\numberwithin{equation}{section}
\tikzset{->-/.style={decoration={
			markings,
			mark=at position #1 with {\arrow{>}}},postaction={decorate}}} 
\tikzset{-<-/.style={decoration={
					markings,
					mark=at position #1 with {\arrow{<}}},postaction={decorate}}}
\def\BB{{\mathbb B}}
\def\CC{{\mathbb C}}
\def\HH{{\mathbb H}}
\def\Kbold{{\mathbf K}}
\def\MM{{\mathbb M}} 
\def\Nbold{{\mathbf N}}
\def\QQ{{\mathbb Q}} 
\def\RR{{\mathbb R}} 
\def\TT{{\mathbb T}}
\def\ZZ{{\mathbb Z}}
\def\Ibold{\mathbf {I}}
\def\Jbold{\mathbf {J}}
\def\uNbold{{\underline{\Nbold}}}
\def\g{\gamma}
\def\tri{\triangle}
\def\Ccal{{\mathcal C}}
\def\Ecal{{\mathcal E}}
\def\Hcal{{\mathcal H}} 
\def\Ical{{\mathcal I}} 
\def\Kcal{{\mathcal K}}
\def\Scal{{\mathcal S}}
\def\Xcal{{\mathcal X}}
\def\sEcal{{\widehat{\Ecal}}}
\def\uk{\underline{k}}
\def\ul{\underline{l}}
\def\vfrak{{\mathfrak{v}}}
\def\mfrak{{\mathfrak{m}}}
\def\bfrak{\mathfrak{b}}
\def\dfrak{\mathfrak{d}}
\def\Sfrak{\mathfrak{S}}
\def\pt{{\scriptscriptstyle\bullet}}
\newcommand\conf{\mathscr{C}\!\mathit{onf}\!}
\newcommand\cfg{\mathit{cfg}}
\newcommand\id{\operatorname{id}}
\newcommand\gr{\operatorname{gr}}
\newcommand\Hom{\operatorname{Hom}}
\newcommand\im{\operatorname{Im}}
\newcommand\sign{\operatorname{sign}}
\newcommand\Sp{\operatorname{Sp}}
\newcommand\nc{{\operatorname{nc}}}
\newcommand\aug{{\operatorname{aug}}}
\newcommand\wt{{\operatorname{wt}}}
\newcommand\comp{{{comp}}}
\newtheorem{atheorem}{Theorem}
\newtheorem{theorem}{Theorem}[section]
\newtheorem{lemma}[theorem]{Lemma}
\newtheorem{proposition}[theorem]{Proposition}
\newtheorem{propositionconvention}[theorem]{Proposition/Convention}
\newtheorem{corollary}[theorem]{Corollary}
\newtheorem{definition}[theorem]{Definition}
\theoremstyle{remark}
\newtheorem{example}[theorem]{Example}
\newtheorem{remark}[theorem]{Remark}
\title{Surface configuration kernels}
\author{Andreas Stavrou}
\email{andreasstavrou@uchicago.edu}
\address{Mathematics Department, University of Chicago}
\begin{document}
\begin{abstract} Let $\Sigma_{g,*}$ be a once-punctured oriented surface of genus $g$.  
We study the action of the mapping class group $\Gamma_{g,*}$ on the $n^{th}$ rational cohomology of the configuration space $\conf_n(\Sigma_{g,*})$ of injections $\{1,\ldots, n\}\hookrightarrow \Sigma_{g,*}$, and compare the kernel of this action $J_{g,*}^\cfg(n)$ with the $n^{th}$ Johnson subgroup $J_{g,*}(n)$. We find high-rank abelian subgroups in the quotient $J_{g,*}^\cfg(n)/J_{g,*}(n)$ arising from the higher Johnson images and from symplectic representation theory. In particular we refute a conjecture due to Bianchi--Miller--Wilson.
\end{abstract}

\maketitle

\section{Introduction}
For a topological space $X$ and an integer $n\ge 0$, the \textit{configuration space } $\conf_n(X)$ is the space of injections $\{1,\ldots, n\}\hookrightarrow X$,  topologised as a subspace of $X^n$. We will be interested in $\conf_n(\Sigma_{g,*})$, where $\Sigma_{g,*}$ is the complement of a single point $*$ in the closed oriented surface $\Sigma_g$ of genus $g$. The \textit{mapping class group} $\Gamma_{g,*}$ is the group of orientation preserving homeomorphisms of $\Sigma_{g,*}$ up to isotopy, and it naturally acts on the rational cohomology $H^*(\conf_n(\Sigma_{g,*});\QQ)$. Our aim is to understand the kernels $J^\cfg(n)$ of the action \begin{equation}\label{eq:introkeyaction}
    \Gamma_{g,*}\curvearrowright H^n(\conf_n(\Sigma_{g,*});\QQ).
\end{equation}

These are closely related with the \textit{Johnson subgroups} $J_{g,*}(n)$ of $\Gamma_{g,*}$, defined as the kernels of $\Gamma_{g,*}\curvearrowright \pi/\pi^{(n+1)}$, where $\pi=\pi_1(\Sigma_g,*)$ and $\pi^{(n+1)}$ is the $(n+1)$th commutator subgroup defined iteratively by $\pi^{(0)}=\pi$ and  $\pi^{(i+1)}=[\pi^{(i)},\pi]$. A theorem of Bianchi, Miller and Wilson \cite{BMW} implies the inclusion $J_{g,*}(n)\subset J_{g,*}^\cfg(n)$, while a conjecture they made predicts the quotient $J_{g,*}(n)/J_{g,*}^\cfg(n)$ to be finite. 
In our earlier work with Looijenga  \cite{LooijengaStavrou25}, we disproved this conjecture in the case $n=4$. In this sequel, we expand this analysis to arbitrary $n$ and determine the quotient $J_{g,*}(n)/J_{g,*}^\cfg(n)$ more precisely.

We access $J^\cfg(n)$ by studying the representation $\Ical^\cfg_n$ from \cite{LooijengaStavrou25} obtained as follows. The group algebra $\QQ\pi$ is filtered by powers of the augmentation ideal $\Ical$, the kernel of the augmentation homomorphism $\aug: \QQ\pi\to \QQ$ sending every $\g\in \pi$ to $1\in \QQ$. There is then a $\Gamma_{g,*}$-equivariant
homomorphism
\begin{equation}\label{eq:tri_nDef}
\tri^n:\QQ\pi/\Ical^{n+1}\to H^n(\conf_n(\Sigma_{g,*})),    
\end{equation}
whose image $\Ical^\cfg_n$ has kernel precisely $J_{g,*}^\cfg(n)$. Since by a theorem of Labute \cite{Labute}, the kernel of the $\Gamma_{g,*}\curvearrowright\QQ\pi/\Ical^{n+1}$ is $J(n)$, the desired quotient $J_{g,*}^\cfg(n)/J_{g,*}(n)$ is completely controlled by the map $\tri^n$. 

The augmentation filtration of $\QQ\pi/\Ical^{n+1}$ allows us to tautologically filter the map $\tri^n$ and the image $\Ical^\cfg_n$. Each graded piece $\gr^\Ical_k\QQ\pi/\Ical^{n+1}=\Ical^k/\Ical^{k+1}$, for $k\le n$, is described by Labute as a natural quotient of $H^{\otimes k}$ where $H=H_1(\Sigma_{g,*};\QQ)$. The action of $\Gamma_{g,*}$ on $H$ is via the symplectic group $\Sp_{2g}(\ZZ)$, and we will be able to give a very nice description of $\ker\gr^\Ical_\pt\tri^n$ in terms of symplectic representation theory.

The group $\Sp_{2g}(\ZZ)$ has a well-understood class of representations over $\QQ$ called \textit{algebraic}, which are direct sums of subquotients of $H^{\otimes n}$ for $n\ge 0$. Algebraic representations decompose as direct sums of irreducibles, each of which has a \textit{weight}: the least $n\ge 0$ for which it embeds in $H^{\otimes n}$. So for each algebraic representation $V$ and $k\ge 0$, there is a well-defined subspace $V^{\le k}$ spanned by all irreducibles of weight $\le k$. With this language, we can now describe the degree-$n$ graded piece of $\ker\tri^n$. 
\begin{atheorem}\label{athm:bottompartLambda}
    The kernel of $\gr^\Ical_n\tri^n$ 
    is the weight $\le n-2$ part of $\Ical^n/\Ical^{n+1}$. In other words, only the top, weight-$n$ part of $\Ical^n/\Ical^{n+1}$ survives in $\Ical^\cfg_n$.
\end{atheorem}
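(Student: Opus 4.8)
The plan is to realise $\gr^\Ical_n\tri^n$ as an explicit surjection of $\Sp_{2g}(\QQ)$-representations and then to read off its kernel from symplectic invariant theory. By Labute's theorem the source $\Ical^n/\Ical^{n+1}$ is the degree-$n$ part of $T(H)/(\omega)$, that is $H^{\otimes n}/(\omega)_n$, where $\omega\in H\otimes H$ is the class of the relator $\prod_i[a_i,b_i]$ and $(\omega)_n=\sum_{k=1}^{n-1}H^{\otimes(k-1)}\otimes\langle\omega\rangle\otimes H^{\otimes(n-1-k)}$. Since $\omega$ is $\Sp_{2g}$-invariant, $(\omega)_n$ is a sum of copies of $H^{\otimes(n-2)}$ and hence carries only weights $\le n-2$; so the weight-$n$ ``harmonic'' (traceless) subspace $\Hcal_n=\bigcap_{i<j}\ker(c_{ij})\subseteq H^{\otimes n}$ --- the $c_{ij}$ being the contractions in slots $i,j$ against the symplectic form --- injects into $\Ical^n/\Ical^{n+1}$ and is exactly its weight-$n$ part, with $\Ical^n/\Ical^{n+1}=\Hcal_n\oplus(\Ical^n/\Ical^{n+1})^{\le n-2}$. (Nothing lives in weight $n-1$, since $H^{\otimes n}$ only involves irreducibles of weight $\equiv n\bmod 2$; this is why the statement reads ``$\le n-2$''.)

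Next I would pin down the map. Unwinding the construction of $\tri^n$ from \cite{LooijengaStavrou25}, one sees that on the deepest filtration step it sends $(\g_1-1)\cdots(\g_n-1)$ to the product of the pullbacks along the coordinate projections $\conf_n(\Sigma_{g,*})\to\Sigma_{g,*}$ of the classes $[\g_i]\in H\cong H^1(\Sigma_{g,*})$; so, after Labute's surjection $H^{\otimes n}\twoheadrightarrow\Ical^n/\Ical^{n+1}$ and this symplectic identification, the map $\gr^\Ical_n\tri^n$ is the pullback $H^n(\Sigma_{g,*}^n)=(H^*)^{\otimes n}\to H^n(\conf_n(\Sigma_{g,*}))$ along the open inclusion $\conf_n(\Sigma_{g,*})\subseteq\Sigma_{g,*}^n$. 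From the Cohen--Taylor--Kriz model of $H^*(\conf_n(\Sigma_{g,*}))$ --- available since $\Sigma_{g,*}$ is a smooth affine (hence formal) curve --- or from the known ring presentation, the kernel of this pullback in degree $n$ is $\sum_{i<j}[\Delta_{ij}]\cdot H^{n-2}(\Sigma_{g,*}^n)$, and because $H^2(\Sigma_{g,*})=0$ the diagonal class reduces to $[\Delta_{ij}]=\omega_{ij}\in H^1_i\otimes H^1_j$ (no ``point'' components). Thus $\gr^\Ical_n\tri^n$ is identified with the canonical surjection $q\colon H^{\otimes n}/(\omega)_n\twoheadrightarrow H^{\otimes n}/\sum_{i<j}R_{ij}$, where $R_{ij}\cong H^{\otimes(n-2)}$ is the image of the map inserting $\omega$ into the $i$-th and $j$-th slots; equivalently, $\gr^\Ical_n\tri^n$ is the effect of imposing $\omega_{ij}=0$ for \emph{all} pairs $i<j$, not just for the consecutive ones already dead in $T(H)/(\omega)$.

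It remains to compute $\ker q$, which is pure symplectic representation theory. By the first fundamental theorem of invariant theory for $\Sp_{2g}$, the space $\Hom_{\Sp_{2g}}(H^{\otimes(n-2)},H^{\otimes n})$ is spanned by Brauer (partial matching) diagrams; each such diagram has a ``cup'' joining two of the top $n$ strands, hence factors through the insertion of an $\omega$ into that pair of slots and so has image contained in some $R_{ij}$. As the weight-$\le n-2$ subspace of $H^{\otimes n}$ is by definition the span of the images of all $\Sp_{2g}$-maps $H^{\otimes(n-2)}\to H^{\otimes n}$, it therefore equals $\sum_{i<j}R_{ij}$; equivalently $H^{\otimes n}/\sum_{i<j}R_{ij}\cong\Hcal_n$ is pure of weight $n$. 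Consequently $\gr^\Ical_n\tri^n$ maps $\Hcal_n$ isomorphically onto this quotient and annihilates the weight-$\le n-2$ part, so $\ker\gr^\Ical_n\tri^n=(\Ical^n/\Ical^{n+1})^{\le n-2}$, which is the assertion.

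The crux is the middle paragraph: matching Labute's presentation of $\Ical^n/\Ical^{n+1}$ against a presentation of the lowest (pure) piece of $H^n(\conf_n(\Sigma_{g,*}))$, and checking that passing from $\Sigma_{g,*}^n$ to the configuration space imposes precisely the relations $\{\omega_{ij}=0 : i<j\}$. This relies on the explicit form of $\tri^n$ from \cite{LooijengaStavrou25} and on the shape of the Cohen--Taylor--Kriz differential $dG_{ij}=[\Delta_{ij}]$ on the open surface, where the point components of the diagonal class vanish because $H^2(\Sigma_{g,*})=0$. Granting this, the representation theory is standard and uniform in $g$.
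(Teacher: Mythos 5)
Your route is genuinely different from the paper's. The paper obtains the lower bound (the weight $\le n-2$ part lies in the kernel) from Corollary 3.5 of the prequel, and the upper bound by showing that $\ker\Hcal_n(\iota)$ --- described concretely by Theorem~\ref{thm:kernelfromLS25} as a sum of terms $\tri_1^\Ibold(\zeta)\times\Hcal_J(U_{2g})$ --- is gr-algebraic of weight $\le n-2$, using Proposition~\ref{prop:Moriyamagralgebraic} and the closure properties of gr-algebraicity. You instead translate $\gr^\Ical_n\tri^n$ into the restriction $i^*\colon H^n(\Sigma_{g,*}^n)\to H^n(\conf_n(\Sigma_{g,*}))$ and try to compute $\ker(i^*)$ directly. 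The translation itself is correct --- by the decomposition formula, $\tri^n((\g_1-1)\cdots(\g_n-1))$ collapses to $\tri^{(1)}(\g_1)\times\cdots\times\tri^{(n)}(\g_n)$, whose Poincar\'e dual is $p_1^*\eta_1\cup\cdots\cup p_n^*\eta_n$ --- and it is a clean reinterpretation that the paper leaves implicit.

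The gap is the assertion that $\ker(i^*)$ in degree $n$ is exactly $\sum_{i<j}[\Delta_{ij}]\cdot H^{n-2}(\Sigma_{g,*}^n)$. The containment $\supseteq$ is immediate, since each $[\Delta_{ij}]$ restricts to zero on $\conf_n(\Sigma_{g,*})$. The reverse containment --- that nothing else dies --- is the whole content, and your citations do not supply it. Kriz's model (and its formality/degeneracy consequences) is proven for smooth \emph{projective} varieties; $\Sigma_{g,*}$ is an open curve, so the degeneration of the relevant Cohen--Taylor/Leray spectral sequence is not a formal corollary, and ``the known ring presentation'' you invoke is not identified. The paper's upper-bound argument is designed precisely to sidestep computing this kernel: it only establishes the weight bound on $\ker\Hcal_n(\iota)$ and lets Proposition~\ref{prop:mukl_weightk_Hotimesn} (the first fundamental theorem, which you also use) do the rest. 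If you can supply a self-contained proof that $\ker(i^*)$ in top degree is generated by the diagonal classes (say, via the Fadell--Neuwirth fibrations or a Mayer--Vietoris spectral sequence over the fat diagonal), the remainder of your argument is sound and gives a nice alternative; as written, the crucial middle step is asserted rather than proved.
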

\begin{remark}
    This is an improvement on  Corollary 3.5 of \cite{LooijengaStavrou25} where we had only proven an inclusion.
\end{remark}

We can already draw consequences for the quotient $J_{g,*}^{\cfg}(n)/J_{g,*}(n)$ expressed in terms of the higher Johnson images $J_{g,*}(k)/J_{g,*}({k+1})$. We note that these latter groups are only known for small $k$, with upper and lower bounds for large $k$. It is known, however, for all $k$ that after tensoring with $\QQ$, they become algebraic representations, and so it makes sense to use the notation $(J_{g,*}(k)/J_{g,*}({k+1}))^{\le w}$ for the part of $J_{g,*}(k)/J_{g,*}({k+1})$ that lands in weight $\le w$ after this tensoring. With this notation, we bound below the contribution of $J_{g,*}(n-1)$ to $J_{g,*}^\cfg(n)$.
\begin{atheorem}\label{athm:J_n-1/J_n}
    The intersection of $J_{g,*}^\cfg(n)/J_{g,*}(n)$ with $J_{g,*}({n-1})/J_{g,*}(n)$ contains the subgroup $(J_{g,*}({n-1})/J_{g,*}(n))^{\le n-3}$.
\end{atheorem}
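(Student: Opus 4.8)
The plan is to deduce Theorem~\ref{athm:J_n-1/J_n} from Theorem~\ref{athm:bottompartLambda} by chasing the filtration on $\tri^n$ restricted to the augmentation-filtration piece coming from $J_{g,*}(n-1)$. Recall that the kernel of $\Gamma_{g,*}\curvearrowright \QQ\pi/\Ical^{n+1}$ is $J_{g,*}(n)$ (Labute), and more generally the kernel of the action on $\QQ\pi/\Ical^{k+1}$ is $J_{g,*}(k)$; thus $J_{g,*}(n-1)/J_{g,*}(n)$ acts trivially on $\Ical^{k}/\Ical^{k+1}$ for $k\le n-1$ and acts on $\Ical^{n}/\Ical^{n+1}$ through a homomorphism whose image is exactly the higher Johnson image. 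More precisely, for $\phi\in J_{g,*}(n-1)$ the element $\phi$ acts on $\QQ\pi/\Ical^{n+1}$ by $\id + \tau(\phi)$, where $\tau(\phi)$ lowers the augmentation filtration by nothing on the top piece but records a map $\Ical^{n-1}/\Ical^{n}\to$ (nothing, since it is top) --- so in fact the whole deviation of $\phi$ from the identity on $\QQ\pi/\Ical^{n+1}$ is a $\Gamma_{g,*}$-equivariant map $\gr^\Ical_\bullet\QQ\pi/\Ical^{n+1}\to\gr^\Ical_\bullet\QQ\pi/\Ical^{n+1}$ raising degree, and the first nonzero component is the Johnson homomorphism landing in $\Hom(\Ical^{1}/\Ical^{2},\Ical^{n}/\Ical^{n+1})$, equivalently an element of $\Ical^{n}/\Ical^{n+1}\otimes H^\vee$-type data. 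The point I would extract is: $\phi\in J_{g,*}(n-1)$ lies in $J_{g,*}^\cfg(n)$ if and only if $\tri^n\circ(\phi - \id) = 0$ on $\QQ\pi/\Ical^{n+1}$, and since $\phi-\id$ has image in $\Ical^{n}/\Ical^{n+1}$ (the top graded piece, because all lower deviations vanish for $\phi\in J_{g,*}(n-1)$), this holds if and only if the image of the Johnson homomorphism of $\phi$ lands in $\ker\bigl(\gr^\Ical_n\tri^n\bigr)$.

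The next step is to invoke Theorem~\ref{athm:bottompartLambda}: $\ker(\gr^\Ical_n\tri^n)$ is the weight $\le n-2$ part of $\Ical^n/\Ical^{n+1}$. So $\phi\in J_{g,*}(n-1)\cap J_{g,*}^\cfg(n)$ exactly when its rational Johnson image, viewed inside $\Ical^{n}/\Ical^{n+1}\otimes H$ (or the appropriate $\Hom$-space --- I will fix the precise identification using Labute's description of $\Ical^n/\Ical^{n+1}$ as a quotient of $H^{\otimes n}$), lands in the weight $\le n-2$ part. Now I need to translate the condition "Johnson image of $\phi$ has weight $\le n-2$ in $\Ical^n/\Ical^{n+1}$" into the condition "$\phi$ has weight $\le n-3$ in $J_{g,*}(n-1)/J_{g,*}(n)$" as stated. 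The Johnson homomorphism $J_{g,*}(n-1)/J_{g,*}(n)\otimes\QQ \hookrightarrow \Hom(H, \Ical^n/\Ical^{n+1})$ --- or rather the standard target $\Hom(H,\mathcal{L}_n)$ with $\mathcal{L}_n$ the degree-$n$ part of the free Lie algebra, which is a subquotient of $H^{\otimes n}$ --- is $\Sp_{2g}$-equivariant and injective on the rational Johnson image. Tensoring a weight-$w$ irreducible appearing in the domain with $H$ raises the weight by at most $1$, so a class of weight $\le n-3$ in $J_{g,*}(n-1)/J_{g,*}(n)$ maps into the weight $\le n-2$ part of the target, hence into $\ker(\gr^\Ical_n\tri^n)$. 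This gives the containment $(J_{g,*}({n-1})/J_{g,*}(n))^{\le n-3}\subseteq J_{g,*}^\cfg(n)/J_{g,*}(n)$ intersected with $J_{g,*}({n-1})/J_{g,*}(n)$, which is exactly the claim.

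There is one subtlety I would need to handle carefully, and it is the main obstacle: matching up the two "weight" conventions and the two ambient representations. The weight of an irreducible in $J_{g,*}(n-1)/J_{g,*}(n)$ is computed inside its own standard Johnson target (a subquotient of $H^{\otimes(n-1)}\otimes H^\vee$ or of $H^{\otimes n}$ depending on normalization), whereas Theorem~\ref{athm:bottompartLambda} measures weight inside $\Ical^n/\Ical^{n+1}$, which via Poincaré--Birkhoff--Witt and Labute's presentation is $\bigoplus_{k\le n}$-something but whose degree-$n$ top associated-graded piece is a quotient of $H^{\otimes n}$. I will need to check that the composite $J_{g,*}(n-1)/J_{g,*}(n)\otimes\QQ \to \Hom(\Ical^1/\Ical^2, \Ical^n/\Ical^{n+1})$ is the "multiplication" realization of the Johnson homomorphism and that it is weight-nonincreasing-by-more-than-one in the precise sense needed --- i.e.\ that an irreducible of weight $w$ in the source contributes only irreducibles of weight $\le w+1$ here, and conversely that nothing of weight $> n-2$ in the image can come from weight $\le n-3$ in the source, so the containment is not lossy in a way that weakens the stated bound. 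Concretely, one subtracts off the degeneracy maps (the symplectic form contractions) that drop weight by $2$, and uses Pieri/branching for $\Sp_{2g}$ to see that $V_\mu\otimes H$ has constituents of weight $\mathrm{wt}(\mu)\pm 1$ only. Once this bookkeeping is pinned down, the rest is a short equivariant diagram chase.
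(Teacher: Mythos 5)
Your proposal is correct and follows essentially the same route the paper takes: you identify that $\phi\in J_{g,*}(n-1)$ lies in $J_{g,*}^\cfg(n)$ precisely when $\im\bigl(\tau_{g,*}(n-1)(\phi)\bigr)\subset\ker(\gr^\Ical_n\tri^n)$, apply Theorem~\ref{athm:bottompartLambda} to identify that kernel as the weight $\le n-2$ part of $\Ical^n/\Ical^{n+1}$, and then use the $\Sp$-equivariant evaluation map $V\otimes H\to\Ical^n/\Ical^{n+1}$ to raise the weight bound by exactly one — which is exactly the argument in the containment direction of the paper's proof (the $k=n-1$, $w=n-2$ instance of the theorem preceding Theorem~\ref{thm:differenceofJohnsonimages}). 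One small caution on wording: the relevant condition is that $\im\,\tau_{g,*}(n-1)(\phi)$ (a subspace of $\Ical^n/\Ical^{n+1}$) has weight $\le n-2$, not that $\tau_{g,*}(n-1)(\phi)$ itself has weight $\le n-2$ as an element of $\Hom(H,\Ical^n/\Ical^{n+1})$ — your evaluation-map paragraph handles this correctly, but your earlier phrase ``lands in the weight $\le n-2$ part of the target'' is ambiguous and should be sharpened.
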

\begin{remark}
    Table 1 of Morita--Sakasai--Suzuki \cite{MorSakSuzSympInvariantLie} provides a decomposition of $J_{g,*}(n-1)/J_{g,*}(n)$ into irreducibles for $n\le 7$, showing that the rank of $(J_{g,*}({n-1})/J_{g,*}(n))^{\le n-1}$ is non-zero for $n\ge 5$ and grows rapidly with $n$.
\end{remark} 

The last theorem is a mere corollary of our main result. In fact, we are able to describe $\ker\gr^\Ical_\pt\tri^n$ completely if the genus $g$ is large, and bound it below otherwise.
\begin{atheorem}[Theorem \ref{thm:assgradedIcalcfg}]\label{athm:groupring}
    For $1\le k\le n$, the surjection $\gr^\Ical_k\QQ\pi/\Ical^{n+1}\twoheadrightarrow \gr^\Ical_k\Ical^\cfg_n$ contains $(\gr^\Ical_k\QQ\pi/\Ical^{n+1})^{\le 3k-2(n+1)}$ in its kernel. If $g\ge k$, this containment is an equality.
\end{atheorem}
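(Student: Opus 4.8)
The plan is to compute the associated-graded map $\gr^\Ical_\pt\tri^n$ explicitly and then read off its kernel using symplectic representation theory. By Labute's theorem the source of $\gr^\Ical_k\tri^n$ is $\gr^\Ical_k\QQ\pi/\Ical^{n+1}=\Ical^k/\Ical^{k+1}=H^{\otimes k}/R_k$, where $R_k=\sum_{i+j=k-2}H^{\otimes i}\otimes\langle\omega\rangle\otimes H^{\otimes j}$ and $\omega\in\Lambda^2 H\subseteq H\otimes H$ is the symplectic form; a summand $V_\lambda$ of weight $w=|\lambda|$ appears as the image of a copy of $V_\lambda$ inside $H^{\otimes w}$ with $(k-w)/2$ symplectic-form factors inserted. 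Theorem~\ref{athm:groupring} is then the assertion that $V_\lambda$ dies in $\gr^\Ical_k\Ical^\cfg_n$ exactly when $w\le 3k-2(n+1)$, that is, when $w\le k-2(n-k)-2$, the ``only'' clause requiring $g\ge k$; the case $k=n$ is Theorem~\ref{athm:bottompartLambda}, so that result is simultaneously a special case and a natural base of an induction.

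The first and hardest step is to pin down $\gr^\Ical_\pt\tri^n$. I would work through the Fadell--Neuwirth fibration $\conf_n(\Sigma_{g,*})\to\conf_{n-1}(\Sigma_{g,*})$ forgetting the last point, whose fibre is a wedge of $2g+n-1$ circles: the rational cohomology Leray spectral sequence degenerates and, iterating, exhibits $H^n(\conf_n(\Sigma_{g,*}))$ as a successive extension assembled from the tensor factors $H^1\big(\Sigma_{g,*}\ssm\{n-1\text{ points}\}\big)\cong H\oplus W_{n-1}$, in which the ``surface part'' $H$ has weight $1$ while the ``puncture-loop part'' $W_{n-1}$ carries no $\Sp$-irreducible of weight $\ge 2$. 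Comparing this with the parallel recursion for $\QQ\pi/\Ical^{n+1}$, and with the defining formula of $\tri^n$ from \cite{LooijengaStavrou25}, I would show that $\gr^\Ical_k\tri^n$ is, modulo the quotient by $R_k$, a composite of $n-k$ insertions of $\omega$ into pairs of tensor slots --- adjoint to symplectic contractions --- whose combinatorial pattern on $\{1,\dots,n\}$ is prescribed by the Arnold and ``log'' relations of the standard presentation of $H^*(\conf_n(\Sigma_{g,*}))$ (recall that here $G_{ij}^2=0$, the Euler class of the open surface vanishing), followed by the projection that imposes those relations. The delicate point, on which I expect most of the work to fall, is to show that the symplectic relations $R_k$ in the source mesh with the Arnold/log relations in the target precisely so as to collapse this composite to the clean shape predicted by the weight bound, with no leftover cancellation.

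Granting the model, the weight count is essentially bookkeeping. A highest-weight vector of a weight-$w$ summand of $\Ical^k/\Ical^{k+1}$ is supported on $H^{\otimes w}$ once the $(k-w)/2$ symplectic-form factors are stripped off; carrying it through the $n-k$ insertions, its image in $H^n(\conf_n(\Sigma_{g,*}))$ is supported on monomials whose underlying graph on $\{1,\dots,n\}$ is a forest with a controlled number of components, and the vector survives if and only if that forest can still hold $w$ independent pulled-back surface classes, which is exactly the inequality $w>3k-2(n+1)$. For the inclusion --- valid for every $g$ --- it suffices to check that when $w\le 3k-2(n+1)$ the resulting cohomology class is forced to vanish, a finite linear-algebra verification in the presentation that does not see $g$; Theorem~\ref{athm:bottompartLambda} settles $k=n$, and general $k$ follows either by the same mechanism or by descending induction on $n-k$ along the fibration, the weight-$\le 1$ fibre factor $W_{n-1}$ accounting for the shift of $2$ in the threshold.

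For the equality when $g\ge k$, what is left is non-vanishing: every weight-$w$ summand with $w>3k-2(n+1)$ must map injectively. For a chosen highest-weight vector I would write down an explicit degree-$n$ cohomology class of $\conf_n(\Sigma_{g,*})$ --- a product of $w$ pulled-back classes $p_i^*a_i$ along pairwise non-interacting symplectic directions together with $n-w$ of the classes $G_{ij}$ arranged compatibly with the insertion pattern --- against which the vector pairs nontrivially. Fitting in $w\le k\le g$ independent symplectic directions is exactly what the hypothesis $g\ge k$ buys; when $g<k$ these directions must collide, the image can genuinely shrink, and only the inclusion survives. Putting together the explicit model, the weight count, and the non-vanishing argument gives Theorem~\ref{athm:groupring}.
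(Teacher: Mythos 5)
Your proposal takes a genuinely different route from the paper and has substantial gaps. For the containment (lower bound), the paper does not compute $\gr^\Ical_k\tri^n$ via Fadell--Neuwirth and Leray; instead, Theorem \ref{thm:lowerboundker} applies the decomposition formula \eqref{eq:decompositionformula_manyfactors} to the lift $\widetilde{\mu^{s,r}_{\uk,\ul}(\mfrak)}$ and uses the pigeonhole principle: when $n\le s+r-1$, every nonempty ordered partition of $[n]$ into $s$ parts forces some chord $(k_i,l_i)$ to have both parts singletons, which produces a factor $\tri^{(p_1,p_2)}(\widetilde\mu)$ that vanishes because $\widetilde\mu\equiv\zeta-1\equiv 0\pmod{\Ical^3}$ in $\QQ\pi$ (Example \ref{ex:tri^2(mu)}). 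Your description of $\gr^\Ical_k\tri^n$ as ``a composite of $n-k$ insertions of $\omega$'' is not a correct model of the map, and the degeneration of the Leray spectral sequence over $\QQ$ is itself a nontrivial fact you assert without justification. The paper never needs any of this, and bypasses the cohomology presentation entirely by working with the Borel--Moore groups $\Hcal_n(\Sigma_{g,*})$ and the Moriyama basis.

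The serious gap is the equality direction. You write ``write down an explicit degree-$n$ cohomology class $\ldots$ against which the vector pairs nontrivially'' and move on, but this is exactly the content of Section \ref{sec:geometricconstructions}, which is the bulk of the whole argument: a closed $3$-manifold $\sEcal\subset\conf_3(\Sigma_{g,*})$ is built by gluing five pieces---two punctured $3$-tori, two circle-times-tubes, and a doubly-punctured $3$-torus---with carefully tracked orientations (Table \ref{tab:submanifolds}); it is then symmetrised and multiplied up to $n$-dimensional closed submanifolds $\sEcal_\bfrak$, and the whole design is engineered so that the intersection pairing $\langle\tri^n(\widetilde\bfrak),\sEcal_{\bfrak'}\rangle$ is a delta function on $\cup_{s+r=n}\BB^{s,r}_\nc$ (Theorem \ref{thm:sEcalPerfectPairing}). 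In particular it is precisely this construction that isolates the \emph{non-consecutive} chord diagrams and makes all intersections but one cancel. A naive product of pullback classes $p_i^*a$ and diagonal classes $G_{ij}$ will not achieve this pairing without proving an analogous duality/independence statement, which is Theorem \ref{thm:independence_r+s=n}, and your sketch gives no mechanism for it. Finally, the genus budget: the proof in the body establishes equality only under $g\ge n$ (Theorem \ref{thm:assgradedIcalcfg}), because one needs $r$ disjoint genus-$2$ subsurfaces for the $\sEcal_j$ and genus for the remaining $s-2r$ curves of $\TT_\mfrak$; your count of ``$w\le k\le g$ independent symplectic directions'' is not what is used.
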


As a consequence, we find that not only $J_{g,*}(n-1)$ but many  lower Johnson subgroups $J_{g,*}(k)$ for  $k<n$ intersect $J^\cfg_{g,*}(n)$ non-trivially. For each such $k$, there exists a subspace $$K_k\subset J_{g,*}({k})/J_{g,*}(k+1)$$ which lifts to $J^\cfg_{g,*}(n)/J_{g,*}(n)$, and the various $K_k$ organise this quotient into successive layers as follows.  Starting with $C_n:=J^\cfg_{g,*}(n)/J_{g,*}(n)$ the intersection with $J_{g,*}(n-1)/J_{g,*}(n)$ is $K_{n-1}$; quotienting by $K_{n-1}$ yields $C_{n-1}$, whose intersection  with $J_{g,*}(n-2)/J_{g,*}(n-1)$ yields $K_{n-2}$, and so on. Representation-theoretic considerations force us to interpret these relations only up to finite index and finite cokernel. This leads to the following description of $J^\cfg_{g,*}(n)/J_{g,*}(n)$ as a \textit{virtual cofiltration}.

\begin{atheorem}[Theorem \ref{thm:virtualcofiltration}]\label{athm:johnsonquotient}
    The quotient $J_{g,*}^\cfg(n)/J_{g,*}(n)$ has a virtual cofiltration, that is, a sequence of group homomorphisms
    \begin{equation*}
        \begin{tikzcd}
            J_{g,*}^\cfg(n)/J_{g,*}(n)=C_n\rar["f_{n}"] & C_{n-1}\rar["f_{n-1}"] & C_{n-2}\rar["f_{n-2}"] & \cdots \rar["f_2"] & C_1 
        \end{tikzcd}
        \end{equation*}
    where each $f_k$ has finite cokernel, and where each kernel $\ker f_k$ is an abelian group containing a copy of
    $(J_{g,*}({k-1})/J_{g,*}(k))^{\le 3k-2(n+1)-1}.$
    Furthermore, if $g\ge \max(n,k+1)$, this containment is of finite index.
\end{atheorem}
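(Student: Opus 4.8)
\emph{Proof plan.} The plan is to filter $J_{g,*}^\cfg(n)/J_{g,*}(n)$ by the images of the Johnson subgroups and to match its successive quotients with the associated graded of $\tri^n$ computed in Theorem~\ref{athm:groupring}. By Labute's theorem the action \eqref{eq:introkeyaction} factors through $\Gamma_{g,*}/J_{g,*}(n)$, and since this group acts faithfully on $\QQ\pi/\Ical^{n+1}$ it inherits the decreasing Johnson filtration by the images of $J_{g,*}(k)$, $1\le k\le n$. I set $C_n:=J_{g,*}^\cfg(n)/J_{g,*}(n)$ and, for $1\le k<n$, let $C_k$ be the image of $C_n$ under $\Gamma_{g,*}/J_{g,*}(n)\twoheadrightarrow\Gamma_{g,*}/J_{g,*}(k)$, i.e.\ $C_k=J_{g,*}^\cfg(n)J_{g,*}(k)/J_{g,*}(k)$, with $f_k\colon C_k\to C_{k-1}$ induced by $J_{g,*}(k)\subseteq J_{g,*}(k-1)$. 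Each $f_k$ is surjective, hence has trivial (in particular finite) cokernel, and a short diagram chase identifies $\ker f_k$ with the image of $J_{g,*}^\cfg(n)\cap J_{g,*}(k-1)$ in $J_{g,*}(k-1)/J_{g,*}(k)$; this is a subgroup of the latter group, which is abelian because the $(k-1)$st Johnson homomorphism $\tau_{k-1}$ embeds it into $\Hom(H,\gr^\Ical_k\QQ\pi)$.

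The crux is a weight estimate relating $\tau_{k-1}$ to $\ker\gr^\Ical_\bullet\tri^n$. Put $w:=3k-2(n+1)-1$, fix $[\gamma]$ in the weight $\le w$ part of $J_{g,*}(k-1)/J_{g,*}(k)$, and choose a lift $\gamma\in J_{g,*}(k-1)$. The operator $\gamma-1$ on $\QQ\pi/\Ical^{n+1}$ raises the augmentation degree by at least $k-1$, with leading graded term the degree-$(k-1)$ derivation $\delta$ of $\gr^\Ical_\bullet\QQ\pi/\Ical^{n+1}$ extending $\tau_{k-1}(\gamma)\in\Hom(H,\gr^\Ical_k\QQ\pi)$. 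As the identity and multiplication maps occurring in the derivation formula for $\delta$ carry weight $0$, each morphism $\delta|_{\gr^\Ical_j}$ has weight $\le w$, so $\delta(\gr^\Ical_j)$ has weight $\le j+w$; for $j\ge1$ this is $\le 3(j+k-1)-2(n+1)$, which by Theorem~\ref{athm:groupring} lies inside $\ker\gr^\Ical_{j+k-1}\tri^n$. Hence $\gamma$ acts trivially on $\gr^\Ical_\bullet\Ical^\cfg_n$. To upgrade this to triviality on $\Ical^\cfg_n$ itself I would run an obstruction argument along the augmentation filtration: successively replace $\gamma$ by $\gamma\gamma'$ with $\gamma'\in J_{g,*}(d)$ for $d=k,k+1,\dots,n-1$ — which leaves $[\gamma]$ unchanged since $J_{g,*}(d)\subseteq J_{g,*}(k)$ — so as to annihilate the action of $\gamma$ on $\Ical^\cfg_n$ one augmentation degree at a time, the obstruction at each stage being killed by a Johnson correction because in the pertinent low-weight range the image of $\tau_d$ exhausts the available corrections. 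Since the filtration on $\Ical^\cfg_n$ is finite the process terminates and the resulting $\gamma$ lies in $J_{g,*}^\cfg(n)$, realising $(J_{g,*}(k-1)/J_{g,*}(k))^{\le w}$ inside $\ker f_k$; this recovers Theorem~\ref{athm:J_n-1/J_n} in the case $k=n$, where no correction is needed because $\gamma-1$ already equals its leading term.

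For the finite-index refinement assume $g\ge\max(n,k+1)$ and take $\gamma\in J_{g,*}^\cfg(n)\cap J_{g,*}(k-1)$. Triviality of the action forces $\delta$ into $\ker\gr^\Ical_\bullet\tri^n$, which for $g\ge n$ is an \emph{equality} of the form in Theorem~\ref{athm:groupring}; after checking that the constraints for $j\ge2$ follow from that for $j=1$, this reads $\tau_{k-1}(\gamma)(H)\subseteq(\gr^\Ical_k\QQ\pi)^{\le 3k-2(n+1)}$, which forces $[\gamma]$ to have weight $\le 3k-2(n+1)+1=w+2$, while a parity count in $\Hom(H,\gr^\Ical_k\QQ\pi)$ rules out weight $w+1$. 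The remaining assertion — that, apart from a finite-index subgroup, no Johnson class of weight $w+2$ has image landing in the weight $\le w+1$ part of $\gr^\Ical_k\QQ\pi$ — is the step I expect to be the real obstacle: it concerns the position of the weight filtration on $J_{g,*}(k-1)/J_{g,*}(k)$ relative to the bracket map $\Hom(H,\gr^\Ical_k\QQ\pi)\to\gr^\Ical_{k+1}\QQ\pi$ cutting out the Lie algebra of symplectic derivations, and I would settle it by an explicit $\Sp_{2g}$-branching computation valid in the stable range $g\ge\max(n,k+1)$. Combined with the previous paragraph this shows $\ker f_k$ contains $(J_{g,*}(k-1)/J_{g,*}(k))^{\le w}$ with finite index, yielding the virtual cofiltration.
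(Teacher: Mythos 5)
Your approach departs from the paper's in a way that has real downstream consequences, and the proposal has substantial gaps.

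\emph{Different choice of $C_k$.} You define $C_k = J_{g,*}^\cfg(n)J_{g,*}(k)/J_{g,*}(k)$, the image of $C_n$ under the successive projections, whereas the paper defines $C_k = J^\cfg_{g,*}(n,k)/J_{g,*}(k)$ using the intermediate subgroups $J^\cfg_{g,*}(n,k) = \ker\bigl(\Gamma_{g,*}\curvearrowright\Ical^\cfg_n/F_{k+1}\Ical^\cfg_n\bigr)$ introduced in Section~\ref{sec:configurationkernels}. The paper's choice is crucial: by the group-theory exchange Lemma~\ref{lem:grouptheoryexchange}, $\ker f_k$ and $\coker f_k$ are then identified with those of $\comp_{k-1}\colon J_{g,*}(k-1)/J_{g,*}(k)\to J^\cfg_{g,*}(n,k-1)/J^\cfg_{g,*}(n,k)$, and the whole theorem is read off from the finite-cokernel result (Theorem~\ref{thm:finitecokernel}) and the kernel computation (Theorem~\ref{thm:differenceofJohnsonimages}). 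Your $C_k$ makes each $f_k$ surjective (a genuinely nicer property), but forces the kernel estimate to be a strictly harder claim: you need to lift a low-weight class of $J_{g,*}(k-1)/J_{g,*}(k)$ not just into $J^\cfg_{g,*}(n,k)$ but all the way into $J^\cfg_{g,*}(n)$.

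\emph{The graded-triviality step is fine.} Your derivation/weight argument is correct and in spirit coincides with what feeds into Theorem~\ref{thm:differenceofJohnsonimages}: the extension of $\tau_{k-1}(\gamma)$ to a degree-$(k-1)$ derivation $\delta$ is $\Sp$-equivariant in $\tau_{k-1}(\gamma)$, so $\delta(\gr^\Ical_j)$ has weight $\le j+w$, which for $j\ge 1$ is $\le 3(j+k-1)-2(n+1)$ and hence dies under $\gr^\Ical_\pt\tri^n$ by Theorem~\ref{athm:groupring}. This shows $\gamma\in J^\cfg_{g,*}(n,k)$, which is precisely what the paper needs and stops at.

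\emph{The obstruction/correction step is the gap.} You then want to iterate corrections $\gamma'\in J_{g,*}(d)$ for $d=k,\dots,n-1$ to push $\gamma$ into $J^\cfg_{g,*}(n)$, claiming ``the image of $\tau_d$ exhausts the available corrections.'' That is false on the nose: what is true (the paper's Theorem~\ref{thm:finitecokernel}) is that $\comp_d$ has \emph{finite} cokernel, not trivial cokernel, and this finiteness-without-surjectivity is exactly the obstruction that makes the cofiltration only ``virtual.'' At each stage the obstruction $\tau^\cfg_{g,*}(n,d)(\gamma)$ may fail to lie in the image of $\comp_d$ over $\ZZ$; you could salvage the argument by passing to a uniform multiple $N[\gamma]$ (with $N$ the product of the exponents of the cokernels), which since $(J_{g,*}(k-1)/J_{g,*}(k))^{\le w}$ is free abelian still produces an isomorphic copy inside $\ker f_k$ — but you would then need to prove the finite-cokernel theorem, which is substantial (it uses Lemma~\ref{lem:extensionbyirrelevantrep} and a filtration argument that is not present in your sketch). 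Finally, you explicitly flag the finite-index half (the sharp kernel description of $\comp_{k-1}$, which in the paper requires the delicate monomial-type analysis of Lemma~\ref{lem:trivialintersectionofreps} and Hain's theorem, Proposition~\ref{prop:Hain'sTheorem}) as an unresolved obstacle; in the paper that is the content of Theorem~\ref{thm:differenceofJohnsonimages} and it is not a routine branching computation.

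In short: the choice of $C_k$ is a legitimate alternative and gives stronger cokernel control, but it pushes all the difficulty into the obstruction/correction step, and the proposal does not close that step; the paper's device of the intermediate kernels $J^\cfg_{g,*}(n,k)$ is precisely what lets it avoid it.
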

For $k$ close to $n$, we then obtain high-rank free abelian contributions, these kernels are large, whereas for $k<(2(n+1)-1)/3$ we get nothing. 
We remark that we do not know whether the assumption $g\ge k$ is necessary or is an artifact of our method.

\subsection{$\QQ$-coefficients}
We warn the reader that while in  \cite{LooijengaStavrou25} we worked integrally, here we work throughout with homology over $\QQ$ instead. This means, for example, that $\Ical^\cfg_n$ corresponds to the rationalisation of the identically denoted abelian group in \cite{LooijengaStavrou25}, and that  our $J^\cfg(n)$ contains the identically denoted group of the prequel with finite index but may not be equal to it. We preferred to work over $\QQ$ to benefit from the semi-simplicity of symplectic representations, but found it tedious to carry the $\otimes \QQ$ notation throughout. However, this should cause little confusion as our theorems on $J^\cfg(n)$ are already stated in terms of ``finite index inclusions''.

\subsection{Structure of the paper} In Section \ref{sec:recollections}, we recollect the modules  $\Hcal_n(U)$ from \cite{LooijengaStavrou25}, recall the notion of gr-algebraic representations and prove Theorem \ref{athm:bottompartLambda}. 
In Section \ref{sec:keroftri^n}, we study the kernel of $\tri^n$ and employ symplectic representation theory to describe an associated graded for the image thus giving Theorem \ref{athm:groupring}. The results of this section depend on Theorem \ref{thm:independence_r+s=n} which is proven separately in Section \ref{sec:geometricconstructions} using a rather elaborate geometric constructions.
Finally, in Section \ref{sec:configurationkernels}, we translate the results on $\Ical^\cfg_n$ to results about $J^\cfg_{g,*}(n)$.
Although this is a sequel of \cite{LooijengaStavrou25}, our aim is to be self-contained.

\subsection{Acknowledgments} The author would like to thank Eduard Looijenga for the conversations which brought this paper into being. Thanks also go to Oscar Randal-Williams, as the germs of these ideas were already planted in Cambridge. The author would finally like to thank Dick Hain for a helpful conversation during his visit at the University of Chicago. 

\section{The deepest graded piece of $\tri^n$}\label{sec:recollections}
\subsection{Recollections from \cite{LooijengaStavrou25}}\label{subsec:recollections} We now recall some notations, definitions and statements from the prequel, that turn the study of $J^\cfg(n)$ into the study of the map $\tri^n$ from \eqref{eq:tri_nDef}.
\subsubsection{On partitions and orders}
We fix these notations for the rest of this paper. 

In general $N$ will be a set of size $n$. If it is equipped with a total order $\prec$, it will be denoted by the boldface $\Nbold$. 

Given two totally ordered sets $\Ibold,\Jbold$, the concatenation  $\Ibold\Jbold$ is the disjoint union $I\sqcup J$ equipped with the total order inherited from $\Ibold$ and $\Jbold$ and which makes $I\prec J$. 

A \textit{partition-with-orders} $\uNbold=(\Nbold_1,\ldots, \Nbold_r)$ of a set $N$  is a partition of $N$ into a disjoint union $N=N_1\sqcup \cdots\sqcup N_r$ and each $N_i$ is equipped with a total ordering. We say $\uNbold$ is \textit{nowhere empty} if all the parts $N_i$ are non-empty. For example, the notation $\uNbold=((3,1),(2,5,4))$ is a nowhere empty partition of the set $[5]$ into two parts; the orderings are understood to be $3\prec 1$ and $2\prec 5\prec 4$. 
\subsubsection{The functor $\Hcal_N(U)$ }
We generalise $\conf_N(X)$ to the case $N$ is a finite set of size $n$ to be the space of all injections $N\hookrightarrow X$ topologised as a subspace of the function space $X^N$.

If $(X,*)$ is a based space with $U=X\setminus \{*\}$, then the complement of $\conf_n(U)$ in $X^n$ is the space $$D_n(X,*)=\{f:N\to X|f \text{ not injective or } *\in \im f\}.$$ The relative homology $$\Hcal_N(X,*):=H_n(X^N,D_N(X,*);\QQ)$$
was studied by Moriyama and Bianchi--Miller--Wilson \cite{Moriyama, BMW} in the context of surfaces. 

In \cite{LooijengaStavrou25}, we focus at the case when $X$ is the one-point compactification of $U$, in which case  this homology 
is isomorphic to to the Borel--Moore homology $H_n^{BM}(\conf_N(U);\QQ)$, and denoted it  $\Hcal_N(U)$ instead. If furthermore $U$ is an orientable manifold of dimension $d$ with no closed component, then Poincar\'e-duality gives a $\Gamma_{g,*}$-equivariant isomorphism $\Hcal_n(U)\cong H^{(d-1)n}(\conf_n(U);\QQ)$.  The facts we present in the next sections on $\Hcal_N(U)$ were all proven in \cite{LooijengaStavrou25}.

\subsubsection{The maps $\tri^\Nbold_U$}
 Denote the $\pi_U=\pi_1(X,*)$ and $\Ical_U$ the augmentation ideal of $\QQ\pi_U$. Let also  $\prec$ be any total order on $N$, and denote $\Nbold=(N,\prec)$. 
Viewing an element $\g\in \pi_U$ as a map $\g:[0,1]\to X$, we can take the $N^{th}$ power $\g^N:[0,1]^N\to X^N$ and restrict it to the $N$-simplex $$\tri^\Nbold=\{f:N\to [0,1]|f \text{ non-decreasing}\}.$$ 
The boundary lands in $D_N(X,*)$, so we obtain a relative homology class $[\g^n|_{\tri^\Nbold}]\in \Hcal_n
(U)$, which we denote by $\tri^\Nbold_U(\g)$. This extends linearly as a map from $\QQ\pi_U$ which vanishes on $\Ical_U^{n+1}$ giving us a map
\begin{equation}\label{eq:triNboldU}
    \tri^\Nbold_U:\QQ\pi_U/\Ical_U^{n+1}\to \Hcal_N(U).
\end{equation}
We also set $\tri^{\emptyset}(\g)=1$. Note that if $n>0$, $\tri^\Nbold_U(1)$ always vanishes, and so $\tri^\Nbold_U$ can be viewed as a map from $\Ical_U/\Ical_U^{n+1}$, instead, with same image. We write simply $\tri^n_U$ to imply the set $[n]$ with the usual order.

\subsubsection{Multiplicative structure}
The maps $\tri^\Nbold$ are multiplicative in the following sense.
Taking the disjoint union of two sets $N_1,N_2$ gives a homeomorphism $X^{N_1}\times X^{N_2}\to X^{N_1\sqcup N_2}$ and, in turn, a product
\begin{equation}
\times: \Hcal_{N_1}(U)\otimes \Hcal_{N_2}(U)\to \Hcal_{N_1\sqcup N_2}(U).    
\end{equation}
The \textit{decomposition formula} gives, for any $\g_1,\g_2\in \pi$, 
\begin{equation}\label{eq:decompositionformula}
    \tri^\Nbold(\g_1\g_2)=\sum_{\Nbold_1\Nbold_2=\Nbold}\tri^{\Nbold_1}(\g_1)\times \tri^{\Nbold_2}(\g_2),
\end{equation}
where the summation is over all partitions $N_1\sqcup N_2=N$ so that, under the inherited orders from $\Nbold$,  $\Nbold_1$ and $\Nbold_2$ concatenate to $\Nbold$. This process iterates to longer products as 
\begin{equation}\label{eq:decompositionformula_manyfactors}
\tri^\Nbold(\g_1\g_2\cdots \g_r)=\sum_{\Nbold_1\Nbold_2\cdots\Nbold_r=\Nbold}\tri^{\Nbold_1}(\g_1)\times \tri^{\Nbold_2}(\g_2)\times\cdots \times \tri^{\Nbold_r}(\g_r).
\end{equation}
Finally, for a general partition-with-orders $\uNbold=(\Nbold_1,\ldots, \Nbold_r)$, we will write $\tri^\uNbold(\g_1\times \cdots\times \g_r)=\tri^{\Nbold_1}(\g_1)\times \cdots\times \tri^{\Nbold_r}$.

\subsection{Surfaces}
Pick a $1$-skeleton $W_{2g}$ of $\Sigma_g$ with a single $0$-cell the point $*$, and $2g$ $1$-cells $\alpha_1,\alpha_{-1},\ldots, \alpha_{g},\alpha_{-g}$ which, viewed as loops, form a basis of the free group $\pi_1=\pi_1(W_{2g},*)$ of rank $2g$. The complement $U_{2g}=W_{2g}\setminus *$ is then a disjoint union of $2g$ open intervals. 

Assume that $\Sigma_g$ is obtained by attaching a single $2$-cell along the word $\zeta\in \pi_1$ given as the product of commutators 
\begin{equation}
    \zeta=[\alpha_1,\alpha_{-1}]\cdots [\alpha_g,\alpha_{-g}].
\end{equation}
Then $\pi=\pi_1/\langle\langle \zeta\rangle \rangle$ and inherits the generators $\alpha_{\pm 1}, \ldots, \alpha_{\pm g}$. 
 
Let $D$ now be an embedded disc in $\Sigma_g$ intersecting $W_{2g}$ at $*$ on its boundary. The complement $\Sigma_{g,1}$ of the interior of $D$ has also parametrised by the loop $\zeta$.  Denote by $\Gamma_{g,1}$ the mapping class group of $\Sigma_{g,1}$.

\subsubsection{The groups $\Hcal_n(\Sigma_{g,*})$ and $\Hcal_n(U_{2g})$}\label{subsec:Hcalofsurfaces}
In the discussion of the Section \ref{subsec:recollections}, we can take $U=\Sigma_{g,*}, U_{2g}$ and $X=\Sigma_g,W_{2g}$, respectively. The inclusion $\iota:(W_{2g},*)\hookrightarrow (\Sigma_g,*)$ gives $\Hcal_N(\iota):\Hcal_N(U_{2g})\to \Hcal_N(\Sigma_{g,*})$. The group $\Hcal_N(U_{2g})$ was completely determined by Moriyama \cite{Moriyama}, see Theorem \ref{thm:Moriyama} below. 
\begin{theorem}[Theorem 2.10 \cite{LooijengaStavrou25}]\label{thm:kernelfromLS25}
    The map $\Hcal_N(\iota)$ is a surjection with kernel $$\Kcal_N(\iota)=\sum_{\Ibold, J: N=I\sqcup J, |I|\ge 2}\tri^{\Ibold}(\zeta)\times \Hcal_J(U_{2g}).$$
\end{theorem}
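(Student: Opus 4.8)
The plan is to realize $\iota$ as the attachment of a single $2$-cell and feed it into a long exact sequence. Passing to one-point compactifications we may write $\Hcal_N(U_{2g})=H_n(W_{2g}^N,D_N(W_{2g},*))$ and $\Hcal_N(\Sigma_{g,*})=H_n(\Sigma_g^N,D_N(\Sigma_g,*))$, and since $\iota$ is an embedding fixing $*$ it induces a map of pairs. Set $D^\Sigma:=D_N(\Sigma_g,*)$ and $D^W:=D_N(W_{2g},*)=W_{2g}^N\cap D^\Sigma$. I would insert the closed subspace $W_{2g}^N\cup D^\Sigma\subseteq\Sigma_g^N$ and run the long exact sequence of the triple $(\Sigma_g^N,\ W_{2g}^N\cup D^\Sigma,\ D^\Sigma)$. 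After choosing compatible CW (or simplicial) structures in which the bad loci and $W_{2g}^N$ are subcomplexes, excision gives $H_*(W_{2g}^N\cup D^\Sigma,D^\Sigma)\cong H_*(W_{2g}^N,D^W)=\Hcal_N(U_{2g})$ — both compute $\tilde H_*$ of the one-point compactification of $\conf_N(U_{2g})$ — and under this identification the triple map $H_*(W_{2g}^N\cup D^\Sigma,D^\Sigma)\to H_*(\Sigma_g^N,D^\Sigma)$ becomes $\Hcal_N(\iota)$. Likewise $H_*(\Sigma_g^N,W_{2g}^N\cup D^\Sigma)\cong H^{BM}_*(O)$, where $O:=\conf_N(\Sigma_{g,*})\setminus\conf_N(U_{2g})$ is the open locus of configurations having at least one point in the open $2$-cell $\mathring e^2$. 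The triple sequence then reads
\[ H^{BM}_{n+1}(O)\xrightarrow{\ \partial\ }\Hcal_N(U_{2g})\xrightarrow{\Hcal_N(\iota)}\Hcal_N(\Sigma_{g,*})\longrightarrow H^{BM}_n(O), \]
so everything reduces to computing $H^{BM}_*(O)$ in degrees $n,n+1$ and identifying $\partial$.

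Next I would show $H^{BM}_{\le n}(O)=0$. Filter $O$ by the open subsets $O_{\ge k}$ of configurations with at least $k$ points in $\mathring e^2$; the $k$-th layer $O_{\ge k}\setminus O_{\ge k+1}$ is $\bigsqcup_{|S|=k}\conf_S(\mathring e^2)\times\conf_{N\setminus S}(U_{2g})$. Since $\mathring e^2\cong\RR^2$, the factor $\conf_k(\RR^2)$ is a connected orientable $2k$-manifold homotopy equivalent to a complex of dimension $k-1$, so $H^{BM}_p(\conf_k\RR^2)\cong H^{2k-p}(\conf_k\RR^2)$ vanishes for $p\le k$ and (for $k\ge 1$) is free of rank $(k-1)!$ in its bottom degree $p=k+1$. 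Since $U_{2g}$ is a disjoint union of lines, $\conf_{N\setminus S}(U_{2g})$ is a disjoint union of contractible $(n-k)$-manifolds, so its Borel–Moore homology is concentrated in degree $n-k$, where it equals $\Hcal_{N\setminus S}(U_{2g})$. By Künneth each layer has Borel–Moore homology concentrated in degrees $\ge n+1$, hence so does $O$ by the Borel–Moore long exact sequences of the nested opens $O_{\ge k}$. In particular $H^{BM}_n(O)=0$, so $\Hcal_N(\iota)$ is surjective and $\ker\Hcal_N(\iota)=\operatorname{im}\partial$; moreover $H^{BM}_{n+1}(O)$ acquires a filtration whose graded pieces are dominated by $\bigoplus_{|S|=k}H^{k-1}(\conf_k\RR^2)\otimes\Hcal_{N\setminus S}(U_{2g})$, $k=1,\dots,n$.

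The inclusion $\Kcal_N(\iota)\subseteq\ker\Hcal_N(\iota)$ is then easy: the maps $\tri^{\Nbold}$ and the products $\times$ are natural with respect to the embedding $\iota$ (immediate from the definitions), so $\Hcal_N(\iota)(\tri^{\Ibold}(\zeta)\times x)=\tri^{\Ibold}_{\Sigma_{g,*}}(\iota_*\zeta)\times\Hcal_J(\iota)(x)$; but $\iota_*\zeta=1$ in $\pi$ by construction and $\tri^{\Ibold}(1)=0$ for $|I|\ge 1$, so this vanishes.

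The hard part — and the step I expect to be the main obstacle — is the reverse inclusion $\operatorname{im}\partial\subseteq\Kcal_N(\iota)$, which requires identifying $\partial$ geometrically. A class in $H^{BM}_{n+1}(O)$ is a locally finite $(n+1)$-cycle whose ends lie over $\conf_N(U_{2g})$ or over the bad locus, and $\partial$ records the ends over $\conf_N(U_{2g})$ — i.e.\ the limiting configurations obtained as the points sitting in the $2$-cell are pushed out of $\mathring e^2$. Since the characteristic map of $e^2$ sends $\partial\bar e^2$ onto $W_{2g}$ exactly along the word $\zeta$, those points get swept along $\zeta$; the only part of the $k$-th layer reaching Borel–Moore degree $n+1$ is the top cohomology $H^{k-1}(\conf_k\RR^2)$ of the $2$-cell factor, whose classes encode the order in which the $k$ points stack up along $\zeta$ — producing precisely sums of $\pm\,\tri^{\Ibold}(\zeta)\times x$ with $\Ibold$ an ordering of a $k$-subset $S\subseteq N$ and $x\in\Hcal_{N\setminus S}(U_{2g})$; the $k=1$ contribution drops out for free because $\tri^{\{i\}}(\zeta)=0$ (a lone point swept once around $\zeta$ bounds, reflecting $\zeta\in[\pi,\pi]$). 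Thus $\operatorname{im}\partial\subseteq\Kcal_N(\iota)$, and with the easy inclusion this forces equality. Making this rigorous calls for an explicit cellular model for $O$ near $\partial\bar e^2$ — a Fox–Neuwirth-type decomposition, or a careful collar of $\mathring e^2$ — compatible with the definition of $\tri^{\Ibold}(\zeta)$ through the ordered simplex $\tri^{\Ibold}$ and the map $\zeta^{\Ibold}$, together with an induction along the layers $O_{\ge k}$. If this chain-level bookkeeping proves too delicate, a fallback is a dimension count, using Moriyama's description of $\Hcal_N(U_{2g})$ \cite{Moriyama} and a known formula for $H^n(\conf_n(\Sigma_{g,*});\QQ)$ to check $\dim\Hcal_N(\Sigma_{g,*})=\dim\Hcal_N(U_{2g})-\dim\Kcal_N(\iota)$ — though that merely relocates the difficulty into computing $\dim\Kcal_N(\iota)$.
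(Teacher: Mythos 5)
Your structural setup is sound and probably close to the argument in the prequel: realize $\Hcal_N(\iota)$ inside the long exact sequence of the triple $(\Sigma_g^N,\,W_{2g}^N\cup D^\Sigma,\,D^\Sigma)$, identify the third term with $H^{BM}_*(O)$ where $O$ is the locus with at least one particle in the open $2$-cell, and stratify $O$ by how many particles sit in $\mathring e^2$. The Borel--Moore degree bound $H^{BM}_p(\conf_k(\RR^2))=0$ for $p\le k$ is correct, the K\"unneth argument gives $H^{BM}_{\le n}(O)=0$ and hence surjectivity, and your ``easy inclusion'' $\Kcal_N(\iota)\subset\ker\Hcal_N(\iota)$ via naturality of $\tri^\Nbold$ and $\times$ together with $\iota_*\zeta=1$ is a complete and correct argument.

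However, there is a genuine gap at exactly the point you flag. You assert that $\partial\colon H^{BM}_{n+1}(O)\to\Hcal_N(U_{2g})$ lands in $\Kcal_N(\iota)$ because ``the top cohomology $H^{k-1}(\conf_k\RR^2)$ encodes the order in which the $k$ points stack up along $\zeta$, producing precisely sums of $\pm\,\tri^{\Ibold}(\zeta)\times x$.'' This is where the proof actually is, and your sketch is not yet an argument. Three specific things would have to be established. First, the stratification of $O$ gives only a spectral sequence converging to $H^{BM}_*(O)$; a class in $H^{BM}_{n+1}(O)$ need not be supported in a single stratum, and $\partial$ must be controlled filtration-by-filtration, not stratum-by-stratum. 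Second, $H^{k-1}(\conf_k(\RR^2))$ has rank $(k-1)!$ while the orderings $\Ibold$ of a $k$-set number $k!$, and the classes $\tri^{\Ibold}(\zeta)$ satisfy nontrivial relations (e.g.\ the shuffle identity of Proposition \ref{prop:exteriorproduct}), so ``encode the order'' conceals a genuine identification that requires an explicit chain model (a Fox--Neuwirth-type cell structure on $O$ near $\partial\bar e^2$, or a collar of $\mathring e^2$) compatible with the simplex parametrisation defining $\tri^{\Ibold}(\zeta)$. Third, the word $\zeta$ traverses each $\alpha_{\pm i}$ twice, so ``sweeping a point along $\zeta$'' is not a monotone motion into a single $1$-cell; you would need to check that the chain-level boundary nonetheless lies in the span you claim, rather than in the larger $\ZZ$-span of $\tri^{\Nbold_1}(\alpha_1)\times\cdots\times\tri^{\Nbold_{-g}}(\alpha_{-g})$. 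Your dimension-count fallback does, as you note, relocate rather than remove the difficulty: it requires both $\dim\Hcal_N(\Sigma_{g,*})$ (not elementary) and $\dim\Kcal_N(\iota)$ (which already requires knowing the relations among the $\tri^{\Ibold}(\zeta)\times x$). Until one of these two routes is carried out at the chain level, the reverse inclusion $\operatorname{im}\partial\subset\Kcal_N(\iota)$ remains unproven.
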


\subsubsection{Mapping class groups}
The inclusion $\Sigma_{g,1}\to \Sigma_{g,*}$ induces a surjection of mapping class groups $\Gamma_{g,1}\twoheadrightarrow \Gamma_{g,*}$ with kernel $\ZZ$ generated by the Dehn twist along the boundary. This allows us to view any $\Gamma_{g,*}$-representation as a $\Gamma_{g,1}$-representation as well.
The action of both groups on $H_\ZZ=H_1(\Sigma_{g,1})\cong H_1(\Sigma_{g,*})$ factors through the symplectic group $\Sp_{2g}(\ZZ)$. 

\begin{proposition}[Corollary 3.9 \cite{LooijengaStavrou25}]
    For $\Gamma=\Gamma_{g,1}$ or $\Gamma_{g,*}$, the kernel of the $\Gamma$-representation $\Ical^\cfg_n$ is $J^\cfg(n)$.
\end{proposition}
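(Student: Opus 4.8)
Since the $\Gamma_{g,1}$-action on both $\Ical^\cfg_n$ and $H^n(\conf_n(\Sigma_{g,*}))$ is pulled back along $\Gamma_{g,1}\twoheadrightarrow\Gamma_{g,*}$ — the boundary Dehn twist acts trivially on $\conf_n(\Sigma_{g,*})$ and on $\QQ\pi/\Ical^{n+1}$ — the kernel over $\Gamma_{g,1}$ is the full preimage of the kernel over $\Gamma_{g,*}$, so it is enough to treat $\Gamma=\Gamma_{g,*}$. Via the Poincar\'e-duality isomorphism of \S\ref{subsec:Hcalofsurfaces} we work inside $\Hcal_n(\Sigma_{g,*})$, in which $\Ical^\cfg_n=\im(\tri^n_{\Sigma_{g,*}})$ is a $\Gamma$-submodule; the inclusion $J^\cfg_{g,*}(n)\subseteq\ker(\Gamma\curvearrowright\Ical^\cfg_n)$ is then automatic, and what remains is to show that every $\phi$ with $\phi|_{\Ical^\cfg_n}=\id$ fixes all of $\Hcal_n(\Sigma_{g,*})$.

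I would deduce this from two auxiliary statements. \emph{(A)} For each $k\le n$, $\Ical^\cfg_k=\im(\tri^k_{\Sigma_{g,*}})$ is a $\Gamma$-equivariant quotient of $\Ical^\cfg_n$. \emph{(B)} For $g\ge 1$, $\Hcal_n(\Sigma_{g,*})$ is spanned over $\QQ$ by the external products $\tri^{\Nbold_1}_{\Sigma_{g,*}}(\gamma_1)\times\cdots\times\tri^{\Nbold_r}_{\Sigma_{g,*}}(\gamma_r)$ with $\gamma_i\in\pi$ and $(\Nbold_1,\dots,\Nbold_r)$ a nowhere-empty partition-with-orders of $[n]$. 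Granting these: by \emph{(A)}, the hypothesis $\phi|_{\Ical^\cfg_n}=\id$ propagates to $\phi|_{\Ical^\cfg_k}=\id$ for every $k\le n$; under the order-isomorphism $\Nbold_i\cong[\,|N_i|\,]$ this says $\phi$ fixes $\tri^{\Nbold_i}_{\Sigma_{g,*}}(\gamma_i)$ for all $i$ and all $\gamma_i$, and since $\times$ is $\Gamma$-equivariant, $\phi$ fixes each product in \emph{(B)}, hence all of $\Hcal_n(\Sigma_{g,*})$; thus $\phi\in J^\cfg_{g,*}(n)$. (For $g=0$ the group $\Gamma_{0,*}$ is trivial and the proposition is vacuous.)

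For \emph{(A)} I would use Theorems \ref{athm:bottompartLambda} and \ref{athm:groupring}: for each $k$, $\gr^\Ical_k\ker(\tri^n_{\Sigma_{g,*}})$ lies in the weight $\le 3k-2(n+1)$ part of $\Ical^k/\Ical^{k+1}$, so the $\Gamma$-equivariant composite $\ker(\tri^n_{\Sigma_{g,*}})\hookrightarrow\Ical/\Ical^{n+1}\to\Ical/\Ical^{k+1}\xrightarrow{\ \tri^k\ }\Ical^\cfg_k$ has source whose degree-$j$ graded constituents have $\Sp_{2g}$-weight $\le 3j-2(n+1)$, whereas the degree-$j$ graded of the target $\Ical^\cfg_k$ (for $j\le k$) has only weights $>3j-2(k+1)>3j-2(n+1)$; by semisimplicity of $\Sp_{2g}$-representations an $\Sp_{2g}$-map between such pieces is zero, and a filtration-length induction then forces the whole composite to vanish, which is precisely the assertion that $\Ical^\cfg_k$ factors through $\Ical^\cfg_n$. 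For \emph{(B)} I would feed the surjection $\Hcal_n(U_{2g})\twoheadrightarrow\Hcal_n(\Sigma_{g,*})$ of Theorem \ref{thm:kernelfromLS25} into Moriyama's computation (Theorem \ref{thm:Moriyama}) of $\Hcal_N(U_{2g})$, whose basis classes are supported on the $1$-cells: by naturality of $\tri^\Nbold$ along $W_{2g}\hookrightarrow\Sigma_g$ and the decomposition formula \eqref{eq:decompositionformula_manyfactors}, applied to words in the generators $\alpha_{\pm i}$, each such class becomes a $\QQ$-combination of the products in \emph{(B)}.

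The step I expect to be the real obstacle is \emph{(B)} — realising an arbitrary Moriyama basis class of $\Hcal_n(U_{2g})$ as a combination of $\tri^\Nbold_{U_{2g}}$-products, which requires producing the classes in which the points on a single $1$-cell occur in a non-increasing order (these arise from the inverse generators $\alpha_{-i}$) and disentangling the shuffles that external multiplication of such classes produces; the weight bookkeeping in \emph{(A)} is routine once Theorems \ref{athm:bottompartLambda} and \ref{athm:groupring} are in hand, with the caveat that for $g<k$ those theorems give only a containment rather than an equality of graded kernels, so the smallest genera may need the integral analysis of \cite{LooijengaStavrou25} (or a separate argument) to complete \emph{(A)}.
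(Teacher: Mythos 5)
Your overall skeleton — reduce to $\Gamma_{g,*}$, prove (A) that each $\Ical^\cfg_k$ with $k\le n$ is a $\Gamma$-equivariant quotient of $\Ical^\cfg_n$, prove (B) that $\Hcal_n(\Sigma_{g,*})$ is spanned by products $\tri^{\Nbold_1}(\g_1)\times\cdots\times\tri^{\Nbold_r}(\g_r)$, and conclude by $\Gamma$-equivariance of $\times$ — is sound, but you have put the difficulty in the wrong place. Step (B) is immediate: the Moriyama basis in Theorem \ref{thm:Moriyama} already consists \emph{exactly} of such products (over arbitrary partitions-with-orders of $[n]$ with $\g_i=\alpha_{\pm j}$), and Theorem \ref{thm:kernelfromLS25} surjects $\Hcal_n(U_{2g})$ onto $\Hcal_n(\Sigma_{g,*})$. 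Since $\tri^\Nbold$ is defined for every total order there is no ``non-increasing'' case to manufacture and no shuffle to disentangle; indeed the paper quotes precisely this spanning statement in the proof of Proposition \ref{prop:Moriyamagralgebraic}.

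The genuine gap is (A). Your weight bookkeeping needs the \emph{upper} bound $\gr^\Ical_j\ker(\tri^n)\subseteq(\gr^\Ical_j\QQ\pi)^{\le 3j-2(n+1)}$, but Theorem \ref{thm:assgradedIcalcfg} gives only the opposite containment unconditionally; the equality you are invoking is proved only for $g\ge n$, and Theorem \ref{athm:bottompartLambda} supplies it unconditionally only in the top degree $j=n$. For smaller $g$ the graded kernel may carry higher-weight irreducibles and the semisimplicity argument no longer forces the composite $\ker(\tri^n)\to\Ical^\cfg_k$ to vanish. Since the Proposition carries no genus hypothesis, this is not a peripheral caveat for ``the smallest genera'' — it is the entire content of the proof there, and moreover the route through Theorems \ref{athm:bottompartLambda}/\ref{athm:groupring} is anachronistic, as those are new results of this paper well downstream of a statement quoted from the prequel. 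The genus-free replacement is already in the paper: the degeneracy map $\partial:\Hcal_n(\Sigma_{g,*})\to\Hcal_{n-1}(\Sigma_{g,*})$ recalled from \cite{LooijengaStavrou25} in the proof of Corollary \ref{cor:independence_r+s<=n} is $\Gamma$-equivariant and commutes with $\tri^n$ and $\tri^{n-1}$, hence restricts to a $\Gamma$-equivariant surjection $\Ical^\cfg_n\twoheadrightarrow\Ical^\cfg_{n-1}$; iterating gives (A) for all $g$ with no representation theory at all.
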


\subsubsection{Algebraic representations}
Here and onwards, let $\Gamma$ be either $\Gamma_{g,*}$ or $\Gamma_{g,1}$. The $\Gamma$-representation $H=H_\ZZ\otimes \QQ$, factors through $\Sp_{2g}(\ZZ)$, and all so-called \textit{algebraic} representations of the symplectic group are direct sums of subquotients of $H^{\otimes n}$, for $n\ge 0$. Algebraic representations reduce to direct sums of irreducibles.

\begin{definition}
    The weight $w(V)$ of an algebraic irreducible is the least $n\ge 0$ for which $V$ embeds $\Sp_{2g}(\ZZ)$-equivariantly in $H^{\otimes n}$. For each algebraic representation $U$ and $w\ge 0$, let $U^{\le w}$ be the span of all its irreducible summands of weight $\le w$.
\end{definition}

\begin{example}
    Each $H^{\otimes n}$ consists of irreducibles of weight $\le n$ and with same parity as $n$.
\end{example}

\subsubsection{Gr-algebraic representations}
We introduce a more general class of $\Gamma$-representations following a definition from \cite[\S~2.1.2]{KupersORWalgebraic}.
\begin{definition}
    We say a $\Gamma$-representation $V$ is \textit{$\gr$-algebraic} if it has a finite filtration 
    $$0= F_0(V)\subset F_1(V)\subset \cdots \subset F_p(V)=V$$
    whose associated graded $\gr^F_\pt V=\oplus_{i=1}^pF_i(V)/F_{i-1}(V)$ factors through $\Sp_{2g}(\ZZ)$ and is algebraic.
    We will say $V$ is of weight $\le d$, if $\gr^F_\pt V$ is. 
\end{definition}

\begin{remark}
    The associated graded, when viewed as an ungraded $\Sp_{2g}(\ZZ)$-representation, does not depend on the choice of filtration that exhibits the gr-algebraicity: this can be checked by repeated applications of Schur's lemma.
\end{remark}

\begin{proposition}\label{prop:gralgebraicclosedunder}
    The property of being gr-algebraic is closed under subquotients, direct sums, and tensor products. Furthermore, if $V_1$ and $V_2$ are of weight $\le d_1$ and $\le d_2$, respectively, then (a) any subquotient of $V_1$ is of weight  $\le d_1$, (b) the direct sum $V_1\oplus V_2$ is of weight $\le \max(d_1,d_2)$, and (c) the tensor product $V_1\otimes V_2$ is of weight $\le d_1+d_2$.
\end{proposition}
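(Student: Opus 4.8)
The plan is to reduce each of the three closure properties, together with the weight bounds, to the corresponding classical facts about honest algebraic $\Sp_{2g}(\ZZ)$-representations: that algebraic representations are closed under subquotients, finite direct sums and tensor products, and that with respect to these operations the weight is monotone under subquotients, $\max$-additive under direct sums, and sub-additive under tensor products. These algebraic-level statements I would establish quickly using the semisimplicity of algebraic representations recorded above, together with the observations that $W\hookrightarrow H^{\otimes w(W)}$ for any algebraic irreducible $W$ and that $H^{\otimes m}$ has all its irreducible summands of weight $\le m$ (the preceding Example). Indeed, for an irreducible $W_1$ of weight $\le d_1$ and an irreducible $W_2$ of weight $\le d_2$ one gets $W_1\otimes W_2\hookrightarrow H^{\otimes w(W_1)}\otimes H^{\otimes w(W_2)}=H^{\otimes(w(W_1)+w(W_2))}$, so every irreducible constituent of $W_1\otimes W_2$ has weight $\le d_1+d_2$; summing over the constituents of $V_1$ and $V_2$ and using semisimplicity gives the tensor statement, and the direct-sum and subquotient statements at the algebraic level are immediate from semisimplicity.

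With those in hand, the gr-algebraic case is a routine transfer through filtrations. For $V_1\oplus V_2$: pad the shorter of the two defining filtrations by repeating its top term, take the termwise direct sum $F_\pt\oplus G_\pt$, and note its associated graded is $\gr^F_\pt V_1\oplus\gr^G_\pt V_2$, algebraic of weight $\le\max(d_1,d_2)$. For a subrepresentation $W\subseteq V$: use the induced filtration $W\cap F_\pt$, under which $\gr_i W$ embeds $\Gamma$-equivariantly into $\gr^F_i V$. For a quotient $V/W$: use $(F_\pt+W)/W$, under which $\gr_i(V/W)$ is a $\Gamma$-equivariant quotient of $\gr^F_i V$; a general subquotient is then obtained by composing these two cases. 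In each instance the new associated graded is a subquotient or a direct sum of the old one(s), so the algebraic-level statements finish parts (a), (b) and the subquotient claim.

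The one step needing actual care is the tensor product. I would put the convolution filtration $H_m=\sum_{i+j=m}F_i\otimes G_j$ on $V_1\otimes V_2$; this is a filtration by $\Gamma$-subrepresentations. The composites $F_i\otimes G_j\hookrightarrow H_{i+j}\twoheadrightarrow H_{i+j}/H_{i+j-1}$ annihilate $F_{i-1}\otimes G_j$ and $F_i\otimes G_{j-1}$, hence assemble into a $\Gamma$-equivariant surjection $\bigoplus_{i+j=m}\gr^F_i V_1\otimes\gr^G_j V_2\twoheadrightarrow\gr^H_m(V_1\otimes V_2)$, surjective because $H_m$ is spanned by the images of the summands. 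The source is algebraic of weight $\le d_1+d_2$ by the algebraic-level tensor and direct-sum statements (each $\gr^F_i V_1$ has weight $\le d_1$, each $\gr^G_j V_2$ has weight $\le d_2$), so the target is too by part (a), whence $V_1\otimes V_2$ is gr-algebraic of weight $\le d_1+d_2$. The mildly delicate point is precisely the claim that the convolution filtration has this associated graded; over $\QQ$ the displayed map is in fact an isomorphism, as one sees by choosing vector-space splittings of $F_\pt$ and $G_\pt$ and counting dimensions, but for the proof only surjectivity is needed. I do not anticipate a genuine obstacle here: the bookkeeping around the convolution filtration is the hardest part, and it is entirely standard.
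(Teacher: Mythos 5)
Your proof is correct. The paper's own proof is a one-line citation to Lemma~2.5 of Kupers--Randal-Williams and its proof, so it delegates the work entirely; you instead supply a self-contained argument, and the two are surely congruent in substance since this is the standard way to establish such closure properties. At the algebraic level you correctly reduce everything to semisimplicity plus the facts that an irreducible of weight $w$ embeds in $H^{\otimes w}$ and that $H^{\otimes m}$ has all summands of weight $\le m$. The transfer to the gr-algebraic case via the induced filtration $W\cap F_\pt$ on a subobject, the image filtration $(F_\pt+W)/W$ on a quotient, termwise sums after padding, and the convolution filtration on a tensor product is exactly right. On the one genuinely delicate point you are careful: you observe that only surjectivity of
\[
\bigoplus_{i+j=m}\gr^F_i V_1\otimes\gr^G_j V_2\longrightarrow\gr^H_m(V_1\otimes V_2)
\]
is needed, which is immediate from $H_m=\sum_{i+j=m}F_i\otimes G_j$ and the right-exactness of $\otimes$ (and over $\QQ$ it is in fact an isomorphism, as you note). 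One cosmetic remark: with the paper's convention $F_0=0$, the convolution filtration has $H_0=H_1=0$, so a reindexing is implicit; this is harmless. The upshot of the comparison is that the paper's proof is faster for a reader willing to chase the reference, while yours is more informative and entirely self-contained within the conventions already set up in Section~2.
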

\begin{proof}
    An application of Lemma 2.5 of \cite{KupersORWalgebraic} and its proof.
\end{proof}

\subsubsection{Proof of Theorem \ref{athm:bottompartLambda}}
The pair $(\Sigma_{g,1},*)$ deformation retracts to $(W_{2g},*)$ giving a natural action of  $\Gamma_{g,1}$ on $\pi_1$ and on $\Hcal_n(U_{2g})$, under which the map $\Hcal_n(\iota):\Hcal_N(U_{2g})\to \Hcal_N(\Sigma_{g,*})$ from Section \ref{subsec:Hcalofsurfaces} is $\Gamma_{g,1}$-equivariant.

\begin{example}\label{example:truncatedgroupringgralgebraic}
    A theorem of Fox \cite{Fox} expresses the associated graded of $\QQ\pi_1$ by the augmentation filtration as the free tensor algebra $T[H]$, so $\QQ\pi_1/\Ical_1^{n+1}$ and $\Ical_1/\Ical_1^{n+1}$ are gr-algebraic of degree $\le n$. A similar result of Labute \cite{Labute} implies $\QQ\pi/\Ical^{n+1}$ and $\Ical/\Ical^{n+1}$ are gr-algebraic of weight $\le n$.
\end{example} 

\begin{proposition}\label{prop:Moriyamagralgebraic}
    The $\Gamma_{g,1}$-representation $\Hcal_N(U_{2g})$ is gr-algebraic of weight $\le n$. 
\end{proposition}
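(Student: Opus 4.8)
The plan is to realise $\Hcal_N(U_{2g})$ as a quotient of a gr-algebraic $\Gamma_{g,1}$-representation of weight $\le n$ and then read off the statement from the closure properties in Proposition~\ref{prop:gralgebraicclosedunder}. The building block is $\QQ\pi_1/\Ical_1^{n+1}$, which is gr-algebraic of weight $\le n$ by Example~\ref{example:truncatedgroupringgralgebraic}, and the glue is the collection of $\Gamma_{g,1}$-equivariant maps $\tri^\Nbold_{U_{2g}}$ of \eqref{eq:triNboldU} together with the multiplicative structure \eqref{eq:decompositionformula_manyfactors}.

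First I would record the shape of $\Hcal_N(U_{2g})$. Since $U_{2g}$ is the disjoint union of the $2g$ open intervals $e_{\pm 1},\dots,e_{\pm g}$, the configuration space $\conf_N(U_{2g})$ is a disjoint union of contractible components --- each an open cell $\cong\RR^n$ --- indexed by the data of a function $N\to\{\pm 1,\dots,\pm g\}$ recording which interval each point lies on, together with a linear order on each fibre; equivalently, the components correspond to pairs $(\uNbold,j_\bullet)$ with $\uNbold=(\Nbold_1,\dots,\Nbold_r)$ a nowhere-empty partition-with-orders of $N$ and $i\mapsto j_i$ an injection $\{1,\dots,r\}\hookrightarrow\{\pm 1,\dots,\pm g\}$. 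By Moriyama's Theorem~\ref{thm:Moriyama}, $\Hcal_N(U_{2g})=H_n^{BM}(\conf_N(U_{2g}))$ is spanned by the orientation classes of these components, and by the very definitions of $\tri^{\Nbold_i}_{U_{2g}}$ and of the product $\times$, the orientation class attached to $(\uNbold,j_\bullet)$ is exactly $\tri^{\uNbold}_{U_{2g}}(\alpha_{j_1}\times\cdots\times\alpha_{j_r})$. Consequently the $\Gamma_{g,1}$-equivariant map
\begin{equation*}
\Psi\colon\ \bigoplus_{\uNbold=(\Nbold_1,\dots,\Nbold_r)}\ \bigotimes_{i=1}^{r}\ \QQ\pi_1/\Ical_1^{\,|N_i|+1}\ \longrightarrow\ \Hcal_N(U_{2g}),
\end{equation*}
assembled from $\tri^{\Nbold_1}_{U_{2g}}\times\cdots\times\tri^{\Nbold_r}_{U_{2g}}$ over the finitely many nowhere-empty partitions-with-orders $\uNbold$ of $N$, is surjective: feeding in $\alpha_{j_1}\otimes\cdots\otimes\alpha_{j_r}$ with the $j_i$ pairwise distinct already hits every component class.

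From here the argument is formal. Each tensor factor $\QQ\pi_1/\Ical_1^{|N_i|+1}$ is gr-algebraic of weight $\le|N_i|$, so each summand $\bigotimes_{i=1}^{r}\QQ\pi_1/\Ical_1^{|N_i|+1}$ is gr-algebraic of weight $\le\sum_i|N_i|=n$ by Proposition~\ref{prop:gralgebraicclosedunder}(c); the source of $\Psi$, a finite direct sum of such, is then gr-algebraic of weight $\le n$ by part~(b) together with closure under direct sums; and its quotient $\Hcal_N(U_{2g})$ is gr-algebraic of weight $\le n$ by part~(a). The only step that is not pure bookkeeping with Proposition~\ref{prop:gralgebraicclosedunder} is the surjectivity of $\Psi$, i.e.\ the identification of the orientation classes of the (contractible) components of $\conf_N(U_{2g})$ with iterated products of the elementary classes $\tri^{\Nbold_i}_{U_{2g}}(\alpha_{j_i})$; this is exactly the content of Moriyama's Theorem~\ref{thm:Moriyama}, and is the one place where geometry rather than representation theory is used.
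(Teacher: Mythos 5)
Your proof is correct and takes essentially the same route as the paper: both realise $\Hcal_N(U_{2g})$ as a quotient of a direct sum $\bigoplus_{\uNbold}\bigotimes_i\QQ\pi_1/\Ical_1^{|N_i|+1}$ via the assembled maps $\tri^{\uNbold}_{U_{2g}}$ and then invoke the closure properties of Proposition~\ref{prop:gralgebraicclosedunder}. The only cosmetic difference is that you derive the surjectivity directly from Moriyama's explicit component-by-component basis (Proposition~\ref{thm:Moriyama}), whereas the paper cites Theorem~1.5 of \cite{LooijengaStavrou25} for the same spanning fact.
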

\begin{proof}
    By Theorem 1.5 of \cite{LooijengaStavrou25}, $\Hcal_N(U_{2g})$ is spanned by the images of the maps $\tri^{\uNbold}_{U_{2g}}$ for $\uNbold=(\Nbold_1,\ldots, \Nbold_r)$ all partitions-with-orders of the set $N$. These maps are equivariant under the action of self-homotopies of the pair $(W_{2g},*)$ and thus of $\Gamma_{g,1}$. The domain of each $\tri^{\uNbold}_{U_{2g}}$ is $\Ical_1/\Ical_1^{n_1}\otimes \cdots \Ical_1/\Ical_1^{n_r}$ where $n_i=|N_i|$ and $n_1+\cdots+n_r=n$. Now, each tensor factor is gr-algebraic of weight $\le n_i$ by Example \ref{example:truncatedgroupringgralgebraic}, so by Proposition \ref{prop:gralgebraicclosedunder} the domain of $\tri^\uNbold$ is gr-algebraic of weight $\le n_1+\ldots +n_r=n$. By the same proposition, so is the span of their images $\Hcal_N(U_{2g})$.
\end{proof}

There is a $\Gamma$-invariant element $\mu\in \Lambda^2H\subset H^{\otimes 2}$ corresponding to the intersection pairing of $H$. If $a_1,a_{-1},\ldots, a_g,a_{-g}$ are the images in $H$ of the generators of $\pi_1$ and $\pi$, it has the explicit form
\begin{equation}
    \mu=\sum_{i=1}^ga_i\otimes a_{-i}-a_{-i}\otimes a_i=\sum_{1\le \pm i\le g} \sign(i)a_i\otimes a_{-i}\in H^{\otimes 2}.
\end{equation}
\noindent Then for $1\le k<l\le n$, there is an \textit{insertion map} $\mu_{k,l}:H^{\otimes n-2}\to H^{\otimes n}, v\mapsto \mu\otimes_{k,l}v$ that inserts $\mu$ in the $k,l$ tensor slots. We note that the consecutive insertions maps $\mu_{i,i+1}$ appear in the context of $\gr^\Ical_\pt\QQ\pi$: an equivalent formulation of Labute's result is that $\Ical^n/\Ical^{n+1}\cong H^{\otimes n}/\sum_{1\le i\le n-1}\im\mu_{i,i+1}$.  More generally, these maps play a key role in symplectic representation theory.
\begin{proposition}\label{prop:mukl_weightk_Hotimesn}
    The images $\im \mu_{k,l}$ for $1\le k<l\le n$ span the weight $\le n-2$ part of $H^{\otimes n}$.
\end{proposition}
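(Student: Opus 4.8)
The plan is to prove the two inclusions separately. For the easy inclusion, $\sum_{k<l}\im\mu_{k,l}\subseteq (H^{\otimes n})^{\le n-2}$, I would argue that each $\im\mu_{k,l}$ is the image of the $\Sp_{2g}$-equivariant map $\mu_{k,l}\colon H^{\otimes(n-2)}\to H^{\otimes n}$, hence is a quotient of $H^{\otimes(n-2)}$; since $H^{\otimes(n-2)}$ consists only of irreducibles of weight $\le n-2$ (as recorded in the Example following the weight definition), and since by Proposition \ref{prop:gralgebraicclosedunder}(a) any subquotient of a weight-$\le(n-2)$ representation has weight $\le n-2$, every irreducible summand appearing in $\im\mu_{k,l}$ has weight $\le n-2$. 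A sum of such subspaces still has this property, giving the containment in $(H^{\otimes n})^{\le n-2}$.

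For the reverse inclusion, I would use the classical branching/contraction description of $H^{\otimes n}$ as an $\Sp_{2g}$-representation. Decompose $H^{\otimes n}=\bigoplus_\lambda V_\lambda^{\oplus m_\lambda}$ into isotypic pieces; the weight-$n$ part is exactly the span of the highest-weight (``harmonic'' / traceless) copies, i.e.\ those killed by all the contraction operators $c_{k,l}\colon H^{\otimes n}\to H^{\otimes(n-2)}$ dual to the insertions $\mu_{k,l}$. The standard fact from symplectic invariant theory (Weyl's construction, or the first fundamental theorem for $\Sp_{2g}$ when $g\ge n$, together with semisimplicity over $\QQ$) is that
\begin{equation*}
H^{\otimes n}=(H^{\otimes n})^{\mathrm{harm}}\ \oplus\ \sum_{1\le k<l\le n}\im\mu_{k,l},
\end{equation*}
where $(H^{\otimes n})^{\mathrm{harm}}=\bigcap_{k<l}\ker c_{k,l}$ is precisely the weight-$n$ part. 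Equivalently, $\sum_{k<l}\im\mu_{k,l}$ is a complement to the harmonic part, and since that complement already has weight $\le n-2$ by the first paragraph, it must equal $(H^{\otimes n})^{\le n-2}$ exactly.

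Concretely I would carry out the steps in this order: (1) establish the easy inclusion as above; (2) recall that $\mu$ is $\Sp_{2g}$-invariant in $\Lambda^2 H$ and that the pairing it induces on $H^{\otimes 2}$ is nondegenerate, so the contraction $c_{k,l}$ satisfies $c_{k,l}\circ\mu_{k,l}=2g\cdot\id$ (up to a nonzero scalar), whence $\mu_{k,l}$ is a split injection and $\im\mu_{k,l}$ is a direct summand; (3) invoke the decomposition of $H^{\otimes n}$ into harmonic tensors plus the span of the images of contractions — this is where I would cite Weyl / Fulton--Harris or the treatment in the symplectic-Schur-Weyl literature, noting the harmonic part is nonzero and well-behaved as long as $g$ is large compared to $n$ (and for smaller $g$ the statement still holds since passing to smaller $g$ only kills irreducibles, never creates weight); (4) identify $(H^{\otimes n})^{\mathrm{harm}}$ with $(H^{\otimes n})^{= n}$ via the weight-counting in the cited Example, so that its complement is exactly $(H^{\otimes n})^{\le n-2}$; (5) combine (1) and the complementarity from (3)--(4) to conclude equality. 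The main obstacle is Step (3): making precise — with a clean citation rather than a reproved decomposition — that the span of all the contraction images is an $\Sp_{2g}$-stable complement to the harmonic tensors, and checking that this holds uniformly in $g$ (in particular that no low-genus degeneracy spoils the weight bookkeeping); everything else is formal manipulation with the already-established gr-algebraic formalism.
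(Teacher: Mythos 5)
Your argument is correct and, modulo the choice of reference, is the same as the paper's: the paper simply cites Procesi for exactly the classical fact you invoke in Step (3)--(4), namely Weyl's decomposition of $H^{\otimes n}$ into harmonic (traceless) tensors of weight $n$ plus the span of the $\mu$-insertions. Your easy inclusion, the split-injectivity of $\mu_{k,l}$ from $c_{k,l}\circ\mu_{k,l}=\pm 2g\cdot\id$, and the Schur's-lemma complementarity are the standard scaffolding around that citation; the only thing to keep in mind is that the identification of the harmonic part with the weight-$n$ part is itself part of the cited classical theorem (not a consequence of the Example on parity alone), so Steps (3) and (4) should be drawn from the same source rather than treated as independent observations.
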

This follows directly from \cite{Procesi} and allows us to prove Theorem \ref{athm:bottompartLambda}. 
\begin{proof}[Proof of Theorem \ref{athm:bottompartLambda}]
In Corollary 3.5 of \cite{LooijengaStavrou25}, we proved that the images of all maps $\mu_{i,j}$ lie in the kernel of the composition of $p:H^{\otimes n}\twoheadrightarrow \Ical^n/\Ical^{n+1}$ with $\gr^\Ical_n\tri^n$, so the weight $\le n-2$ part of $\Ical^n/\Ical^{n+1}$ does lie in the kernel of $\gr^\Ical_n\tri^n$. 
To conclude the proof it suffices to show that this kernel is of weight $\le n-2$. 

The inclusion $\iota: W_{2g}\to \Sigma_g$ gives the commuting square of $\Gamma_{g,1}$-equivariant maps
\begin{equation}
    \begin{tikzcd}
        (\Ical_1)^n/(\Ical_1)^{n+1}\cong H^{\otimes n}\rar["\gr^{\Ical_1}_n\tri_1^n", hook]\dar[two heads] & \Hcal_n(U_{2g})\dar[two heads,"\Hcal_n(\iota)"]\\
        \Ical^n/\Ical^{n+1}\rar["\gr^\Ical_n\tri^n"] & \Hcal_n(\Sigma_{g,*}),
    \end{tikzcd}
\end{equation}
where the top map is injective by Moriyama \cite{Moriyama}. The desired kernel of $\gr^\Ical_n\tri^n$ is then image under the leftmost map of $(\gr^{\Ical_1}_n\tri_1^n)^{-1}(\ker(\Hcal_n(\iota)))$ and so has weight bounded by the same bound as $\ker(\Hcal_n(\iota))$. This kernel was described in Theorem \ref{thm:kernelfromLS25} to be spanned by the subspaces $\tri_1^\Ibold(\zeta)\times \Hcal_{[n]\setminus I}(U_{2g})$ for all subsets $I\subseteq [n]$ of size $\ge 2$ equipped with an order. Under the action of $\Gamma_{g,1}$, the element $\tri_1^\Ibold(\zeta)$ is invariant, the operation $\times$ is equivariant and $\Hcal_{[n]\setminus I}(U_{2g})$ is, by Proposition \ref{prop:Moriyamagralgebraic}, gr-algebraic of weight $\le |[n]\setminus I|\le n-2$. It follows that $\ker(\Hcal_n(\iota))$ is gr-algebraic of weight $\le n-2$ and thus $\ker\gr^\Ical_n\tri^n$ is also  gr-algebraic (and \textit{a priori} algebraic) of weight $\le n-2$ as desired.
\end{proof}

\section{The module $\Ical^\cfg_n$}\label{sec:keroftri^n}
In this section, we determine the kernel of the map $\tri^n$, reducing Theorem \ref{athm:groupring} to Theorem \ref{thm:independence_r+s=n} which we will then treat in its dedicated Section \ref{sec:geometricconstructions}. To do so, we first find $\QQ[\Gamma]$-generators for $\QQ\pi/\Ical^{n+1}$, where $\Gamma$ is either $\Gamma_{g,1}$ or $\Gamma_{g,*}$, by lifting $\QQ[\Sp_{2g}(\ZZ)]$-generators of its associated graded imported from symplectic representation theory. Then we determine which of these generators are annihilated by $\tri^n$, and argue that the rest remain $\QQ[\Gamma]$-linearly independent. 

\subsection{A Magnus correspondence}
The isomorphism $\Ical_1^{n}/\Ical_1^{n+1}\cong H^{\otimes n}$ of Fox is given by the correspondence
\begin{equation}\label{eq:correspondenceMagnus}
    (\g_1-1)\cdots (\g_n-1)\mapsto [\g_1]\otimes \cdots \otimes [\g_n]
\end{equation}
for $\g_1,\ldots, \g_n\in \pi_1$ and where $[\g]\in H$ is the abelianisation of $\g\in \pi_1$. 
Conversely, our chosen basis for $\pi_1$ produces a lift \eqref{eq:correspondenceMagnus} given on monomials by 
\begin{equation}\label{eq:MagnusSplitting}
    \vfrak=a_{i_1}\otimes \cdots \otimes a_{i_n}\in H^{\otimes n}\mapsto \widetilde{\vfrak}= (\alpha_{i_1}-1)\cdots(\alpha_{i_n}-1)\in \QQ\pi_1,
\end{equation}
for $i_1,\ldots, i_n\in \{\pm 1, \ldots, \pm g\}$. The linearly extended map $\widetilde{\cdot}:T[H]\to \QQ\pi_1$ is easily seen to be a ring homomorphism but not $\Gamma_{g,1}$-equivariant. 
\begin{remark}
    Since $\tri^{0}(\g)=1$ for all $\g\in \pi_1$ and $\tri^{i}(1)=0$ for all $i\ge 1$, it follows that $\tri^{0}$ vanishes on $\g-1$ and $\tri^{i}(\g-1)=\tri^{i}(\g)$ for $i\ge 1$. The decomposition formula \eqref{eq:decompositionformula_manyfactors} then takes on the particular form on $\tri^n(\widetilde{\vfrak})$ where $\vfrak=a_{i_1}\otimes \cdots \otimes a_{i_k}\in H^{\otimes k}$:
    \begin{equation}\label{eq:tri^ngeneraltilde}
    \tri^n(\widetilde{\vfrak})=\sum_{\Nbold_1\cdots\Nbold_k=\textbf{[n]}} \tri^{\Nbold_1}(\alpha_{i_1})\times\cdots \times \tri^{\Nbold_k}(\alpha_{i_k})
    \end{equation}
    where the summation this time only runs over all \textit{non-empty} partitions-with-orders $\Nbold_1,\ldots,$ $ \Nbold_k$ that concatenate to the standard order $(1, \ldots, n)$. 
\end{remark}

\begin{example}\label{ex:tri^2(mu)}
    Since $\zeta$ is a commutator, it follows from Fox \cite{Fox} that $\zeta-1\in \Ical_1^2$, and we checked in \cite{LooijengaStavrou25} that $\zeta-1+\Ical_1^3\mapsto \mu$ under correspondence \eqref{eq:correspondenceMagnus}.
    In particular, formula \eqref{eq:tri^ngeneraltilde} gives
    \begin{equation}\label{eq:tri^nmutilde}
        \tri^n(\widetilde{\mu})=\sum_{ 0<k<n}\sum_{1\le \pm i\le g}\sign(i)\tri^{(1,\ldots, k)}(\alpha_{i})\times \tri^{(k+1,\ldots, n)}(\alpha_{-i}). 
    \end{equation}
    However, $\mu\neq \zeta-1\in \QQ\pi_1$. In particular, $\tri_1^k(\widetilde{\mu})\neq\tri_1^k(\zeta)\in \Hcal_k(U_{2g})$ for $k\ge 3$ or, equivalently, $\tri^k(\widetilde{\mu})=0\in \Hcal_k(\Sigma_{g,*})$ for $k\ge 3$. Chasing this discrepancy between $\widetilde{\mu}$ (that comes from symplectic representation theory) and $\zeta$ (that comes from topology and group theory) is the crux of this section.
\end{example} 

\subsection{$\QQ[\Sp_{2g}(\ZZ)]$-generators}\label{sec:SPgenerators}
We consider monomials $\mfrak\in T[H]$ in  the free associative generators $a_{\pm 1},\ldots, a_{\pm g}$. Each such monomial is said to be \textit{of type $(p,q)$} if it has $p$ positive-index factors and $q$ negative-index ones; say $\mfrak$ is \textit{positive} if $q=0$. 
Classical representation theory (and this is implied by what follows) gives us that the weight-$k$ part of $H^{\otimes n}$ is generated as an $\Sp_{2g}(\ZZ)$-representation by the monomials of type $(k,n-k)$.  We will however give a more restricted set of generators.
\begin{definition}
    A \textit{chord diagram (of length $r$)} in the set $[n]$ is a pair $(\uk,\ul)$ of $r$-tuples $\uk=(k_1,\ldots, k_r)$ and $\ul=(l_1,\ldots, l_r)$ of a total of $2r$ pairwise distinct elements of $[n]$, with $k_i<l_i$ for all $i=1,\ldots, r$, and $k_1<k_2<\cdots<k_r$. Say that $(\uk,\ul)$ is \textit{non-consecutive} if all pairs $(k_i,l_i)$ are non-consecutive, that is $k_i+1<l_i$ for all $i=1,\ldots, r$, and call it \textit{consecutive} otherwise. 
\end{definition} 
For any chord diagram, there is a corresponding insertion map \begin{equation}
    \mu^{n,r}_{\uk,\ul}:H^{\otimes n-2r}\to H^{\otimes n}
\end{equation} the map inserting $r$ copies of $\mu$ in the pairs of tensor slots $(k_1,l_1), \ldots, (k_r,l_r)$, so that if $r=1$, $\mu^{n,1}_{(k,l)}$ is the map $\mu_{k,l}$ of Proposition \ref{prop:mukl_weightk_Hotimesn}. Equivalently, each $\mu^{n,r}_{\uk,\ul}$ is a composition of the map $\vfrak\in H^{\otimes n-2r}\mapsto \mu^{\otimes r}\otimes \vfrak$ with a permutation of the tensor factors by some $\sigma\in \Sfrak_n$. 

\begin{definition}\label{def:positivesubspace}
    Let $\MM^n_+$ be the set of all \textit{positive monomials} in $H^{\otimes n}$, and denote by $\HH^n _+$ their $\QQ$-span. For $1\le k\le g$, let $\MM^n_{+,k}\subset \MM^n_+$ be the subset of monomials involving only the last $k$ indices, i.e.  $\mfrak\in \MM^n_{+,k}$ is a product of $a_i$ with $i>g-k$. Denote by $\HH^n _{+,k}$ the $\QQ$-span of this set.
\end{definition}

For $0\le 2r\le n$, we define
\begin{equation*}
    \BB^{n,r}=\{\mu^{n,r}_{\uk,\ul}(\mfrak): (\uk,\ul)\text{ chord-diagrams of length } r\text{, and } \mfrak\in \MM^{n-2r}_{+, n-2r}\},
\end{equation*}
and $\BB^{n,r}_\nc$ its subset where we only consider \textit{non-consecutive} chord-diagrams. Note that $\BB^{0,0}=\{1\}$ is the set containing the empty monomial.

\begin{proposition}\label{prop:weight(n-2r)ofH^n}
    For $0\le 2r\le n$, the weight-$(n-2r)$ part of $H^{\otimes n}$ is generated over $\QQ[\Sp_{2g}(\ZZ)]$ by $\BB^{n,r}$. Furthermore, each irreducible summand of $H^{\otimes n}$ of weight $(n-2r)$ contains a non-zero $\QQ$-linear combination of such elements. More specifically, this element is the application of a linear combination of $\mu^{n,r}_{\uk,\ul}$, for various chord-diagrams $(\uk,\ul)$, on a fixed $\vfrak\in \HH^{n-2r}_{+,n-2r}$.
\end{proposition}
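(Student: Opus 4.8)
The plan is to peel the statement down, by iterating Proposition~\ref{prop:mukl_weightk_Hotimesn}, to a generation statement for the traceless part of $H^{\otimes n-2r}$ --- the point where symplectic representation theory enters --- and then to carry out the chord-diagram bookkeeping. Throughout one uses that $\Sp_{2g}(\ZZ)$ is Zariski dense in the algebraic group, so $\QQ[\Sp_{2g}(\ZZ)]$-spans agree with $\QQ[\Sp_{2g}]$-spans on algebraic representations. The first preliminary is that, by induction on $r$ with base case Proposition~\ref{prop:mukl_weightk_Hotimesn}, the weight-$\le(n-2r)$ part of $H^{\otimes n}$ equals $\sum_{(\uk,\ul)}\im\mu^{n,r}_{\uk,\ul}$, summed over all chord diagrams of length $r$ in $[n]$; in the inductive step one writes $H^{\otimes m}$ as the sum of its traceless part and $\sum_{k<l}\im\mu^{m}_{k,l}$ and observes that composing a length-$s$ insertion with one further insertion in the complementary slots is a length-$(s{+}1)$ insertion.

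The second preliminary --- and the crux --- is a generation statement for the traceless part. Let $\mathrm{Harm}(H^{\otimes m})$ denote the joint kernel of all contractions $H^{\otimes m}\to H^{\otimes m-2}$; since $\mu_{k,l}$ and the corresponding contraction are adjoint for an $\Sp$-invariant inner product, Proposition~\ref{prop:mukl_weightk_Hotimesn} identifies $\mathrm{Harm}(H^{\otimes m})$ with the weight-$m$ part of $H^{\otimes m}$, and every positive monomial lies in it because a contraction of a positive monomial involves some $\langle a_i,a_j\rangle$ with $i,j>0$. I claim that for $m\le g$ the positive monomials in the last $m$ indices generate $\mathrm{Harm}(H^{\otimes m})$ over $\QQ[\Sp_{2g}(\ZZ)]$, and moreover that for every irreducible $V$ of weight $m$ one can choose $v_0\in V\setminus\{0\}$ and a basis $\iota_1,\dots,\iota_d$ of $\Hom_{\Sp_{2g}}(V,\mathrm{Harm}(H^{\otimes m}))$ with $\iota_j(v_0)\in\HH^{m}_{+,m}$ for all $j$. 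The ingredients are classical (compare \cite{Procesi}): the $\GL_g$-highest-weight vectors of a polynomial weight $\lambda$ with $|\lambda|=m$ are $\QQ$-combinations of positive monomials, they are automatically $\Sp_{2g}$-highest-weight vectors (the additional positive root operators kill positive monomials), Schur--Weyl for $\GL_g$ together with the $\GL_{2g}\downarrow\Sp_{2g}$ branching realises their span as $\Hom_{\Sp_{2g}}(V_\lambda,\mathrm{Harm}(H^{\otimes m}))$, and conjugating by a (signed-permutation) Weyl-group element of $\Sp_{2g}(\ZZ)$ carrying $\{1,\dots,m\}$ onto $\{g{-}m{+}1,\dots,g\}$ pushes everything into $\HH^m_{+,m}$; generation then follows, since a nonzero $\Sp$-stable complement of the resulting span would contain a highest-weight vector, hence a positive monomial, a contradiction. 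This step is also where the tacit hypothesis $n-2r\le g$ is used: it is needed both for $\HH^{n-2r}_{+,n-2r}$ to be defined and for the above Weyl element to exist.

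Granting this, the first assertion is immediate. Since $\mu$ is $\Sp$-invariant, each $\mu^{n,r}_{\uk,\ul}$ is an $\Sp$-equivariant embedding, so it sends $\mathrm{Harm}(H^{\otimes n-2r})$ into the weight-$(n-2r)$ part of $H^{\otimes n}$ and sends $(H^{\otimes n-2r})^{\le n-2r-2}$ into $(H^{\otimes n})^{\le n-2r-2}$; comparing with the first preliminary, $\sum_{(\uk,\ul)}\mu^{n,r}_{\uk,\ul}(\mathrm{Harm}(H^{\otimes n-2r}))$ is exactly the weight-$(n-2r)$ part of $H^{\otimes n}$. Substituting $\mathrm{Harm}(H^{\otimes n-2r})=\QQ[\Sp_{2g}(\ZZ)]\cdot\HH^{n-2r}_{+,n-2r}$ and using equivariance of the $\mu^{n,r}_{\uk,\ul}$ rewrites this as $\QQ[\Sp_{2g}(\ZZ)]\cdot\mathrm{span}(\BB^{n,r})$; and since every element of $\BB^{n,r}$ already lies in the weight-$(n-2r)$ part, $\BB^{n,r}$ generates it.

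For the last two assertions, apply $\Hom_{\Sp_{2g}}(V,-)$ to this identity: injectivity and equivariance of the $\mu^{n,r}_{\uk,\ul}$ show that the multiplicity space $\Hom_{\Sp_{2g}}(V,\text{weight-}(n-2r)\text{ part of }H^{\otimes n})$ is spanned by the composites $\mu^{n,r}_{\uk,\ul}\circ\iota_j$ over all chord diagrams $(\uk,\ul)$ and all $j$. Extracting from this spanning set a basis of the multiplicity space produces a decomposition of the weight-$(n-2r)$ part of $H^{\otimes n}$ into irreducibles, each of the form $(\mu^{n,r}_{\uk,\ul}\circ\iota_j)(V)$ and hence generated by the single vector $\mu^{n,r}_{\uk,\ul}(\iota_j(v_0))$; taking $\vfrak:=\iota_j(v_0)\in\HH^{n-2r}_{+,n-2r}$ and coefficients supported on $(\uk,\ul)$ displays this vector as a linear combination $\sum_{(\uk,\ul)}c_{\uk,\ul}\mu^{n,r}_{\uk,\ul}(\vfrak)$ of the asserted type (and, expanding $\vfrak$ into monomials, as a nonzero $\QQ$-combination of elements of $\BB^{n,r}$). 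The main difficulty, as indicated, is the classical input of the second paragraph; the rest is formal given Proposition~\ref{prop:mukl_weightk_Hotimesn}, and in particular one does not need the stable-range dimension count for the Brauer algebra, only that a spanning set contains a basis.
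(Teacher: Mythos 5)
Your approach is essentially the paper's: both reduce to a generation statement for the harmonic (top-weight) part of $H^{\otimes n-2r}$, produce highest-weight vectors as linear combinations of positive monomials, and conjugate by a signed-permutation Weyl element of $\Sp_{2g}(\ZZ)$ to push the indices into the range $>g-(n-2r)$. The paper's published argument is just a compressed version of your second and third paragraphs; yours is more explicit about the "harmonic $=$ top weight" identification and about the tacit standing hypothesis $n-2r\le g$.

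The one place I would push back is the final paragraph, where you settle the ``Furthermore'' and ``More specifically'' clauses only for the particular decomposition of the weight-$(n-2r)$ isotypic part obtained by extracting a basis from the spanning set $\{\mu^{n,r}_{\uk,\ul}\circ\iota_j\}$. But the proposition is later invoked (in the proofs of Theorem~\ref{thm:assgradedIcalcfg} and Lemma~\ref{lem:trivialintersectionofreps}) for an arbitrary irreducible subrepresentation $W$, which need not be one of the summands you chose. A set can $\QQ[\Sp_{2g}(\ZZ)]$-generate a representation while its $\QQ$-span misses some irreducible subrepresentation: in $V\oplus V$ the set $\{(v_1,0),(0,v_2)\}$ with $v_1,v_2$ independent generates, yet its $\QQ$-span misses the antidiagonal copy of $V$. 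What saves the second clause here is precisely the ``single $v_0$'' feature you built in and then set aside: any $W\cong V$ inside the isotypic piece $V\otimes M_2$ has the form $V\otimes\QQ m$, so $v_0\otimes m$ is a nonzero $\QQ$-combination of the elements $\mu^{n,r}_{\uk,\ul}(\iota_j(v_0))\in\QQ\BB^{n,r}$, hence a nonzero element of $W\cap\QQ\BB^{n,r}$. You have the device but do not close the loop. The ``fixed $\vfrak$'' claim is genuinely more delicate for arbitrary $W$: the element just produced involves the several vectors $\iota_j(v_0)$ rather than one $\vfrak$, and writing it as $\sum_{(\uk,\ul)}c_{\uk,\ul}\mu^{n,r}_{\uk,\ul}(\vfrak)$ with a single $\vfrak$ amounts to a rank-one condition on the multiplicity vector of $W$, which is not automatic once the Specht multiplicity $\dim S^\lambda$ exceeds one. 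The paper's own sentence ``it is the image of a weight-$(n-2r)$ irreducible summand $V$ of $H^{\otimes n-2r}$ under a linear combination of maps $\mu^{n,r}_{\uk,\ul}$'' asserts this rank-one form without argument, so the gap is really the same one the paper elides; but since the fixed-$\vfrak$ form is used essentially in Lemma~\ref{lem:trivialintersectionofreps}, it deserves an explicit justification or a reformulation of the downstream argument that avoids it.
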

\begin{proof}
    All statements used here come from \S 17.3 of Fulton--Harris \cite{FultonHarris}. The weight-$(n-2r)$ part of $H^{\otimes n}$ lies in the span of the images of the repeated insertions $\mu_{\uk,\ul}^{n,r}$. More specifically, it is the image of a weight-$(n-2r)$ irreducible summand $V$ of $H^{\otimes n-2r}$  under a linear combination of maps $\mu^{n,r}_{\uk,\ul}$. Now, this irreducible $V$ has a \textit{highest weight vector}, i.e. a $\QQ[\Sp_{2g}(\ZZ)]$-generator $\vfrak$, that is a sum of permutations of a specific $a_{i_1}\otimes \cdots \otimes a_{i_{n-2r}}$ where $1=i_1\le \ldots \le i_{n-2r}$. There is a symplectic transformation $A\in \Sp_{2g}(\ZZ)$ that swaps the order $a_{\pm i}\mapsto a_{\pm (n-2r-i)}$, putting these monomials in $\MM^{n-2r}_{+,n-2r}$. 
\end{proof}

\begin{proposition}\label{prop:weight(n-2r)ofgrI_QQpi}
    For $0\le 2r\le n$, the weight-$(n-2r)$ part of $\Ical^n/\Ical^{n+1}$ is generated over $\QQ[\Sp_{2g}(\ZZ)]$ by $\BB^{n,r}_\nc$. Furthermore, each irreducible summand of $\Ical^n/\Ical^{n+1}$ of weight $(n-2r)$ contains a non-zero $\QQ$-linear combination of such elements.
\end{proposition}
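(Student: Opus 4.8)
The plan is to derive this from Proposition~\ref{prop:weight(n-2r)ofH^n} together with Labute's presentation $\Ical^n/\Ical^{n+1}\cong H^{\otimes n}/\sum_{1\le i\le n-1}\im\mu_{i,i+1}$. Write $p\colon H^{\otimes n}\twoheadrightarrow \Ical^n/\Ical^{n+1}$ for the (${\Sp_{2g}(\ZZ)}$-equivariant) quotient map. Since algebraic representations are semisimple, $\Ical^n/\Ical^{n+1}$ is algebraic and $p$ restricts to a surjection from the weight-$(n-2r)$ part of $H^{\otimes n}$ onto the weight-$(n-2r)$ part of $\Ical^n/\Ical^{n+1}$.

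The one substantive point is that $p$ annihilates $\mu^{n,r}_{\uk,\ul}(\mfrak)$ whenever the chord diagram $(\uk,\ul)$ is \emph{consecutive}. Indeed, if $l_j=k_j+1$ for some $j$, then in $\mu^{n,r}_{\uk,\ul}(\mfrak)$ the $j$th copy of $\mu$ occupies the adjacent tensor slots $k_j,k_j+1$; deleting those two slots produces an element $w\in H^{\otimes n-2}$ (a product of the remaining $r-1$ copies of $\mu$ and of $\mfrak$) with $\mu^{n,r}_{\uk,\ul}(\mfrak)=\mu_{k_j,k_j+1}(w)\in\im\mu_{k_j,k_j+1}\subseteq\ker p$. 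Hence $p(\BB^{n,r})=p(\BB^{n,r}_\nc)\cup\{0\}$, and combining this with Proposition~\ref{prop:weight(n-2r)ofH^n} and the surjectivity noted above gives that the weight-$(n-2r)$ part of $\Ical^n/\Ical^{n+1}$ is generated over $\QQ[\Sp_{2g}(\ZZ)]$ by (the images of) $\BB^{n,r}_\nc$.

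For the second assertion I would lift an irreducible summand through $p$. Choose a $\QQ[\Sp_{2g}(\ZZ)]$-module complement $C$ to $\ker p$ in $H^{\otimes n}$, so that $p|_C\colon C\to\Ical^n/\Ical^{n+1}$ is an isomorphism. Given an irreducible summand $W$ of $\Ical^n/\Ical^{n+1}$ of weight $n-2r$, set $\tilde W=(p|_C)^{-1}(W)$; this is an irreducible summand of $H^{\otimes n}$, isomorphic to $W$ and hence of weight $n-2r$, with $p$ restricting to an isomorphism $\tilde W\xrightarrow{\sim}W$. Proposition~\ref{prop:weight(n-2r)ofH^n} supplies a nonzero $x\in\tilde W$ of the form $x=\bigl(\sum_{(\uk,\ul)}c_{\uk,\ul}\,\mu^{n,r}_{\uk,\ul}\bigr)(\vfrak)$ with $\vfrak\in\HH^{n-2r}_{+,n-2r}$ fixed and the $c_{\uk,\ul}$ scalars. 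Then $p(x)$ is nonzero and lies in $W$, and by the previous paragraph the terms with $(\uk,\ul)$ consecutive die; expanding $\vfrak$ into positive monomials in $\MM^{n-2r}_{+,n-2r}$ exhibits $p(x)$ as a nonzero $\QQ$-linear combination of images of elements of $\BB^{n,r}_\nc$ lying in $W$, as required.

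All the genuine content sits in Proposition~\ref{prop:weight(n-2r)ofH^n} and in Labute's relations, so I do not expect a real obstacle here; the only delicate step is the bookkeeping in the middle paragraph, namely verifying uniformly---over the positions of the other $r-1$ chords and over $\mfrak$---that a single consecutive chord lets one factor the whole insertion map through one $\mu_{i,i+1}$.
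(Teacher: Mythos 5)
Your proof is correct and follows the same route as the paper: identify $\Ical^n/\Ical^{n+1}$ as the quotient of $H^{\otimes n}$ by $\sum_i \im\mu_{i,i+1}$, observe that any insertion $\mu^{n,r}_{\uk,\ul}$ with a consecutive chord factors through some $\mu_{i,i+1}$ and hence dies, and then pull everything down from Proposition~\ref{prop:weight(n-2r)ofH^n} via semisimplicity. The paper's proof is a two-sentence compression of exactly this argument; your write-up simply makes the kernel calculation and the lifting-through-a-complement step explicit.
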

\begin{proof}
    The algerba $\gr^I_\pt\QQ\pi$ is the quotient of the tensor algebra $T[H]$ by the two-sided ideal generated by $\mu$, so the surjection $H^{\otimes n} \to \Ical^n/\Ical^{n+1}$ has kernel the images of all consecutive insertions $\mu^{n,r}_{\uk,\ul}$. The conclusion follows from Proposition \ref{prop:weight(n-2r)ofH^n}.
\end{proof}

Denote by $\widetilde{\BB^{n,r}}$ and $\widetilde{\BB^{n,r}_\nc}$ the sets of lifts of ${\BB^{n,r}}$ and ${\BB^{n,r}_\nc}$, respectively, in $\QQ\pi_1$ according to \eqref{eq:MagnusSplitting}. The following is a recursive application of Propositions \ref{prop:weight(n-2r)ofH^n} and \ref{prop:weight(n-2r)ofgrI_QQpi}.

\begin{proposition}\label{prop:QQGammaGenGroupRing}
    For any $n\ge 0$, the $\QQ[\Gamma_{g,1}]$-module $\QQ\pi_1/\Ical_1^{n+1}$ (resp. $\QQ[\Gamma]$-module $\QQ\pi/\Ical^{n+1}$) is spanned by the union of the sets  $\widetilde{\BB^{s,r}}$ (resp. $\widetilde{\BB^{s,r}_\nc}$) with $0\le 2r\le s\le n$.
\end{proposition}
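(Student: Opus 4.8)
The plan is to prove the statement by induction on $n$, in both cases simultaneously, using the filtration of $\QQ\pi_1/\Ical_1^{n+1}$ (resp. $\QQ\pi/\Ical^{n+1}$) by augmentation powers. The base case $n=0$ is trivial: $\QQ\pi_1/\Ical_1 \cong \QQ$ is spanned by $\widetilde{\BB^{0,0}}=\{1\}$, and likewise for $\pi$. For the inductive step, consider the short exact sequence of $\QQ[\Gamma_{g,1}]$-modules
\begin{equation*}
    0 \to \Ical_1^n/\Ical_1^{n+1} \to \QQ\pi_1/\Ical_1^{n+1} \to \QQ\pi_1/\Ical_1^n \to 0,
\end{equation*}
(and its analogue for $\pi$). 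By the inductive hypothesis, the quotient $\QQ\pi_1/\Ical_1^n$ is spanned over $\QQ[\Gamma_{g,1}]$ by $\bigcup_{0\le 2r\le s\le n-1}\widetilde{\BB^{s,r}}$; since these classes already lie in $\QQ\pi_1/\Ical_1^{n+1}$ (the lifting map $\widetilde{\,\cdot\,}:T[H]\to\QQ\pi_1$ is a genuine ring homomorphism into $\QQ\pi_1$, not merely into a truncation), it suffices to span the submodule $\Ical_1^n/\Ical_1^{n+1}$ modulo what is already hit. But Proposition \ref{prop:weight(n-2r)ofH^n} (resp. Proposition \ref{prop:weight(n-2r)ofgrI_QQpi}) tells us that, for each $r$ with $0\le 2r\le n$, the weight-$(n-2r)$ part of $\Ical_1^n/\Ical_1^{n+1}\cong H^{\otimes n}$ (resp. of $\Ical^n/\Ical^{n+1}$) is generated over $\QQ[\Sp_{2g}(\ZZ)]$ by $\BB^{n,r}$ (resp. $\BB^{n,r}_\nc$); as $H^{\otimes n}=\bigoplus_{r}(H^{\otimes n})^{\text{wt }n-2r}$ and similarly for $\Ical^n/\Ical^{n+1}$, the whole of $\Ical_1^n/\Ical_1^{n+1}$ (resp. $\Ical^n/\Ical^{n+1}$) is generated over $\QQ[\Sp_{2g}(\ZZ)]$ by $\bigcup_{0\le 2r\le n}\BB^{n,r}$ (resp. $\bigcup_{0\le 2r\le n}\BB^{n,r}_\nc$).

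The key point is then to upgrade from $\QQ[\Sp_{2g}(\ZZ)]$-generation of the graded piece to $\QQ[\Gamma_{g,1}]$-generation of the filtered module. This is where the word ``lift'' does its work: the action of $\Gamma_{g,1}$ on $\QQ\pi_1/\Ical_1^{n+1}$ respects the augmentation filtration, and the induced action on $\gr^{\Ical_1}_n = \Ical_1^n/\Ical_1^{n+1}\cong H^{\otimes n}$ is exactly the algebraic $\Sp_{2g}(\ZZ)$-action (Example \ref{example:truncatedgroupringgralgebraic}, Fox). So if $\vfrak\in\BB^{n,r}$ and $g\cdot\bar\vfrak$ is its image under $g\in\Gamma_{g,1}$ in $\Ical_1^n/\Ical_1^{n+1}$, then $g\cdot\widetilde\vfrak$ is a lift of $g\cdot\bar\vfrak$; hence the $\QQ[\Gamma_{g,1}]$-submodule generated by $\widetilde{\BB^{n,r}}$ surjects onto the $\QQ[\Sp_{2g}(\ZZ)]$-submodule of $\gr^{\Ical_1}_n$ generated by $\BB^{n,r}$. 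Summing over $r$, the $\QQ[\Gamma_{g,1}]$-submodule generated by $\bigcup_{0\le 2r\le n}\widetilde{\BB^{n,r}}$ surjects onto all of $\Ical_1^n/\Ical_1^{n+1}$. Combining this with the inductive hypothesis handling $\QQ\pi_1/\Ical_1^n$ closes the induction. For $\pi$ one argues identically, replacing $\QQ\pi_1$ by $\QQ\pi$, $H^{\otimes n}$ by its Labute quotient $\Ical^n/\Ical^{n+1}$, $\BB^{n,r}$ by $\BB^{n,r}_\nc$, and invoking Proposition \ref{prop:weight(n-2r)ofgrI_QQpi} in place of Proposition \ref{prop:weight(n-2r)ofH^n}; one must note that the lifts $\widetilde{\BB^{n,r}_\nc}\subset\QQ\pi_1$ descend to $\QQ\pi$, and that their images in $\Ical^n/\Ical^{n+1}$ are the classes of $\BB^{n,r}_\nc$, which holds because the projection $\QQ\pi_1\to\QQ\pi$ is compatible with the augmentation filtrations and induces $H^{\otimes n}\to\Ical^n/\Ical^{n+1}$ on the $n$-th graded piece.

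The one subtlety to be careful about — and the main obstacle — is that generation of the associated graded module over a group ring does \emph{not} automatically lift to generation of the filtered module over the same group ring unless one has a section on the level of \emph{elements}, which is precisely why the fixed basis of $\pi_1$ and the splitting \eqref{eq:MagnusSplitting} are needed rather than just the abstract isomorphism of Fox. Concretely: having chosen lifts $\widetilde\vfrak$ of a spanning set of $\gr^{\Ical_1}_n$, every element of $\Ical_1^n/\Ical_1^{n+1}$ is a $\QQ[\Gamma_{g,1}]$-combination of these lifts \emph{modulo} $\Ical_1^{n+1}$ — but since $\Ical_1^{n+1}=0$ in $\QQ\pi_1/\Ical_1^{n+1}$, the ``modulo'' is vacuous, so the lifts span $\Ical_1^n/\Ical_1^{n+1}$ on the nose. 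This is the clean reason the recursion terminates without error terms accumulating, and it is worth spelling out explicitly in the write-up since it is exactly the step that fails if one tries to run the same argument with, say, the lower central series in place of the augmentation filtration.
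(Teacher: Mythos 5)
Your proof is correct and unpacks exactly what the paper asserts: the paper's proof is the single sentence ``a recursive application of Propositions \ref{prop:weight(n-2r)ofH^n} and \ref{prop:weight(n-2r)ofgrI_QQpi},'' and your inductive argument along the augmentation filtration, together with the observation that $\widetilde{\,\cdot\,}$ is a genuine section into $\QQ\pi_1$ (not merely into a truncation), is the intended recursion. The ``key point'' paragraph about lifting $\QQ[\Sp_{2g}(\ZZ)]$-generation of the graded piece to $\QQ[\Gamma_{g,1}]$-generation of the filtered module, using surjectivity of $\Gamma_{g,1}\to\Sp_{2g}(\ZZ)$ and the triviality of $\Ical_1^{n+1}$ in the truncation, is precisely the content the paper leaves implicit.
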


\subsection{Lower bound for $\ker\tri^n$}
We now determine which basis elements are for certain in the kernel of $\tri^n:\QQ\pi/\Ical^{n+1}\to \Hcal_n(\Sigma_{g,*})$. 
\begin{theorem}\label{thm:lowerboundker}
    Suppose $0\le 2r\le s\le n$. Then the lifted image of $\mu_{\uk,\ul}^{s,r}$ in $\QQ\pi/\Ical^{n+1}$ is annihilated by  $\tri^n$ whenever $s+r\ge n+1$.
\end{theorem}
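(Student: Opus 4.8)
The plan is to pass to $\Hcal_n(U_{2g})$, expand via the decomposition formula, and finish with a pigeonhole argument on block sizes.

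First, by naturality of $\tri^n$ in the based space the composite $\QQ\pi_1\to\QQ\pi/\Ical^{n+1}\xrightarrow{\tri^n}\Hcal_n(\Sigma_{g,*})$ equals $\Hcal_n(\iota)\circ\tri^n_{U_{2g}}$, and $\ker\Hcal_n(\iota)=\Kcal_n(\iota)$ by Theorem \ref{thm:kernelfromLS25}. So it is enough to prove that $\tri^n_{U_{2g}}\bigl(\widetilde{\mu^{s,r}_{\uk,\ul}(\mfrak)}\bigr)$ lies in $\Kcal_n(\iota)=\sum_{|I|\ge 2}\tri^I(\zeta)\times\Hcal_{[n]\setminus I}(U_{2g})$.

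Next, I write $\widetilde{\mu^{s,r}_{\uk,\ul}(\mfrak)}$ as a signed sum over $\vec\imath=(i_1,\dots,i_r)\in\{\pm1,\dots,\pm g\}^r$ of the monomials $(\alpha_{j_1}-1)\cdots(\alpha_{j_s}-1)$, where $j_{k_t}=i_t$, $j_{l_t}=-i_t$, and the other $s-2r$ indices are those of $\mfrak$. Applying \eqref{eq:tri^ngeneraltilde} to each monomial and interchanging the order of summation gives
\[
\tri^n_{U_{2g}}\bigl(\widetilde{\mu^{s,r}_{\uk,\ul}(\mfrak)}\bigr)=\sum_{\Nbold_1\cdots\Nbold_s=(1,\dots,n)}\ \sum_{\vec\imath}\Bigl(\prod_t\sign(i_t)\Bigr)\,\tri^{\Nbold_1}(\alpha_{j_1})\times\cdots\times\tri^{\Nbold_s}(\alpha_{j_s}),
\]
where the outer sum ranges over ordered partitions of $(1,\dots,n)$ into $s$ nonempty blocks $\Nbold_1,\dots,\Nbold_s$.

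Then I fix such a partition and let $b$ be the number of blocks of size $\ge2$; since the $s$ blocks are nonempty and sum to $n$ we have $n\ge s+b$, hence $b\le n-s\le r-1$ by the hypothesis $s+r\ge n+1$. The $2r$ slots of the chord diagram are pairwise distinct, so at most $b\le r-1$ of the pairs $(k_t,l_t)$ meet a block of size $\ge2$, and therefore some pair $(k_{t_0},l_{t_0})$ has $|\Nbold_{k_{t_0}}|=|\Nbold_{l_{t_0}}|=1$. For that pair $\tri^{\Nbold_{k_{t_0}}}(\alpha_{i_{t_0}})=a_{i_{t_0}}$ and $\tri^{\Nbold_{l_{t_0}}}(\alpha_{-i_{t_0}})=a_{-i_{t_0}}$ in $H$, so carrying out just the inner sum over $i_{t_0}$ (with all other indices fixed) and regrouping the $\times$-product factors out the element $\sum_i\sign(i)\,a_i\times a_{-i}\in\Hcal_{\Nbold_{k_{t_0}}}(U_{2g})\otimes\Hcal_{\Nbold_{l_{t_0}}}(U_{2g})$, which equals $\tri^{I}_{U_{2g}}(\zeta)$ for $I=\Nbold_{k_{t_0}}\sqcup\Nbold_{l_{t_0}}$, because $\zeta-1\equiv\mu\pmod{\Ical_1^3}$ (Example \ref{ex:tri^2(mu)}). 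Consequently the whole contribution of this partition lies in $\tri^{I}(\zeta)\times\Hcal_{[n]\setminus I}(U_{2g})\subset\Kcal_n(\iota)$ with $|I|=2$; summing over all partitions completes the proof.

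The counting $b\le n-s\le r-1$ is the only place the hypothesis $s+r\ge n+1$ is used, and it is immediate. The part that needs care is the final manipulation: legitimately interchanging the two summations, extracting a single $\mu$-pair from inside the ordered $\times$-product and regrouping the multiplication — using associativity and graded-commutativity of $\times$, with signs being harmless since only membership in $\Kcal_n(\iota)$ is at stake — and matching the reconstituted degree-$2$ class with $\tri^{I}(\zeta)$.
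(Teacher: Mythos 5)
Your proof is correct and follows the paper's argument essentially verbatim: apply the decomposition formula, pigeonhole on block sizes to find a chord $(k_{t_0},l_{t_0})$ whose two endpoints are both singleton blocks, and use $\widetilde{\mu}\equiv\zeta-1\pmod{\Ical_1^3}$ to collapse the resulting two-slot factor. The only difference is cosmetic: you route through $\Hcal_n(U_{2g})$ and membership in $\Kcal_n(\iota)$ via Theorem \ref{thm:kernelfromLS25}, whereas the paper works directly in $\Hcal_n(\Sigma_{g,*})$ and observes that the extracted factor $\tri^{(p_1,p_2)}(\widetilde{\mu})$ is simply zero there.
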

\begin{proof}
    The codomain $H^{\otimes s}$ of $\mu^{s,r}_{\uk,\ul}$ is lifted in $\QQ\pi/\Ical^{n+1}$ into the span $\widetilde{H^{\otimes s}}$ of products of type $(\g_1-1)\cdots (\g_s-1)$ where $\g_1,\ldots, \g_s\in \{\alpha_{\pm1}, \ldots, \alpha_{\pm g}\}$. The decomposition formula \eqref{eq:decompositionformula_manyfactors}
    evaluates $\tri^n$ on each of these products as the sum
    $$\sum_{\Nbold_1\cdots \Nbold_s=\textbf{n}}\tri^{\Nbold_1}(\g_1-1)\times \cdots \times \tri^{\Nbold_s}(\g_s-1)$$
    over all ordered subsets $\Nbold_1,\ldots, \Nbold_s$ that concatenate to the order $1\prec \cdots \prec n$. But if any $\Nbold_i$ is empty, then $\tri^{\Nbold_i}(\g_i-1)=\tri^{0}(\g_i-1)=\aug(\g_i-1)=0$, and so the product vanishes. Then in fact, we have that $\tri^n$ restricted to $\widetilde{H^{\otimes s}}$ decomposes as $\sum \tri^{\Nbold_1,\ldots, \Nbold_s}$ over all \textit{non-empty} partitions-with-orders  $\Nbold_1,\ldots, \Nbold_s$ that concatenate to the order $1\prec \cdots \prec n$.

    Given a monomial $\mfrak\in H^{\otimes s-2r}$, we will show that $\tri^{\Nbold_1,\ldots, \Nbold_s}(\widetilde{\mu^{s,r}_{\uk,\ul}(\mfrak)})$ vanishes for all non-empty partitions-with-orders $\Nbold_1,\ldots, \Nbold_s$ which, from above, implies the vanishing of $\tri^n(\widetilde{\mu^{s,r}_{\uk,\ul}(\mfrak)})$. Now fix such an $\Nbold_1,\ldots, \Nbold_s$.   
    The assumption $n\le s+r-1$ and the pigeonhole principle imply the existence of an $i\in \{1,\ldots, r\}$ for which both $N_{k_i}, N_{l_i}$ are singletons; say their elements are $p_1, p_2\in [s]$, respectively. We then have the factorisation $\mu^{s,r}_{\uk,\ul}(\mfrak)=\mu\otimes_{p_1,p_2} v$ for some $v\in H^{\otimes s-2}$ and, as a result, $$\tri^{\Nbold_1, \ldots, \Nbold_s}(\widetilde{\mu^{s,r}_{\uk,\ul}(\mfrak)})=\tri^{(p_1,p_2)}(\widetilde{\mu})\times \tri^{\Nbold_1,\ldots,\widehat{\Nbold_{k_i}},\ldots, \widehat{\Nbold_{l_i}}, \ldots \Nbold_s}(\widetilde{v}).$$ But here the first factor vanishes from Example \ref{ex:tri^2(mu)}, and so the product also does, as desired.
\end{proof}

In the special case $r=1, s=n$, this recovers Corollary 3.5 of \cite{LooijengaStavrou25}. The following is now immediate.
\begin{corollary}\label{cor:vanishingsr>N}
   We have the vanishing $\tri^n(\widetilde{\bfrak})=0\in \Hcal_n(\Sigma_{g,*})$ for any $\bfrak\in \BB^{s,r}_\nc$ with  $s+r\ge n+1$.
\end{corollary}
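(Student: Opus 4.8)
The plan is to deduce this immediately from Theorem \ref{thm:lowerboundker} by unwinding the definition of $\BB^{s,r}_\nc$. Given $\bfrak\in\BB^{s,r}_\nc$ with $s+r\ge n+1$, I would first write $\bfrak=\mu^{s,r}_{\uk,\ul}(\mfrak)$ for some non-consecutive chord diagram $(\uk,\ul)$ of length $r$ in $[s]$ and some positive monomial $\mfrak\in\MM^{s-2r}_{+, s-2r}$, as permitted by the definition of $\BB^{s,r}_\nc$. Since the chord diagram $(\uk,\ul)$ uses $2r$ pairwise distinct elements of $[s]$, this already forces $2r\le s$. Moreover, by linearity of the Magnus lift $\widetilde{\cdot}$ from \eqref{eq:MagnusSplitting}, the element $\widetilde{\bfrak}=\widetilde{\mu^{s,r}_{\uk,\ul}(\mfrak)}$ belongs to the lifted image of the insertion map $\mu^{s,r}_{\uk,\ul}$ inside $\QQ\pi/\Ical^{n+1}$, even though $\mu^{s,r}_{\uk,\ul}(\mfrak)$ is itself only a signed sum of monomials rather than a single monomial.

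Next I would split into two cases according to whether $s\le n$. If $s\le n$, then the hypotheses $0\le 2r\le s\le n$ and $s+r\ge n+1$ of Theorem \ref{thm:lowerboundker} all hold, so the theorem applies to $\mu^{s,r}_{\uk,\ul}$ and directly yields $\tri^n(\widetilde{\bfrak})=0$ in $\Hcal_n(\Sigma_{g,*})$. If instead $s\ge n+1$, then $\widetilde{\bfrak}$ is a $\QQ$-linear combination of products of $s$ factors of the form $\alpha_i-1\in\Ical_1$, hence lies in $\Ical_1^{s}\subseteq\Ical_1^{n+1}$; its image in $\QQ\pi/\Ical^{n+1}$ is therefore zero, so $\tri^n(\widetilde{\bfrak})=0$ trivially. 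I do not anticipate any real obstacle: the corollary is a formal consequence of Theorem \ref{thm:lowerboundker}, and the only point requiring a moment's attention is the separate, degenerate range $s>n$, which is not excluded by the hypotheses of the corollary but is already invisible in $\QQ\pi/\Ical^{n+1}$.
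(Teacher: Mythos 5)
Your proof is correct and fills in exactly the reasoning the paper leaves implicit (the paper just declares the corollary ``immediate'' after Theorem~\ref{thm:lowerboundker}). Your observation that the hypotheses of the corollary do not formally exclude $s>n$, and that this degenerate case is handled trivially because $\widetilde{\bfrak}\in\Ical_1^{s}\subseteq\Ical_1^{n+1}$ dies already in $\QQ\pi/\Ical^{n+1}$, is a correct and careful completion of the argument.
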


\subsection{Upper bound for $\ker\tri^n$}
We will show that our lower bound $\ker(\tri^n)$ is sharp, if the genus is large, in the sense that all elements of $\cup_{s+r\le n}\BB^{s,r}_\nc$ that were not annihilated by $\tri^n$ according to Corollary \ref{cor:vanishingsr>N} are actually mapped to linearly independent elements of $\Hcal_n(\Sigma_{g,*})$. The following theorem is the key ingredient to proving sharpness. We state the theorem here but will prove it separately in Section \ref{sec:geometricconstructions} with geometric methods. 

\begin{theorem}\label{thm:independence_r+s=n}
    If $0\le n \le g$, the set $$\{\tri^n(\widetilde{\bfrak})\in \Hcal_n(\Sigma_{g,*}):\bfrak\in \cup_{s+r=n}\BB^{s,r}_{\nc} \}$$ is $\QQ$-linearly independent. Furthermore, for fixed $0\le s\le n$ (note that this fixes $r=(n-s)/2$), so is the subset
    $$\{\tri^n(\widetilde{\bfrak})+\tri^n(\Ical^{s+1})\in \Hcal_n(\Sigma_{g,*})/\tri^n(\Ical^{s+1}):\bfrak\in \BB^{s,r}_{\nc}\}.$$
\end{theorem}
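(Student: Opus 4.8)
The plan is to detect the classes $\tri^n(\widetilde\bfrak)$ by explicit geometric intersection pairings. Since $\conf_n(\Sigma_{g,*})$ is a connected oriented $2n$-manifold and $\Hcal_n(\Sigma_{g,*})\cong H_n^{BM}(\conf_n(\Sigma_{g,*}))$, Poincar\'e duality together with the universal coefficient theorem identifies $\Hcal_n(\Sigma_{g,*})$ with the linear dual of $H_n(\conf_n(\Sigma_{g,*});\QQ)$, the pairing being intersection of a Borel--Moore $n$-cycle with a compactly supported $n$-cycle. It therefore suffices to produce, for each $\bfrak$ in the indexing set, a compact $n$-cycle $\Xi_\bfrak\subset\conf_n(\Sigma_{g,*})$ such that the matrix of intersection numbers $\bigl(\tri^n(\widetilde{\bfrak'})\cdot[\Xi_\bfrak]\bigr)_{\bfrak,\bfrak'}$ is invertible; I will arrange it to be triangular — with enough care, even diagonal — with nonzero diagonal, which gives the first (absolute) statement at once. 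For the second (relative) statement I will additionally verify $[\Xi_\bfrak]\cdot\tri^n(\Ical^{s+1})=0$ whenever $\bfrak\in\BB^{s,r}_\nc$, so that the pairing descends to $\Hcal_n(\Sigma_{g,*})/\tri^n(\Ical^{s+1})$ and still separates $\BB^{s,r}_\nc$ there.

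The cycles are built from pairwise disjoint local models in $\Sigma_{g,*}$, one attached to each tensor slot of $\bfrak=\mu^{s,r}_{\uk,\ul}(\mfrak)$, where $\mfrak=a_{j_1}\cdots a_{j_{s-2r}}$ is the positive monomial part; the hypothesis $n\le g$ is used precisely to guarantee there are enough handles to support all these models in disjoint handles, away from the puncture $*$. To the slot of the $p$-th letter $a_{j_p}$ I attach one simple closed curve — or several parallel copies of it, if that slot is to carry several configuration points — homologous to the dual arc $\alpha_{-j_p}$ and pushed off the $1$-skeleton so as to meet $\alpha_{j_p}$ transversally but to be disjoint from every other generating arc; here it is essential that these curves be homologically nontrivial (a small nullhomotopic transversal would pair trivially with everything), which is exactly where the handle structure enters. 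To each chord $(k_i,l_i)$ I attach, inside a fresh handle $h_i$, a cycle in the configuration space of that once-punctured torus, carrying the number of points that the two slots $k_i,l_i$ must absorb and built to detect the class $\tri^{\bullet}(\widetilde{a_{h_i}\otimes a_{-h_i}})$; the handle $h_i$ extracts exactly the summand $\sign(h_i)\,a_{h_i}\otimes a_{-h_i}$ of $\mu=\sum_h\sign(h)\,a_h\otimes a_{-h}$. Because the models are disjoint, $\conf$ of their union is a product of configuration spaces, and the external product of the local cycles is a genuine compact $n$-cycle $\Xi_\bfrak$, equipped with a prescribed assignment of the $n$ labels to the models.

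To evaluate $\tri^n(\widetilde{\bfrak'})\cdot[\Xi_\bfrak]$ I expand $\tri^n(\widetilde{\bfrak'})$ by the decomposition formula \eqref{eq:decompositionformula_manyfactors} into a sum, over nowhere-empty partitions-with-orders $\Nbold_1\cdots\Nbold_{s'}=\textbf{[n]}$, of products $\prod_t\tri^{\Nbold_t}(\alpha_{c_t})$, each supported near a union of generating arcs, and I compute the intersection with $\Xi_\bfrak$ model by model, using that each model meets only the one generating arc it is dual to and that each part $\Nbold_t$ is a consecutive block of $[n]$. Matching the label assignment prescribed by $\Xi_\bfrak$ against the blocks of $\bfrak'$ forces the combinatorial type, the letters, and the ordering data of $\bfrak'$ to coincide with those of $\bfrak$: for any other $\bfrak'$, or any non-matching partition, some model receives the wrong generating arc or the wrong number of points, and that factor of the intersection vanishes; on the diagonal one is left with a product of local intersection numbers $\pm1$. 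The main obstacle — the real content of the ``elaborate geometric construction'' — is the design of the chord models when a chord must absorb more than two configuration points: the intermediate points that a non-consecutive chord squeezes between its two ends must be routed out of the handle so that the local pairing survives, which requires circumventing the vanishing $\tri^k(\widetilde\mu)=0$ for $k\ge3$ of Example~\ref{ex:tri^2(mu)}, and this is also what singles out the non-consecutive chord diagrams as the correct index set. Finally, for the relative statement: any element of $\Ical^{s+1}$ is, after the decomposition formula, a sum of products of at least $s+1$ augmentation factors, each localised near a single loop, and such a product cannot be fed into the $s$ disjoint models of $\Xi_\bfrak$ without pushing two factors through one model; hence it misses $\Xi_\bfrak$, so $\tri^n(\Ical^{s+1})\cdot[\Xi_\bfrak]=0$.
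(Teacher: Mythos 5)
Your overall strategy is the right one and matches the paper's: build, for each $\bfrak$, a compact $n$-cycle in $\conf_n(\Sigma_{g,*})$ from pairwise disjoint local models — one dual curve per letter of the positive monomial, one ``entanglement'' model per chord — and show the intersection matrix against $\{\tri^n(\widetilde\bfrak)\}$ is diagonal with $\pm1$ diagonal. But your proposal has a genuine gap: you explicitly flag the construction of the chord model as ``the main obstacle'' and then do not supply it, and that construction is essentially the entire content of the argument. In the paper this is the closed $3$-manifold $\sEcal\subset\conf_3(\Sigma_{g,*})$: it is \emph{not} a cycle in a once-punctured torus sitting in a single handle as you suggest, but a five-piece gluing supported in a genus-$2$ subsurface $\Scal_{[2j-1,2j]}$ (two handles per chord), built out of two Bianchi-type tori-with-boundary, two auxiliary tubes $v^{(i,j)}(\epsilon)$, and a central $3$-torus with two solid tori removed, and then symmetrized as $\sEcal=\Ecal\sqcup\tau\Ecal$ over the transposition of particles $1,2$. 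The assignment of three configuration points to a chord and the non-consecutive condition are exactly what you flag but do not justify; the paper's proof of Theorem~\ref{thm:sEcalPerfectPairing} hinges on the explicit list of five transversal intersections of $\Ecal$ with $\conf_3(U_{2g})$ in Proposition~\ref{prop:Ecalintersections} and on which of them are compatible with the block structure forced by non-consecutiveness.

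Your treatment of the relative statement is also incorrect as stated. You claim elements of $\Ical^{s+1}$ ``miss'' $\Xi_\bfrak$ because a product of $\ge s+1$ augmentation factors ``cannot be fed into the $s$ disjoint models.'' In the paper's construction this is false: $\sEcal$ \emph{does} meet components corresponding to products $\tri^{(1)}(\alpha_{i_1})\tri^{(2)}(\alpha_{i_2})\tri^{(3)}(\alpha_{i_3})$ coming from $\Ical^3$; the algebraic intersection number vanishes only because intersection $(4)$ of $\Ecal$ cancels against $(5)'$ of $\tau\Ecal$ and the pairs $(2),(3)$ and $(2)',(3)'$ cancel via the shuffle identity $\tri^{(i)}\times\tri^{(j)}=\tri^{(i,j)}-\tri^{(j,i)}$ of Proposition~\ref{prop:exteriorproduct}. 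This cancellation is precisely why the symmetrization $\sEcal=\Ecal\sqcup\tau\Ecal$ is needed — a feature absent from your sketch. A ``disjointness of supports'' argument cannot by itself kill the pairing with $\tri^n(\Ical^{s+1})$, because the chord model necessarily carries three points and thus can absorb configurations coming from three separate small blocks.

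In short: correct skeleton, but the load-bearing construction is missing, the local model lives in the wrong subsurface (genus $2$, not a handle), and the vanishing on $\tri^n(\Ical^{s+1})$ needs a cancellation argument rather than a disjointness argument.
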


\begin{corollary}\label{cor:independence_r+s<=n}
    If $0\le n \le g$, the set  
    $$\{\tri^n(\widetilde{\bfrak})\in \Hcal_n(\Sigma_{g,*}):\bfrak\in \cup_{s+r\le n}\BB^{s,r}_{\nc} \}$$ is $\QQ$-linearly independent. Furthermore, for fixed $0\le s\le n$, so is the set
    $$\{\tri^n(\widetilde{\bfrak})+\tri^n(\Ical^{s+1})\in \Hcal_n(\Sigma_{g,*})/\tri^n(\Ical^{s+1}):\bfrak\in \cup_{s+r\le n} \BB^{s,r}_{\nc}\}.$$
\end{corollary}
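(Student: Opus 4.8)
The statement to prove is Corollary \ref{cor:independence_r+s<=n}, which extends the linear-independence result of Theorem \ref{thm:independence_r+s=n} from the ``top slice'' $s+r=n$ to the full union over $s+r\le n$. The plan is to run an induction on $n-s-r\ge 0$, or equivalently to filter $\Hcal_n(\Sigma_{g,*})$ by the subspaces $\tri^n(\Ical^{s+1})$ (equivalently, by how deep the lift lives in the augmentation filtration) and to peel off one graded layer at a time. The base of the induction is precisely Theorem \ref{thm:independence_r+s=n}. For the inductive step, suppose we have a non-trivial $\QQ$-linear relation among the elements $\tri^n(\widetilde{\bfrak})$ for $\bfrak\in\cup_{s+r\le n}\BB^{s,r}_\nc$. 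Among the $\bfrak$ appearing with non-zero coefficient, let $s_0$ be the \emph{largest} value of $s$ that occurs (so $r_0=(n-s_0)/2$ is determined, and of course $s_0+r_0$ need not equal $n$ -- this is the point of the corollary versus the theorem).

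First I would reduce modulo $\tri^n(\Ical^{s_0+1})$. Every $\widetilde{\bfrak}$ with $\bfrak\in\BB^{s,r}_\nc$ and $s<s_0$ lies in $\Ical^{s}/\Ical^{s+1}$ lifted to $\QQ\pi/\Ical^{n+1}$, hence in $\Ical^{s_0}\subset\Ical^{s_0+1}$ after lifting (since $s\le s_0-1$, in fact $\widetilde{\bfrak}\in\Ical_1^s$, and its image in $\QQ\pi/\Ical^{n+1}$ lies in $\Ical^s\supseteq\Ical^{s_0}$; more to the point $\bfrak\in\HH^{s}$ with $s<s_0$ means its lift to $\QQ\pi$ is in $\Ical^s$, so it dies in the quotient by $\Ical^{s_0+1}$ only if $s\ge s_0+1$ -- so I must be more careful here and instead work modulo $\tri^n(\Ical^{s_0+1})$ which kills exactly those $\widetilde{\bfrak}$ whose lifts sit in $\Ical^{s_0+1}$, i.e. those with $s\ge s_0+1$, which by choice of $s_0$ do not occur). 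So after passing to $\Hcal_n(\Sigma_{g,*})/\tri^n(\Ical^{s_0+1})$, the relation only involves $\bfrak\in\BB^{s,r}_\nc$ with $s\le s_0$; among these, the ones with $s<s_0$ have lifts $\widetilde{\bfrak}\in\Ical^s\setminus\Ical^{s+1}$ but they still survive in this quotient. So this reduction alone does not isolate $s=s_0$; I instead need the \emph{second} statement of Theorem \ref{thm:independence_r+s=n}, which gives independence of the $\bfrak\in\BB^{s,r}_\nc$ for a \emph{fixed} $s$ modulo $\tri^n(\Ical^{s+1})$. The correct move is: consider the relation, and look at its image under $\Hcal_n(\Sigma_{g,*})\to \Hcal_n(\Sigma_{g,*})/\tri^n(\Ical^{s_0+1})$. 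In this quotient, every term with $s<s_0$ becomes a class represented by a lift in $\Ical^{s}$ with $s<s_0$; but modulo $\Ical^{s_0+1}$ such a class only depends on the image in $\Ical^{s}/\Ical^{s+1}$ together with lower-order corrections. The cleanest route is actually the reverse: filter by the \emph{smallest} relevant $s$.

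Concretely, here is the clean inductive step I would write. Let $s_0$ be the \emph{smallest} $s$ with a non-zero coefficient in the relation. I claim the sub-relation consisting only of the terms with $s=s_0$ must already vanish, which by the ``fixed-$s$'' part of Theorem \ref{thm:independence_r+s=n} (the case $r=r_0=(n-s_0)/2$, together with an analogous ``fixed-$s$'' version of the corollary for the slice $s=s_0$, which is again Theorem \ref{thm:independence_r+s=n}'s second statement since for fixed $s=s_0$ there is only one $r$) forces all those coefficients to be zero -- contradiction. To see the claim, project to $\gr^\Ical_{s_0}\Hcal_n(\Sigma_{g,*})$ in a suitable sense: more precisely, the lift $\widetilde{\bfrak}$ for $\bfrak\in\BB^{s,r}_\nc$ lies in $\Ical_1^{s}$, hence its image in $\QQ\pi/\Ical^{n+1}$ lies in $\Ical^{s}$; for $s>s_0$ it lies in $\Ical^{s_0+1}$. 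Applying $\tri^n$ and then projecting $\Hcal_n(\Sigma_{g,*})\twoheadrightarrow \Hcal_n(\Sigma_{g,*})/\tri^n(\Ical^{s_0+1})$ kills all terms with $s>s_0$ and leaves exactly the $s=s_0$ sub-relation, now read in the quotient $\Hcal_n(\Sigma_{g,*})/\tri^n(\Ical^{s_0+1})$. By the second statement of Theorem \ref{thm:independence_r+s=n} applied with this value of $s$, the images of $\{\tri^n(\widetilde{\bfrak}):\bfrak\in\BB^{s_0,r_0}_\nc\}$ in this very quotient are $\QQ$-linearly independent, so every coefficient with $s=s_0$ vanishes. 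This contradicts minimality of $s_0$, proving the first assertion.

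For the second assertion of the corollary (independence modulo $\tri^n(\Ical^{s+1})$ for a fixed $s$, now over all $r$ with $s+r\le n$ -- but wait, $r$ is determined by $s$ via $r=(n-s)/2$ only when $s+r=n$; for the corollary $s$ is fixed but $r$ ranges, so $\BB^{s,r}_\nc$ appears for several $r$), I would argue similarly: in a hypothetical relation among $\{\tri^n(\widetilde{\bfrak})+\tri^n(\Ical^{s+1}):\bfrak\in\cup_{s+r\le n}\BB^{s,r}_\nc\}$, all the $\widetilde{\bfrak}$ have lifts in $\Ical_1^s$, hence the same augmentation depth $s$, so the quotient by $\tri^n(\Ical^{s+1})$ does not separate the different $r$ automatically; instead one uses that $\BB^{s,r}_\nc\subset\HH^s$ already for all these $r$ (the $\mu$-insertions do not change the tensor length $s$, only the weight), and the ``fixed-$s$'' part of Theorem \ref{thm:independence_r+s=n} handles all of $\cup_r \BB^{s,r}_\nc$ at once -- indeed that theorem's second clause is stated for $\bfrak\in\BB^{s,r}_\nc$ with $r=(n-s)/2$, but by the same peeling argument applied inside $\Ical^s/\Ical^{s+1}$ (filtering now by weight rather than by $s$, using Proposition \ref{prop:weight(n-2r)ofgrI_QQpi}) one extends it to the union over all $r\le(n-s)/2$. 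I would spell this out as a short lemma.

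\textbf{Main obstacle.} The genuine subtlety is bookkeeping the two nested filtrations correctly: the augmentation filtration (indexed by $s$) and, within each graded piece $\Ical^s/\Ical^{s+1}$, the symplectic weight filtration (indexed by $n-2r$). Theorem \ref{thm:independence_r+s=n} is an assertion on the ``diagonal'' slices $s+r=n$, and the corollary wants the ``lower triangle'' $s+r\le n$; the danger is a term with parameters $(s,r)$ conspiring, modulo $\tri^n(\Ical^{s+1})$, with a term of strictly larger $s$ -- this is exactly what the minimal-$s$ (or maximal-$s$) choice together with the compatibility of $\tri^n$ with both filtrations is designed to rule out, and writing that compatibility down carefully ($\tri^n(\Ical^{s+1})$ is a $\Gamma$-subrepresentation and the reduction maps are $\Gamma$-equivariant, so Schur's lemma applies cleanly after everything is made algebraic via Proposition \ref{prop:Moriyamagralgebraic} and Example \ref{example:truncatedgroupringgralgebraic}) is where all the care goes. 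Once the filtrations are set up, the induction and the appeal to Theorem \ref{thm:independence_r+s=n} are formal.
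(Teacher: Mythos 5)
Your plan diverges from the paper's proof, and there is a genuine gap. The paper proceeds by induction on $n$ using a degeneracy map $\partial\colon\Hcal_n(\Sigma_{g,*})\to\Hcal_{n-1}(\Sigma_{g,*})$ (from \cite{LooijengaStavrou25}) that commutes with $\tri^n$ and $\tri^{n-1}$. Under $\partial$, the class $\tri^n(\widetilde{\bfrak})$ goes to $\tri^{n-1}(\widetilde{\bfrak})$, which vanishes exactly when $s+r=n$ (by Corollary \ref{cor:vanishingsr>N}). This yields a three-term diagram in which the middle vertical map $\tri^n$ is pinched between the left map (injective by Theorem \ref{thm:independence_r+s=n}) and the right map (injective by induction on $n$), and an easy diagram chase finishes. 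This mechanism does not appear anywhere in your proposal, and it is precisely what you would need to fill your gap.

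The gap in your argument: after projecting to $\Hcal_n(\Sigma_{g,*})/\tri^n(\Ical^{s_0+1})$ with $s_0$ minimal, the surviving relation involves $\bfrak\in\BB^{s_0,r}_\nc$ for \emph{all} $r$ with $s_0+r\le n$ and $2r\le s_0$, not just $r=n-s_0$. Theorem \ref{thm:independence_r+s=n}'s ``furthermore'' clause only covers the diagonal slice $s_0+r=n$. So the step ``...so every coefficient with $s=s_0$ vanishes'' is a non-sequitur: the coefficients with $r<n-s_0$ are untouched. In other words, your proof of the first assertion quietly relies on the second assertion (independence over the whole $\cup_r\BB^{s_0,r}_\nc$ for fixed $s_0$), which you only gesture at. Your suggested fix — ``filtering now by weight'' inside $\Ical^{s_0}/\Ical^{s_0+1}$ — does not go through as stated: weight is a grading, not a filtration, there is no ``deepest weight'' to peel; and more seriously, the lift $\widetilde{\cdot}$ is not $\Sp_{2g}(\ZZ)$-equivariant, so the weight decomposition of $H^{\otimes s_0}$ is not respected by the passage through $\QQ\pi$. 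Making such an argument rigorous would amount to reproving (a piece of) Theorem \ref{thm:assgradedIcalcfg}, which the paper only establishes \emph{using} this corollary — a circularity. To repair your proof, you need a way to relate the $s+r<n$ terms to a smaller configuration number, and the degeneracy map $\partial$ is exactly that tool; you should build the argument around it as the paper does.
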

\begin{proof}
    Proceed inductively on $n$, with the case $n=0$ being obvious. In \cite{LooijengaStavrou25}, we found a degeneracy map $\partial:\Hcal_n(\Sigma_{g,*})\to \Hcal_{n-1}(\Sigma_{g,*})$ that commutes with $\tri^n$ and $\tri^{n-1}$, and in particular makes the following diagram commute
    \begin{equation}
        \begin{tikzcd} \bigoplus_{s+r=n}\QQ\widetilde{\BB^{s,r}}\rar\dar[hook,"\tri^n_U"]&\bigoplus_{s+r\le n}\QQ\widetilde{\BB^{s,r}} \rar\dar["\tri^n"]& \bigoplus_{s+r\le n-1}\QQ\widetilde{\BB^{s,r}} \dar["\tri^{n-1}", hook]\\
             \Hcal_n(\Sigma_{g,*})\rar["="]& \Hcal_n(\Sigma_{g,*})\rar["\partial"] & \Hcal_{n-1}(\Sigma_{g,*}).
        \end{tikzcd}
    \end{equation}
    The top row is clearly exact, leftmost vertical map is injective by Theorem \ref{thm:independence_r+s=n} and the rightmost vertical map is injective by induction. It follows that the middle map is injective as well.

    The same inductive proof works for the ``furthermore'' assertion as $\partial$ descends to a map $\Hcal_n(\Sigma_{g,*})/\tri^n(\Ical^{s+1})\to\Hcal_{n-1}(\Sigma_{g,*})/\tri^{n-1}(\Ical^{s+1})$.
\end{proof}

The image $\Ical^\cfg_n=\im\tri^n$ is naturally filtered by $\tri^n(\Ical)\supset \tri^n(\Ical^2)\supset \cdots$ giving the (tautologically) surjective associated graded map $$\gr^\Ical_\pt:\gr^\Ical_\pt\QQ\pi/\Ical^{n+1}\to \gr^\Ical_\pt\Ical^\cfg_n,$$
where we can replace the domain by simply $\gr^\Ical_\pt\QQ\pi$.
The graded kernel $\ker\gr^\Ical_\pt\tri^n$ coincides (also tautologically) with the associated graded of $\ker\tri^n$ by the filtration $\ker\tri^n\cap \Ical\supset \ker\tri^n\cap \Ical^2\supset \cdots$. We now completely describe the kernel and image of $\gr^\Ical_\pt\tri^n$ provided $g$ is large.

\begin{theorem}\label{thm:assgradedIcalcfg} For $1\le k\le n$, the kernel of $\gr^\Ical_k\tri^n$ contains the subrepresentation $(\gr^\Ical_k\QQ\pi)^{\le 3k-2(n+1)}$, and $\gr^\Ical_k\Ical^\cfg_n$ is a quotient of
    \begin{equation}
        \gr^\Ical_k\Ical^\cfg_n\cong(\gr^\Ical_k\QQ\pi)/(\gr^\Ical_k\QQ\pi)^{\le 3k-2(n+1)}.
    \end{equation}
    
    If furthermore, $g\ge n$, both of these statements are equalities.
    In particular, if $k<2(n+1)/3$, then $\gr^\Ical_k$ is injective and  $\gr^\Ical_k\Ical^\cfg_n\cong\gr^\Ical_k\QQ\pi$. 
\end{theorem}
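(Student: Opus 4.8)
The plan is to study the surjection $\gr^\Ical_k\tri^n\colon \gr^\Ical_k\QQ\pi=\Ical^k/\Ical^{k+1}\twoheadrightarrow\gr^\Ical_k\Ical^\cfg_n$ directly and to identify its kernel. Since $\tri^n$ is $\Gamma$-equivariant, so is this map; its source carries a $\Gamma$-action through $\Sp_{2g}(\ZZ)$ by Labute, and its target, being a $\Gamma$-equivariant quotient of the source, does too. Thus $\gr^\Ical_k\tri^n$ is an $\Sp_{2g}(\ZZ)$-equivariant surjection of algebraic, hence semisimple, representations, and everything is determined once we know $\ker\gr^\Ical_k\tri^n$, which is then a direct sum of irreducibles.

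For the general lower bound I would first record the identification $\gr^\Ical_k\tri^n(\bfrak)=\tri^n(\widetilde\bfrak)+\tri^n(\Ical^{k+1})$ for $\bfrak\in H^{\otimes k}$ (here $\bfrak$ also denotes its image in $\Ical^k/\Ical^{k+1}$ and $\widetilde\bfrak$ is the Magnus lift), which holds because $\widetilde{\cdot}$ is a section of the Fox isomorphism. By Proposition \ref{prop:weight(n-2r)ofgrI_QQpi}, the weight-$(k-2r)$ part of $\Ical^k/\Ical^{k+1}$ is generated over $\QQ[\Sp_{2g}(\ZZ)]$ by $\BB^{k,r}_\nc$; and the inequality $k-2r\le 3k-2(n+1)$ is an elementary rearrangement of $k+r\ge n+1$, which is exactly the hypothesis of Corollary \ref{cor:vanishingsr>N} with $s=k$. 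Hence $\gr^\Ical_k\tri^n$ annihilates $\BB^{k,r}_\nc$ whenever $k-2r\le 3k-2(n+1)$, and since $\ker\gr^\Ical_k\tri^n$ is $\Sp_{2g}(\ZZ)$-stable it therefore contains every weight-$w$ summand with $w\le 3k-2(n+1)$, i.e.\ it contains $(\gr^\Ical_k\QQ\pi)^{\le 3k-2(n+1)}$. This yields the claimed surjection onto $\gr^\Ical_k\Ical^\cfg_n$ from $(\gr^\Ical_k\QQ\pi)/(\gr^\Ical_k\QQ\pi)^{\le 3k-2(n+1)}$.

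For the equality under $g\ge n$, write $\Ical^k/\Ical^{k+1}=A\oplus B$ with $A$ the weight $\le 3k-2(n+1)$ part; by the previous step $A\subseteq\ker\gr^\Ical_k\tri^n$, so it is enough to prove that $\gr^\Ical_k\tri^n|_B$ is injective. If it were not, its kernel would contain an irreducible summand $V$ of $\Ical^k/\Ical^{k+1}$ of some weight $k-2r>3k-2(n+1)$, equivalently $k+r\le n$. By the ``furthermore'' clause of Proposition \ref{prop:weight(n-2r)ofgrI_QQpi}, $V$ contains a nonzero element of the form $v=\sum_{\bfrak\in\BB^{k,r}_\nc}c_\bfrak\bfrak$, and then $\gr^\Ical_k\tri^n(v)=\sum_\bfrak c_\bfrak\bigl(\tri^n(\widetilde\bfrak)+\tri^n(\Ical^{k+1})\bigr)$ is nonzero by the ``furthermore'' clause of Corollary \ref{cor:independence_r+s<=n} applied with $s=k$ --- this is the only place where $g\ge n$ is used. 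Since $v\in V\subseteq\ker\gr^\Ical_k\tri^n$, this is a contradiction, so $\ker\gr^\Ical_k\tri^n=A$ and $\gr^\Ical_k\Ical^\cfg_n\cong(\gr^\Ical_k\QQ\pi)/(\gr^\Ical_k\QQ\pi)^{\le 3k-2(n+1)}$. When $k<2(n+1)/3$ the exponent $3k-2(n+1)$ is negative, so $A=0$, whence $\gr^\Ical_k\tri^n$ is injective and $\gr^\Ical_k\Ical^\cfg_n\cong\gr^\Ical_k\QQ\pi$.

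The step I expect to be the main obstacle is the injectivity argument of the third paragraph: linear independence of the distinguished vectors $\tri^n(\widetilde\bfrak)$ alone does not prevent an $\Sp_{2g}(\ZZ)$-equivariant map from killing one copy of an irreducible sitting inside a multiplicity space, and what rescues the argument is the sharp form of Proposition \ref{prop:weight(n-2r)ofgrI_QQpi}, namely that \emph{every} irreducible constituent of the relevant weight already meets the span of $\BB^{k,r}_\nc$. Of course the genuinely hard content is exported to Theorem \ref{thm:independence_r+s=n}, whose geometric proof is carried out separately in Section \ref{sec:geometricconstructions}.
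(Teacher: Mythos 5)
Your proof is correct and follows essentially the same route as the paper: the lower bound via Proposition \ref{prop:weight(n-2r)ofgrI_QQpi} together with the vanishing from Theorem \ref{thm:lowerboundker} (you cite its Corollary \ref{cor:vanishingsr>N}, which is the same content), and the sharpness via the ``furthermore'' clauses of Proposition \ref{prop:weight(n-2r)ofgrI_QQpi} and Corollary \ref{cor:independence_r+s<=n}. You also correctly identify the crucial point that linear independence of the images alone would not suffice without the guarantee that \emph{every} irreducible of the given weight meets the span of $\BB^{k,r}_\nc$.
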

\begin{proof}
    It is easy to see that $(\gr^\Ical_k\QQ\pi)^{\le 3k-2(n+1)}$ lies in $\ker\gr^\Ical_k\tri^n$. From Proposition \ref{prop:weight(n-2r)ofgrI_QQpi}, $(\gr^\Ical_k\QQ\pi)^{\le 3k-2(n+1)}$ is generated by the images of the maps $\mu^{k,r}_{\uk,\ul}$ for $k-2r\le 3k-2(n+1)$. The latter inequality is equivalent to the condition $k+r\ge n+1$ under which, as we found in Theorem \ref{thm:lowerboundker}, $\tri^n$ annihilates the lifts of $\im\mu^{k,r}_{\uk,\ul}$. Then $\gr^\Ical_k\tri^n$ also does. This proves the first statement.

    Now assume $g\ge n$ and $V\subset \gr^\Ical_k\QQ\pi$ is an irreducible summand of $\ker \gr^\Ical_k\tri^n$ of weight  $k-2r>3k-2(n+1)$. By Proposition \ref{prop:weight(n-2r)ofgrI_QQpi}, there is a non-zero element $\vfrak\in V$ in the $\QQ$-span of $\BB^{k,r}_\nc$. The inequality is equivalent to $k+r\le n$, and the ``furthermore'' part of Corollary \ref{cor:independence_r+s<=n}, gives that $\tri^n(\widetilde{\vfrak})\not\equiv 0 \pmod{\tri^n(\Ical^{k+1})}$, contradicting that $\gr^\Ical_k\tri^n(\vfrak)=0$. This makes the inclusion of the first paragraph sharp.
\end{proof}

\section{A zoo of submanifolds}\label{sec:geometricconstructions}
Our current task is to prove Theorem \ref{thm:independence_r+s=n}. Henceforth we assume $g\ge n$ and we must detect the linear independence of the elements $\tri^n(\widetilde{\bfrak})\in \Hcal_n(\Sigma_{g,*})$ for $\bfrak\in \cup_{s+r=n}\BB^{s,r}$. To do so, we will employ the 
intersection perfect pairing $$\langle \hspace{4pt}, \hspace{2pt}\rangle:H^{BM}_{n}(\conf_{n}(\Sigma_{g,*}))\otimes H_{n}(\conf_{n}(\Sigma_{g,*}))\to \QQ,$$
where $H^{BM}_{n}(\conf_{n}(\Sigma_{g,*}))$ is naturally isomorphic to $\Hcal_{n}(\Sigma_{g,*})$, and construct dual elements $[\sEcal_\bfrak]\in H_n(\conf_n(\Sigma_{g,*}))$ for each $\tri^n(\widetilde{\bfrak})$ so that
\begin{equation}\label{eq:sEcalPerfectPairing}
    \langle \tri^n(\widetilde{\bfrak}), [\sEcal_{\bfrak'}]\rangle=\begin{cases}\pm 1,\text{ if } \bfrak=\bfrak',\\
0\text{ otherwise.}
\end{cases}
\end{equation}
Each $[\sEcal_\bfrak]$ will be the homology class of an $n$-dimensional, closed, oriented submanifold of $\conf_n(\Sigma_{g,*})$ specifically designed for \eqref{eq:sEcalPerfectPairing} to hold. The key ingredient for $\sEcal_\bfrak$ is the submanifold $\Ecal\subset \conf_3(\Sigma_{g,*})$ of Subsection \ref{sec:Ecal}.  The proof of \eqref{eq:sEcalPerfectPairing} is given in Theorem \ref{thm:sEcalPerfectPairing}. The pairing $\langle \hspace{4pt}, \hspace{2pt}\rangle$  will be evaluated on pairs $([M],[N])$ of submanifold classes of oriented, $n$-dimensional submanifold where $M$ is properly embedded and $N$ is closed, and will be computed as a signed, transversal intersection.

\subsection{A chartography of the surface}
We pick explicit models $\Xcal$ and $\Scal$ of $\Sigma_{g}$ and $\Sigma_{g,*}$, respectively, using the upper half plane $\{z\in \CC:\Im(z)\ge 0\}$. Take the rectangle $[0,4g]\times [0,h]\subset \CC$, for some large $h$, and partition the side on the real axis into $4g$ segments of equal lengths and identified in pairs according to Figure \ref{fig:cartographyofsurface} by isometries; then collapse the other three sides and the points $(0,i)$, for $i\in \ZZ$, $0\le i\le 4g$, into the unique point $p$. This forms the based quotient $(\Xcal,*)$;  let $\Scal=\Xcal\setminus *$ be the complement. The orientation of $\CC$ canonically orients $\Xcal$ and $\Scal$, and the Euclidean metric gives a metric on $\Scal$. 

Let $\alpha_1,\alpha_{-1},\ldots, \alpha_g,\alpha_{-g}$ be the based loops in $(\Xcal,*)$ shown in Figure \ref{fig:cartographyofsurface} parametrised by constant speed. These loops define a $1$-skeleton of $\Scal$ which we identify with the wedge $W_{2g}$.
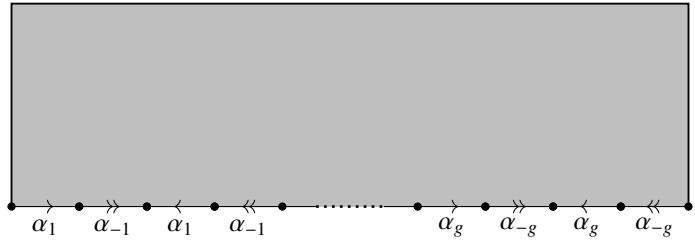
\begin{figure}
  \centering 
   \begin{tikzpicture}[xscale=0.9,yscale=0.9]

    \filldraw[color=lightgray] (0,0) rectangle ++(10,3);
    \draw[line width=0.7pt] (0,0) to (0,3) to (10,3) to (10,0);
    \filldraw (0,0) circle (1.5pt);
    \draw[postaction={decoration={markings, mark=at position 0.6 with {\arrow{>}}}, decorate}](0.05,0) to ++(0.95,0);
    \draw (0.5,-0.3) node {\footnotesize $\alpha_1$};
    \filldraw (1,0) circle (1.5pt);
    \begin{scope}[shift={(1,0)}]
        \filldraw (0,0) circle (1.5pt);
    \draw[postaction={decoration={markings, mark=at position 0.6 with {\arrow{>>}}}, decorate}](0,0) to ++(1,0);
    \draw (0.5,-0.3) node {\footnotesize $\alpha_{-1}$};
    \filldraw (1,0) circle (1.5pt);
    \end{scope}
    \begin{scope}[shift={(2,0)}]
        \filldraw (0,0) circle (1.5pt);
    \draw[postaction={decoration={markings, mark=at position 0.5 with {\arrow{<}}}, decorate}](0,0) to ++(1,0);
    \draw (0.5,-0.3) node {\footnotesize $\alpha_1$};
    \filldraw (1,0) circle (1.5pt);
    \begin{scope}[shift={(1,0)}]
        \filldraw (0,0) circle (1.5pt);
    \draw[postaction={decoration={markings, mark=at position 0.6 with {\arrow{<<}}}, decorate}](0,0) to ++(1,0);
    \draw (0.5,-0.3) node {\footnotesize $\alpha_{-1}$};
    \filldraw (1,0) circle (1.5pt);
    \end{scope}
    \end{scope}
    
    \draw (4,0) to ++(0.5,0);
    \draw[dotted, line width=1pt] (4.5,0) to (5.5,0); 
    \draw (5.5,0) to ++(0.5,0);
    
    \begin{scope}[shift={(6,0)}]
        \filldraw (0,0) circle (1.5pt);
    \draw[postaction={decoration={markings, mark=at position 0.6 with {\arrow{>}}}, decorate}](0,0) to ++(1,0);
    \draw (0.5,-0.3) node {\footnotesize $\alpha_g$};
    \filldraw (1,0) circle (1.5pt);
    \begin{scope}[shift={(1,0)}]
        \filldraw (0,0) circle (1.5pt);
    \draw[postaction={decoration={markings, mark=at position 0.6 with {\arrow{>>}}}, decorate}](0,0) to ++(1,0);
    \draw (0.5,-0.3) node {\footnotesize $\alpha_{-g}$};
    \filldraw (1,0) circle (1.5pt);
    \end{scope}
    \begin{scope}[shift={(2,0)}]
        \filldraw (0,0) circle (1.5pt);
    \draw[postaction={decoration={markings, mark=at position 0.5 with {\arrow{<}}}, decorate}](0,0) to ++(1,0);
    \draw (0.5,-0.3) node {\footnotesize $\alpha_g$};
    \filldraw (1,0) circle (1.5pt);
    \begin{scope}[shift={(1,0)}]
        \filldraw (0,0) circle (1.5pt);
    \draw[postaction={decoration={markings, mark=at position 0.6 with {\arrow{<<}}}, decorate}](0,0) to ++(0.95,0);
    \draw (0.5,-0.3) node {\footnotesize $\alpha_{-g}$};
    \filldraw (1,0) circle (1.5pt);
    \end{scope}
    \end{scope}
    \end{scope}

   \end{tikzpicture}
   \caption{\footnotesize The closed surface $\Xcal$. It is obtained by identifying the pairs of intervals labelled $\alpha_{\pm i}$ together by preserving the diretions of the arrows; the top three sides of the rectangle as well as the $4g+1$ labelled points on the bottom side are all collapsed to the basepoint $p$.}
   \label{fig:cartographyofsurface}
\end{figure}

\begin{figure}
  \centering 
   \begin{tikzpicture}[xscale=1,yscale=1]

    \filldraw[color=lightgray] (0,0) rectangle ++(9,3);
    \draw[dotted, line width=1pt] (0,0) to (0,3) to (9,3) to (9,0);
    \draw (0,0) to (0.45,0);

    \begin{scope}[shift={(0.5,0)}]

    \draw (0,0) circle (1.5pt);
    \draw[postaction={decoration={markings, mark=at position 0.8 with {\arrow{>}}}, decorate}](0.05,0) to ++(0.9,0);
    \draw (0.5,-0.3) node {\footnotesize $\alpha_i$};
    \draw (1,0) circle (1.5pt);
    \begin{scope}[shift={(1,0)}]
        \draw (0,0) circle (1.5pt);
    \draw[postaction={decoration={markings, mark=at position 0.8 with {\arrow{>}}}, decorate}](0.05,0) to ++(0.9,0);
    \draw (0.5,-0.3) node {\footnotesize $\alpha_{-i}$};
    \draw (1,0) circle (1.5pt);
    \end{scope}
    \begin{scope}[shift={(2,0)}]
        \draw (0,0) circle (1.5pt);
    \draw[postaction={decoration={markings, mark=at position 0.3 with {\arrow{<}}}, decorate}](0.05,0) to ++(0.9,0);
    \draw (0.5,-0.3) node {\footnotesize $\alpha_i$};
    \draw (1,0) circle (1.5pt);
    \begin{scope}[shift={(1,0)}]
        \draw (0,0) circle (1.5pt);
    \draw[postaction={decoration={markings, mark=at position 0.3 with {\arrow{<}}}, decorate}](0.05,0) to ++(0.9,0);
    \draw (0.5,-0.3) node {\footnotesize $\alpha_{-i}$};
    \draw (1,0) circle (1.5pt);
    \end{scope}
    \end{scope}

    \draw[domain=0:180, postaction={decoration={markings, mark=at position 0.6 with {\arrow{<}}}, decorate}] plot ({1.5+cos(\x)}, {sin(\x)});
    \draw (0.95,1.1) node {\footnotesize $\beta_i$};
    \draw[domain=0:180, postaction={decoration={markings, mark=at position 0.4 with {\arrow{<}}}, decorate}] plot ({2.5+cos(\x)}, {sin(\x)});
    \draw (3.2,1.1) node {\footnotesize $\beta_{-i}$};
    \filldraw (2,{sqrt(3)/2}) circle (1pt);
    \draw (2,{sqrt(3)/2-0.3}) node {\footnotesize $p_i$};

    \begin{scope}[shift={(4,0)}]
    
        \draw (0,0) circle (1.5pt);
    \draw[postaction={decoration={markings, mark=at position 0.8 with {\arrow{>}}}, decorate}](0.05,0) to ++(0.9,0);
    \draw (0.5,-0.3) node {\footnotesize $\alpha_{i+1}$};
    \draw (1,0) circle (1.5pt);
    
    \begin{scope}[shift={(1,0)}]
        \draw (0,0) circle (1.5pt);
    \draw[postaction={decoration={markings, mark=at position 0.8 with {\arrow{>}}}, decorate}](0.05,0) to ++(0.9,0);
    \draw (0.5,-0.3) node {\footnotesize $\alpha_{-i-1}$};
    \draw (1,0) circle (1.5pt);
    \end{scope}
    
    \begin{scope}[shift={(2,0)}]
    
        \draw (0,0) circle (1.5pt);
    \draw[postaction={decoration={markings, mark=at position 0.3 with {\arrow{<}}}, decorate}](0.05,0) to ++(0.9,0);
    \draw (0.5,-0.3) node {\footnotesize $\alpha_{i+1}$};
    \draw (1,0) circle (1.5pt);
    
    \begin{scope}[shift={(1,0)}]
    
        \draw (0,0) circle (1.5pt);
    \draw[postaction={decoration={markings, mark=at position 0.3 with {\arrow{<}}}, decorate}](0.05,0) to ++(0.9,0);
    \draw (0.5,-0.3) node {\footnotesize $\alpha_{-i-1}$};
    \draw (1,0) circle (1.5pt);
    
    \end{scope}
    \end{scope}
    \draw[domain=0:180, postaction={decoration={markings, mark=at position 0.6 with {\arrow{<}}}, decorate}] plot ({1.5+cos(\x)}, {sin(\x)});
    \draw (0.9,1.1) node {\footnotesize $\beta_{i+1}$};
    \draw[domain=0:180, postaction={decoration={markings, mark=at position 0.4 with {\arrow{<}}}, decorate}] plot ({2.5+cos(\x)}, {sin(\x)});
    \draw (3.3,1.1) node {\footnotesize $\beta_{-i-1}$};
    \filldraw (2,{sqrt(3)/2}) circle (1pt);
    \draw (2,{sqrt(3)/2-0.4}) node {\footnotesize $p_{i+1}$};
    \end{scope}
    
    \end{scope}
    
    \draw (8.55,0) to (9,0);

    \draw[domain={atan(sqrt(3)/4)}:{180-atan(sqrt(3)/4)}, postaction={decoration={markings, mark=at position 0.4 with {\arrow{<}}}, decorate}] plot ({4.5+sqrt(19/4)*cos(\x)}, {sqrt(19/4)*sin(\x)});
    \draw (4.8,2.35) node {\footnotesize $\epsilon_{i,i+1}$};
    
    \draw[domain={acos(0.5/sqrt(19/4))}:{180-atan(sqrt(3)/4)}, postaction={decoration={markings, mark=at position 0.4 with {\arrow{<}}}, decorate}] plot ({8.5+sqrt(19/4)*cos(\x)}, {sqrt(19/4)*sin(\x)});
    \draw (8,2.35) node {\footnotesize $\epsilon_{i+1,i+2}$};

    \draw[domain={atan(sqrt(3)/4)}:{180-acos(0.5/sqrt(19/4))}, postaction={decoration={markings, mark=at position 0.6 with {\arrow{<}}}, decorate}] plot ({0.5+sqrt(19/4)*cos(\x)}, {sqrt(19/4)*sin(\x)});
    \draw (1,2.35) node {\footnotesize $\epsilon_{i+1,i+2}$};

   \end{tikzpicture}
   \caption{\footnotesize The dual curves $\beta_{\pm i}$, the intersection points $p_i$, and the paths $\epsilon_{i,i+1}$ in the open surface $U$. (Here only a part of $U$ is depicted.)}
   \label{fig:dualcurves}
\end{figure}
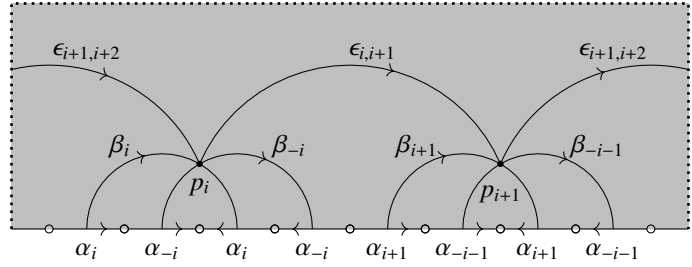

\subsubsection{Curve intersections and dual curves}
The transversal intersection between a pair of curves $\gamma,\delta$ in $\Scal$ or $\Xcal$ is \textit{positive} if the pair of tangent vectors $(\gamma',\delta')$ at the intersection is oriented as the pair $(1,i)\in \CC^2$. A simple closed curve $\gamma$ in $\Scal$ that intersects the $U_{2g}$ transversally is specified up to isotopy by its \textit{intersection pattern} with the $\alpha_{i}$: a sequence of intersections specified by the $\alpha_i$ and a sign. In particular, we say that $\gamma$ is \textit{dual} to $\alpha_i$ if $\gamma,\alpha_i$ intersect once transversally and positively, and $\gamma$ is disjoint from all other generating loops. Our favourite dual curve to $\alpha_i$ is the hyperbolic line $\beta_i$ between the midpoints of $\alpha_i$ and $\alpha_{i}^{-1}$ as in Figure \ref{fig:dualcurves}; so then $\beta_i$ and $\beta_{-i}$ intersect transversally once, positively at the the point $p_i$. We fix the hyperbolic arc $\epsilon_{i,j}$ from $p_i$ to $p_j$. An open thickening of the graph in $\Scal$ comprised of $$\beta_i,\beta_{-i},\epsilon_{i,i+1},\beta_{i+1},\beta_{-(i+1)},\epsilon_{i+1,i+2},\ldots,\epsilon_{j-1,j} \beta_{j},\beta_{-j}, $$
for any $i\le j$,
gives the open subsurface $\Scal_{[i,i+1,\ldots,j]}$; it is homeomorphic to the complement of a point in closed orientable surface of genus $j-i+1$.



\subsection{Intersections of cubes in $\conf_n(\Scal)$}\label{sec:intersectionconventions}
If $\gamma_1,\ldots, \gamma_k: (0,1)\to \Scal$ are pairwise disjoint simple open segments in $\Scal$, then we can take the open $n$-cube $$\gamma_1\times \cdots \times \gamma_n:(0,1)^k\to \conf_k(\Scal).$$ More generally, if $\sigma\in \Sfrak_k$ is any permutation, then we can take the cube $\gamma_1^{(\sigma(1))}\times \cdots \times \gamma_n^{(\sigma(k))}$ where the particle $\sigma(i)$ traverses $\gamma_i$. (Here and onwards, the $\times$ symbol is viewed as coming from an iteration of the partially defined product $\conf_{I}(\Scal)\times \conf_{J}(\Scal) \dashrightarrow \conf_{I\sqcup J}(\Scal)$.) 
The open submanifold $\gamma_1^{(\sigma(1))}\times \cdots \times \gamma_n^{(\sigma(k))}$ is canonically oriented using the product orientation of $(0,1)^k$. We will be interested to know the signs of intersecton of two such submanifolds and whether they intersect transversally. 

\begin{propositionconvention}\label{prop:intersectionconvention}
Suppose $\g_1,\delta_1,\ldots, \g_n,\delta_n$ are open oriented  segments in $\Scal$, and $\sigma,\tau\in \Sfrak_n$ are permutations,
such that each pair $\g_{\sigma^{-1}(i)}$, $\delta_{\tau^{-1}(i)}$ intersects transversely $k_i$-many times, with signed intersection count $\varepsilon_i$. Then the open oriented submanifolds $\g_1^{(\sigma(1))}\times \cdots \times \g_n^{(\sigma(n))}$ and $\delta_1^{(\tau(1))}\times \cdots \times \delta_n^{(\tau(n))}$ of $\Scal^n$ intersect transversely $\prod_{i=1}^n k_i$-many times with signed intersection count $$\varepsilon(\sigma,\tau)\cdot \prod_{i=1}^n \varepsilon_i,$$ where $\varepsilon(\sigma,\tau)$ is the sign of the permutation that takes the product
$\g_*^{(\sigma(1))}\times \cdots \times \g_*^{(\sigma(n))}\times \delta_*^{(\tau(1))}\times \cdots \times \delta_*^{(\tau(n))}$ into $\g_*^1\times \delta_*^1\times \cdots \times \g_*^n\times \delta_*^n$.
\end{propositionconvention}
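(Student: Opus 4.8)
The plan is to describe the intersection $M\cap N$ explicitly in particle coordinates, deduce transversality for free from the product structure, compute the local intersection sign at each point, and observe that the orientation bookkeeping reproduces verbatim the permutation used to define $\varepsilon(\sigma,\tau)$.

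I would begin by unwinding the definitions of the two cubes. Write $M:=\g_1^{(\sigma(1))}\times\cdots\times\g_n^{(\sigma(n))}$ and $N:=\delta_1^{(\tau(1))}\times\cdots\times\delta_n^{(\tau(n))}$, and write a point of $\conf_n(\Scal)\subset\Scal^n$ as $(x_1,\dots,x_n)$ with $x_j$ the position of particle $j$. The cubes are well defined precisely because the $\g_i$, resp.\ the $\delta_i$, are pairwise disjoint, and one has $M=\{x:x_j\in\g_{\sigma^{-1}(j)}\text{ for all }j\}$ and $N=\{x:x_j\in\delta_{\tau^{-1}(j)}\text{ for all }j\}$, whence $M\cap N=\{x:x_j\in\g_{\sigma^{-1}(j)}\cap\delta_{\tau^{-1}(j)}\text{ for all }j\}$. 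As the $j$-th of these surface intersections consists of $k_j$ transverse points, $M\cap N$ has exactly $\prod_{i=1}^n k_i$ points, parametrised by the independent choice of one crossing point $q_j$ of $\g_{\sigma^{-1}(j)}$ with $\delta_{\tau^{-1}(j)}$ per particle $j$. Fixing such $p=(q_1,\dots,q_n)$, inside $T_p\Scal^n=\bigoplus_j T_{q_j}\Scal$ one has $T_pM=\bigoplus_j T_{q_j}\g_{\sigma^{-1}(j)}$ and $T_pN=\bigoplus_j T_{q_j}\delta_{\tau^{-1}(j)}$; since $\g_{\sigma^{-1}(j)}$ meets $\delta_{\tau^{-1}(j)}$ transversely in $\Scal$, each summand $T_{q_j}\g_{\sigma^{-1}(j)}+T_{q_j}\delta_{\tau^{-1}(j)}$ fills $T_{q_j}\Scal$, so $T_pM\oplus T_pN=T_p\Scal^n$ and the intersection is transverse at $p$. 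This already gives the count $\prod_i k_i$.

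The sign at $p$ is the heart of the matter. The chosen parametrisation of $M$ by $(0,1)^n$ gives $T_pM$ the ordered basis $B_M$ whose $i$-th vector is $\g_i'$, lying in the $\sigma(i)$-th factor $T_{q_{\sigma(i)}}\Scal$; similarly $T_pN$ gets $B_N$ whose $i$-th vector is $\delta_i'$ in the $\tau(i)$-th factor. With the convention that a transverse point contributes $+1$ when an oriented basis of $T_pM$ followed by one of $T_pN$ orients $\conf_n(\Scal)$, the local sign at $p$ is the sign of the ordered basis $(B_M,B_N)$ of $T_p\Scal^n$ relative to the product orientation. I would split this product orientation along the factors $T_{q_j}\Scal$: it is represented by any ordered basis obtained by concatenating, for $j=1,\dots,n$ in order, a positively oriented basis of $T_{q_j}\Scal$. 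Taking in the $j$-th factor the basis $(\g_{\sigma^{-1}(j)}',\delta_{\tau^{-1}(j)}')$, which is positively oriented exactly when $\g_{\sigma^{-1}(j)}$ and $\delta_{\tau^{-1}(j)}$ cross positively at $q_j$ by definition of the intersection sign on a surface, one finds that the interleaved ordered basis
\[
C:=\bigl(\g_{\sigma^{-1}(1)}',\,\delta_{\tau^{-1}(1)}',\ \g_{\sigma^{-1}(2)}',\,\delta_{\tau^{-1}(2)}',\ \dots,\ \g_{\sigma^{-1}(n)}',\,\delta_{\tau^{-1}(n)}'\bigr)
\]
represents $\bigl(\prod_j\varepsilon_j(q_j)\bigr)$ times the product orientation, where $\varepsilon_j(q_j)=\pm1$ is the crossing sign at $q_j$. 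Hence the local sign at $p$ equals $\prod_j\varepsilon_j(q_j)$ times the sign of $(B_M,B_N)$ relative to $C$. But $C$ is merely a reordering of the list of vectors underlying $(B_M,B_N)$ --- each $\g_i'$ occupies the factor $T_{q_{\sigma(i)}}\Scal$ in both lists, and each $\delta_i'$ the factor $T_{q_{\tau(i)}}\Scal$ --- so that relative sign is the sign of the permutation of $2n$ entries carrying the list $(\g_1',\dots,\g_n',\delta_1',\dots,\delta_n')$, whose members lie in the factors indexed $(\sigma(1),\dots,\sigma(n),\tau(1),\dots,\tau(n))$, to the interleaved list $C$, whose members lie in the factors indexed $(1,1,2,2,\dots,n,n)$. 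This is precisely the permutation whose sign defines $\varepsilon(\sigma,\tau)$. Therefore the local sign at $p$ is $\varepsilon(\sigma,\tau)\prod_j\varepsilon_j(q_j)$.

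Summing over the $\prod_i k_i$ intersection points, i.e.\ over the independent choices of the $q_j$, the signed intersection count is
\[
\sum_{(q_1,\dots,q_n)}\varepsilon(\sigma,\tau)\prod_j\varepsilon_j(q_j)=\varepsilon(\sigma,\tau)\prod_{j=1}^n\Bigl(\sum_{q\in\g_{\sigma^{-1}(j)}\cap\delta_{\tau^{-1}(j)}}\varepsilon_j(q)\Bigr)=\varepsilon(\sigma,\tau)\prod_{i=1}^n\varepsilon_i,
\]
which is the assertion. I expect the only genuine hazard to be the orientation bookkeeping in the previous paragraph: one must keep straight that the sign of one ordered basis against another (both being reorderings of a common list of vectors) is literally a permutation sign, and must verify that this permutation is the one defining $\varepsilon(\sigma,\tau)$ rather than its inverse or an $\Sfrak_n$-conjugate; running the argument by hand for $n=1$ and $n=2$ nails down all the conventions.
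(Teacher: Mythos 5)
Your proof is correct, and since the paper presents this result as a ``Proposition/Convention'' without a written proof, your argument supplies exactly the routine verification the paper leaves implicit. The transversality and count $\prod_i k_i$ follow from the block-diagonal decomposition of the tangent spaces, and the sign computation — factoring the comparison of $(B_M,B_N)$ against the product orientation through the interleaved basis $C$ — reproduces the definition of $\varepsilon(\sigma,\tau)$ correctly (the only subtlety being that a permutation and its inverse have the same sign, so the direction of the reordering is immaterial, and the product orientation of $\Scal^n$ is unambiguous because $\Scal$ is even-dimensional). You are also right to flag that the statement is in part a choice of convention: the paper does not specify whether a transverse point contributes $+1$ when $T_pM$ followed by $T_pN$ orients the ambient or the reverse, and the Proposition/Convention is precisely where that choice gets fixed; your reading is the one consistent with the paper's later sign computations (e.g.\ Proposition \ref{prop:boundarysubsurfaceintersections} and Lemma \ref{lem:tubeintersection}).
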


The above statemetn also applies to general $n$-dimensional submanifolds $M,N$ in $\conf_n(\Scal)$, provided, they are modeled as cubes (in an orientation preserving fashion) near their intersections.

\subsection{$\tri^\Nbold$ as subspaces}
Under the natural isomorphism $H^{BM}_n(\conf_n(\Scal))\cong\Hcal_n(\Scal)$ the class $[M]\in H^{BM}_n(\conf_n(\Scal))$ of a properly embedded, oriented, $n$-dimensional submanifold corresponds to the relative class of the compactification $[M^*,*]\in \Hcal_n(\Scal)$. In this light, we will reinterpret the elements $\tri^{\Nbold}(\g_1)\in \Hcal_n(\Scal)$ as proper submanifold classes.

If $\g$ is a smooth, simple, closed loop in $\Xcal$ based at $*$, and $\Nbold=(p_1,\ldots, p_n)$ an ordering on $[n]$, then $\tri^\Nbold(\g)$ defines the proper submanifold of $\conf_n(\Scal)$ that is the restriction of $\g^{(p_1)}\times \cdots \times \g^{(p_n)}$ on the open simplex
$$\mathring{\tri}^n=\{(t_1,\ldots, t_n)\in [0,1]^n:0<t_1<t_2<\cdots< t_n<1\}.$$
As a subspace of $\conf_n(\Scal)$, it contains all configurations where all $n$ particles lie on the curve $\g$ with the strict order $(p_1,\ldots, p_n)$. It receives a canonical orientation by restricting from the $n$-cube. We will use the notation $\tri^\Nbold$ both for th subspace of $\conf_n(\Scal)$ and its homology class in $H^{BM}_n(\conf_n(\Scal))\cong\Hcal_n(\Scal)$. 

Of course, given disjoint $\g$ and $\g'$, the product $\tri^{\Nbold}(\g)\times \tri^{\Nbold'}(\g)$, where $\Nbold,\Nbold'$ is a partition-with-orders of $[n]$ is an oriented, proper, $n$-dimensional submanifold of $\conf_{n}(\Scal)$. Recall that $U_{2g}\subset \Scal$ is the union of the interiors of the pairwise disjoint loops $\alpha_{\pm 1},\ldots, \alpha_{\pm g}$.
\begin{proposition}[\cite{Moriyama, LooijengaStavrou25}]\label{thm:Moriyama}
    For $n\ge 0$, the connected components of $\conf_n(U_{2g})\subset \conf_n(\Scal)$ are the open, oriented, $n$-dimensional, proper submanifolds
    \begin{equation}\label{eq:basisMoriyama}
    \tri^{\Nbold_1}(\alpha_1)\times \tri^{\Nbold_{-1}}(\alpha_{-1})\times \cdots \times \tri^{\Nbold_g}(\alpha_g)\times \tri^{\Nbold_{-g}}(\alpha_{-g})
\end{equation}
    given by partitions-with-orders $\Nbold_1,\Nbold_{-1},\ldots, \Nbold_g,\Nbold_{-g}$ of $[n]$. Furthermore, the elements \eqref{eq:basisMoriyama} form a basis for $\Hcal_n(U_{2g})$.
\end{proposition}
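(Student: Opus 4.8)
The plan is to reduce the proposition to an elementary computation of the set of connected components of $\conf_n$ of a disjoint union of open intervals, together with the identification $\Hcal_n(U_{2g})\cong H^{BM}_n(\conf_n(U_{2g}))$ recalled above (note that $W_{2g}$ is the one-point compactification of $U_{2g}$, so this identification applies). Throughout, write $\alpha_i^{\circ}=\alpha_i\smallsetminus\{*\}$ for the interior of the loop $\alpha_i$, an oriented open interval, so that $U_{2g}=\bigsqcup_{1\le|i|\le g}\alpha_i^{\circ}\subset\Scal$.

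First I would identify the connected components of $\conf_n(U_{2g})$. An injection $f\colon[n]\hookrightarrow U_{2g}$ determines a locally constant ``colour'' function $c_f\colon[n]\to\{\pm1,\ldots,\pm g\}$ recording which interval each particle lies on (locally constant because the $\alpha_i^{\circ}$ are open), and, for each colour $i$, a locally constant linear order $\prec_i$ on $N_i:=c_f^{-1}(i)$ induced by the orientation of $\alpha_i^{\circ}$ (locally constant because two particles on one interval cannot exchange order without colliding). Thus every configuration yields a partition-with-orders $(\Nbold_1,\Nbold_{-1},\ldots,\Nbold_g,\Nbold_{-g})$ of $[n]$; conversely the locus of configurations with prescribed such data is precisely the image of the cube of the loops $\alpha_{\pm i}$ restricted to the product of open simplices $\otri^{\Nbold_1}\times\cdots\times\otri^{\Nbold_{-g}}$, which by the description of $\tri^{\Nbold}$ as a subspace is the subspace $\tri^{\Nbold_1}(\alpha_1)\times\cdots\times\tri^{\Nbold_{-g}}(\alpha_{-g})$ of \eqref{eq:basisMoriyama}. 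Such a subspace is connected (a product of connected sets), open and closed in $\conf_n(U_{2g})$ by the two local-constancy observations, and in fact closed even in $\conf_n(\Scal)$ (a sequence inside it that converges in $\conf_n(\Scal)$ cannot have an interval-coordinate approaching an endpoint, since that endpoint is $*\notin\Scal$, nor can two particles collide); hence it is a properly embedded, oriented $n$-submanifold of $\conf_n(\Scal)$, diffeomorphic to $\prod_i\otri^{\Nbold_i}\cong\RR^n$. Therefore the subspaces \eqref{eq:basisMoriyama} are exactly the connected components of $\conf_n(U_{2g})$, indexed bijectively by partitions-with-orders of $[n]$.

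Second I would pass to homology. Since $\conf_n(U_{2g})$ is a finite disjoint union of the components above, each $\cong\RR^n$, Borel--Moore homology gives $H^{BM}_n(\conf_n(U_{2g});\QQ)=\bigoplus_C H^{BM}_n(C;\QQ)=\bigoplus_C\QQ\cdot[C]$, the sum over components $C$, with $[C]$ the Borel--Moore fundamental class determined by the orientation. It then remains to match generators. Under $\Hcal_n(U_{2g})\cong H^{BM}_n(\conf_n(U_{2g}))$, the class $\tri^{\Nbold}(\g)$ is by construction the fundamental class of the proper submanifold $\g^{(p_1)}\times\cdots\times\g^{(p_n)}|_{\otri^n}$, and the product $\times$ on $\Hcal$ --- induced from the homeomorphisms $X^{N_1}\times X^{N_2}\cong X^{N_1\sqcup N_2}$ via the homology cross product --- sends a product of submanifold classes to the class of the Cartesian-product submanifold (which lands in $\conf_n(U_{2g})$ precisely because the $\alpha_{\pm i}^{\circ}$ are pairwise disjoint). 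Hence $\tri^{\Nbold_1}(\alpha_1)\times\cdots\times\tri^{\Nbold_{-g}}(\alpha_{-g})$ equals $\pm[C]$ for the corresponding component $C$, and the collection of all of them is a basis of $\Hcal_n(U_{2g})$.

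I do not expect a serious obstacle: this is Moriyama's theorem re-expressed geometrically, and its content is the combinatorial enumeration of $\pi_0$ together with standard Borel--Moore homology facts. The only points requiring care are bookkeeping ones: verifying that the $\times$-product on $\Hcal$ genuinely corresponds to the Cartesian product of oriented submanifold classes (an Eilenberg--Zilber/K\"unneth argument for relative homology, as set up in \S\ref{subsec:recollections}), checking the iterated product is defined (disjointness of the $\alpha_{\pm i}^{\circ}$), and tracking orientation signs --- though the last of these is immaterial to the basis statement, which is insensitive to rescaling the individual vectors. Alternatively, one may run the argument directly on $\Hcal_n(U_{2g})=H_n(W_{2g}^{\,n},D_n(W_{2g},*))$, stratifying $W_{2g}^{\,n}$ by colour and applying relative K\"unneth, which is the original route.
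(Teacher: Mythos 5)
Your proof is correct, and matches the expected argument for this result; note that the paper itself supplies no proof, simply citing Moriyama and the prequel \cite{Moriyama, LooijengaStavrou25} as the source. The only thing worth flagging is a small redundancy in your justification of properness: since $U_{2g}$ is already a closed subset of $\Scal$ (each open arc $\alpha_i^{\circ}=\alpha_i\smallsetminus\{*\}$ is closed in $\Scal$ because its only extra limit point $*$ has been removed), $\conf_n(U_{2g})$ is closed in $\conf_n(\Scal)$, and closedness of each component follows immediately from that together with your local-constancy observations; your separate convergent-sequence argument is therefore not needed, though harmless. Your bookkeeping about the $\times$-product and the Borel--Moore identification $\Hcal_n(U_{2g})\cong H^{BM}_n(\conf_n(U_{2g}))$ (applicable because $W_{2g}$ is indeed the one-point compactification of the disjoint union of $2g$ open intervals) is correct and is exactly what the paper's \S\ref{subsec:recollections} sets up.
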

\begin{remark}
    We recall that the injection $\iota: (W_{2g},*)\hookrightarrow (\Xcal,*)$ gives surjections $\Hcal_n(U_{2g})\to \Hcal_n(\Scal)$. So to evaluate all intersection of a closed, oriented, $n$-dimensional submanifold $N\subset \conf_n(\Scal)$ with $\Hcal_n(\Scal)$, it suffices to know how $N$ intersects $\conf_n(U_{2g})$, in which components, and with which signs.
\end{remark}

For non-disjoint $\g,\g'$, the product $\tri^{\Nbold}(\g)\times \tri^{\Nbold'}(\g')$ still makes sense as elements of $\Hcal_{n}(\Scal)$, and a cheap way to obtain a subspace of $\conf_n(\Scal)$ is to ``throw away'' its intersections with $\Xcal^n\setminus \conf_n(\Scal)$. The following formula tells us what we would get if $\g=\g'$.

\begin{proposition}[Lemma 2.1 \cite{LooijengaStavrou25}]\label{prop:exteriorproduct}
    In $\Hcal_{I\sqcup J}(\Scal)$, we have the formula $$\tri^{\Ibold}(\g)\times \tri^{\Jbold}(\g)=\sum_{\Kbold}\sign(\Ibold\Jbold,\Kbold)\tri^{\Kbold}(\g)$$
    where the sum is over all shuffles of $\Ibold$ and $\Jbold$, that is, total orders $\Kbold$ of $I\sqcup J$ which extend the total orders $\Ibold$ and $\Jbold$, and $\sign(\Ibold,\Jbold)$ is the sign of the permutation that takes the concatenated order $\Ibold\Jbold$ to $\Kbold$.
\end{proposition}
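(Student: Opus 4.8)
The plan is to realise the cross product $\tri^{\Ibold}(\g)\times\tri^{\Jbold}(\g)$ by one explicit singular chain and then subdivide its parameter space. Recall that $\tri^{\Ibold}(\g)$ is by construction the relative class of the power map $\g^{I}$ restricted to the order simplex $\tri^{\Ibold}\subseteq[0,1]^{I}$, oriented by the cube coordinates listed in the order $\Ibold$, and similarly for $\tri^{\Jbold}(\g)$. Under the homeomorphism $\Xcal^{I}\times\Xcal^{J}\cong\Xcal^{I\sqcup J}$ defining the product $\times$ (followed by the inclusion of relative parts $D_{I}\times\Xcal^{J}\cup\Xcal^{I}\times D_{J}\subseteq D_{I\sqcup J}(\Xcal,*)$), the cross product of these two chains is represented by $\g^{I\sqcup J}$ restricted to the product polytope $P:=\tri^{\Ibold}\times\tri^{\Jbold}\subseteq[0,1]^{I\sqcup J}$, carrying the product orientation; equivalently, $P$ carries the cube orientation for the coordinates listed in the concatenated order $\Ibold\Jbold$. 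I would first check that $\g^{I\sqcup J}$ sends $\partial P$ into $D_{I\sqcup J}(\Xcal,*)$: a boundary face of $P$ has either some coordinate equal to $0$ or $1$, in which case that particle lands at $*$, or two coordinates adjacent in $\Ibold$ (or in $\Jbold$) equal, in which case two particles collide. Hence $\tri^{\Ibold}(\g)\times\tri^{\Jbold}(\g)=(\g^{I\sqcup J})_{*}[P]$ in $\Hcal_{I\sqcup J}(\Scal)$.

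Next I would subdivide $P$. A function $f\colon I\sqcup J\to[0,1]$ lies in $P$ exactly when $f|_{I}$ is non-decreasing for $\Ibold$ and $f|_{J}$ is non-decreasing for $\Jbold$; off a codimension-one locus the values of $f$ are pairwise distinct, and sorting them determines a unique total order $\Kbold$ on $I\sqcup J$, which by construction extends both $\Ibold$ and $\Jbold$, i.e. is a shuffle. Thus $P=\bigcup_{\Kbold}\tri^{\Kbold}$ over all shuffles $\Kbold$, with pairwise disjoint interiors — this is the classical triangulation of a product of two simplices — and the interior walls are precisely the faces where a coordinate indexed by $I$ equals a coordinate indexed by $J$. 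On the interior of the cell $\tri^{\Kbold}$, the product orientation of $P$ (the cube orientation for the listing $\Ibold\Jbold$) differs from the canonical orientation of $\tri^{\Kbold}$ (the cube orientation for the listing $\Kbold$) by the sign of the permutation carrying the listing $\Ibold\Jbold$ to the listing $\Kbold$, that is, by $\sign(\Ibold\Jbold,\Kbold)$. Consequently, as relative chains, $[P]=\sum_{\Kbold}\sign(\Ibold\Jbold,\Kbold)\,[\tri^{\Kbold}]$, the interior walls occurring twice with opposite induced orientations and therefore cancelling in the boundary.

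It then remains to push forward by $\g^{I\sqcup J}$. The interior walls of the subdivision are loci where an $I$-particle and a $J$-particle coincide, so their images lie in $D_{I\sqcup J}(\Xcal,*)$; hence the identity $[P]=\sum_{\Kbold}\sign(\Ibold\Jbold,\Kbold)\,[\tri^{\Kbold}]$ persists after applying $(\g^{I\sqcup J})_{*}$ in relative homology, and since $(\g^{I\sqcup J})_{*}[\tri^{\Kbold}]=\tri^{\Kbold}(\g)$ by definition we obtain
\[
\tri^{\Ibold}(\g)\times\tri^{\Jbold}(\g)=\sum_{\Kbold}\sign(\Ibold\Jbold,\Kbold)\,\tri^{\Kbold}(\g).
\]

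The step I expect to be the main obstacle is the orientation bookkeeping of the middle paragraph: fixing once and for all the convention orienting each order simplex $\tri^{\Kbold}$, checking that the product orientation of $P$ restricts to exactly $\sign(\Ibold\Jbold,\Kbold)$ times it, and verifying that the internal walls cancel so that no spurious boundary term survives. The cleanest way to carry this out is to compute with the volume forms $dt_{a_{1}}\wedge\cdots\wedge dt_{a_{n}}$ attached to the various orderings $(a_{1},\dots,a_{n})$ of $I\sqcup J$, where reordering the wedge factors produces precisely the sign of the corresponding permutation; once this is pinned down, the remaining ingredients — the standard decomposition of a product of simplices and the fact that $\g$ collapses every face of $P$ into $D_{I\sqcup J}(\Xcal,*)$ — are routine.
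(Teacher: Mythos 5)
Your proof is correct, and it is the expected argument for this statement: identify the cross product with the image of the prism $P=\tri^{\Ibold}\times\tri^{\Jbold}$ under $\g^{I\sqcup J}$, apply the classical shuffle triangulation of a product of simplices, and account for orientations by the sign of the permutation taking the listing $\Ibold\Jbold$ to $\Kbold$. The paper itself does not reprove this; it cites Lemma~2.1 of \cite{LooijengaStavrou25}, but the shuffle decomposition you use is the standard (and essentially the only reasonable) proof, and your careful treatment of the interior walls --- that they cancel in pairs and, independently, map into $D_{I\sqcup J}(\Xcal,*)$ --- correctly handles the one place the argument could go wrong.
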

\subsection{Toric submanifolds and intersections in $\conf_n(\Scal)$}\label{sec:tori}
The definition of the Subsection \ref{sec:intersectionconventions} also applies to a family  $\gamma_1,\ldots, \gamma_k: S^1\to \Scal$ of pairwise disjoint, simple, closed curves in $\Scal$. Then for any $\sigma\in \Sfrak_k$, we have the oriented, toric submanifold $$\gamma_1^{(\sigma(1))}\times \cdots \times \gamma_n^{(\sigma(k))}:\TT^k=(S^1)^k\to \conf_k(\Scal),$$
see Figure \ref{fig:toric}.

Specialise, now, to the case where the $\gamma_1,\ldots \gamma_k$ are each dual to $\alpha_{i_1},\ldots, \alpha_{i_k}$, respectively, for $i_1,\ldots, i_k\in \{\pm 1,\ldots, \pm g\}$. Assuming the $\g_1,\ldots, \g_k$ are pairwise disjoint, then if $\alpha_i$ appears in this list, then $\alpha_{-i}$ does not. We also assume that all the curves dual to $\alpha_i$ are parallel copies of $\beta_i$ contained in $\Scal_{[i]}$. Then, we obtain $k$ intersection points of the $\gamma_1,\ldots, \gamma_k$ with $U_{2g}$ each labeled by $\sigma(1),\ldots, \sigma(k)$, respectively. Follow each $\alpha_i$ increasingly and record these numbers, to produce a partition-with-orders $\uNbold_{\gamma,\sigma}$ with $2g$, possibly empty, parts.

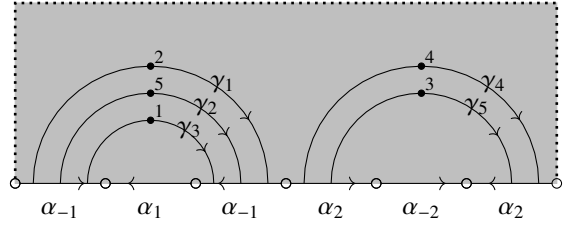
\begin{figure}
  \centering 
   \begin{tikzpicture}[xscale=1.2,yscale=1.2]

    \filldraw[color=lightgray] (0,0) rectangle ++(6,2);
    \draw[dotted, line width=1pt] (0,0.05) to (0,2) to (6,2) to (6,0.05);

    \begin{scope}[shift={(-1,0)}]
    \draw (1,0) circle (1.5pt);
    \begin{scope}[shift={(1,0)}]
        \draw (0,0) circle (1.5pt);
    \draw[postaction={decoration={markings, mark=at position 0.8 with {\arrow{>}}}, decorate}](0.05,0) to ++(0.9,0);
    \draw (0.5,-0.3) node {\footnotesize $\alpha_{-1}$};
    \draw (1,0) circle (1.5pt);
    \end{scope}
    \begin{scope}[shift={(2,0)}]
        \draw (0,0) circle (1.5pt);
    \draw[postaction={decoration={markings, mark=at position 0.3 with {\arrow{<}}}, decorate}](0.05,0) to ++(0.9,0);
    \draw (0.5,-0.3) node {\footnotesize $\alpha_1$};
    \draw (1,0) circle (1.5pt);
    \begin{scope}[shift={(1,0)}]
        \draw (0,0) circle (1.5pt);
    \draw[postaction={decoration={markings, mark=at position 0.3 with {\arrow{<}}}, decorate}](0.05,0) to ++(0.9,0);
    \draw (0.5,-0.3) node {\footnotesize $\alpha_{-1}$};
    \draw (1,0) circle (1.5pt);
    \end{scope}
    \end{scope}


    \draw[domain=0:180, postaction={decoration={markings, mark=at position 0.2 with {\arrow{<}}}, decorate}] plot ({2.5+1.3*cos(\x)}, {1.3*sin(\x)});
    \draw (3.3,1.1) node {\footnotesize $\g_{1}$};

    \draw[domain=0:180, postaction={decoration={markings, mark=at position 0.2 with {\arrow{<}}}, decorate}] plot ({2.5+cos(\x)}, {sin(\x)});
    \draw ({(3.3-2.5)/1.3+2.5},{1.1/1.3}) node {\footnotesize $\g_{2}$};
    
    \draw[domain=0:180, postaction={decoration={markings, mark=at position 0.2 with {\arrow{<}}}, decorate}] plot ({2.5+0.7*cos(\x)}, {0.7*sin(\x)});
    \draw ({(3.3-2.5)/1.3*0.7+2.5},{1.1/1.3*0.7}) node {\footnotesize $\g_{3}$};

    \filldraw (2.5,1.3) circle (1pt);
    \draw (2.6,1.4) node {\footnotesize $_2$};
    \filldraw (2.5,1) circle (1pt);
    \draw (2.6,1.1) node {\footnotesize $_5$};
    \filldraw (2.5,0.7) circle (1pt);
    \draw (2.6,0.8) node {\footnotesize $_1$};

    \begin{scope}[shift={(4,0)}]
    \begin{scope}[shift={(-1,0)}]
            \draw[domain=0:180, postaction={decoration={markings, mark=at position 0.2 with {\arrow{<}}}, decorate}] plot ({2.5+1.3*cos(\x)}, {1.3*sin(\x)});
    \draw (3.3,1.1) node {\footnotesize $\g_{4}$};

    \draw[domain=0:180, postaction={decoration={markings, mark=at position 0.2 with {\arrow{<}}}, decorate}] plot ({2.5+cos(\x)}, {sin(\x)});
    \draw ({(3.3-2.5)/1.3+2.5},{1.1/1.3}) node {\footnotesize $\g_{5}$};
    
    \filldraw (2.5,1.3) circle (1pt);
    \draw (2.6,1.4) node {\footnotesize $_4$};
    \filldraw (2.5,1) circle (1pt);
    \draw (2.6,1.1) node {\footnotesize $_3$};
    \end{scope}
    
        \draw (0,0) circle (1.5pt);
    \draw[postaction={decoration={markings, mark=at position 0.8 with {\arrow{>}}}, decorate}](0.05,0) to ++(0.9,0);
    \draw (0.5,-0.3) node {\footnotesize $\alpha_{2}$};
    \draw (1,0) circle (1.5pt);
    
    \begin{scope}[shift={(1,0)}]
        \draw (0,0) circle (1.5pt);
    \draw[postaction={decoration={markings, mark=at position 0.8 with {\arrow{>}}}, decorate}](0.05,0) to ++(0.9,0);
    \draw (0.5,-0.3) node {\footnotesize $\alpha_{-2}$};
    \draw (1,0) circle (1.5pt);
    \end{scope}
    
    \begin{scope}[shift={(2,0)}]
    
        \draw (0,0) circle (1.5pt);
    \draw[postaction={decoration={markings, mark=at position 0.3 with {\arrow{<}}}, decorate}](0.05,0) to ++(0.9,0);
    \draw (0.5,-0.3) node {\footnotesize $\alpha_{2}$};
    \draw (1,0) circle (1.5pt);
    
    \end{scope}
    \end{scope}
    \end{scope}

   \end{tikzpicture}
   \caption{\footnotesize A $5$-torus in  $\conf_5(U)$. The submanifold $\g_1^{(2)}\times \g_2^{(5)}\times \g_3^{(1)}\times \g_4^{(4)}\times \g_5^{(3)}$ intersects $\conf_5(U_1)$ precisely once transversally in the component $\tri^{(2,5,1)}(\alpha_{-1})\times \tri^{(4,3)}(\alpha_2)$.}
   \label{fig:toric}
\end{figure}

\begin{proposition}\label{prop:toricintersections}
    The toric submanifold $\gamma_1^{(\sigma(1))}\times\cdots \times \gamma_k^{(\sigma(k))}: \TT^k\to \conf_k(\Scal)$ is supported in the union $\Scal_{[i_1]}\cup\cdots \Scal_{[i_k]}$ and intersects $\conf_k(U_{2g})$ precisely once, transversally, in the component $\tri^{\uNbold_{\gamma,\sigma}}(\alpha_1\times \alpha_{-1}\times \cdots \times \alpha_{g}\times \alpha_{-g})$, which lies in the open $k$-cube $\tri^{(\sigma(1))}(\alpha_{i_1})\times \cdots \times \tri^{(\sigma(k))}(\alpha_{i_k})$.
\end{proposition}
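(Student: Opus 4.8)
The plan is to reduce everything to a slot-by-slot analysis, exploiting that each $\gamma_j$ is dual to $\alpha_{i_j}$. Write $T\subset\conf_k(\Scal)$ for the image of $\gamma_1^{(\sigma(1))}\times\cdots\times\gamma_k^{(\sigma(k))}$, so that a configuration $c\colon[k]\to\Scal$ lies in $T$ exactly when $c(\sigma(j))\in\gamma_j$ for every $j$. The support claim is then immediate: by hypothesis $\gamma_j\subset\Scal_{[i_j]}$, so every $c\in T$ has all its points in $\Scal_{[i_1]}\cup\cdots\cup\Scal_{[i_k]}$.

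Next I would compute $T\cap\conf_k(U_{2g})$. A point $c\in T$ lies in $\conf_k(U_{2g})$ iff $c(\sigma(j))\in\gamma_j\cap U_{2g}$ for every $j$. Since $\gamma_j$ is dual to $\alpha_{i_j}$, it meets $\alpha_{i_j}$ transversally in a single interior point $q_j$ and is disjoint from the interiors of all the other generating loops, so $\gamma_j\cap U_{2g}=\{q_j\}$. Hence $c\in T\cap\conf_k(U_{2g})$ forces $c(\sigma(j))=q_j$ for all $j$. Conversely, because the $\gamma_j$ are pairwise disjoint the points $q_1,\dots,q_k$ are pairwise distinct, so this prescription defines an honest configuration $c_0$, which visibly lies in both $T$ and $\conf_k(U_{2g})$. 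Thus $T\cap\conf_k(U_{2g})=\{c_0\}$, a single point; there is no tension with $T$ being positive-dimensional, because $U_{2g}$ is $1$-dimensional in $\Scal$, so $\conf_k(U_{2g})$ is not open in $\conf_k(\Scal)$.

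For the component, observe that in $c_0$ the particle $\sigma(j)$ sits on the interior of $\alpha_{i_j}$ at position $q_j$. By Proposition \ref{thm:Moriyama} the component of $\conf_k(U_{2g})$ containing a configuration is recorded by the partition-with-orders whose $\alpha_m$-part lists, in the order $\alpha_m$ traverses them, the labels of the particles lying on $\alpha_m$; for $c_0$ this list is by definition $\uNbold_{\gamma,\sigma}$. So $c_0$ lies in $\tri^{\uNbold_{\gamma,\sigma}}(\alpha_1\times\alpha_{-1}\times\cdots\times\alpha_g\times\alpha_{-g})$, and since particle $\sigma(j)$ is on $\alpha_{i_j}$ this component sits inside the open $k$-cube $\tri^{(\sigma(1))}(\alpha_{i_1})\times\cdots\times\tri^{(\sigma(k))}(\alpha_{i_k})$. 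For transversality at $c_0$ I would use the slot decomposition $T_{c_0}\conf_k(\Scal)=\bigoplus_{i=1}^k T_{c_0(i)}\Scal$: under it $T_{c_0}T$ is the sum of the lines $T_{q_j}\gamma_j$ placed in slot $\sigma(j)$, while $T_{c_0}\conf_k(U_{2g})$ is the sum of the lines $T_{q_j}\alpha_{i_j}$ placed in slot $\sigma(j)$; transversality of $\gamma_j$ and $\alpha_{i_j}$ at $q_j$ gives $T_{q_j}\gamma_j\oplus T_{q_j}\alpha_{i_j}=T_{q_j}\Scal$ in each slot, so the two submanifolds are transverse at $c_0$.

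I do not expect a serious obstacle: the proposition is essentially the statement that duality turns a global intersection into a product of one-dimensional transverse intersections inside $\Scal$. The only points that require care are checking that the $q_j$ are genuinely distinct so that $c_0$ is a valid configuration, and matching the combinatorial recipe defining $\uNbold_{\gamma,\sigma}$ with Moriyama's parametrisation of components in Proposition \ref{thm:Moriyama}. The sign of the (unique, transverse) intersection, not needed for the present statement, would afterwards be read off from Proposition/Convention \ref{prop:intersectionconvention}.
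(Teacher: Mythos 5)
Your proof is correct, and since the paper actually states Proposition \ref{prop:toricintersections} without giving a proof (implicitly treating it as an unwinding of the definitions of dual curves, the supports $\Scal_{[i_j]}$, and Moriyama's parametrisation of components from Proposition \ref{thm:Moriyama}), your argument is exactly the verification the reader is expected to supply. Each step — the support claim from $\gamma_j\subset\Scal_{[i_j]}$, the identification $\gamma_j\cap U_{2g}=\{q_j\}$ from duality, the distinctness of the $q_j$ from pairwise disjointness, the slot-by-slot transversality, and the matching of the recorded orderings with the defining recipe for $\uNbold_{\gamma,\sigma}$ — is the right one, and the observation that the component records both the distribution of particles among the cells and their order along each cell (whereas the $k$-cube records only the former) correctly explains why the conclusion is a containment rather than an equality when some $i_j$ coincide.
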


\subsection{Embedded surfaces with boundary}\label{subsec:Bianchitrick}
Here is an ingredient for constructing non-toric closed submanifolds due to Bianchi (see Example 3 from Section 4.1 of \cite{Bianchi2020}, and having also a counterpart in \cite{bianchistavrou22Johnson}).

\begin{figure}
  \centering 
     \begin{tikzpicture}[xscale=1.2,yscale=1.2]

    \filldraw[color=lightgray] (0,0) rectangle ++(4,1.5);
    \draw[dotted, line width=1pt] (0,0.05) to (0,1.5) to (4,1.5) to (4,0.05);

    \draw (0,0) circle (1.5pt);
    \draw[postaction={decoration={markings, mark=at position 0.8 with {\arrow{>}}}, decorate}](0.05,0) to ++(0.9,0);
    \draw (0.5,-0.3) node {\footnotesize $\alpha_i$};
    \draw (1,0) circle (1.5pt);
    \begin{scope}[shift={(1,0)}]
        \draw (0,0) circle (1.5pt);
    \draw[postaction={decoration={markings, mark=at position 0.8 with {\arrow{>}}}, decorate}](0.05,0) to ++(0.9,0);
    \draw (0.5,-0.3) node {\footnotesize $\alpha_{-i}$};
    \draw (1,0) circle (1.5pt);
    \end{scope}
    \begin{scope}[shift={(2,0)}]
        \draw (0,0) circle (1.5pt);
    \draw[postaction={decoration={markings, mark=at position 0.3 with {\arrow{<}}}, decorate}](0.05,0) to ++(0.9,0);
    \draw (0.5,-0.3) node {\footnotesize $\alpha_i$};
    \draw (1,0) circle (1.5pt);
    \begin{scope}[shift={(1,0)}]
        \draw (0,0) circle (1.5pt);
    \draw[postaction={decoration={markings, mark=at position 0.3 with {\arrow{<}}}, decorate}](0.05,0) to ++(0.9,0);
    \draw (0.5,-0.3) node {\footnotesize $\alpha_{-i}$};
    \draw (1,0) circle (1.5pt);
    \end{scope}
    \end{scope}

    \draw[domain=0:180, postaction={decoration={markings, mark=at position 0.75 with {\arrow{<}}}, decorate}] plot ({1.5+cos(\x)}, {sin(\x)});
    \draw (0.63,0.8) node {\footnotesize $\beta_i$};

    \draw[domain=0:180, postaction={decoration={markings, mark=at position 0.4 with {\arrow{<}}}, decorate}] plot ({2.5+cos(\x)}, {sin(\x)});
    \draw (3.2,1.1) node {\footnotesize $\beta_{-i}$};

    \filldraw (1.5,1) circle (1pt);
    \draw (1.6,1.1) node {\footnotesize $_1$};
    \filldraw (2.5,1) circle (1pt);
    \draw (2.6,1.1) node {\footnotesize $_2$};

    \filldraw[color=lightgray] (2,{sqrt(3)/2}) circle (2pt);
    \draw[color=darkgray]  (2,{sqrt(3)/2}) circle (2pt);
    
   \end{tikzpicture}
\caption{The embedded torus-minus-a-disc $\Sigma^{(1,2)}(\beta_i\times \beta_{-i}$).}
\label{fig:Sigma_beta_ixbeta_-i}
\end{figure}
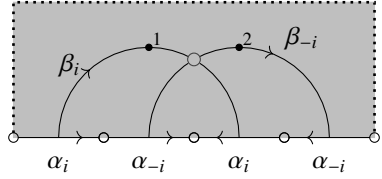

Suppose the simple closed curves $\gamma_1,\gamma_2:S^1\to \Scal$ intersect transversely once, positively, at the point $p$. Then the torus $\gamma_1\times\gamma_2$ fails to lie in $\conf_2(\Scal)$ because it intersects the diagonal of $\Scal\times \Scal$ at $(p,p)$. We obtain a (non-closed) submanifold of $\conf_n(\Scal)$ by ``throwing away'' the intersection with the diagonal. More precisely, we remove from $\TT^2$ the interior of a small disc centred in the preimage $q$ of $(p,p)$ to obtain an embedding of $\TT^2\setminus \mathring{D}^2$ in $\conf_2(\Scal)$. After a small isotopy, we can make the boundary $S^1\cong \partial(\TT^2\setminus \mathring{D}^2)=\overline{\partial(D^2)}$ to parametrise the loop $v^{(1,2)}(p)$ where particle $1$ sits at point $p$ and particle $2$ performs a small counterclockwise $2\pi$-orbit around $1$. Perform this isotopy near the boundary of $\TT^2\setminus \mathring{D}^2$, to obtain an embedding $\Sigma(\gamma_1\times \gamma_2): \TT^2\setminus \mathring{D}^2\to \conf_2(\Scal)$. 

Suppose we started with the curves $\beta_i,\beta_{-i}$, then we can assume that our isotopy takes place very locally and that $\Sigma(\beta_i\times \beta_{-i})$ has the properties: (i) its image is supported in $\Scal_{[i]}$, (ii) away from a neighbourhood of the boundary it agrees with the product embedding $\beta_i\times \beta_{-i}$, and (iii) the loop $v^{(1,2)}(p_i)$ is along the circle of a very small radius $\varepsilon>0$ centred at $p_i$. 

\begin{proposition}\label{prop:boundarysubsurfaceintersections}
    The submanifold $\Sigma(\beta_i\times \beta_{-i})$ of $\conf_2(\Scal)$ is supported in $\conf_2(\Scal_{[i]})$, and it intersects $\conf_2(U_{2g})$ transversally precisely once in the component $\tri^{(1)}(\alpha_i)\times \tri^{(2)}(\alpha_{-i})$, with sign of the intersection $\langle \tri^{(1)}(\alpha_i)\times \tri^{(2)}(\alpha_{-i}),\Sigma(\beta_i\times \beta_{-i})\rangle =- 1$.
\end{proposition}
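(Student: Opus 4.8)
The plan is to split $\Sigma(\beta_i\times\beta_{-i})$ into a \emph{product region}, where by property (ii) it coincides with $\beta_i^{(1)}\times\beta_{-i}^{(2)}$ with a small open disc around the point mapping to the diagonal deleted, and a \emph{boundary region}, a collar of $\partial(\TT^2\setminus\mathring D^2)$ on which the isotopy of properties (ii)--(iii) takes place. The clause ``supported in $\conf_2(\Scal_{[i]})$'' is immediate from property (i), so the content is the intersection with $\conf_2(U_{2g})$. For the boundary region: the point $p_i$ lies in the interior of the rectangular model, at positive distance from $U_{2g}$, which is closed in $\Scal$ and lies on the bottom edge; since throughout this region both particles stay within a small neighbourhood of $p_i$ (of size comparable to the collar width and to the $\varepsilon$ of property (iii)), shrinking the construction makes the boundary region disjoint from $\conf_2(U_{2g})$.

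It then remains to examine the product region. A configuration there lies in $\conf_2(U_{2g})$ exactly when particle $1$ sits on $\beta_i\cap U_{2g}$ and particle $2$ on $\beta_{-i}\cap U_{2g}$. Since $\beta_{\pm i}$ is dual to $\alpha_{\pm i}$, each of these sets is a single point, say $x_i\in\alpha_i$ and $x_{-i}\in\alpha_{-i}$; as $x_{\pm i}$ lie on the bottom edge of the model whereas $p_i$ does not, the configuration $(x_i,x_{-i})$ survives the removal of a sufficiently small disc. Hence $\Sigma(\beta_i\times\beta_{-i})$ meets $\conf_2(U_{2g})$ in the single configuration $(x_i,x_{-i})$, and by Theorem \ref{thm:Moriyama} this lies in the component $\tri^{(1)}(\alpha_i)\times\tri^{(2)}(\alpha_{-i})$.

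For transversality and the sign: near $(x_i,x_{-i})$ both submanifolds are products of honest smooth curves --- $\beta_i^{(1)}\times\beta_{-i}^{(2)}$ by property (ii), and $\alpha_i^{(1)}\times\alpha_{-i}^{(2)}$ --- and $\beta_{\pm i}$ meets $\alpha_{\pm i}$ transversally in a single point, so the intersection is transverse. I would then apply Proposition/Convention \ref{prop:intersectionconvention} with $\sigma=\tau=\id$: the factor $\varepsilon(\id,\id)$ is the sign of the transposition interchanging the two middle tensor slots, hence $-1$; and each of $\varepsilon_{i},\varepsilon_{-i}$ is the signed intersection count of $\alpha_{\pm i}$ with $\beta_{\pm i}$ \emph{in that order}, which equals $-1$ because duality of $\beta_{\pm i}$ to $\alpha_{\pm i}$ records $(\beta_{\pm i}',\alpha_{\pm i}')$, not $(\alpha_{\pm i}',\beta_{\pm i}')$, as the positively oriented frame. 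Multiplying the three signs yields $\langle\tri^{(1)}(\alpha_i)\times\tri^{(2)}(\alpha_{-i}),\Sigma(\beta_i\times\beta_{-i})\rangle=(-1)(-1)(-1)=-1$, as asserted.

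The step I expect to be the main obstacle is the sign bookkeeping --- reconciling the orientation baked into the notion of ``dual curve'' with the ordered pair $(\gamma,\delta)$ in Proposition/Convention \ref{prop:intersectionconvention}, and computing $\varepsilon(\sigma,\tau)$ correctly. Once the product and boundary regions are separated, the ``exactly one transversal intersection'' count is routine.
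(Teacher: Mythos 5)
Your proof is correct and follows the same route as the paper's: you reduce to the product region, where $\Sigma(\beta_i\times\beta_{-i})$ coincides with the cube $\beta_i^{(1)}\times\beta_{-i}^{(2)}$ near $U_{2g}$, and then invoke Proposition/Convention~\ref{prop:intersectionconvention} with $\sigma=\tau=\id$, getting $\varepsilon(\id,\id)=-1$ from the shuffle permutation. The one place you diverge from the paper's proof is in the intermediate signs: you set $\varepsilon_{1}=\langle\alpha_{i},\beta_{i}\rangle=-1$ and $\varepsilon_{2}=\langle\alpha_{-i},\beta_{-i}\rangle=-1$ by reading the definition of \emph{dual} literally (it asks for $(\gamma',\alpha_i')$ to be the positive frame, so the opposite ordering $(\alpha_i',\gamma')$ is negative), whereas the paper's proof records $\langle\alpha_i,\beta_i\rangle=\langle\alpha_{-i},\beta_{-i}\rangle=+1$. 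That value matches the orientation actually drawn for $\beta_i$ in Figure~\ref{fig:dualcurves}, which suggests a sign slip in the stated definition of ``dual'' rather than in the proof. Either way, the ambiguity toggles exactly two of the three factors, so the product is unchanged and both derivations yield $-1$; your argument for locating the unique transversal intersection in the correct Moriyama component is also the intended one.
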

\begin{proof}
    We only need to justify the sign  of the intersection.
    Since the piece we removed from the torus and the small isotopy in the definition of $\Sigma(\beta_i\times \beta_{-i})$ are supported away from $U_{2g}$, the required intersection is modelled after
    $$\langle \alpha_i^{(1)}\times \alpha_{-i}^{(2)}, \beta_i^{(1)}\times \beta_{-i}^{(2)}\rangle.$$
    By Proposition \ref{prop:intersectionconvention}, this is computed as $$-\langle \alpha_i, \beta_i \rangle\cdot \langle \alpha_{-i}, \beta_{-i}\rangle =-(+1)(+1)=-1.$$
\end{proof}


\subsection{Auxiliary tubes}\label{sec:tubes} 
Given two points $p,q\in \Scal$ at least $\varepsilon$ away from $U_{2g}$, the circles 
$v^{(1,2)}(p)$ and $v^{(1,2)}(q)$ from Section \ref{subsec:Bianchitrick} are isotopic, and any path $\epsilon: p_i\rightsquigarrow p_{i+1}$, gives an explicit isotopy cylinder
\begin{align}
    v^{(1,2)}(\epsilon):[0,1]\times S^1&\to \conf_2(\Scal),\label{eq:tubedefn} \\
    (t,\theta)&\mapsto (\epsilon(t), v_{1,2}(\epsilon(t))) \notag
\end{align}
with boundary $\bar{v}^{(1,2)}(p)\sqcup v^{(1,2)}(q)$.  For example, $v^{(1,2)}(\varepsilon_{i,i+1})$ glued appropriately with $\Sigma(\beta_i\times \beta_{-i})$ and $\Sigma(\beta_{i+1}\times \beta_{-(i+1)})$ forms an embedded surface of genus $2$ in $\conf_2(\Scal)$.

\begin{figure}[h]
  \centering 
   \begin{tikzpicture}[xscale=1.2,yscale=1.2]

    \filldraw[color=lightgray] (0,0) rectangle ++(3,2);
    \filldraw[color=gray] (0.3,0) to (0.3,1.5) to (0.7,1.5) to (.7,0);
    \filldraw[color=gray] (0.5,1.5) circle (0.2);
    \filldraw[color=gray] (2.3,0) to (2.3,1.5) to (2.7,1.5) to (2.7,0);
    \filldraw[color=gray] (2.5,1.5) circle (0.2);

    \draw[dotted, line width=1pt] (0,0.05) to (0,2) to (3,2) to (3,0.05);

    \begin{scope}[shift={(0,0.15)}]
        
        
        \begin{scope}[shift={(0,-0.6)}, opacity=0.4]
        \filldraw (0.5,0.5) circle (1pt);
        \draw[domain=0:360, postaction={decoration={markings, mark=at position 1 with {\arrow{>}}}, decorate}] plot ({0.5+0.2*cos(\x)}, {0.5+0.2*sin(\x)});
        \filldraw (0.3,0.5) circle (1pt);      
        \end{scope}

        \begin{scope}[shift={(0,-0.75)}, opacity=0.25]
        \draw[domain=0:360, postaction={decoration={markings, mark=at position 1 with {\arrow{>}}}, decorate}] plot ({0.5+0.2*cos(\x)}, {0.5+0.2*sin(\x)});     
        \end{scope}

        \begin{scope}[opacity=0.4]
            
            \draw[domain=0:360, postaction={decoration={markings, mark=at position 0.1 with {\arrow{>}}}, decorate}] plot ({2.5+0.2*cos(180-\x)}, {-0.20+0.2*sin(180-\x)});
        \end{scope}
        
        \begin{scope}[opacity=0.3]
            \filldraw (2.5,-0.05) circle (1pt);
            \draw[domain=0:360, postaction={decoration={markings, mark=at position 0.1 with {\arrow{>}}}, decorate}] plot ({2.5+0.2*cos(180-\x)}, {-0.05+0.2*sin(180-\x)});
            \filldraw (2.7,-0.05) circle (1pt);
        \end{scope}
        
        \begin{scope}[opacity=0.1]
        \filldraw (2.5,0.1) circle (1pt);
        \draw[domain=0:360, postaction={decoration={markings, mark=at position 0.1 with {\arrow{>}}}, decorate}] plot ({2.5+0.2*cos(180-\x)}, {0.1+0.2*sin(180-\x)});
        \filldraw (2.7,0.1) circle (1pt);
        \end{scope}

    \end{scope}

    \filldraw[color=white] (0.1,-0.012) rectangle ++(0.8,-0.3);
    \filldraw[color=white] (2.1,-0.012) rectangle ++(0.8,-0.3);

    \begin{scope}[shift={(-1,0)}]
    \draw (1,0) circle (1.5pt);
    \begin{scope}[shift={(1,0)}]
        \draw (0,0) circle (1.5pt);
    \draw[postaction={decoration={markings, mark=at position 0.8 with {\arrow{>}}}, decorate}](0.05,0) to ++(0.9,0);
    \draw (0.5,-0.3) node {\footnotesize $\alpha_{i}$};
    \draw (1,0) circle (1.5pt);
    \end{scope}

    \begin{scope}[shift={(2,0)}]
        \draw (0.05,0) to (0.3,0);
        \draw[dotted, line width=1pt] (0.3,0) to (0.7,0);
        \draw (0.7,0) to (0.95,0);
        
        \draw (1,0) circle (1.5pt);
        
    \begin{scope}[shift={(1,0)}]
        \draw (0,0) circle (1.5pt);
    \draw[postaction={decoration={markings, mark=at position 0.3 with {\arrow{<}}}, decorate}](0.05,0) to ++(0.9,0);
    \draw (0.5,-0.3) node {\footnotesize $\alpha_{i}$};
    \draw (1,0) circle (1.5pt);
    \end{scope}
    
    \end{scope}

    \begin{scope}[shift={(1,0)}]
        \draw[postaction={decoration={markings, mark=at position 0.3 with {\arrow{>}}}, decorate}] (0.5,1.5) to (0.5,0);
        \draw[postaction={decoration={markings, mark=at position 0.5 with {\arrow{<}}}, decorate}] (2.5,1.5) to (2.5,0);
        \draw (2.3,0.75) node {\footnotesize $\epsilon$};

    \begin{scope}[shift={(0,-0.15)}]
        \filldraw (0.5,0.5) circle (1pt);
        \draw (0.42,0.56) node {\footnotesize $_1$};
        \draw[domain=0:360, postaction={decoration={markings, mark=at position 1 with {\arrow{>}}}, decorate}] plot ({0.5+0.2*cos(\x)}, {0.5+0.2*sin(\x)});
        \filldraw (0.3,0.5) circle (1pt);
        \draw (0.2,0.56) node {\footnotesize $_2$};

        \begin{scope}[shift={(0,0.15)}, opacity=0.6]
            \filldraw (0.5,0.5) circle (1pt);
            \draw[domain=0:360, postaction={decoration={markings, mark=at position 1 with {\arrow{>}}}, decorate}] plot ({0.5+0.2*cos(\x)}, {0.5+0.2*sin(\x)});
            \filldraw (0.3,0.5) circle (1pt);      
        \end{scope}
        \begin{scope}[shift={(0,0.3)}, opacity=0.4]
            \filldraw (0.5,0.5) circle (1pt);
            \draw[domain=0:360, postaction={decoration={markings, mark=at position 1 with {\arrow{>}}}, decorate}] plot ({0.5+0.2*cos(\x)}, {0.5+0.2*sin(\x)});
            \filldraw (0.3,0.5) circle (1pt);      
        \end{scope}
        \begin{scope}[shift={(0,0.45)}, opacity=0.2]
            \filldraw (0.5,0.5) circle (1pt);
            \draw[domain=0:360, postaction={decoration={markings, mark=at position 1 with {\arrow{>}}}, decorate}] plot ({0.5+0.2*cos(\x)}, {0.5+0.2*sin(\x)});
            \filldraw (0.3,0.5) circle (1pt);      
        \end{scope}
        \begin{scope}[shift={(0,0.6)}, opacity=0.1]
            \filldraw (0.5,0.5) circle (1pt);
            \draw[domain=0:360, postaction={decoration={markings, mark=at position 1 with {\arrow{>}}}, decorate}] plot ({0.5+0.2*cos(\x)}, {0.5+0.2*sin(\x)});
            \filldraw (0.3,0.5) circle (1pt);      
        \end{scope}
    
        \begin{scope}[shift={(0,-0.15)}, opacity=0.6]
            \filldraw (0.5,0.5) circle (1pt);
            \draw[domain=0:360, postaction={decoration={markings, mark=at position 1 with {\arrow{>}}}, decorate}] plot ({0.5+0.2*cos(\x)}, {0.5+0.2*sin(\x)});
            \filldraw (0.3,0.5) circle (1pt);      
        \end{scope}
        
    \end{scope}
    \end{scope}
    \end{scope}

   \end{tikzpicture}
   \caption{\footnotesize An animation of the tube $v_{1,2}(\epsilon)$: particle $1$ is traversing arc $\epsilon$ while particle $2$ is orbiting $1$ at a fixed radius $\varepsilon$.}
   \label{fig:tubeanimation}
\end{figure}
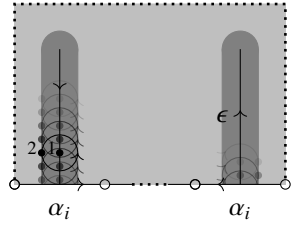

\begin{lemma}\label{lem:tubeintersection}
    The submanifold $v^{(1,2)}(\epsilon)$ of $\conf_2(\Scal)$ is supported in an $\varepsilon$-neighbourhood of the arc $\epsilon$. If $\epsilon$ is disjoint from $U_{2g}$ and $\varepsilon$ is small enough, then $v^{(1,2)}(\epsilon)$ is disjoint from $\conf_2(U_{2g})$. If, on the other hand, 
    $\epsilon$ intersects $U_{2g}$ transversely once along $\alpha_i$ so that $\langle \epsilon, \alpha_i\rangle$ is positive, then $v^{(1,2)}(\epsilon)$
    intersects $\conf_2(U_{2g})$ transversely twice, once in each of the components $\tri^{(1,2)}(\alpha_i)$ and $\tri^{(2,1)}(\alpha_i)$ with the equal signs
    \begin{equation*}
        \langle \tri^{(1,2)}(\alpha_i) , v^{(1,2)}(\epsilon)\rangle =
        \langle \tri^{(2,1)}(\alpha_i) , v^{(1,2)}(\epsilon)\rangle =+1.
    \end{equation*}
\end{lemma}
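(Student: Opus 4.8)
The plan is to read the intersections of $v^{(1,2)}(\epsilon)$ with $\conf_2(U_{2g})$ straight off the parametrisation $(t,\theta)\mapsto\bigl(\epsilon(t),\ \epsilon(t)+\varepsilon e^{i\theta}\bigr)$ in a local chart, where $\varepsilon$ is the fixed small orbit radius and particle $2$ runs counterclockwise. The support statement is immediate, since particle $1$ always lies on $\epsilon$ and particle $2$ within distance $\varepsilon$ of it. If moreover $\epsilon$ misses $U_{2g}$, then $\epsilon$ is compact and disjoint from the closed set $\overline{U_{2g}}\cup\{*\}$, so for $\varepsilon$ below their distance the $\varepsilon$-neighbourhood of $\epsilon$ still misses $U_{2g}$; in particular no configuration of $v^{(1,2)}(\epsilon)$ puts a particle on $U_{2g}$, giving disjointness from $\conf_2(U_{2g})$.

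For the crossing case I would first locate the intersection points. By hypothesis $\epsilon$ meets $U_{2g}$ in the single transverse point $q=\epsilon(t_0)\in\alpha_i$, so a configuration of $v^{(1,2)}(\epsilon)$ lying in $\conf_2(U_{2g})$ must have its first particle at $q$, i.e.\ $t=t_0$; its second particle then lies on the $\varepsilon$-circle about $q$. Since $q$ is interior to the embedded arc $\alpha_i$ and at positive distance from $\overline{U_{2g}}\setminus\alpha_i$, for $\varepsilon$ small this circle meets $U_{2g}$ in exactly the two transverse points where it crosses $\alpha_i$, one on either side of $q$. This produces exactly two intersection points $(t_0,\theta_0)$ and $(t_0,\theta_1)$; at one of them particle $2$ sits ahead of particle $1$ along the oriented arc $\alpha_i$ and at the other behind, so they lie respectively in the components $\tri^{(1,2)}(\alpha_i)$ and $\tri^{(2,1)}(\alpha_i)$, and transversality at each will come out of the sign computation.

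That sign computation is the bulk of the argument and the step I expect to be fiddliest. I would work in a local chart at $q$ in which $\Scal$ carries its $\CC$-orientation $dx\wedge dy$ and the loop $\alpha_i$ is the $x$-axis oriented in the $+x$-direction; the positivity hypothesis $\langle\epsilon,\alpha_i\rangle>0$ then forces $\epsilon'(t_0)=(a,b)$ with $b<0$. In the product chart on $\Scal^2$ the definition of $\tri^\Nbold$ presents the oriented tangent plane of $\tri^{(1,2)}(\alpha_i)$ (resp.\ $\tri^{(2,1)}(\alpha_i)$) as the span of the two ``$+x$''-directions of the particles taken in the order dictated by $\Nbold$, while the oriented tangent plane of $v^{(1,2)}(\epsilon)$ at $(t_0,\theta)$ is spanned by the image of $\partial_t$, namely $\bigl(\epsilon'(t_0),\epsilon'(t_0)\bigr)$, and of $\partial_\theta$, namely $\bigl(0,\varepsilon(-\sin\theta,\cos\theta)\bigr)$, evaluated at $\theta=\theta_0$ resp.\ $\theta_1$. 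Concatenating the two frames in the order prescribed for the pairing — the properly embedded $\tri$-factor first, then the cylinder — and computing the resulting $4\times4$ determinant against $dx_1\,dy_1\,dx_2\,dy_2$ yields $-b>0$ in both cases, hence sign $+1$ (and transversality); alternatively one can first isotope $\epsilon$ so that it crosses $\alpha_i$ perpendicularly and then invoke Proposition/Convention~\ref{prop:intersectionconvention} with $\sigma,\tau$ the evident permutations. The only genuine care needed is orientation bookkeeping — fixing the cylinder's orientation as $dt\wedge d\theta$ with $\theta$ counterclockwise, keeping straight the product orientations of $\tri^{(1,2)}(\alpha_i)$ and $\tri^{(2,1)}(\alpha_i)$ as dictated by the orders $(1,2)$ and $(2,1)$, and using the sign convention of Proposition/Convention~\ref{prop:intersectionconvention} — and it is exactly this bookkeeping that forces the two local signs to agree, both equal to $+1$.
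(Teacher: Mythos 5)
Your proposal is correct, and the route you take in the sign computation is genuinely different from the paper's. You compute the $4\times4$ tangent-frame determinant directly in a local chart at the crossing point, which is clean and self-contained; the paper instead replaces the tube, near its two intersections with $\conf_2(U_{2g})$, by two small ``rectangles'' $\epsilon^{(1)}\times\bar\epsilon_+^{(2)}$ and $\epsilon^{(1)}\times\epsilon_-^{(2)}$ (with $\epsilon_\pm$ parallel shifts of $\epsilon$ on either side), and then reads off the signs from Proposition/Convention~\ref{prop:intersectionconvention}, exactly as it does for every other intersection in Section~\ref{sec:geometricconstructions}. Your explicit computation — $\alpha_i$ along the $x$-axis, $\epsilon'(t_0)=(a,b)$ with $b<0$ forced by $\langle\epsilon,\alpha_i\rangle>0$, frames $(\partial_{x_1},\partial_{x_2})$ and $(\partial_{x_2},\partial_{x_1})$ for $\tri^{(1,2)}$ and $\tri^{(2,1)}$, and the determinant $-\varepsilon b>0$ at both $\theta=0,\pi$ — is internally consistent with Proposition/Convention~\ref{prop:intersectionconvention} and reproduces the paper's $+1,+1$. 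The trade-off is that the paper's recipe stays within the uniform bookkeeping framework used throughout the rest of the zoo of submanifolds, while yours is more elementary for this one lemma but a one-off. One small remark: the paper also records (in the subsequent Remark) a slicker a priori argument for the equality of the two signs — the tube bounds a solid cylinder in $\Scal^2$, so its algebraic intersection with $\alpha_i^{(1)}\times\alpha_i^{(2)}=\tri^{(1,2)}(\alpha_i)-\tri^{(2,1)}(\alpha_i)$ vanishes — which you might like as a sanity check on your determinant.
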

\begin{proof}
    The first assertion is obvious. If $\epsilon$ is disjoint from $U_{2g}$, then by compactness the two subspaces are a positive distance away; take $\varepsilon$ to be smaller than this.
    In the second case, assume the intersection between $\alpha_i$ and $\epsilon$ is $p$. Then the intersection between $\conf_2(U_{2g})$ and $v^{(1,2)}(\epsilon)$ occur at the two points $(p,p\pm\varepsilon)\in \conf_2(U_{2g})\subset \conf_2(\RR)$. To find the signs, we follow the recipe of Section \ref{sec:intersectionconventions}.

     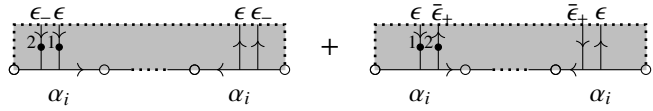
\begin{figure}
  \centering 
   \begin{tikzpicture}[xscale=1.2,yscale=1.2]

    \begin{scope}[shift={(-3,0)}]
    \filldraw[color=lightgray] (0,0) rectangle ++(3,0.5);
    \draw[dotted, line width=1pt] (0,0.05) to (0,0.5) to (3,0.5) to (3,0.05);
        
    \begin{scope}[shift={(-1,0)}]
    \draw (1,0) circle (1.5pt);
    \begin{scope}[shift={(1,0)}]
        \draw (0,0) circle (1.5pt);
    \draw[postaction={decoration={markings, mark=at position 0.8 with {\arrow{>}}}, decorate}](0.05,0) to ++(0.9,0);
    \draw (0.5,-0.3) node {\footnotesize $\alpha_{i}$};
    \draw (1,0) circle (1.5pt);
    \end{scope}
    \begin{scope}[shift={(2,0)}]
        \draw (0.05,0) to (0.3,0);
        \draw[dotted, line width=1pt] (0.3,0) to (0.7,0);
        \draw (0.7,0) to (0.95,0);
        
    \draw (1,0) circle (1.5pt);
    \begin{scope}[shift={(1,0)}]
        \draw (0,0) circle (1.5pt);
    \draw[postaction={decoration={markings, mark=at position 0.3 with {\arrow{<}}}, decorate}](0.05,0) to ++(0.9,0);
    \draw (0.5,-0.3) node {\footnotesize $\alpha_{i}$};
    \draw (1,0) circle (1.5pt);
    \end{scope}
    
    \end{scope}

    \begin{scope}[shift={(1,0)}]
    
        \draw[postaction={decoration={markings, mark=at position 0.3 with {\arrow{>}}}, decorate}] (0.5,0.5) to (0.5,0);
    \draw[postaction={decoration={markings, mark=at position 0.5 with {\arrow{<}}}, decorate}] (2.5,0.5) to (2.5,0);
     \draw[postaction={decoration={markings, mark=at position 0.3 with {\arrow{>}}}, decorate}] (0.3,0.5) to (0.3,0);
    \draw[postaction={decoration={markings, mark=at position 0.5 with {\arrow{<}}}, decorate}] (2.7,0.5) to (2.7,0);
    
    \draw (2.5,0.62) node {\footnotesize $\epsilon$};
    \draw (2.75,0.6) node {\footnotesize $\epsilon_-$};
    \draw (0.5,0.62) node {\footnotesize $\epsilon$};
    \draw (0.3,0.6) node {\footnotesize $\epsilon_-$};
    
    \filldraw (0.5,0.25) circle (1pt);
    \draw (0.42,0.31) node {\footnotesize $_1$};
   
    \filldraw (0.3,0.25) circle (1pt);
    \draw (0.2,0.31) node {\footnotesize $_2$};
    \end{scope}
    \end{scope}
    \end{scope}

    \draw (0.5,0.25) node {$+$};

    \begin{scope}[shift={(1,0)}]
            \filldraw[color=lightgray] (0,0) rectangle ++(3,0.5);
    \draw[dotted, line width=1pt] (0,0.05) to (0,0.5) to (3,0.5) to (3,0.05);
        
    \begin{scope}[shift={(-1,0)}]
    
    \draw (1,0) circle (1.5pt);
    \begin{scope}[shift={(1,0)}]
        \draw (0,0) circle (1.5pt);
    \draw[postaction={decoration={markings, mark=at position 0.9 with {\arrow{>}}}, decorate}](0.05,0) to ++(0.9,0);
    \draw (0.5,-0.3) node {\footnotesize $\alpha_{i}$};
    \draw (1,0) circle (1.5pt);
    \end{scope}
    \begin{scope}[shift={(2,0)}]
        \draw (0.05,0) to (0.3,0);
        \draw[dotted, line width=1pt] (0.3,0) to (0.7,0);
        \draw (0.7,0) to (0.95,0);
        
    \draw (1,0) circle (1.5pt);
    \begin{scope}[shift={(1,0)}]
        \draw (0,0) circle (1.5pt);
    \draw[postaction={decoration={markings, mark=at position 0.2 with {\arrow{<}}}, decorate}](0.05,0) to ++(0.9,0);
    \draw (0.5,-0.3) node {\footnotesize $\alpha_{i}$};
    \draw (1,0) circle (1.5pt);
    \end{scope}
    
    \end{scope}

    \begin{scope}[shift={(1,0)}]
        \draw[postaction={decoration={markings, mark=at position 0.3 with {\arrow{>}}}, decorate}] (0.5,0.5) to (0.5,0);
    \draw[postaction={decoration={markings, mark=at position 0.5 with {\arrow{<}}}, decorate}] (2.5,0.5) to (2.5,0);
    
     \draw[postaction={decoration={markings, mark=at position 0.3 with {\arrow{<}}}, decorate}] (0.7,0.5) to (0.7,0);
    \draw[postaction={decoration={markings, mark=at position 0.5 with {\arrow{>}}}, decorate}] (2.3,0.5) to (2.3,0);
    
    \draw (2.5,0.62) node {\footnotesize $\epsilon$};
    \draw (2.25,0.62) node {\footnotesize $\bar{\epsilon}_+$};
    \draw (0.45,0.62) node {\footnotesize $\epsilon$};
    \draw (0.75,0.62) node {\footnotesize $\bar{\epsilon}_+$};

    \filldraw (0.5,0.25) circle (1pt);
    \draw (0.42,0.31) node {\footnotesize $_1$};
   
    \filldraw (0.7,0.25) circle (1pt);
    \draw (0.6,0.31) node {\footnotesize $_2$};
    \end{scope}
    \end{scope}

    \end{scope}
   \end{tikzpicture}
   \caption{\footnotesize The intersection of $v_{1,2}(\epsilon)$ with $\conf_2(U_1)$ is the same as that of $\epsilon^{(1)}\times \epsilon_-^{(2)}+\epsilon^{(1)}\times \bar{\epsilon}_+^{(2)}$ depicted here.}
   \label{fig:tubeintersections}
\end{figure}
     Let $\epsilon_{\pm}=\epsilon\pm \varepsilon$ be horizontal shifts of the arc $\epsilon$ so that the three arcs intersect $\alpha_{i}$ in the order $\epsilon_-, \epsilon, \epsilon_+$. Then, up to a small isotopy $v_{1,2}(\epsilon)$ is modeled near its intersections with $U_{2g}$ by the rectangles $\epsilon^{(1)}\times \bar\epsilon_+^{(2)}$ and $ \epsilon^{(1)}\times \epsilon_-^{(2)}$; this is by recalling the parametrisation of $v^{(1,2)}(\epsilon)$ with a counterclockwise orbit \eqref{eq:tubedefn} (cf. Figures \ref{fig:tubeanimation} and \ref{fig:tubeintersections}). The former intersects $\conf_2(U_{2g})$ in $\tri^{(1,2)}(\alpha_i)$ and the latter in $\tri^{(2,1)}(\alpha_i)$; both intersections are transversal at a single point. The signs are given by  Proposition \ref{prop:intersectionconvention} and the assumed fact $\langle \epsilon, \alpha_i\rangle=1$ as
     \begin{align*}
         \langle \tri^{(1,2)}(\alpha_i), \epsilon^{(1)}\times \bar\epsilon_+^{(2)}\rangle&=\langle \alpha_i^{(1)}\times \alpha_i^{(2)}, \epsilon^{(1)}\times \bar\epsilon_+^{(2)}\rangle\\
         &=-\langle \alpha_i^{(1)}, \epsilon^{(1)}\rangle\cdot \langle \alpha_i^{(2)}, \bar\epsilon_+^{(2)}\rangle\\
         &=-(-1)(+1)\\
         &=+1,
     \end{align*}
     and
     \begin{align*}
         \langle \tri^{(2,1)}(\alpha_i), \epsilon^{(1)}\times \epsilon_-^{(2)}\rangle&=\langle \alpha_i^{(2)}\times \alpha_i^{(1)}, \epsilon^{(1)}\times \epsilon_-^{(2)}\rangle\\
         &=+\langle \alpha_i^{(1)}, \epsilon^{(1)}\rangle\cdot \langle \alpha_i^{(2)}, \epsilon_-^{(2)}\rangle\\
         &=+(-1)(-1)\\
         &=+1.
     \end{align*}
\end{proof}

\begin{remark}
    We could justify a priori why the two signs in the last lemma must be equal. The tube $v_{1,2}(\epsilon)$ is null-bordant in $\Scal^2$ as it bounds the solid cylinder when letting the radius $\varepsilon$ shrink to zero. So its algebraic intersection with any proper submanifold of $\conf_2(\Scal)$ that remains proper in $\Scal^2$ must vanish. Proposition \ref{prop:exteriorproduct} gives the decomposition of the open square $\alpha_i^{(1)}\times \alpha_i^{(2)}=\tri^{(1,2)}(\alpha_i)-\tri^{(2,1)}(\alpha_i)$. As the intersection of $v_{1,2}(\epsilon)$ with $\alpha_i^{(1)}\times \alpha_i^{(2)}$ vanishes, its intersections with $\tri^{(1,2)}(\alpha_i)$  and $\tri^{(2,1)}(\alpha_i)$ are equal.
\end{remark}

\subsection{The three particle entanglement $\Ecal$}\label{sec:Ecal} We will now combine the constructions from Sections \ref{sec:tori}, \ref{subsec:Bianchitrick} and \ref{sec:tubes} to construct a closed, oriented, $3$-dimensional submanifold $\Ecal$ of $\conf_3(\Scal)$. The reader should keep in mind that the aim of $\Ecal$ is to intersect the component $\tri^{(1,2)}(\alpha_1)\times \tri^{(3)}(\alpha_{-1})$ of $\conf_3(U_{2g})$ once. The construction of is teleological, with the aim to make Theorem \ref{thm:sEcalPerfectPairing} true. 

\begin{figure}
  \centering 
   \begin{tikzpicture}[xscale=1.2,yscale=1.2]

    \filldraw[color=lightgray] (0,0) rectangle ++(8,3);
    \draw[dotted, line width=1pt] (0,0.05) to (0,3) to (8,3) to (8,0.05);

    \draw (0,0) circle (1.5pt);
    \draw[postaction={decoration={markings, mark=at position 0.67 with {\arrow{>}}}, decorate}](0.05,0) to ++(0.9,0);
    \draw (0.5,-0.3) node {\footnotesize $\alpha_1$};
    \draw (1,0) circle (1.5pt);
    \begin{scope}[shift={(1,0)}]
        \draw (0,0) circle (1.5pt);
    \draw[postaction={decoration={markings, mark=at position 0.8 with {\arrow{>}}}, decorate}](0.05,0) to ++(0.9,0);
    \draw (0.5,-0.3) node {\footnotesize $\alpha_{-1}$};
    \draw (1,0) circle (1.5pt);
    \end{scope}
    \begin{scope}[shift={(2,0)}]
        \draw (0,0) circle (1.5pt);
    \draw[postaction={decoration={markings, mark=at position 0.43 with {\arrow{<}}}, decorate}](0.05,0) to ++(0.9,0);
    \draw (0.5,-0.3) node {\footnotesize $\alpha_1$};
    \draw (1,0) circle (1.5pt);
    \begin{scope}[shift={(1,0)}]
        \draw (0,0) circle (1.5pt);
    \draw[postaction={decoration={markings, mark=at position 0.3 with {\arrow{<}}}, decorate}](0.05,0) to ++(0.9,0);
    \draw (0.5,-0.3) node {\footnotesize $\alpha_{-1}$};
    \draw (1,0) circle (1.5pt);
    \end{scope}
    \end{scope}

    \draw[domain=0:180, postaction={decoration={markings, mark=at position 0.75 with {\arrow{<}}}, decorate}] plot ({1.5+cos(\x)}, {sin(\x)});
    \draw (0.63,0.8) node {\footnotesize $\beta_1$};

    \draw[color=red, domain=0:180, postaction={decoration={markings, mark=at position 0.75 with {\arrow{<}}}, decorate}] plot ({1.5+0.8*cos(\x)}, {0.8*sin(\x)});
    \draw (1,0.35) node {\color{red} \footnotesize $\beta_1'$};

    \draw (1.92,{sqrt(3)/2+0.22}) node {\footnotesize $p_1$};
    \draw (1.82,0.5) node {\color{teal} \footnotesize $p_1'$};

    \draw[color=blue, looseness=1, postaction={decoration={markings, mark=at position 0.8 with {\arrow{>}}}, decorate}] ({2.5-cos(2*asin(0.4))},{0.8*sin(acos(0.4))}) to[out=180, in=90] (1.25,0);
    
    \draw[color=blue, looseness=1.1, postaction={decoration={markings, mark=at position 0.3 with {\arrow{>}}}, decorate}] (3.75,0) to[out=90, in=120] (6,{sqrt(3)/2});
    
    \draw (4,1) node {\color{blue}\footnotesize $\epsilon'$};

    \draw[domain=0:180, postaction={decoration={markings, mark=at position 0.4 with {\arrow{<}}}, decorate}] plot ({2.5+cos(\x)}, {sin(\x)});
    \draw (3.2,1.1) node {\footnotesize $\beta_{-1}$};
    \filldraw (2,{sqrt(3)/2}) circle (1pt);

    \filldraw[color=teal] ({2.5-cos(2*asin(0.4))},{0.8*sin(acos(0.4))}) circle (1pt);

    \begin{scope}[shift={(4,0)}]
    
        \draw (0,0) circle (1.5pt);
    \draw[postaction={decoration={markings, mark=at position 0.8 with {\arrow{>}}}, decorate}](0.05,0) to ++(0.9,0);
    \draw (0.5,-0.3) node {\footnotesize $\alpha_{2}$};
    \draw (1,0) circle (1.5pt);
    
    \begin{scope}[shift={(1,0)}]
        \draw (0,0) circle (1.5pt);
    \draw[postaction={decoration={markings, mark=at position 0.8 with {\arrow{>}}}, decorate}](0.05,0) to ++(0.9,0);
    \draw (0.5,-0.3) node {\footnotesize $\alpha_{-2}$};
    \draw (1,0) circle (1.5pt);
    \end{scope}
    
    \begin{scope}[shift={(2,0)}]
    
        \draw (0,0) circle (1.5pt);
    \draw[postaction={decoration={markings, mark=at position 0.3 with {\arrow{<}}}, decorate}](0.05,0) to ++(0.9,0);
    \draw (0.5,-0.3) node {\footnotesize $\alpha_{2}$};
    \draw (1,0) circle (1.5pt);
    
    \begin{scope}[shift={(1,0)}]
    
        \draw (0,0) circle (1.5pt);
    \draw[postaction={decoration={markings, mark=at position 0.3 with {\arrow{<}}}, decorate}](0.05,0) to ++(0.9,0);
    \draw (0.5,-0.3) node {\footnotesize $\alpha_{-2}$};
    \draw (1,0) circle (1.5pt);
    
    \end{scope}
    \end{scope}
    \draw[domain=0:180, postaction={decoration={markings, mark=at position 0.6 with {\arrow{<}}}, decorate}] plot ({1.5+cos(\x)}, {sin(\x)});
    \draw (1.1,0.7) node {\footnotesize $\beta_{2}$};
    \draw[domain=0:180, postaction={decoration={markings, mark=at position 0.4 with {\arrow{<}}}, decorate}] plot ({2.5+cos(\x)}, {sin(\x)});
    \draw (3.3,1.1) node {\footnotesize $\beta_{-2}$};
    \filldraw (2,{sqrt(3)/2}) circle (1pt);
    \draw (2,{sqrt(3)/2-0.2}) node {\footnotesize $p_{2}$};
    \end{scope}
    
    \draw[domain={atan(sqrt(3)/4)}:{180-atan(sqrt(3)/4)}, postaction={decoration={markings, mark=at position 0.4 with {\arrow{<}}}, decorate}] plot ({4+sqrt(19/4)*cos(\x)}, {sqrt(19/4)*sin(\x)});
    \draw (4.3,2.35) node {\footnotesize $\epsilon$};

   \end{tikzpicture}
   \caption{\footnotesize The curves involved in $\Ecal$. The curve $\beta_1'$ is parallel to $\beta_1$ and intersects $\alpha_1$ after $\beta_1$ does. The intersection $p_1'$ is between $\beta_1'$ and $\beta_{-1}$. The arc $\epsilon'$ is from $p_1'$ to $p_2$; it intersects none of the other curves except at its boundary, and intersects only the arc $\alpha_{-1}$, with $(\epsilon',\alpha_1)$ positive.}
   \label{fig:inputforEcal}
\end{figure}
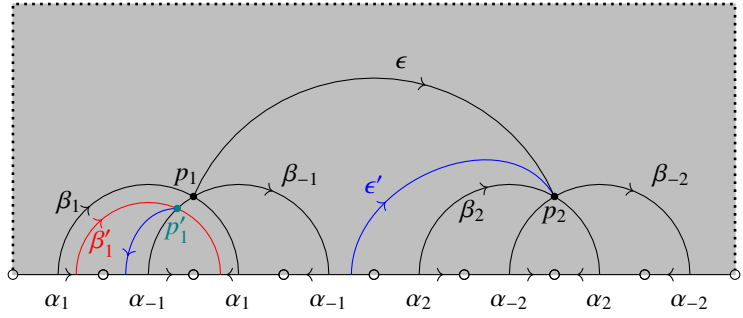

Referring the reader to Figure \ref{fig:inputforEcal}, take a parallel copy $\beta_1'$ of $\beta_1$ that intersects $\beta_{-1}$ at $p_1'$, and so that $\alpha_1$ intersects first $\beta_1$ and then $\beta_1'$. Consider the $3$-torus $\beta_1^{(1)}\times\beta_{-1}^{(3)}\times\beta_1'^{(2)}$ (that intersects $\tri^{(1,2)}(\alpha_1)\times \tri^{(3)}(\alpha_{-1})$ transversally once). This torus fails to be contained in $\conf_3(\Scal)$ because it intersects the $13$-diagonal along the curve $(p_1,p_1)\times \beta_1'^{(2)}$
and the $23$-diagonal along $\beta_1^{(1)}\times (p_1',p_1')$. As in Section \ref{subsec:Bianchitrick}, we take a small disc $D^2$ around the $(\beta_1^{(1)}\times\beta_{-1}^{(3)})$-preimage of $(p_1,p_1)$ and remove the open solid torus $\mathring{D}^2\times S^1$ from $S^1\times S^1\times S^1$ to obtain a manifold with a boundary torus $S^1\times S^1$; after a small isotopy supported near $p_1\in \Scal$, we may assume this torus parametrises $v^{(1,3)}(p_1)\times {\beta'}_1^{(2)}$ in an orientation preserving manner. Assuming the radius $\varepsilon$ is small enough, we can act similarly around $\beta_1^{(1)}\times (p_1',p_1')$: remove another open solid torus (disjoint from the first one) and isotope near $p_1'$ to obtain a new boundary parametrising $\beta_1^{(1)}\times v^{(3,2)}(p_1')$ in an orientation preserving manner. (It is a bit subtle to see why it is with this and not with the inverse orientation. Removing a disc from the torus $\beta_{-1}^{(3)}\times {\beta'}_1^{(2)}$ yields a boundary $\bar{v}^{(3,2)}(p_1')$ because $\langle\beta_{-1},{\beta'}_1\rangle$ is negative. But then the canonical orientation of the boundary of $\partial(S^1\times X)$ is $S^1\times \overline{\partial X}$.) Denote by $\Sigma^{(1,3,2)}(\beta_1\times\beta_{-1}\times \beta_1')$ this $3$-torus with two open solid $2$-tori removed, as well with its embedding in $\conf_3(\Scal)$. See Figure \ref{fig:Ecal1-3torus}.

\input{fig_Ecal_parts}


We use auxiliary tubes and the surfaces from Section \ref{subsec:Bianchitrick} to cap these boundaries off; for this we must assume $g\ge 2$. 

In general, for any two elements $n_1,n_2$ of any set, let $\Sigma^{(n_1,n_2)}(\beta_i\times \beta_{-i})$ be the submanifold of $\conf_{\{n_1,n_2\}}(\Scal)$ where particles $1$ and $2$ are replaced by $n_1$ and $n_2$ respectively. Then, $\Sigma^{(1,3)}(\beta_2\times\beta_{-2})$ and $\beta_1'^{(2)}$ have disjoint supports in $\Scal$ and so we can take their product $\Sigma^{(1,3)}(\beta_2\times\beta_{-2})\times\beta_1'^{(2)}$, a submanifold of $\conf_3(\Scal)$ with one boundary torus parametrised by $v^{(1,3)}(p_2)\times \beta_1'^{(2)}$ in an orientation preserving manner. Similarly, the product $\beta_1^{(1)}\times \Sigma^{(3,2)}_{1,1}(\beta_{-2}\times\beta_2)$ has ones boundary torus parametrised by $\beta_1^{(1)}\times v^{(3,2)}(p_2)$. (That this is the orientation follows exactly as at the end of the previous paragraph). The latter two toric boundaries are isotopic to the boundaries of $\Sigma^{(1,3,2)}(\beta_1\times\beta_{-1}\times \beta_1')$. We glue them together with the help of tubes.

Our assumption that $\alpha_1$ intersects $\beta_1$ and $\beta_1'$ in this order implies that the path $\epsilon_{1,2}:p_1\rightsquigarrow p_2$ is disjoint from $\beta_1'$. Then the cylinder-circle product $v^{(1,3)}(\epsilon_{1,2})\times \beta_1'^{(2)}$ is an embedding of $[0,1]\times S^1\times S^1$ in $\conf_3(\Scal)$ with two boundary tori parametrising $v^{(1,3)}(p_2)\times \beta_1'^{(2)}$ and $\overline{v}^{(1,3)}(p_1)\times {\beta'_1}^{(2)}$ in an orientation preserving manner; this covers two out of our four boundaries. For the remaining two,  we ask for a new path $\epsilon_{1,2}':p_1'\rightsquigarrow p_2$ which intersects $U_{2g}$ transversely once at $\alpha_{-1}$ so that $\langle \epsilon_{1,2}', \alpha_{-1}\rangle =+1$ (this sign is determined by the cartography of the surface), and lies entirely in the subsurface $\Scal_{[1,2]}$; see Figure \ref{fig:inputforEcal}. Then the analogous circle-cylinder product  $\beta_1^{(1)}\times v^{(3,2)}(\epsilon_{1,2}')$ has boundaries $\beta_1^{(1)}\times v^{(3,2)}(p_1')$ and $\beta_1^{(1)}\times \overline{v}^{(3,2)}(p_2)$. (That the orientation reversal went to the endpoint rather than the startpoint is again from the formula $\partial(S^1\times X)=S^1\times \overline{\partial X}$.)

We can now glue the above five pieces along their boundaries to form a closed submanifold. A summary of them appears in Table \ref{tab:submanifolds}, along with whether the common boundaries are equally  or oppositely oriented. To obtain a canonically oriented manifold, we must glue in an orientation reversing way which leads us to reverse the orientations of three of our pieces. We must make sure that each pair of boundaries is glued in an orientation reversing manner: this leads us to reverse the orientation of three of the pieces and define
\begin{align}           
\Ecal=&\overline{\Sigma^{(1,3)}(\beta_2\times\beta_{-2})\times \beta_1'^{(2)}}
\cup\big(v^{(1,3)}(\epsilon_{1,2})\times \beta_1'^{(2)} \big)\cup \Sigma^{(1,3,2)}(\beta_1\times \beta_{-1}\times \beta_1') \label{eq:EcalDefn}
\\ &\hspace{10pt}\cup \overline{\big(\beta_1^{(1)}\times {v}^{(3,2)}(\epsilon_{1,2}')\big)} \cup\overline{\beta_1^{(1)}\times \Sigma^{(3,2)}(\beta_{-2}\times\beta_2)}, \notag
\end{align}
a closed, oriented, $3$-dimensional submanifold of $\conf_3(\Scal)$.

\begin{table}
    \centering
    \begin{tabular}{|c|c|c|c|}
    \hline
      & Submanifold   & Boundary 1 & Boundary 2 \\ \hline 
      1. & $\Sigma^{(1,3)}(\beta_2\times\beta_{-2})\times\beta_1'^{(2)}$ & $\emptyset$ & $v^{(1,3)}(p_2)\times \beta_1'^{(2)}$  \\
      & & & $\updownarrow +$\\
      2. & $v^{(1,3)}(\epsilon_{1,2})\times {\beta'_1}^{(2)}$  & $\overline{v}^{(1,3)}(p_1)\times {\beta'_1}^{(2)}$ & $v^{(1,3)}(p_2)\times \beta_1'^{(2)}$\\ 
      & & $\updownarrow -$ & \\
      3. & $\Sigma^{(1,3,2)}(\beta_1\times\beta_{-1}\times {\beta'_1})$   & $v^{(1,3)}(p_1)\times {\beta'_1}^{(2)}$ & $\beta_1^{(1)}\times v^{(3,2)}(p_1')$ \\ & & & $\updownarrow +$\\
      4. & $\beta_1^{(1)}\times {v}^{(3,2)}(\epsilon_{1,2}')$ & $\beta_1^{(1)}\times \bar{v}^{(3,2)}(p_2)$ & $\beta_1^{(1)}\times {v}^{(3,2)}(p_1')$ \\ 
      & & $\updownarrow-$ & \\ 5. & 
      $\beta_1^{(1)}\times \Sigma^{(3,2)}_{1,1}(\beta_{-2}\times\beta_2)$ & $\beta_1^{(1)}\times v^{(3,2)}(p_2)$ &  $\emptyset$ \\[5pt]
    \hline
    \end{tabular}
    \vspace{6pt}
    \caption{\footnotesize The five pieces of $\Ecal$ with their boundaries. The vertical arrows point between identical boundaries, and the sign indicates whether the two orientations are equal or reverse. For a gluing to give a canonically oriented manifold, the orientations must be reverse. This justifies why we invert the orientations of the first and last two manifolds in \eqref{eq:EcalDefn}, and therefore of their boundaries.}
    \label{tab:submanifolds}
\end{table}



\begin{proposition}\label{prop:Ecalintersections}
The submanifold $\Ecal$ of $\conf_3(\Scal)$ is supported in the subsurface $\Scal_{[1,2]}$ and intersects $\conf_3(U_{2g})$ transversally $5$ times in the components and with the signs detailed below:
\begin{align*}
    \langle \tri^{(1,2)}(\alpha_1) \tri^{(3)}(\alpha_{-1}),\Ecal\rangle&=+1:(1),\\
     \langle \tri^{(1)}(\alpha_1) \tri^{(2,3)}(\alpha_{-1}),\Ecal\rangle&=-1:(2),\\
     \langle \tri^{(1)}(\alpha_1) \tri^{(3,2)}(\alpha_{-1}),\Ecal\rangle&=-1:(3),\\
     \langle \tri^{(2)}(\alpha_1) \tri^{(1)}(\alpha_{2}) \tri^{(3)}(\alpha_{-2}),\Ecal\rangle&=+1:(4),\\
     \langle \tri^{(1)}(\alpha_{1})\tri^{(2)}(\alpha_2)  \tri^{(3)}(\alpha_{-2}),\Ecal\rangle&=-1:(5).
\end{align*}
\end{proposition}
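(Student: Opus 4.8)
The plan is to compute $\Ecal\cap\conf_3(U_{2g})$ by treating the five constituent submanifolds of \eqref{eq:EcalDefn} one at a time and then assembling the results. The support statement is immediate: the curves $\beta_1,\beta_1',\beta_{-1},\beta_2,\beta_{-2}$ and the arcs $\epsilon_{1,2},\epsilon_{1,2}'$ all lie in $\Scal_{[1,2]}$ by construction, hence so do all the products, tubes and surfaces-with-boundary assembled from them, and therefore $\Ecal\subset\conf_3(\Scal_{[1,2]})$.

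The key reduction is that every surgery used to build $\Ecal$---the excision of the two open solid tori around the $13$- and $23$-diagonals, the small isotopies arranging the new boundary tori to parametrise the $v$-orbits, and the two tube gluings---is supported in arbitrarily small neighbourhoods of the points $p_1,p_1',p_2$, each of which is a positive distance from $U_{2g}$; and the two diagonals themselves miss $\conf_3(U_{2g})$. Consequently, for the purpose of intersecting with $\conf_3(U_{2g})$ each piece may be replaced by its model product form, and $\Ecal\cap\conf_3(U_{2g})$---together with transversality and signs---is the disjoint union of the corresponding data for the five pieces, once we carry along the three orientation reversals $\overline{\,\cdot\,}$ of \eqref{eq:EcalDefn}, each of which flips the sign of the contribution of the piece it is applied to.

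It then remains to treat the pieces. For $\Sigma^{(1,3,2)}(\beta_1\times\beta_{-1}\times\beta_1')$ the underlying $3$-torus $\beta_1^{(1)}\times\beta_{-1}^{(3)}\times\beta_1'^{(2)}$ meets $\conf_3(U_{2g})$ transversally in exactly one point by Proposition~\ref{prop:toricintersections}; because $\alpha_1$ meets $\beta_1$ before $\beta_1'$ this point lies in the component $\tri^{(1,2)}(\alpha_1)\tri^{(3)}(\alpha_{-1})$, and it avoids the two excised solid tori, so it gives intersection $(1)$. The piece $v^{(1,3)}(\epsilon_{1,2})\times\beta_1'^{(2)}$ contributes nothing: $\epsilon_{1,2}$ is disjoint from $U_{2g}$, so by Lemma~\ref{lem:tubeintersection} the tube $v^{(1,3)}(\epsilon_{1,2})$ misses $\conf_{\{1,3\}}(U_{2g})$ for $\varepsilon$ small, whence particles $1$ and $3$ never both lie on $U_{2g}$. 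The piece $\beta_1^{(1)}\times v^{(3,2)}(\epsilon_{1,2}')$ produces intersections $(2)$ and $(3)$: the arc $\epsilon_{1,2}'$ crosses $U_{2g}$ transversally once along $\alpha_{-1}$ with $\langle\epsilon_{1,2}',\alpha_{-1}\rangle=+1$, so Lemma~\ref{lem:tubeintersection}, applied with the relabelling $1\mapsto 3$, $2\mapsto 2$, gives two transversal intersections of $v^{(3,2)}(\epsilon_{1,2}')$ with $\conf_{\{2,3\}}(U_{2g})$ in the components $\tri^{(3,2)}(\alpha_{-1})$ and $\tri^{(2,3)}(\alpha_{-1})$, and multiplying by $\beta_1^{(1)}$, which meets $\alpha_1$ once, yields the components of $(2)$ and $(3)$. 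Finally, for the two pieces $\Sigma^{(1,3)}(\beta_2\times\beta_{-2})\times\beta_1'^{(2)}$ and $\beta_1^{(1)}\times\Sigma^{(3,2)}(\beta_{-2}\times\beta_2)$, Proposition~\ref{prop:boundarysubsurfaceintersections} (with the appropriate particle relabelling, and with the orientation subtlety when $\beta_{-2},\beta_2$ meet negatively handled as in the construction) gives one transversal intersection of the $\Sigma$-factor with $\conf_{\{1,3\}}(U_{2g})$, resp.\ $\conf_{\{2,3\}}(U_{2g})$, in the component $\tri^{(1)}(\alpha_2)\tri^{(3)}(\alpha_{-2})$, resp.\ $\tri^{(2)}(\alpha_2)\tri^{(3)}(\alpha_{-2})$; multiplying by the remaining dual-curve circle---$\beta_1'^{(2)}$, resp.\ $\beta_1^{(1)}$, which meets $\alpha_1$ once by Proposition~\ref{prop:toricintersections}---gives intersections $(4)$ and $(5)$. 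This exhausts all five points.

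The one real obstacle is the sign bookkeeping for $(2)$--$(5)$: one must combine correctly the base sign $-1$ from Proposition~\ref{prop:boundarysubsurfaceintersections} and the base sign $+1$ from Lemma~\ref{lem:tubeintersection}; the permutation sign $\varepsilon(\sigma,\tau)$ of Proposition/Convention~\ref{prop:intersectionconvention} produced by interleaving the particle labels of the two factors of each product; the orientation reversals $\overline{\,\cdot\,}$ on pieces $1$, $4$ and $5$; and the reversed boundary parametrisation of $\Sigma(\gamma_1\times\gamma_2)$ that occurs when $\gamma_1,\gamma_2$ intersect negatively. Once these are disentangled and found to yield the signs $+1,-1,-1,+1,-1$ as stated, transversality of the five points and completeness of the list follow at once from the reduction of the second paragraph.
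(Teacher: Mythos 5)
Your decomposition of $\Ecal$ into its five constituent pieces, the observation that the surgeries and boundary gluings take place away from $\conf_3(U_{2g})$ so each piece may be replaced by its product model, and the identification of which pieces hit which components of $\conf_3(U_{2g})$ all match the paper's proof. You correctly note that the tube piece $v^{(1,3)}(\epsilon_{1,2})\times\beta_1'^{(2)}$ contributes nothing (since $\epsilon_{1,2}$ misses $U_{2g}$), that the remaining four pieces give the five listed components, and that the orientation reversals on pieces $1$, $4$, $5$ enter with a global $-1$.

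However, the actual \emph{sign} computations --- which are the entire content of the proposition, since knowing only the components would not suffice for Theorem~\ref{thm:sEcalPerfectPairing} --- are not carried out. You list the four sources of sign (the base signs of Proposition~\ref{prop:boundarysubsurfaceintersections} and Lemma~\ref{lem:tubeintersection}, the shuffle sign $\varepsilon(\sigma,\tau)$ of Proposition/Convention~\ref{prop:intersectionconvention}, the three orientation reversals, and the boundary-orientation subtlety for negatively intersecting $\gamma_1,\gamma_2$), but then conclude with ``once these are disentangled and found to yield the signs $+1,-1,-1,+1,-1$ as stated,\ldots,'' which defers exactly the step that must be proved. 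The paper instead carries each of these out explicitly: for $(1)$, it evaluates $\langle \alpha_1^{(1)}\times\alpha_1^{(2)}\times\alpha_{-1}^{(3)},\ \beta_1^{(1)}\times\beta_{-1}^{(3)}\times{\beta_1'}^{(2)}\rangle$ via Proposition/Convention~\ref{prop:intersectionconvention}, obtaining a permutation sign $(-1)^{2+2}=+1$ times three positive pairwise intersections; for $(2)$ and $(3)$ it combines the $+1$ of Lemma~\ref{lem:tubeintersection} with the orientation reversal to get $-1$; for $(4)$ it finds the shuffle permutation is odd, which cancels the orientation reversal to give $+1$; and for $(5)$ the shuffle is even and the orientation reversal yields $-1$. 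Without these computations --- in particular, without working out the parity of the shuffle permutations in each of the five products and how the boundary-orientation flip for $\Sigma^{(3,2)}(\beta_{-2}\times\beta_2)$ does or does not propagate to the intersection sign --- the proof is incomplete. You should carry out at least one of the mixed-label cases, say $(4)$, explicitly, to show the bookkeeping actually closes.
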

\begin{proof}
    All the intermediate boundaries are supported away from $U_{2g}$ so we are free to consider the intersections of each piece separately.

    Firstly, the order that $\beta_1,\beta_1'$ intersect $\alpha_1$ gives a unique intersection of $\conf_3(U_{2g})$ with the torus $\Sigma^{(1,3,2)}(\beta_1\times \beta_{-1}\times \beta_1')$ in the component $\tri^{(1,2)}(\alpha_1)\times \tri^{(3)}(\alpha_{-1})$. Following Section \ref{sec:intersectionconventions}, the sign of the intersection is computed as 
    \begin{align*}
        \langle \alpha_1^{(1)}&\times\alpha_{1}^{(2)}\times \alpha_{-1}^{(3)},\beta_1^{(1)}\times\beta_{-1}^{(3)}\times\beta_1'^{(2)}\rangle\\
        &=(-1)^{2+2}\langle  \alpha_1,\beta_1\rangle\cdot \langle \alpha_{-1}, \beta_{-1}\rangle\cdot\langle \alpha_1, \beta_1'\rangle\\
        &=+1.
    \end{align*}

    For the circle-cylinder product $\overline{\beta_1'^{(1)}\times v^{(3,2)}(\epsilon_{1,2}')}$, Lemma \ref{lem:tubeintersection} tells us we have two intersections: one with each of $\tri^{(1)}(\alpha_1)\times \tri^{(3,2)}(\alpha_{-1})$ and $\tri^{(1)}(\alpha_1)\times \tri^{(2,3)}(\alpha_{-1})$. The former has sign
    \begin{align*}
        -\langle \tri^{(1)}(\alpha_1)&\times \tri^{(3,2)}(\alpha_{-1}), \beta_1'^{(1)}\times {v}^{(3,2)}(\epsilon_{1,2}')\rangle\\
        &= -(+1)\langle \tri^{(1)}(\alpha_1), \beta_1'^{(1)}\rangle\cdot \langle \tri^{(3,2)}(\alpha_{-1}),  {v}^{(3,2)}(\epsilon_{1,2}')\rangle\\
        &=-(+1)(+1)(+1)=-1
    \end{align*}
    where the external minus sign is due to the orientation reversal. Similarly the latter intersection has sign $-1$.

    Finally, the extremal piece $\overline{\Sigma^{(1,3)}(\beta_2\times\beta_{-2})\times \beta_1'^{(2)}}$  intersects only $\tri^{(2)}(\alpha_1)\times \tri^{(1)}(\alpha_{2})\times \tri^{(3)}(\alpha_{-2})$ transversally once, and the intersection has sign
    $$-\langle \alpha_1^{(2)}\times \alpha_2^{(1)}\times \alpha_{-2}^{(3)}, \beta_2^{(1)}\times \beta_{-2}^{(3)}\times \beta_1'^{(2)}\rangle,$$
    where the outside minus sign is due to the orientation reversal. The odd permutation contributes a further $-1$ but the pairwise intersections of arcs are all positive, so  overall we get $+1$. On the other hand, the unique intersection of $\overline{\beta_1^{(1)}\times \Sigma^{(3,2)}(\beta_{-2}\times\beta_2)}$  with $\tri^{(1)}(\alpha_{1})\tri^{(2)}(\alpha_2)  \tri^{(3)}(\alpha_{-2})$ has sign
    $$-\langle \alpha_1^{(1)}\times \alpha_2^{(2)}\times\alpha_{-2}^{(3)}, \beta_1^{(1)}\times \beta_{-2}^{(3)}\times \beta_2^{(2)}\rangle $$
    which is $-(+1)\cdot 1\cdot 1\cdot 1=-1$.
\end{proof}

We will want that $\Ecal$ annihilates $\tri^3(\Ical^3)\subset \Hcal_3(\Scal)$ under the intersection pairing. We achieve this by symmetrising it with respect to the transposition $\tau\in \Sfrak_3$ of  $1$ and $2$: define
$$\sEcal=\Ecal\sqcup \tau\Ecal$$ as the disjoint union of the two submanifolds in $\conf_3(\Scal)$.

\begin{proposition}\label{prop:sEcalintersections}
The submanifold $\sEcal$ of $\conf_3(\Scal)$ is supported in the subsurface $\Scal_{[1,2]}$ and intersects $\conf_3(U_{2g})$ transversally $10$ times. Four of these occur in the components $$\tri^{(1)}(\alpha_1)\tri^{(2)}(\alpha_2)\tri^{(3)}(\alpha_{-2}), \hspace{6pt} \tri^{(2)}(\alpha_1)\tri^{(1)}(\alpha_2)\tri^{(3)}(\alpha_{-2}),$$ and come in two pairs of opposite signs, and thus, vanishing algebraic contribution. The remaining six intersections have the following signs:
\begin{align*}
    \langle \tri^{(1,2)}(\alpha_1) \tri^{(3)}(\alpha_{-1}),\sEcal\rangle&=+1:(1),\\
    \langle \tri^{(2,1)}(\alpha_1) \tri^{(3)}(\alpha_{-1}),\sEcal\rangle&=+1:(1)',\\
     \langle \tri^{(1)}(\alpha_1) \tri^{(2,3)}(\alpha_{-1}),\sEcal\rangle&=-1:(2),\\
     \langle \tri^{(1)}(\alpha_1) \tri^{(3,2)}(\alpha_{-1}),\sEcal\rangle&=-1:(3),\\
     \langle \tri^{(2)}(\alpha_1) \tri^{(1,3)}(\alpha_{-1}),\sEcal\rangle&=-1:(2)',\\
     \langle \tri^{(2)}(\alpha_1) \tri^{(3,1)}(\alpha_{-1}),\sEcal\rangle&=-1:(3)'.
\end{align*}
The intersection of $\sEcal$ with $\tri^3(\Ical^3)$ vanishes.
\end{proposition}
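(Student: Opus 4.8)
The plan is to bootstrap from Proposition~\ref{prop:Ecalintersections}, using only the $\Sfrak_3$-symmetry of $\conf_3(\Scal)$, the exterior-product formula (Proposition~\ref{prop:exteriorproduct}) and graded-commutativity of the product $\times$.

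\textbf{Support and the ten intersections.} The transposition $\tau=(1\,2)\in\Sfrak_3$ acts on $\conf_3(\Scal)$ by relabelling particles while fixing $\Scal$ pointwise, so $\tau\Ecal$, like $\Ecal$, is supported in $\Scal_{[1,2]}$; hence so is $\sEcal=\Ecal\sqcup\tau\Ecal$. Since $\tau$ swaps two factors of $\Scal^3$ and $\dim\Scal$ is even, $\tau$ is an orientation-preserving diffeomorphism of $\conf_3(\Scal)$, and it carries each oriented submanifold $\tri^{\Nbold_1}(\g_1)\times\cdots\times\tri^{\Nbold_m}(\g_m)$ to $\tri^{\tau\Nbold_1}(\g_1)\times\cdots\times\tri^{\tau\Nbold_m}(\g_m)$ with the same canonical orientation (relabelling particles does not disturb the simplex orientations). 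Because the intersection pairing is invariant under orientation-preserving diffeomorphisms, applying $\tau$ to the five intersections of $\Ecal$ from Proposition~\ref{prop:Ecalintersections} yields the five intersections of $\tau\Ecal$: each lies in the $\tau$-relabelled Moriyama component of Proposition~\ref{thm:Moriyama} and carries the same sign. As $\sEcal$ is a disjoint union, its intersections with $\conf_3(U_{2g})$ are the union of these, so there are $5+5=10$, all transversal. The intersection of $\Ecal$ in $\tri^{(2)}(\alpha_1)\tri^{(1)}(\alpha_2)\tri^{(3)}(\alpha_{-2})$ (sign $+1$) is matched by that of $\tau\Ecal$ in the same component with sign $-1$, and likewise the intersection of $\Ecal$ in $\tri^{(1)}(\alpha_1)\tri^{(2)}(\alpha_2)\tri^{(3)}(\alpha_{-2})$ (sign $-1$) is matched by one of $\tau\Ecal$ there of sign $+1$; these four cancel algebraically. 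The remaining six lie in the components $(1),(1)',(2),(3),(2)',(3)'$ with the signs listed.

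\textbf{Vanishing on $\tri^3(\Ical^3)$.} By Labute's \cite{Labute} presentation of $\Ical^3/\Ical^4$ and the Magnus lift \eqref{eq:MagnusSplitting}, the subspace $\tri^3(\Ical^3)\subset\Hcal_3(\Scal)$ is spanned by the classes $\tri^3(\widetilde\vfrak)$ for monomials $\vfrak=a_{i_1}\otimes a_{i_2}\otimes a_{i_3}$. With three particles and three factors the only non-empty ordered partition of $[3]$ into three parts is the all-singleton one, so \eqref{eq:tri^ngeneraltilde} gives $\tri^3(\widetilde\vfrak)=\tri^{(1)}(\alpha_{i_1})\times\tri^{(2)}(\alpha_{i_2})\times\tri^{(3)}(\alpha_{i_3})$ in $\Hcal_3(U_{2g})$. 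Since $[\sEcal]$ is supported in $\Scal_{[1,2]}$ and pairs to zero with every Moriyama basis element except the six listed (the four $\alpha_{\pm2}$-intersections having cancelled), only monomials with $i_1,i_2,i_3\in\{1,-1\}$ can contribute, and there are eight of these. Expanding $\tri^{(1)}(\alpha_{i_1})\times\tri^{(2)}(\alpha_{i_2})\times\tri^{(3)}(\alpha_{i_3})$ in the Moriyama basis using Proposition~\ref{prop:exteriorproduct} (whenever two of the three curves coincide) together with graded-commutativity of $\times$, one checks that five of the eight produce only components absent from the list, while the three monomials $a_1\otimes a_1\otimes a_{-1}$, $a_1\otimes a_{-1}\otimes a_{-1}$, $a_{-1}\otimes a_1\otimes a_{-1}$ produce, respectively, $\pm$ the differences $(1)-(1)'$, $(2)-(3)$, $(2)'-(3)'$ of Moriyama basis elements. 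In each of these three pairs the two components carry the \emph{same} $\sEcal$-sign, so the two terms contribute oppositely and $\langle\tri^3(\widetilde\vfrak),[\sEcal]\rangle=0$. Hence $[\sEcal]$ annihilates all of $\tri^3(\Ical^3)$.

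The conceptual input is minimal; the real work is the sign bookkeeping. I expect the main obstacle to be the second part: controlling the Koszul signs in the exterior-product expansions and verifying that, for each of the three relevant monomials, the two surviving coefficients and the two $\sEcal$-signs line up exactly as recorded in Proposition~\ref{prop:sEcalintersections} so that the cancellation is genuine. Tracking orientation reversals through the $\tau$-pushforward in the first part, and matching the resulting components to the tags $(1),(1)',\dots$, is also delicate but more mechanical.
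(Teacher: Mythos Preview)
Your proposal is correct and follows essentially the same route as the paper's proof: both deduce the ten intersections of $\sEcal$ from those of $\Ecal$ via the orientation-preserving action of $\tau$, observe the pairwise cancellation of the $\alpha_{\pm 2}$ intersections, and then verify the vanishing on $\tri^3(\Ical^3)$ by reducing to monomial classes $\tri^{(1)}(\alpha_{i_1})\tri^{(2)}(\alpha_{i_2})\tri^{(3)}(\alpha_{i_3})$ and checking that the three nontrivial cases $(1,1,-1),(1,-1,-1),(-1,1,-1)$ expand via $\tri^{(i)}\times\tri^{(j)}=\tri^{(i,j)}-\tri^{(j,i)}$ into the differences $(1)-(1)'$, $(2)-(3)$, $(2)'-(3)'$. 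Your closing caveat about sign bookkeeping is unnecessary: the signs work out exactly as you describe, and the paper does not do anything more elaborate.
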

\begin{proof}
    The permutation $\tau$ acts in an orientation preserving fashion on $\Scal^3$ and thus on $\conf_3(\Scal)$ (we are permuting $\Scal$-factors which have even dimension). The intersections of $\tau\Ecal$ with $\conf_3(U_{2g})$ can then be read off from Proposition \ref{prop:Ecalintersections} by swapping particles $1$ and $2$ throughout. The intersection $(4)$ of $\Ecal$ cancels with intersection $(5)'$ of $\tau\Ecal$, and vice versa. The remaining intersections are as appearing.

    For the last assertion, it suffices that $\langle \tri^{(1)}(\alpha_{i_1})\tri^{(2)}(\alpha_{i-2})\tri^{(3)}(\alpha_{i_3}), \sEcal\rangle$ vanishes for all $i_1,i_2,i_3\in \{\pm1, \ldots, \pm g\}$. So far this is automatically true in all except the cases $(i_1, i_2,i_3)=(1,1,-1)$, $(1,-1,-1)$, and $(-1,1,-1)$. Now recall the formula $\tri^{(i)}\times \tri^{(j)}=\tri^{(i,j)}-\tri^{(j,i)}$ of Lemma 2.1 of \cite{LooijengaStavrou25}. Subtracting $(1)'$ from $(1)$ we get the vanishing intersection for the first of the three cases; the subtractions $(2)-(3)$ and $(2)'-(3)'$ give the other two.
\end{proof}
\subsection{The perfect pairing}
We henceforth impose the assumption $g\ge n$. For each $\bfrak\in \BB^{s,r}_\nc$ with $r+s=n$, we construct the dual submanifold $\sEcal_\bfrak$ of $\conf_n(\Scal)$.

Write out $\bfrak=\mu^{s,r}_{\uk,\ul}(\mfrak)$. The chord diagram $(\uk,\ul)$ of length $r$ in $[s]$ determines a unique a non-decreasing surjection $[n]\to [s]$ with the property that each $k_j$, with $j=1,\ldots, r$, has two consecutive preimages; call these preimages $\kappa_j, \kappa_j+1$. Let $\lambda_j$ be the unique preimage of $l_j$ for each $j=1,\ldots, r$, and $\rho_1<\cdots< \rho_{s-2r}$ be the remaining elements of $[n]$. 

For each $j=1,\ldots, r$, let 
$$\sEcal_j=\sEcal^{(\kappa_j,\kappa_j+1,\lambda_j)}_{[2j-1,2j]}$$
be the closed, oriented, $3$-dimensional submanifold of $\conf_{\{\kappa_j,\kappa_j+1,\lambda_j\}}(\Scal)$ that replaces points $1,2,3$ of $\sEcal$ with $\kappa_j,\kappa_j+1,\lambda_j$, respectively, and the curves $\alpha_{\pm 1}, \alpha_{\pm 2}$ of $\sEcal$ with $\alpha_{\pm (2j-1)}, \alpha_{\pm 2j}$, respectively. Then each $\sEcal_j$ is supported in the subsurface $\Scal_{[2j-1,2j]}$, so that the  $\sEcal_1,\ldots, \sEcal_r$ have pairwise disjoint supports.

Proceeding, we construct a dual torus $\TT_\mfrak$ to $\mfrak$. Write the monomial out as $\mfrak=a_{i_1}\otimes \cdots \otimes a_{i_{n-2r}}\in \MM^{n-2r}_{+,n-2r}$ where the indices $i_1,\ldots, i_{n-2r}$ are in the range $>g-(n-2r)\ge 2r$ using the assumption $g\ge n$. Take pairwise disjoint curves $\eta_1,\ldots, \eta_{n-2r}$ that are dual to $\alpha_{i_1},\ldots, \alpha_{i_{n-2r}}$, respectively; ensure that each $\eta_j$, for $j=1,\ldots, s-2r$, is a parallel copy of $\beta_{i_j}$ lying entirely in the subsurace $\Scal_{[i_j]}$. 
Define the torus
$$\TT_\mfrak:=\eta_1^{(\rho_1)}\times \cdots \times \eta_{n-2r}^{(\rho_{n-2r})}$$
which is an $(n-2r)$-dimensional submanifold of $\conf_{\{\rho_1,\ldots, \rho_{n-2r}\}}(\Scal)$. This torus is supported in the subsurface $\Scal_{[>2r]}$ and thus has disjoint support from each of the $\sEcal_1,\ldots, \sEcal_r$. 

The disjointness of supports permits us to define the product $$\sEcal_\bfrak=\sEcal_1\times \cdots \times \sEcal_r\times \TT_\mfrak$$ 
as an oriented, $n$-dimensional submanifold of $\conf_n(\Scal)$.
The following is a consequence of  Propositions \ref{prop:toricintersections} and \ref{prop:Ecalintersections}.
\begin{proposition}
    The submanifold $\sEcal_\bfrak$ intersects $\conf_n(U_{2g})$ transversally at $10^j$ points, at each of which the particles $\kappa_j, \kappa_j+1,\lambda_j$ lie in the union of the $1$-cells $\alpha_{\pm(2j-1)},\alpha_{\pm 2j}$, for $j=1,\ldots, r$, (there are $5$ such choices for each $j$), and each particle $\rho_j$, for $j=1,\ldots, s-2r$, lies on the cell $\alpha_{i_j}$ at the intersection with $\eta_j$ (there is one such choice here). Each intersection involves exactly $r$ cells $\alpha_{i}$ with $i<0$, and these indices satisfy $|i|\le 2r$. 
    
    Furthermore, the intersection of $\sEcal_\bfrak$ with $\tri^n(\Ical^{k+1})$ vanishes.
\end{proposition}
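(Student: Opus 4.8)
The plan is to derive all three assertions from the product decomposition $\sEcal_\bfrak=\sEcal_1\times\cdots\times\sEcal_r\times\TT_\mfrak$ together with the crucial fact that its factors have pairwise disjoint supports: $\sEcal_j$ lies in $\Scal_{[2j-1,2j]}$, whose only $1$-cells are $\alpha_{\pm(2j-1)},\alpha_{\pm2j}$, while $\TT_\mfrak$ lies in $\Scal_{[>2r]}$, near curves dual to the $\alpha_{i_l}$ with all $i_l>2r>0$. First I would note that a configuration on $\sEcal_\bfrak$ lies in $\conf_n(U_{2g})$ exactly when each factor lies in the configuration space of its own cells; hence $\sEcal_\bfrak\cap\conf_n(U_{2g})$ is the product of the intersections of the individual factors, and transversality is inherited since the factors vary in independent subsurfaces (Proposition/Convention~\ref{prop:intersectionconvention}). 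Applying Proposition~\ref{prop:sEcalintersections} to each $\sEcal_j$ (which contributes $10$ transversal points) and Proposition~\ref{prop:toricintersections} to $\TT_\mfrak$ (which contributes one) yields $10^r$ transversal points, and reading off the components from the same propositions gives the stated location of the particles. Since each intersection pattern of $\sEcal$ occupies exactly one negative-index cell among $\alpha_{-(2j-1)},\alpha_{-2j}$, and $\TT_\mfrak$ occupies only positive cells, every intersection point of $\sEcal_\bfrak$ uses exactly $r$ negative-index cells, all of absolute value $\le 2r$.

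For the last assertion (the relevant level being $k=s$, where $r+s=n$), the plan is as follows. Since $\tri^n$ on $\QQ\pi$ is $\Hcal_n(\iota)$ composed with $\tri^n$ on $\QQ\pi_1$ and $\Ical^{s+1}$ is the image of $\Ical_1^{s+1}$, it suffices to prove $\langle\tri^n_{\pi_1}(x),\sEcal_\bfrak\rangle=0$ for $x\in\Ical_1^{s+1}$. By Fox's theorem $\gr_\bullet\QQ\pi_1=T[H]$, so $\Ical_1^{s+1}/\Ical_1^{n+1}$ is spanned by products $(\alpha_{j_1}-1)\cdots(\alpha_{j_m}-1)$ with $s+1\le m\le n$; expanding $\tri^n_{\pi_1}$ of such a product by the decomposition formula~\eqref{eq:decompositionformula_manyfactors} as a sum of the submanifold classes $\tri^{\Nbold_1}(\alpha_{j_1})\times\cdots\times\tri^{\Nbold_m}(\alpha_{j_m})$ over ordered partitions $\Nbold_1\cdots\Nbold_m=[n]$ into $m$ nonempty parts, it is enough to kill each such class against $\sEcal_\bfrak$.

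Fixing such a class, I would first discard the case where some $\alpha_{j_i}$ meets none of the subsurfaces $\Scal_{[2j-1,2j]},\Scal_{[>2r]}$ (then the submanifold is disjoint from $\sEcal_\bfrak$). Otherwise each $\alpha_{j_i}$ is relevant to exactly one factor of $\sEcal_\bfrak$, and disjointness of supports forces, at any intersection point, the parts indexed by the factors assigned to $\sEcal_j$ to have union exactly $\{\kappa_j,\kappa_j+1,\lambda_j\}$ and those assigned to $\TT_\mfrak$ to have union exactly $\{\rho_1,\dots,\rho_{s-2r}\}$. Writing $a_j$ (resp.\ $a_0$) for the number of factors assigned to $\sEcal_j$ (resp.\ $\TT_\mfrak$), this gives $a_0+\sum_{j=1}^ra_j=m$, $a_j\le3$ and $a_0\le s-2r$. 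By Proposition/Convention~\ref{prop:intersectionconvention} the signed intersection number then factors, up to sign, as $\bigl(\prod_{j=1}^r\langle\,\cdot\,,\sEcal_j\rangle\bigr)\langle\,\cdot\,,\TT_\mfrak\rangle$, and whenever $a_j=3$ the corresponding factor is a fully split triple $\langle\tri^{(1)}(\alpha)\tri^{(2)}(\alpha)\tri^{(3)}(\alpha),\sEcal\rangle$, which vanishes by the last clause of Proposition~\ref{prop:sEcalintersections}. Hence a surviving term needs $a_j\le2$ for all $j$, so $m\le(s-2r)+2r=s$, contradicting $m\ge s+1$; every term therefore vanishes, giving $\langle\tri^n(\Ical^{s+1}),\sEcal_\bfrak\rangle=0$.

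I expect the main obstacle to be the bookkeeping in the last paragraph: one must verify that, near its intersections with $\conf_n(U_{2g})$, the submanifold $\tri^{\Nbold_1}(\alpha_{j_1})\times\cdots$ is modeled, compatibly with orientations, by a product of cubes matching the $\sEcal_1\times\cdots\times\TT_\mfrak$ factorization, so that Proposition/Convention~\ref{prop:intersectionconvention} genuinely applies and the intersection number splits as a product over the pieces; once this is set up the pigeonhole count $2r+(s-2r)=s<s+1$ closes the argument at once.
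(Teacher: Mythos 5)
Your proof is correct, and it is genuinely more detailed than what the paper provides: the paper states this proposition with only the remark that it ``is a consequence of Propositions~\ref{prop:toricintersections} and~\ref{prop:Ecalintersections}'' (and that reference to $\Ecal$ rather than $\sEcal$ appears to be a slip, since $\sEcal_j$ is defined as a copy of $\sEcal$ and $\sEcal$ has $10$, not $5$, intersections with $\conf_3(U_{2g})$, which also explains the paper's $10^j$ as a typo for $10^r$). Your reading of the statement --- with $k=s$, where $s+r=n$ --- is the correct one, as that is how the ``furthermore'' is used in the proof of Theorem~\ref{thm:independence_r+s=n}.

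For the first part, your argument is exactly the intended one: disjoint supports of the factors make $\sEcal_\bfrak\cap\conf_n(U_{2g})$ a product of the individual factor intersections, so $10^r\cdot 1$ transverse points with the stated cell pattern, each using exactly one negative cell per $\sEcal_j$ of index $\le 2r$.

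For the ``furthermore,'' your argument is the right one and supplies a proof where the paper gives none. The key steps are all sound: spanning $\Ical_1^{s+1}/\Ical_1^{n+1}$ by products $(\alpha_{j_1}-1)\cdots(\alpha_{j_m}-1)$ with $m\ge s+1$, expanding $\tri^n$ by the decomposition formula over partitions into $m$ nonempty parts, and then assigning each index $i$ to the unique factor of $\sEcal_\bfrak$ whose support contains $\alpha_{j_i}$ (discarding terms where this cell is not hit by the corresponding factor). The counting $a_0\le s-2r$, $a_j\le 3$, $\sum a_j+a_0=m\ge s+1$ forces some $a_j=3$, i.e.\ a fully split triple of singleton parts inside $\sEcal_j$'s particle set. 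That factor kills the term by the final clause of Proposition~\ref{prop:sEcalintersections} applied to the relabelled copy $\sEcal_j$ (note: the three cells need not be equal --- the cited clause covers all triples $\alpha_{i_1},\alpha_{i_2},\alpha_{i_3}$ --- so your ``$\tri^{(1)}(\alpha)\tri^{(2)}(\alpha)\tri^{(3)}(\alpha)$'' should be read as $\tri^{(u_1)}(\alpha_{i_1})\tri^{(u_2)}(\alpha_{i_2})\tri^{(u_3)}(\alpha_{i_3})$). The caveat you flag --- that near the intersection points each piece is modelled by cubes so that Proposition/Convention~\ref{prop:intersectionconvention} factors the algebraic count as a product over pieces --- is the same routine local analysis the paper uses throughout Section~\ref{sec:geometricconstructions} (cf.\ the proof of Proposition~\ref{prop:Ecalintersections}), so this is not a genuine gap, only a bookkeeping verification.
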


We now prove that equation \eqref{eq:sEcalPerfectPairing} holds. This is a long proof, but it justifies all the choices in the construction of 
$\sEcal_\bfrak$.

\begin{theorem}\label{thm:sEcalPerfectPairing}
    Suppose $\bfrak,\bfrak'\in \cup_{s+r=n}\BB^{s,r}_\nc$. Then $\langle \tri^n(\widetilde{\bfrak}), \sEcal_{\bfrak'}\rangle=\pm 1$ if $\bfrak=\bfrak'$, and $=0$ otherwise.
\end{theorem}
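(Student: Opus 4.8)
\emph{Overall approach and Step 1 (reduction to intersection numbers).} The plan is to expand $\tri^n(\widetilde{\bfrak})$ along the Moriyama basis of $\Hcal_n(U_{2g})$ and reduce \eqref{eq:sEcalPerfectPairing} to a finite, purely combinatorial bookkeeping of arcs and signs; throughout $g\ge n$ is in force, as in the construction of $\sEcal_{\bfrak'}$. Since $\tri^n(\widetilde{\bfrak})=\Hcal_n(\iota)\big(\tri^n_1(\widetilde{\bfrak})\big)$ and $\Hcal_n(U_{2g})$ has the Moriyama basis $\{e_B\}$ of Proposition~\ref{thm:Moriyama}, write $\tri^n_1(\widetilde{\bfrak})=\sum_B c_B\,e_B$ with $c_B\in\QQ$. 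Each $e_B$ is a proper oriented $n$-dimensional submanifold of $\conf_n(\Scal)$, so the remark following Proposition~\ref{thm:Moriyama} gives $\langle\tri^n(\widetilde{\bfrak}),\sEcal_{\bfrak'}\rangle=\sum_B c_B\,\langle e_B,\sEcal_{\bfrak'}\rangle$, where $\langle e_B,\sEcal_{\bfrak'}\rangle$ is the signed count of the transversal intersections of $\sEcal_{\bfrak'}$ with $e_B$. By the Proposition preceding this Theorem (itself deduced from Propositions~\ref{prop:toricintersections} and~\ref{prop:Ecalintersections}) only finitely many $e_B$ meet $\sEcal_{\bfrak'}$, and, by iterated use of Proposition~\ref{prop:intersectionconvention}, each $\langle e_B,\sEcal_{\bfrak'}\rangle$ is a global permutation sign times the product of one sign, drawn from the list $(1),(1)',(2),(3),(2)',(3)'$ of Proposition~\ref{prop:sEcalintersections}, for each three-particle block $\sEcal_t$ of $\sEcal_{\bfrak'}$, and one factor $+1$ for each torus factor $\eta_u^{(\rho_u)}$ of $\TT_{\mfrak'}$ (each of which, by Proposition~\ref{prop:toricintersections}, meets $\conf_1(U_{2g})$ once, positively, in $\tri^{(\rho_u)}(\alpha_{i_u})$ and nowhere else).

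\emph{Step 2 (expanding $\tri^n_1(\widetilde{\bfrak})$).} Write $\bfrak=\mu^{s,r}_{\uk,\ul}(\mfrak)$. Since $\mu=\sum_{1\le\pm i\le g}\sign(i)\,a_i\otimes a_{-i}$ and $\widetilde{\cdot}$ is a ring homomorphism, $\widetilde{\bfrak}=\sum_{(d_1,\dots,d_r)}\big(\prod_t\sign(d_t)\big)\,\widetilde{\mfrak_{d}}$, the sum over $r$-tuples with $1\le|d_t|\le g$, where $\mfrak_{d}$ is the $s$-fold tensor monomial carrying $a_{d_t}$ in slot $k_t$, $a_{-d_t}$ in slot $l_t$, and the factors of $\mfrak$ in the remaining slots. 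The decomposition formula \eqref{eq:tri^ngeneraltilde} expresses each $\tri^n_1(\widetilde{\mfrak_{d}})$ as a sum, over nowhere-empty ordered partitions $\Nbold_1\cdots\Nbold_s=[n]$, of products $\tri^{\Nbold_1}(\alpha_{j_1})\times\cdots\times\tri^{\Nbold_s}(\alpha_{j_s})$; reordering the factors into the curve order $\alpha_1,\alpha_{-1},\dots,\alpha_g,\alpha_{-g}$ (producing a permutation sign, the factors being odd-dimensional) and applying the shuffle formula of Proposition~\ref{prop:exteriorproduct} whenever a curve repeats rewrites each term in the Moriyama basis and so determines the coefficients $c_B$.

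\emph{Step 3 (the combinatorial core).} Fix an $e_B$ from the list of Step~1 with $c_B\ne0$. Counting the particles landing on each $1$-cell pins everything down. The block $\sEcal_t$ absorbs exactly three particles, all on $\alpha_{\pm(2t-1)}$, so exactly one chord slot-pair $(k_t,l_t)$ of $\bfrak$ can feed it (two or more would force at least four particles there, none would leave those three unaccounted for); hence $r=r'$, $\{|d_t|\}=\{1,3,\dots,2r-1\}$, and matching the two strictly increasing sequences of the ``doubled'' particles $\kappa_t,\kappa_t+1$ forces the chord-to-block correspondence to be the identity. For the slots feeding block $t$, the three absorbed particles form $\Nbold_{k_t}\sqcup\Nbold_{l_t}$ with $\{|\Nbold_{k_t}|,|\Nbold_{l_t}|\}=\{1,2\}$; because $(k_t,l_t)$ is non-consecutive, $\{\kappa_t,\kappa_t+1\}$ is the unique pair of adjacent integers among these three, and it must be $\Nbold_{k_t}$---otherwise a doubled particle would be pushed into slot $k_t+1\ne l_t$, i.e.\ onto a monomial curve or another chord's curve, contradicting the block structure. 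Thus $\Nbold$ is the ``natural'' partition of $\bfrak$ ($\Nbold_{k_t}=\{\kappa_t,\kappa_t+1\}$, $\Nbold_{l_t}=\{\lambda_t\}$, $\Nbold_{m_u}=\{\rho_u\}$), and the torus factors of $\TT_{\mfrak'}$ then read off that $\mfrak=\mfrak'$ and that the chord diagram of $\bfrak$ equals $(\uk',\ul')$; hence $c_B\ne0$ for some $e_B$ in the list forces $\bfrak=\bfrak'$, proving the off-diagonal vanishing. When $\bfrak=\bfrak'$ the only surviving pair is $d=(1,3,\dots,2r-1)$ with $\Nbold$ the natural partition, so exactly one $e_B$ in the list has $c_B\ne0$: the element given by the configuration $(1)$ of Proposition~\ref{prop:sEcalintersections} in every block together with the dual-curve crossing in every torus factor (repeated factors of $\mfrak$ being handled by ordering the parallel copies of the dual curves so that the toric intersection records the particles increasingly, which is exactly the surviving shuffle term, of coefficient $+1$). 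For this $e_B$ one has $c_B=\pm1$ and $\langle e_B,\sEcal_{\bfrak}\rangle=\pm1$, since every block contributes the sign $+1$ of $(1)$ and every torus factor contributes $+1$; hence $\langle\tri^n(\widetilde{\bfrak}),\sEcal_{\bfrak}\rangle=\pm1$.

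\emph{The main obstacle} is not the idea but the execution: one must track three independent sources of signs---the $\prod_t\sign(d_t)$ from the Magnus expansion, the permutation signs coming both from reordering the odd-dimensional factors into curve order and from the iterated application of Proposition~\ref{prop:intersectionconvention}, and the shuffle signs of Proposition~\ref{prop:exteriorproduct}---and verify, uniformly in $r$, that the ``natural'' partition is the unique admissible one; it is precisely the non-consecutiveness of the chord diagrams that excludes the spurious partitions. Organising the support-matching and the uniqueness argument so that they are done once and cleanly, rather than case by case in $r$, is the delicate part.
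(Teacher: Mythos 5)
Your proposal is correct and follows essentially the same approach as the paper's proof: expand $\tri^n(\widetilde{\bfrak})$ via the decomposition formula into monomials $\dfrak$ (equivalently, Moriyama basis elements), use the particle-per-cell count to force $r=r'$ and the chord-to-block correspondence to be the identity, invoke non-consecutiveness of the chord diagrams to eliminate the type-$(2)$ intersection in each block, and match the torus factors to force $\mfrak=\mfrak'$, with the unique surviving transversal intersection point yielding the $\pm1$. The main difference is one of presentation: you phrase the argument in terms of the Moriyama basis $\{e_B\}$ and its coefficients $c_B$, whereas the paper works directly with the monomials $\dfrak$, and your counting of particles is slightly less granular than the paper's two-step inequality argument ($r\ge r'$ from negative cells, $3r'\ge 3r$ from particles in the first $4r$ cells) — but the substance is the same.
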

\begin{proof}
    We first give an explicit expression for $\tri^n(\widetilde{\bfrak})$. 
    Write out $\bfrak=\mu^{s,r}_{\uk,\ul}(\mfrak)$ and $\mfrak=a_{i_1}\otimes \cdots \otimes a_{i_n}$. Then the decomposition formula in the form \eqref{eq:tri^ngeneraltilde} expands  $\tri^n(\widetilde{\bfrak})$ as 
    \begin{equation}\label{eq:tri^n(tilde(bfrak))}
        \sum_{\substack{\Nbold_1,\ldots, \Nbold_s\neq\emptyset\\ \Nbold_1\Nbold_2\cdots\Nbold_s=\underline{[n]}}}\hspace{-16pt}\pm\left(\prod_{j=1}^r\left(\sum_{1\le \pm i\le g} \sign(i)\tri^{\Nbold_{k_j}}(\alpha_i)\tri^{\Nbold_{l_j}}(\alpha_{-i})\right)\right)\times \tri^{\Nbold_{p_1}}(\alpha_{i_1}) \cdots  \tri^{\Nbold_{p_{s-2r}}}(\alpha_{i_{s-2r}}).
    \end{equation}
    Here, we have used that $\mu^{s,r}_{\uk,\ul}$ inserts $\mu$ in the $r$ pairs of tensor slots prescribed by $(\uk,\ul)$, and the expression of $\tri^{n}(\widetilde{\mu})$ from equation \eqref{eq:tri^nmutilde}. We have permuted the factors so that all the $\widetilde{\mu}$ come first, at the expense of a sign $\pm$. The outer summation is over all partitions-with-orders $\Nbold_1,\ldots, \Nbold_s$ with non-empty parts and so that the concatenated ordering $\Nbold_1\Nbold_2\cdots\Nbold_s$ is the ordering $1<2<\cdots<n$; and $p_1<p_2<\cdots<p_{s-2r}$ are the elements of $[s]$ not in the chord diagram $(\uk,\ul)$.

    We simplify further. Suppose in a given partition-with-orders $\Nbold_1,\ldots, \Nbold_s$, there is $j\in \{1,\ldots ,r\}$ for which both $N_{k_j}$ and $N_{l_j}$  are singletons; say their unique elements are $u,v$, respectively.  Then the factor
    $$\sum_{1\le \pm i\le g} \sign(i)\tri^{\Nbold_{k_j}}(\alpha_i)\tri^{\Nbold_{l_j}}(\alpha_{-i})$$
    is equal $\tri^{(u,v)}(\widetilde{\mu})=\tri^{(u,v)}(\zeta)=0\in \Hcal_{\{u,v\}}(\Scal)$ and thus makes the entire contribution  of this partition-with-orders vanishes. We may then restrict the outer summation of \eqref{eq:tri^n(tilde(bfrak))} to include only those partitions with $|N_{k_j}|+|N_{l_j}|\ge 3$ for all $j=1,\ldots, r$. As $n=s+r$, and all parts $N_1,\ldots, N_r$ are non-empty, the last inequality is forced to the equality $|N_{k_j}|+|N_{l_j}|= 3$ and also that each $N_{p_j}$, for $j=1,\ldots, s-2r$, is a singleton. Distributing out the expression \eqref{eq:tri^n(tilde(bfrak))}, we then get that $\tri^n(\widetilde{\bfrak})$ is a sum of terms
    \begin{equation}\label{eq:monomialsoftri^n(bfrak)}
    \pm\tri^{\Nbold_{k_1}}(\alpha_{j_1}) \tri^{\Nbold_{l_1}}(\alpha_{-j_1})\cdots  \tri^{\Nbold_{k_r}}(\alpha_{j_r}) \tri^{\Nbold_{l_r}}(\alpha_{-j_r}) \tri^{(r_1)}(\alpha_{i_1}) \cdots  \tri^{(r_{s-2r})}(\alpha_{i_{s-2r}})
    \end{equation}
    where each $j_1,\ldots, j_r\in \{\pm 1,\ldots, \pm g\}$ and the $r_1<\cdots< r_{s-2r}\in [n]$ are the elements of $N_{p_1},\ldots,N_{s-2r}$, respectively. 
    
    From now on, we fix a product $\dfrak$ of type \eqref{eq:monomialsoftri^n(bfrak)} and consider the subspace it defines in $\conf_n(U_{2g})$. Let us take another $\bfrak'=\mu^{s',r'}_{\uk',\ul'}(\mfrak')\in \cup_{s+r=n}\BB^{s,r}_\nc$ and assume that the subspace $\dfrak$ contains an intersection of $\conf_n(U_{2g})$ with $\sEcal_{\bfrak'}$. We will deduce that this occurs only if $\bfrak=\bfrak'$ and, furthermore, only if $\dfrak$ is of a prescribed form.

    Since $i_1,\ldots, i_{s-2r}>0$ in \eqref{eq:monomialsoftri^n(bfrak)}, then the monomial $\dfrak$ involves at most $r$ different $\alpha_{j}$ with $i<0$ (the ``at most'' is because the $|j_1|,\cdots, |j_r|$ might not be distinct). On the other hand each intersection point $\sEcal_{\bfrak'}\cap \conf_n(U_{2g})$ involves exactly $r'$ negative $\alpha_j$s, all of which have indices $|j|\le 2r'$. As one of these intersections is in $\dfrak$, we must have $r\ge r'$ and all the negative indices of $\dfrak$ must satisfy $|j_1|,\ldots, |j_r|\le 2r$. On the other hand, $\dfrak$ has at least $|N_{k_1}|+\cdots+|N_{l_r}|=3r$ of the $n$ particles in the first $4r$ cells $\alpha_{\pm 1},\ldots, \alpha_{\pm 2r}$ (the ``at least'' is because some of the ${i_1},\ldots, {i_{s-2r}}$ might also be $\le 2r$), while all intersection points in $\sEcal_{\bfrak'}\cap \conf_n(U_{2g})$ have exactly $3r'$ points in these cells; this shows  $3r'\ge 3r$, and in all we have $r=r'$. Recalling that $s+r=n=s'+r'$, we also have $s=s'$. 

    We have assumed that the indices of the monomial $\mfrak$ satisfy ${i_1},\ldots, {i_{s-2r}}> g-(s-2r)$ and the latter is $=2r+g-s\ge 2r+g-n\ge 2r$, using $g\ge n$. This means that these indices are all distinct from any of the ${\pm j_1},\ldots, {\pm j_r}$.
    From the types of possible intersections of $\sEcal$ with $\conf_n(U_{2g})$ from Proposition \ref{prop:sEcalintersections} only the types $(1)$ and $(2)$ can occur in $\dfrak$, as other intersections either involve  a factor $\tri^{(i,j)}$ where $j\neq i+1$, or use more cells than they should. So each factor $\sEcal^{(\kappa_j',\kappa_j+1',\lambda_j')}_{[2j-1,2j]}$ of $\sEcal_{\bfrak'}$ intersect one of the factors $\tri^{\Nbold_{k_t}}(\alpha_{j_t}) \tri^{\Nbold_{l_t}}(\alpha_{-j_t})$ of $\dfrak$, which must then, by the same proposition, equal one of the following terms:
    \begin{align*}
        \tri^{(\kappa_j',\kappa_j'+1)}(\alpha_{2j-1})\tri^{(\lambda_j')}(\alpha_{-(2j-1)}) &\hspace{6pt}(1),\\
        \tri^{(\kappa_j')}(\alpha_{2j-1})\tri^{(\kappa_j'+1,\lambda_j')}(\alpha_{-(2j-1)}) &\hspace{6pt}(2),
    \end{align*}
    A consequence of this is that the $|j_1|,\ldots, |j_r|$ are pairwise distinct and in bijection with the odd numbers $1,3,\ldots,2r-1$ in some order; given $i\in \{1,\ldots, r\}$, let $|j_{i}|=2t(i)-1$ for some $t(i)\in \{1,\ldots, r\}$. Recall that the concatenated order $\Nbold_1\cdots\Nbold_s$ was required to be the order $1<\cdots<n$, that each part is non-empty, and that each pair $(k_{1},l_{1}),\ldots, (k_{r},l_{r})$ is non-consecutive. Then the pair $(\Nbold_{k_{t}},\Nbold_{l_{t}})$ is either $((u,u+1),(v))$ for some $u+2<v$ in $[n]$, or $((u),(v,v+1))$ for some $u+1<v$. Then, only option $(1)$ is possible: so $\Nbold_{k_{j_{t(j)}}}=(\kappa_j',\kappa_j'+1)$ and $\Nbold_{l_{j_{t(j)}}}=(\lambda_j')$. Furthermore as $\kappa_1'<\cdots<\kappa_r'$, but also $k_1<\cdots<k_r$ which means $\Nbold_{k_1}<\cdots<\Nbold_{k_r}$, it follows that $t(i)=i$ for all $i$. Then $(\uk,\ul)=(\uk',\ul')$ and $j_i=2i-1$ for all $i=1,\ldots, r$.

    Finally, the increasing order of the $r_1,\ldots, r_{s-2r}$ and the $\rho_1, \ldots, \rho_{s-2r}$ and the fact that they are the remaining elements of $[n]$ implies that $r_1=\rho_1,\ldots,\allowbreak r_{s-2r}=\rho_{s-2r}$. Then the $(s-2r)$-cube  $\tri^{(\rho_1)}(\alpha_{i_1}) \cdots  \tri^{(\rho_{s-2r})}(\alpha_{i_{s-2r}})$ must contain the unique intersection of $\TT_{\mfrak'}$ with $\conf_{r_1,\ldots, r_{s-2r}\}}(U_{2g})$, so each $\alpha_{i_j}$ must be dual to the curve $\eta_j'$ which in turn is dual to $\alpha_{i_j'}$. Then $\alpha_{i_j}=\alpha_{i_j'}$ for all $j=1,\ldots, s-2r$ and we have also proved that $\mfrak=\mfrak'$.

    We have proved that if $\langle \tri^n(\widetilde{\bfrak}, \sEcal_{\bfrak'}\rangle \neq 0$, then $\bfrak=\bfrak'$. We have also proved that in the case $\bfrak=\bfrak'$, then $\langle \dfrak, \sEcal_{\bfrak'}\rangle\neq 0$ only for the summand    
    $$ \dfrak=\pm\left(\prod_{j=1}^r\tri^{(\kappa_j,\kappa_j+1)}(\alpha_{2j-1}) \tri^{(\lambda_j)}(\alpha_{-(2j-1)})\right)\times  \tri^{(\rho_1)}(\alpha_{i_1}) \cdots  \tri^{(\rho_{s-2r})}(\alpha_{i_{s-2r}})$$ 
    of $\tri^n(\widetilde{\bfrak})$. Finally, this latter intersection between subspaces of $\conf_n(\Scal)$ is transversal at a unique point. Therefore it is $\pm 1$.
\end{proof}
\begin{remark}[On the genus condition] The condition $g\ge n$ is necessary in the proof. For example, if $n$ is even, then the image of the $\mu^{n,n/2}_{\uk,\ul}$, running over all chord diagrams, are not linearly independent if $g<n$. That does not impede Theorem \ref{thm:sEcalPerfectPairing} to hold via another proof for $g<n$, however.
\end{remark}
\begin{remark}[The non-consecutive condition]
    The proof would fail if some pair $(k_i,l_i)$ were consecutive; let us look as an example at the case $n=3$, and $\bfrak=\mu^{2,1}_{(1),(2)}(1)=\mu\in H^{\otimes 2}$, so that $\sEcal_\bfrak=\sEcal$ in $\conf_3(\Scal)$.
    Then $$\tri^3(\widetilde{\mu})=\sum_{1\le \pm i\le g} \sign(i)\left(\tri^{(1,2)}(\alpha_i)\tri^{(3)}(\alpha_{-i})+\tri^{(1)}(\alpha_i)\tri^{(2,3)}(\alpha_{-i})\right),$$
    with only the summands $\tri^{(1,2)}(\alpha_1)\tri^{(3)}(\alpha_{-1})$ and $\tri^{(1)}(\alpha_1)\tri^{(2,3)}(\alpha_{-1})$
    intersecting $\sEcal$; the signs are $+1$ and $-1$, respectively, coming from parts $(1)$ and $(2)$ of Proposition \ref{prop:Ecalintersections}. Thus $\langle \tri^3(\widetilde{\bfrak}),\sEcal_\bfrak\rangle =0$.
\end{remark}

    


Theorem \ref{thm:independence_r+s=n} is now a simple corollary.
\begin{proof}[Proof of Theorem \ref{thm:independence_r+s=n}]
    The dual set $\{\langle -, \sEcal_\bfrak\rangle:\bfrak\in \cup_{s+r=n}\BB^{s,r}_\nc\}$ is a set of functionals vanishing on $\tri^{n}(\Ical^{s+1})$ and thus defined on $\Hcal_n(\Scal)/\tri^{n}(\Ical^{s+1})$ where they form a partial dual basis for the desired set.
\end{proof}

\section{The configuration Johnson kernels}\label{sec:configurationkernels}
In this section, we translate the results of Section \ref{sec:keroftri^n} to results about the quotient $J^\cfg_{g,*}(n)/J_{g,*}(n)$. We fix $n\ge 1$ throughout. We also introduce the following notation for a filtered vector space: if we have $V=F_0V\supset F_1V\supset F_2V\supset\cdots$, then let us denote $$V|^i_j=F_iV/F_{j+1}V$$
for the relevant quotients.
If $i$ is omitted then it is assumed to be $0$.

\subsection{Configuration Johnson homomorphisms}
We recall that $J^\cfg_{g,*}(n)$ is the kernel of the action $\Gamma_{g,*}\curvearrowright \Ical^\cfg_n$.
Filtering, as before, $\Ical^\cfg_n$ by $F_i\Ical^\cfg_n=\tri^n(\Ical^i)$, we will now consider the more general kernels
$$J^\cfg_{g,*}(n,k)=\ker(\Gamma_{g,*}\curvearrowright \Ical^\cfg_n|_k).$$
Here, of course, $J^\cfg_{g,*}(n,n)=J^\cfg_{g,*}(n)$. 
Supposing $k\ge 1$, then the action of any 
$\phi\in J^\cfg_{g,*}(n,k)$ on $\Ical^\cfg_n|_{k+1}$ has the property that $\tau_\phi:=\phi-\id$ lands in $\Ical^\cfg_n|^{k+1}_{k+1}=\gr^\Ical_{k+1}\Ical^\cfg_n$, so we obtain a function \begin{equation}\label{eq:pretau}
    \phi\in J^\cfg_{g,*}(n,k)\longmapsto \tau_\phi=(\phi-\id)\in \Hom(\Ical^\cfg_n|_{k}, \Ical^\cfg_n|^{k+1}_{k+1}).
\end{equation}
\begin{lemma}
    The function \eqref{eq:pretau} is a group homomorphism, whose kernel is $J^\cfg_{g,*}(n,k+1)$, and it factors through the injection
    $$\tau^\cfg_{g,*}(n,k):J^\cfg_{g,*}(n,k)/J^\cfg_{g,*}(n,k+1)\hookrightarrow \Hom(H,\Ical^\cfg_n|^{k+1}_{k+1}).$$
    It is equivariant with the conjugation action of $\Gamma_{g,*}$ on the domain and the natural action on the codomain. 
    
    In particular, $J^\cfg_{g,*}(n,k)/J^\cfg_{g,*}(n,k+1)$ is a finitely generated abelian group, and its action of $\Gamma_{g,*}$ factors through $\Sp_{2g}(\ZZ)$. After tensoring with $\QQ$, it is an algebraic representation of weight $\le k+2$.
\end{lemma}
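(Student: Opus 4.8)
The plan is to run the standard Johnson‑homomorphism formalism. The structural inputs are that $\Gamma_{g,*}$ acts on $\QQ\pi/\Ical^{n+1}$ by augmented ring automorphisms and that $\tri^n$ is equivariant for this action, so the filtration $F_i\Ical^\cfg_n=\tri^n(\Ical^i)$ is $\Gamma_{g,*}$‑stable with $F_0=F_1=\Ical^\cfg_n$ (as $n\ge1$, since $\tri^n$ kills $1$) and $\gr^\Ical_i\Ical^\cfg_n=F_i/F_{i+1}=\Ical^\cfg_n|^i_i$. First I would observe that, since $k\ge1$, $J^\cfg_{g,*}(n,k)\subseteq J^\cfg_{g,*}(n,1)=\ker(\Gamma_{g,*}\curvearrowright\gr^\Ical_1\Ical^\cfg_n)$, and by Theorem~\ref{thm:assgradedIcalcfg} with $k=1$ (the excluded weight range $\le 1-2n$ being empty) $\gr^\Ical_1\Ical^\cfg_n=H$; hence every such $\phi$ lies in the Torelli group. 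As $\phi$ acts trivially on $H=\gr^\Ical_1\QQ\pi$, the graded ring automorphism it induces on $\gr^\Ical_\pt\QQ\pi$ is the identity (that algebra is generated in degree $1$), so $\phi-\id$ raises the augmentation filtration: $(\phi-\id)(F_i\Ical^\cfg_n)\subseteq F_{i+1}\Ical^\cfg_n$ for all $i$. Together with the equivalence $\phi\in J^\cfg_{g,*}(n,k)\iff(\phi-\id)(\Ical^\cfg_n)\subseteq F_{k+1}\Ical^\cfg_n$, this re‑proves the assertion preceding the lemma that $\tau_\phi=\phi-\id$ descends to an element of $\Hom(\Ical^\cfg_n|_k,\gr^\Ical_{k+1}\Ical^\cfg_n)$.

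Next I would treat the homomorphism property and kernel: $\phi\psi-\id=(\phi-\id)+(\psi-\id)+(\phi-\id)(\psi-\id)$, and the last composite sends $\Ical^\cfg_n$ into $(\phi-\id)(F_{k+1}\Ical^\cfg_n)\subseteq F_{k+2}\Ical^\cfg_n$, which vanishes in $\gr^\Ical_{k+1}\Ical^\cfg_n$, so $\tau_{\phi\psi}=\tau_\phi+\tau_\psi$ and \eqref{eq:pretau} is a homomorphism; its kernel is $\{\phi:(\phi-\id)(\Ical^\cfg_n)\subseteq F_{k+2}\Ical^\cfg_n\}=J^\cfg_{g,*}(n,k+1)$, giving the injection $\tau^\cfg_{g,*}(n,k)$ on the quotient. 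To upgrade its target from $\Hom(\Ical^\cfg_n|_k,\gr^\Ical_{k+1})$ to $\Hom(H,\gr^\Ical_{k+1})$ I would show $\tau_\phi$ kills $F_2\Ical^\cfg_n/F_{k+1}$, that is, $(\phi-\id)(\tri^n(\Ical^2))\subseteq\tri^n(\Ical^{k+2})$: this is the ``derivation'' property of the Johnson‑type map, which I would extract from the ring structure on $\QQ\pi/\Ical^{n+1}$, from the relation $(\phi-\id)(\Ical)\subseteq\Ical^{k+1}+\ker\tri^n$, and from the decomposition formula~\eqref{eq:decompositionformula} controlling $\tri^\pt$ on products. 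I expect this factorisation step to be the main obstacle: when $k$ is large, $J^\cfg_{g,*}(n,k)$ strictly contains the honest Johnson subgroup $J_{g,*}(k)$, so one cannot simply quote the classical derivation identity and must instead exploit the multiplicative structure of $\tri^\pt$ directly.

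Equivariance is immediate from $\tau_{g\phi g^{-1}}=g\circ\tau_\phi\circ g^{-1}$, which is exactly the $\Gamma_{g,*}$‑action on $\Hom(H,\gr^\Ical_{k+1}\Ical^\cfg_n)$, while normality of $J^\cfg_{g,*}(n,k)$ and $J^\cfg_{g,*}(n,k+1)$ lets conjugation descend to the quotient. For the concluding assertions: $\Hom(H,\gr^\Ical_{k+1}\Ical^\cfg_n)\cong H^\vee\otimes\gr^\Ical_{k+1}\Ical^\cfg_n$ carries an integral structure (inherited from the integral set‑up of \cite{LooijengaStavrou25}, which agrees with the present one up to finite index), hence is a finitely generated abelian group, so its subgroup $J^\cfg_{g,*}(n,k)/J^\cfg_{g,*}(n,k+1)$ is finitely generated and, sitting inside a $\QQ$‑vector space, torsion‑free. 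Since $\gr^\Ical_{k+1}\Ical^\cfg_n$ is a quotient of $\gr^\Ical_{k+1}\QQ\pi$, hence of $H^{\otimes k+1}$, the Torelli group acts trivially on $H^\vee\otimes\gr^\Ical_{k+1}\Ical^\cfg_n$, so it acts trivially on $J^\cfg_{g,*}(n,k)/J^\cfg_{g,*}(n,k+1)$ and the $\Gamma_{g,*}$‑action factors through $\Sp_{2g}(\ZZ)$. Finally, $\tau^\cfg_{g,*}(n,k)\otimes\QQ$ realises $(J^\cfg_{g,*}(n,k)/J^\cfg_{g,*}(n,k+1))\otimes\QQ$ as an $\Sp_{2g}(\ZZ)$‑subrepresentation of $H^\vee\otimes\gr^\Ical_{k+1}\Ical^\cfg_n$; as $H^\vee\cong H$ has weight $1$ and $\gr^\Ical_{k+1}\Ical^\cfg_n$, a quotient of $H^{\otimes k+1}$, has weight $\le k+1$, Proposition~\ref{prop:gralgebraicclosedunder} makes the tensor product algebraic of weight $\le k+2$, and hence so is the subrepresentation.
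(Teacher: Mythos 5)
Your argument for the homomorphism property, the identification of the kernel with $J^\cfg_{g,*}(n,k+1)$, the equivariance, and the concluding claims about algebraicity and the weight bound $\le k+2$ all track the paper's proof. The one step you flag as ``the main obstacle''\ --- getting $\tau_\phi$ to kill $F_2\Ical^\cfg_n/F_{k+1}$ so that it factors through $H$ --- is indeed where your proposal stops short, and the paper closes it by a shorter route than the one you are contemplating. Rather than descend to $\QQ\pi$, track $\ker\tri^n$, and invoke the decomposition formula (which does run into the issue that $\ker\tri^n$ need not a priori be an ideal), the paper observes at the outset that $\tri^n$ endows $\Ical^\cfg_n$ itself with the structure of a filtered non-unital ring with $F_i\cdot F_j\subseteq F_{i+j}$. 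With that in hand, the derivation identity $\phi(xy)-xy=(\phi(x)-x)\phi(y)+x(\phi(y)-y)$ applied inside $\Ical^\cfg_n$, together with $F_2=F_1\cdot F_1$ and $(\phi-\id)(F_1)\subseteq F_{k+1}$, gives $(\phi-\id)(F_2)\subseteq F_{k+1}\cdot F_1+F_1\cdot F_{k+1}\subseteq F_{k+2}$ immediately; no bookkeeping of $\ker\tri^n$ is needed, and your worry about $J^\cfg_{g,*}(n,k)$ being larger than $J_{g,*}(k)$ is rendered harmless because the whole computation happens after applying $\tri^n$.

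A smaller point: your appeal to Theorem~\ref{thm:assgradedIcalcfg} to deduce $\gr^\Ical_1\Ical^\cfg_n=H$ is not quite licensed --- without $g\ge n$ that theorem gives only a lower bound on $\ker\gr^\Ical_k\tri^n$ (for $k=1$ the empty bound), not an equality, and the lemma is stated with no genus hypothesis. However, this equality is inessential at every place you use it: to conclude that $\phi$ acts trivially on $\gr^\Ical_\pt\Ical^\cfg_n$ it is enough that $\phi$ acts trivially on the degree-$1$ generators $\gr^\Ical_1\Ical^\cfg_n$ (which is immediate from $\phi\in J^\cfg_{g,*}(n,k)$, $k\ge1$, with no Torelli detour), and for the injection into $\Hom(H,\Ical^\cfg_n|^{k+1}_{k+1})$ it is enough that $\Ical^\cfg_n|^1_1$ is a quotient of $H$, since that makes $\Hom(\Ical^\cfg_n|^1_1,-)$ a subspace of $\Hom(H,-)$. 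With the filtered-ring observation supplied and the Torelli detour removed, your proof would coincide with the paper's.
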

\begin{proof}
    The surjection $\tri^n$ makes $\Ical^\cfg_n$ into a filtered ring (without a unit), whose associated graded ring $\gr^\Ical_\pt\Ical^\cfg_n$ is generated in degree $1$ as that is true for $\gr^\Ical_\pt\QQ\pi$ by Labute. Since $k\ge 1$, $J^\cfg_{g,*}(n,k)$ acts trivially on $\gr^\Ical_1\Ical^\cfg_n=H$ and thus on the whole $\gr^\Ical_\pt\Ical^\cfg_n$, and, in particular, on $\Ical^\cfg_n|^{k+1}_{k+1}$. Then if $\phi\in J^\cfg_{g,*}(n,k)$ and $x\in \Ical^\cfg_n|^{k+1}_{k+1}$, we get \begin{align*}
        \tau_{\phi\circ\psi}(x)&=(\phi\circ\psi)(x)-x=\phi(\psi(x))-x=\phi(x+\tau_\psi(x))-x\\
        &=x+\tau_\phi(x)+\tau_\psi(x)-x=(\tau_\phi+\tau_\psi)(x)
    \end{align*}
    since the action of $\phi$ on $\tau_\psi(x)\in \Ical^\cfg_n|^{k+1}_{k+1}$ is trivial. This proves that \eqref{eq:pretau} is a homomorphism. 
    To be in the kernel of \eqref{eq:pretau} is equivalent $\tau^\cfg_{g,*}(n,k)$ is equivalent to acting trivially on $\Ical^\cfg_n|^1_{k+1}$ and thus lying in $J^\cfg_{g,*}(n,k+1)$, so \eqref{eq:pretau} factors through $J^\cfg_{g,*}(n,k)/J^\cfg_{g,*}(n,k+1)$. Finally, the multiplicativity of the filtration implies that $\phi\in J^\cfg_{g,*}(n,k)$ also acts trivially on $\Ical^\cfg_n|^2_{k+1}$ and hence the homomorphism $\tau_\phi$ vanishes $\Ical^\cfg_n|^2_{k+1}$ and thus factors through $\Ical^\cfg_n|^1_1=H$. Then \eqref{eq:pretau} factors through the claimed monomorphism. 
    
    It is a trivial exercise to check that the equivariance with the $\Gamma_{g,*}$ actions. The last properties on the domain follow because  clearly makes the map equivariant; the action factors through $\Sp_{2g}(\ZZ)$ because $\Hom(H,\Ical^\cfg_n|^{k+1}_{k+1})$ is a $\QQ$-vector space, and thus torsion free, and as an $\Sp_{2g}(\ZZ)$-representation it is a quotient of $H\otimes H^{\otimes k+1}$.
\end{proof}

The discussion giving rise to $\tau^\cfg_{g,*}(n,k)$ is the exact analogue to that of the classical Johnson homomorphisms
\begin{equation}
    \tau_{g,*}(k):J_{g,*}(k)/J_{g,*}(k+1)\to \Hom(H,\QQ\pi|^{k+1}_{k+1}).
\end{equation}
We remark that albeit working over $\QQ$ instead of $\ZZ$, the map $\tau_{g,*}(k)$ remains injective as $\pi/\pi^{(n+1)}$ is free abelian by Labute \cite{Labute} and thus embeds in $\QQ\pi|_{k+1}$. Since $\QQ\pi|_{k}$ surjects on $\Ical^\cfg_n|_k$ we have the inclusions $J_{g,*}(k)\subset J_{g,*}^\cfg(n,k)$, for $k\ge n$, which induce the comparison comparison maps $$\comp_k:J_{g,*}(k)/J_{g,*}(k+1)\to J^\cfg_{g,*}(n,k)/J^\cfg_{g,*}(n,k+1).$$ These sit in the commuting squarea
\begin{equation}\label{eq:comparisonmapofJohnsons}
 \begin{tikzcd}
    J_{g,*}(k)/J_{g,*}(k+1)\rar["\comp_k"]\dar["\tau_{g,*}(k)", hook]& J^{\cfg}_{g,*}(n,k)/J^{\cfg}_{g,*}(n,k+1) \dar["\tau^{\cfg}_{g,*}{(n,k)}", hook]\\
        \Hom(H,\QQ\pi|^{k+1}_{k+1})\rar[two heads, "\tri_k"] & \Hom(H,\Ical^\cfg_n|^{k+1}_{k+1}),
    \end{tikzcd}
\end{equation}
with the bottom surjection coming from $\gr^\Ical_{k+1}\tri^{k+1}$.
We will determine the kernel and cokernel of each $\comp_k$.

\subsection{The kernel of $\comp_k$}
It will be convenient to work with the Johnson homomorphisms 
\begin{equation}
    \tau_{g,1}(k):J_{g,1}(k)/J_{g,1}(k+1)\to \Hom(H,\QQ\pi_1|^{k+1}_{k+1})
\end{equation}
for $\Gamma_{g,1}$ as well. The target can be simplified by the chain of $\Sp_{2g}(\ZZ)$-equivariant isomorphisms
\begin{equation}\label{eq:selfdualitysimplification}
    \Hom(H,\QQ\pi_1|^{k+1}_{k+1})\cong H^{\vee}\otimes H^{\otimes k+1}\cong H^{\otimes k+2},
\end{equation}
where the first isomorphism is due to Fox \cite{Fox}. The last term has an action of the cyclic group $\Ccal_{k+2}$ of order $k+2$ by permuting the tensor factors. We have the following known result, e.g. see Morita's survey \cite[Prop~4.6]{Morita98survey}.

\begin{lemma}
    The image of $\tau_{g,1}(k)$ lies, under the isomorphism \eqref{eq:selfdualitysimplification}, in the $\Ccal_{k+2}$-invariant subspace $(H^{\otimes k+2})^{\Ccal_{k+2}}$.
    
\end{lemma}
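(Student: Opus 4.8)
The plan is to carry out the classical argument (as in Morita's survey) that realises $\tau_{g,1}(k)$ as a derivation of the graded group ring and then invokes the boundary relation $\phi(\zeta)=\zeta$, which holds for every $\phi\in\Gamma_{g,1}$ because a mapping class of $\Sigma_{g,1}$ fixes the boundary loop $\zeta$ on the nose.

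First I would reinterpret $\tau_{g,1}(k)$ graded-algebraically. For $\phi\in J_{g,1}(k)$ one has $\phi(\g)\g^{-1}\in\pi_1^{(k+1)}\subseteq 1+\Ical_1^{k+1}$ for all $\g\in\pi_1$, so the operator $\delta:=\phi-\id$ on $\QQ\pi_1$ satisfies $\delta(\g-1)=\phi(\g)-\g\in\Ical_1^{k+1}$, hence $\delta(\Ical_1^d)\subseteq\Ical_1^{d+k}$ for all $d$. Thus $\delta$ induces a degree-$k$ endomorphism $\bar\delta$ of the free tensor algebra $\gr^\Ical_\pt\QQ\pi_1=T[H]$ of Example \ref{example:truncatedgroupringgralgebraic}; the identity $\delta(xy)=\delta(x)y+x\,\delta(y)+\delta(x)\delta(y)$, valid because $\phi$ is a ring automorphism, together with $k\ge 1$ (which pushes the last term into strictly higher filtration degree than $\delta(x)y$ and $x\,\delta(y)$), shows $\bar\delta$ is a \emph{derivation} of $T[H]$. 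Unwinding the definitions — using $\QQ\pi_1|^{k+1}_{k+1}=\Ical_1^{k+1}/\Ical_1^{k+2}\cong H^{\otimes k+1}$ and that conjugation acts trivially on $\gr^\Ical_\pt\QQ\pi_1$ — identifies the restriction $\bar\delta|_H\colon H\to H^{\otimes k+1}$ with $f:=\tau_{g,1}(k)(\phi)$.

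Next I would feed in the boundary relation. Since $\phi(\zeta)=\zeta$ we get $\delta(\zeta-1)=\phi(\zeta)-\zeta=0$; as $\zeta-1\in\Ical_1^2$ has class $\mu\in H^{\otimes 2}$ in $\gr^\Ical_2$ (Example \ref{ex:tri^2(mu)}) and $\delta$ raises filtration degree by $k$, passing to $\gr^\Ical_{k+2}=H^{\otimes k+2}$ yields $\bar\delta(\mu)=0$. Using the derivation property and $\mu=\sum_{1\le\pm i\le g}\sign(i)\,a_i\otimes a_{-i}$,
\[
0=\bar\delta(\mu)=(f\otimes\id_H)(\mu)+(\id_H\otimes f)(\mu).
\]
Under the self-duality isomorphism $\Hom(H,\QQ\pi_1|^{k+1}_{k+1})\cong H^{\otimes k+2}$ of \eqref{eq:selfdualitysimplification}, which is $f\mapsto (\id_H\otimes f)(\mu)$ up to sign, the second summand is the image $t_f$ of $f$, while the antisymmetry of $\mu$ identifies the first summand with $-\sigma(t_f)$, where $\sigma$ is a generator of $\Ccal_{k+2}$ acting by cyclic permutation of tensor factors. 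Hence $t_f=\sigma(t_f)$, so $t_f$ lies in $(H^{\otimes k+2})^{\Ccal_{k+2}}$, as claimed.

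I expect the main obstacle to be bookkeeping rather than ideas: confirming that $\bar\delta|_H$ really is $\tau_{g,1}(k)(\phi)$ in the paper's normalisation (a short check with Fox's identification and the triviality of conjugation on the associated graded), and tracking the rotation and sign conventions so that the antisymmetry of $\mu$ turns $(f\otimes\id_H)(\mu)$ into $-\sigma(t_f)$ for the correct cyclic generator. Since the assertion is only membership in the invariant subspace, none of these sign choices affects the conclusion, so the argument should go through cleanly once the graded-derivation picture is in place.
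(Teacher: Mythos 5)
Your argument is correct, and it is essentially the classical proof (via the derivation picture and the boundary relation $\phi(\zeta)=\zeta$) that the paper is citing from Morita's survey rather than reproducing. The key computation — that the derivation property together with $\bar\delta(\mu)=0$ and the antisymmetry of $\mu$ forces $t_f=\sigma(t_f)$ — is sound, and as you note the choice of cyclic generator is immaterial for the conclusion.
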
 
On the other hand, the surjection  $\Gamma_{g,1}\twoheadrightarrow\Gamma_{g,*}$ induces a comparison of Johnson homomorphisms
\begin{equation}\label{eq:comparisonmapofJohnsonsClassical}
 \begin{tikzcd}
    J_{g,1}(k)/J_{g,1}(k+1)\rar\dar["\tau_{g,1}(k)", hook]& J_{g,*}(k)/J_{g,*}(k+1)\dar["\tau_{g,*}{(k)}", hook]\\
    \Hom(H,\QQ\pi_1|^{k+1}_{k+1})\rar[two heads] & \Hom(H,\QQ\pi|^{k+1}_{k+1}).
    \end{tikzcd}
\end{equation}
The following, is due to Hain \cite{HainInfinitesimal}, in  a formulation from \cite[\S~6.1]{Morita98survey}.
\begin{proposition}\label{prop:Hain'sTheorem}
    The top map of diagram \eqref{eq:comparisonmapofJohnsonsClassical} is injective for all $k\neq 2$; for $k=2$ the kernel is $\ZZ$ generated by the boundary Dehn twist. After tensoring with $\QQ$ this map becomes surjective for all $k$.
\end{proposition}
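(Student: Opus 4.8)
The plan is to push the comparison all the way down to the level of Johnson homomorphisms, where the statement turns into a transparent question about images, and then to invoke Hain's infinitesimal presentation for the one genuinely hard input. First I would use the commuting square \eqref{eq:comparisonmapofJohnsonsClassical}: since both vertical Johnson homomorphisms $\tau_{g,1}(k)$ and $\tau_{g,*}(k)$ are injective (Labute, as recalled just before the statement) and the bottom map is $\QQ[\Sp_{2g}(\ZZ)]$-linear, the kernel of $\comp_k$ is identified with $\mathrm{Im}\,\tau_{g,1}(k)\cap\ker(\text{bottom map})$ and its cokernel with $\mathrm{Im}\,\tau_{g,*}(k)$ modulo the image of $\mathrm{Im}\,\tau_{g,1}(k)$. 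Using Labute's presentation $\gr^\Ical_\pt\QQ\pi\cong T[H]/(\mu)$ together with Proposition \ref{prop:mukl_weightk_Hotimesn}, the kernel of the bottom map $\Hom(H,H^{\otimes k+1})\to\Hom(H,\Ical^{k+1}/\Ical^{k+2})$ is exactly $\Hom\bigl(H,(H^{\otimes k+1})^{\le k-1}\bigr)$, the homomorphisms landing in the weight $\le k-1$ (equivalently ``$\mu$-divisible'') part. So everything reduces to: (i) $\mathrm{Im}\,\tau_{g,1}(k)$ meets $\Hom(H,(H^{\otimes k+1})^{\le k-1})$ trivially for $k\neq 2$ and in a single line for $k=2$, and (ii) $\mathrm{Im}\,\tau_{g,*}(k)\otimes\QQ$ is the image of $\mathrm{Im}\,\tau_{g,1}(k)\otimes\QQ$ under the bottom map.

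Next I would pin down the boundary twist. Since $T_\partial$ acts on $\pi_1$ by conjugation by the boundary word $\zeta$, and $\zeta-1\equiv\mu\pmod{\Ical_1^3}$ (Example \ref{ex:tri^2(mu)}), a direct expansion in $\QQ\pi_1/\Ical_1^4$ gives $T_\partial\in J_{g,1}(2)$ with $\tau_{g,1}(2)(T_\partial)$ equal to the homomorphism $\bar\gamma\mapsto\mu\otimes\bar\gamma-\bar\gamma\otimes\bar\mu$, which is nonzero (for $g\ge2$) and visibly lies in $\mathrm{im}\,\mu_{1,2}+\mathrm{im}\,\mu_{2,3}$, hence in $\ker(\text{bottom map})$; thus $\overline{T_\partial}$ spans a line in $\ker\comp_2\otimes\QQ$, and a $\ZZ$-summand of $\ker\comp_2$ since its Johnson image is a primitive vector (which is part of Morita's description of the second Johnson image). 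Because $\tau_{g,1}(2)(T_\partial)\neq0$ we also have $T_\partial\notin J_{g,1}(3)$, so for $k\ge 3$ the subgroup $\langle T_\partial\rangle=\ker(\Gamma_{g,1}\to\Gamma_{g,*})$ meets $J_{g,1}(k)$ trivially and contributes nothing. It is worth noting that $T_\partial$ being central in $\Gamma_{g,1}$ forces $[\tau_{g,1}(2)(T_\partial),\mathrm{Im}\,\tau_{g,1}(j)]=0$ for all $j$, so the ``obvious'' ideal generated by this class never propagates to higher degree — but this observation alone does not rule out further, non-obvious, elements of $\mathrm{Im}\,\tau_{g,1}(k)$ sitting in the $\mu$-part.

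The hard part — and essentially the only content beyond the above bookkeeping — is assertions (i) and (ii), which I would not attempt to reprove from scratch: they are exactly the functoriality of Hain's infinitesimal presentation of the Torelli groups under the map of marked surfaces $\Sigma_{g,1}\to\Sigma_{g,*}$ that fills in the boundary circle with a once-punctured disc, as worked out in \cite{HainInfinitesimal} and recorded in the form needed here in \cite[\S6.1]{Morita98survey}. Concretely, Hain's presentation exhibits $\bigoplus_k\mathrm{Im}\,\tau_{g,1}(k)\otimes\QQ$ and $\bigoplus_k\mathrm{Im}\,\tau_{g,*}(k)\otimes\QQ$ as graded Lie algebras and identifies the first as surjecting onto the second with kernel the graded ideal generated by the degree-$2$ class $\tau_{g,1}(2)(T_\partial)$; combined with the centrality remark above, that kernel is the single line $\QQ\langle\tau_{g,1}(2)(T_\partial)\rangle$ in degree two and zero elsewhere, and it does meet the $\mu$-part exactly in that line and nowhere else. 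The expected obstacle is entirely that Hain's computation is deep and not elementary (and valid in a range $g\ge 3$, which is harmless here since the paper works in the large-genus regime); granting it, assembling the pieces yields injectivity of $\comp_k$ for $k\neq2$, the kernel $\ZZ=\langle T_\partial\rangle$ for $k=2$, and rational surjectivity for all $k$.
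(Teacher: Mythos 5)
Note first that the paper does not prove this proposition: it is stated as a theorem of Hain, in the formulation of Morita's survey, and your proposal ultimately defers to the same source for the substance (your points (i) and (ii)). What you add beyond the citation is some preliminary bookkeeping, and that bookkeeping contains an error.

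You assert that the kernel of the bottom map of \eqref{eq:comparisonmapofJohnsonsClassical} is $\Hom\bigl(H,(H^{\otimes k+1})^{\le k-1}\bigr)$. It is not. Labute's presentation $\gr^\Ical_\pt\QQ\pi\cong T[H]/(\mu)$ identifies the kernel of $H^{\otimes k+1}\twoheadrightarrow\Ical^{k+1}/\Ical^{k+2}$ with $\sum_{i=1}^{k}\im\mu_{i,i+1}$, the span of \emph{consecutive} insertions only, whereas Proposition~\ref{prop:mukl_weightk_Hotimesn} says that to span the full weight $\le k-1$ part of $H^{\otimes k+1}$ one needs \emph{all} insertions $\mu_{i,j}$ with $i<j$, a strictly larger space. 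The paper is careful about exactly this distinction; compare Proposition~\ref{prop:weight(n-2r)ofgrI_QQpi} and the two displayed intersections in Lemma~\ref{lem:trivialintersectionofreps}. Consequently your assertion (i), which places $\im\tau_{g,1}(k)$ away from the larger subspace $\Hom\bigl(H,(H^{\otimes k+1})^{\le k-1}\bigr)$ for $k\neq 2$, is strictly stronger than what Hain establishes, and you offer no argument for it. If one substitutes the correct kernel $\Hom\bigl(H,\sum_i\im\mu_{i,i+1}\bigr)$, then (i) and (ii) become literally the statement of the proposition, so the ``reduction'' is a restatement rather than a proof. The pieces of your argument that are sound — the identification of $\tau_{g,1}(2)(T_\partial)$ with the class of $\mu$ in degree $2$, the fact that $T_\partial\in J_{g,1}(2)\setminus J_{g,1}(3)$, and the observation that centrality of $T_\partial$ forces the ideal it generates in $\gr J_{g,1}\otimes\QQ$ to be concentrated in degree $2$ — are indeed the mechanism behind Hain's theorem, but they are not a proof of it without Hain's presentation.
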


Theorem \ref{thm:assgradedIcalcfg} puts the subspace $$\Hom(H,(\QQ\pi|^{k+1}_{k+1})^{\le 3(k+1)-2(n+1)})$$ in the kernel of the map $\tri_k$ from diagram \eqref{eq:comparisonmapofJohnsons}. 
With the next lemma we will be able to find an upper bound on the weight of this kernel. The proof is a careful analysis of what types of monomials can appear in these intersections. 

\begin{lemma}\label{lem:trivialintersectionofreps}
For any weight bound $-1\le w\le k$, the intersection of the subspaces \begin{equation}\label{eq:intersection1}
    (H^{\otimes k+2})^{C_{k+2}}\cap H\otimes (H^{\otimes k+1})^{\le w}\subset H^{\otimes k+2}
\end{equation} is of weight $\le w-1$.

If, furthermore, $g\ge k+2$, then the intersection
    \begin{equation}\label{eq:intersection2}
        \im \tau_{g,1}(k)\otimes \QQ\cap \left(H\otimes (H^{\otimes k+1})^{\le w}+ H\otimes \sum_{i=1}^{k}\im\mu_{i,i+1}\right) \subset H^{\otimes k+2}  
    \end{equation}
    is also of weight $\le w-1$.
\end{lemma}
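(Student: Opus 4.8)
The plan is to reduce both statements to a concrete combinatorial analysis of monomials $a_{i_1}\otimes\cdots\otimes a_{i_{k+2}}$ and the two explicit operations involved: cyclic permutation of tensor slots, and insertion of copies of $\mu$. The key observation is that the weight of an algebraic representation can be detected by contractions with $\mu$: a subrepresentation $V\subset H^{\otimes m}$ has weight $\le m-2j$ if and only if it lies in the span of images of $j$-fold insertions $\mu^{m,j}_{\uk,\ul}$, equivalently (by self-duality of $H$) if and only if enough contractions of $V$ against $\mu$ vanish. So to show \eqref{eq:intersection1} has weight $\le w-1$, i.e.\ lies in weight $\le (k+2)-2\lceil (k+3-w)/2\rceil$, I would show that every element of the intersection becomes, after contracting slots $1$ and $2$ with $\mu$, an element of $H^{\otimes k}$ of weight $\le w-1$ again -- or more directly, exhibit the intersection as already lying inside the span of appropriate $\mu$-insertions.

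Concretely, for \eqref{eq:intersection1}: the second factor $H\otimes(H^{\otimes k+1})^{\le w}$ is, by Proposition~\ref{prop:mukl_weightk_Hotimesn} applied to the last $k+1$ slots, the span of images of insertion maps $\mu_{a,b}$ with $2\le a<b\le k+2$ (at least $\lceil(k+2-w)/2\rceil$ of them), i.e.\ diagrams whose chords all avoid slot $1$. An element $v$ in this space, if it is also $C_{k+2}$-invariant, must then be expressible using chord diagrams avoiding \emph{any} single slot (apply the cyclic rotation that moves the excluded slot), and hence -- this is the heart of the argument -- using chord diagrams that contain at least one \emph{extra} chord, pushing the weight down by a further $2$ below the expected $w$; after the $C_{k+2}$-symmetry this should yield weight $\le w-1$. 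I would make this precise by a counting argument: a $C_{k+2}$-invariant element lying in the span of $\mu_{a,b}$ with $a,b\ge 2$ is a sum of terms each visibly a multiple insertion, and the pigeonhole across the $k+2$ cyclic rotates of the ``slot $1$ is free'' condition forces the minimal number of chords to increase. Alternatively, and perhaps more cleanly, I would contract slots $1,2$ with $\mu$ and track how the cyclic symmetry interacts with that contraction, using the identity $\mu$-contraction $=$ trace to land back in $H^{\otimes k}$ and induct on $k$.

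For \eqref{eq:intersection2}, the extra input is the ``$C_{k+2}$-invariance of the image of $\tau_{g,1}(k)$'' from the cited Lemma, so $\im\tau_{g,1}(k)\otimes\QQ\subseteq (H^{\otimes k+2})^{C_{k+2}}$ and we are intersecting that invariant space with $H\otimes(H^{\otimes k+1})^{\le w}+H\otimes\sum_{i}\im\mu_{i,i+1}$. The second summand $H\otimes\sum_i\im\mu_{i,i+1}$ is exactly the lift relevant to $\QQ\pi_1$, i.e.\ the kernel direction coming from the relation $\zeta$; it is again a span of $\mu$-insertions (consecutive ones), so the whole bracket is a span of insertion maps on $H^{\otimes k}$ living in the last $k+1$ slots. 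The argument is then formally the same as for \eqref{eq:intersection1}: the $C_{k+2}$-invariance combined with ``all chords avoid slot $1$'' forces weight $\le w-1$, now needing $g\ge k+2$ so that the relevant $\mu$-insertions on $H^{\otimes k}$ genuinely span the asserted weight subspaces (Proposition~\ref{prop:mukl_weightk_Hotimesn} and Proposition~\ref{prop:weight(n-2r)ofH^n} are only equalities in a stable range of $g$).

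I expect the main obstacle to be the bookkeeping in the pigeonhole/symmetrisation step: turning ``$C_{k+2}$-invariant'' plus ``expressible via chords missing slot $1$'' into the quantitative statement ``one more chord than generic, hence weight drops by $2$ not $0$'' -- and doing so uniformly in the parities of $k$ and $w$, since $(H^{\otimes m})^{\le w}$ only contains irreducibles of weight $\equiv m\pmod 2$, so the naive bound ``$\le w$'' for an odd-vs-even mismatch is already ``$\le w-1$'' and one must be careful not to double-count the saving. I would handle this by working with the explicit dual description: pair against the functionals $\langle-,\sEcal_\bfrak\rangle$ (or their analogues in $H^{\otimes k+2}$) and show directly that any $C_{k+2}$-invariant element in the stated span is annihilated by all functionals detecting weight $\ge w$, using that a cyclic average of a chord-diagram vector that misses slot $1$ must miss every slot once, forcing two chords to overlap and thus contributing to a lower weight piece.
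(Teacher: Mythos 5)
Your proposal identifies the right structural tension -- cyclic $C_{k+2}$-invariance versus "expressible via chords avoiding slot $1$" -- and that is indeed the core of the paper's proof. But the step where the work actually lives is exactly the step you flag as an unresolved ``obstacle,'' and your sketch of it is both incomplete and off-target. You frame the conclusion as ``$C_{k+2}$-invariance forces at least one extra chord, hence weight drops by $2$ more''; the paper does not argue that way, and it is not clear that statement is even true. What the paper does is start from a putative weight $\ge w+1$ irreducible $V$ in the intersection, take a concrete nonzero $v\in V$ built from a highest-weight monomial $\vfrak$ (Proposition~\ref{prop:weight(n-2r)ofH^n}), and decompose $v=\sum_i a_i\otimes v_i + a_{-i}\otimes v_{-i}$. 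Counting the number of \emph{negative index factors} a monomial must carry in order to lie in $(H^{\otimes k+1})^{\le w}$, combined with the weight bound on $V$, shows $v_{-i}=0$ for all $i$: every monomial summand of $v$ has a positive index in slot~$1$ and at least one negative index in the other slots. That outright contradicts cyclic invariance at the level of monomial coefficients, with no ``extra chord'' ever appearing. Your ``span of insertion maps plus pigeonhole'' formulation is not equivalent: being in the span of $\mu_{a,b}$ with $a,b\ge 2$ does not decompose an element as a sum of terms ``each visibly a multiple insertion,'' and a cyclic average of a chord-diagram vector missing slot $1$ is a sum of rotates, not a single vector that ``misses every slot once,'' so no overlap of chords is forced.

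For \eqref{eq:intersection2} the proposal says the argument is ``formally the same,'' which undersells the extra work. You omit the essential use of Hain's theorem (Proposition~\ref{prop:Hain'sTheorem}) to rule out that $V$ sits entirely inside $\im\tau_{g,1}(k)\otimes\QQ\cap H\otimes\sum_i\im\mu_{i,i+1}$ -- without this, one cannot decompose $v=u+u'$ with $u$ nonzero and not absorbed into the $\mu_{i,i+1}$ part, and the rest of the argument does not launch. You also do not isolate what $g\ge k+2$ is actually for: in the paper it guarantees that $\vfrak$ uses only indices $>2r$ so that distinct $\mu^{k+2,r}_{\uk,\ul}(\vfrak)$ have non-cancelling monomial summands, making ``the set of chord diagrams appearing in $v$'' well-defined; only then can one invoke cyclic-closure of that set and the type-(I)/type-(II) dichotomy to reach the contradiction. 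In short: right idea, missing the monomial-level bookkeeping that actually proves it, and missing the Hain input and the genus-stability mechanism in the second case.
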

\begin{remark}
    These intersections are obviously of weight $\le w-1$, so all we need to prove is that no irreducibles of weight $w+1$ appear.
\end{remark}
\begin{proof}[Proof of Lemma \ref{lem:trivialintersectionofreps}] For $w=-1$, there is nothing to prove so we assume $w\ge 0$. Also assume that $w$ is of the same parity as $k+1$; otherwise replace $w$ by $w-1$ and the meaning of the statements does not change.

The argument in both cases starts in the same way. Assume $V$ is an irreducible summand of weight $\wt(V)\ge w+1$ in the given intersection. If we write $\wt(V)=k+2-2r$ (we can do so from the parity assumption), then Proposition \ref{prop:weight(n-2r)ofH^n} gives a non-zero element $v\in V$ that is a linear combination of $\mu^{k+2,r}_{\uk,\ul}(\vfrak)$ for various chord diagrams $(\uk,\ul)$ and a fixed element $\mu^{\otimes r}\otimes \vfrak$ with $\vfrak\in \HH_{+,k+2-2r}^{k+2-2r}$ (recall Definition \ref{def:positivesubspace}). In particular, $v$ is of type $(k+2-r,r)$ according to Section \ref{sec:SPgenerators}, or simply \textit{$r$-negative}. Then $v$ is uniquely expressible as \begin{equation}\label{eq:firstfactordecompv}
    \sum_{1\le i\le g}a_i\otimes v_i+a_{-i}\otimes v_{-i}
\end{equation}
with each $v_{\pm i}\in H^{\otimes k+1}$. Clearly, each $v_{i}$ is $r$-negative, and each $v_{-i}$ is $r-1$-negative. The assumption $v\neq 0$ implies that not all $v_{\pm i}$ can vanish at once. We proceed to analyse the two cases separately.

\smallskip
\emph{Case: intersection \eqref{eq:intersection1}.} We are assuming on the one hand that $v\in H\otimes (H^{\otimes k+1})^{\le w}$, or equivalently that $v_{\pm i}\in (H^{\otimes k+1})^{\le w}$ for all $i$. The latter subspace of $H^{\otimes k+1}$ is $\QQ$-spanned at least $\ge (k+1-w)/2$ negative monomials; this is because all irreducible summands of weight $\le w$ involve at least $\ge (k+1-w)/2$ factors of $\mu$, each contributing for certain a negative $a_i$ factor. If some $v_i\neq 0$ for $i>0$, then $r\ge (k+1-w)/2$ so $\wt(V)=k+2-2r\le w+1$. Similarly, if some $v_{-i}\neq 0$ for $i>0$, then $r-1\ge (k+1-w)/2$ and $\wt(V)\le w-1$. Since we assumed $\wt(V)\ge w+1$ and $v\neq 0$, it follows that $v_{-i}=0$ for all $i>0$ and $r=(k+1-w)/2$. In particular $r\ge 1$ and $v=\sum_{1\le i\le g}a_i\otimes v_i$ is at least $1$-negative.

On the other hand, since $v$ is invariant under the cyclic $\Ccal_{k+2}$-action, then for any monomial appearing non-trivially in $v$, all the cyclic permutations of that monomial appear with the same non-trivial coefficient. But every monomial with positive factor in the first slot and at least one negative factor thereafter has a cyclic permutation with a negative factor in the first slot. We have proven $v$ contains monomials of the former type but none of the latter, giving us a contradiction, thus proving the first assertion of the statement.

\smallskip
\emph{Case: intersection \eqref{eq:intersection2}.}
First, let us observe that $V$ cannot entirely lie in the intersection $\im\tau_{g,1}(k)\otimes \QQ\cap H\otimes \sum_{i=1}^{k}\im\mu_{i,i+1}$; if it did, then $V\subset J_{g,1}(k)/J_{g,1}(k+1)\otimes \QQ$ would lie in the kernel of $J_{g,1}(k)/J_{g,1}(k+1)\to J_{g,*}(k)/J_{g,*}(k+1)$. By Proposition \ref{prop:Hain'sTheorem}, this kernel is trivial unless $k=2$ in which case it is the trivial representation $\QQ$ of weight $0$, while $V$ has weight $\ge w+1\ge 1$.

Now, write $v=u+u'$ for some $u\in H\otimes (H^{\otimes k+1})^{\le w}$ and $u'\in H\otimes \sum_{i=1}^{k}\im\mu_{i,i+1}$, assuming $u$ is not contained in the latter space. So we know  from the previous case that $u$ is non-zero and in the span of monomials that are at least $(k+1-w)/2$-negative. This, as before, implies $r\ge (k+1-w)/2$ or, equivalently, $\wt(V)\le w+1$. Our assumption $\wt(V)\ge w+1$ forces the weight of $V$ to be precisely $w+1$. It also forces that, in a decomposition of $u$ analogous to \eqref{eq:firstfactordecompv}, that all $u_i=0$ for $i<0$; in other words, any monomial summand of $u$ has its negative factors in the last $k+1$ tensor slots.

The assumption $g\ge k+2$ means that the fix element $\vfrak\in \HH^{k+2-2r}_{+,k+2-2r}$ involves only the generators $a_i$ for $i\ge 2r$. Then, each $\mu^{k+2,r}_{\uk,\ul}(\vfrak)$ contains, as summands, a permutation of $a_1\otimes a_{-1}\otimes \cdots \otimes a_{r}\otimes a_{-r}\otimes \vfrak$ that has the factors $a_1,a_{-1},\ldots,a_r,a_{-r}$ in the tensor slots $k_1,l_1,\ldots, k_r,l_r$, respectively; in the other tensor slots only terms involving $a_i$ with $i>r$ appear. It follows that the different $\mu^{k+2,r}_{\uk,\ul}(\vfrak)$ contain terms that cannot cancel between them and are thus linearly independent. In particular, there is a well-defined set of chord diagrams involved non-trivially in the linear combination $v$.

By virtue of lying in the span of the sum $H\otimes (H^{\otimes k+1})^{\le w}+H\otimes\sum\im\mu_{i,i+1}$, these chord-diagrams can only be of the following two types: (I) having at least one consecutive chord in the last $k+1$ slots (coming from some $\mu_{i,i+1}$), and (II) non-consecutive, and involving only the last $k+1$ slots. The $\Ccal_{k+2}$ invariance of $v$, along with the argument of the previous paragraph, enforces that this set of chord-diagrams is closed under cyclic permutation. But a chord diagram of type (II) of length $r\ge 1$ can be cycled to be still non-consecutive and to now involve the first slot; this is of neither type (I) or (II). Therefore $v$ must involve only type (I) chord diagrams which contradicts that $v\not\in H\otimes \sum_{i=1}^{k}\im\mu_{i,i+1}$.
\end{proof}




Suppose $A$ is a finitely generated free abelian group with an action of $\Sp_{2g}(\ZZ)$. In case $A\otimes \QQ$ is algebraic, we denote by $A^{\le w}$ the intersection of $A\subset A\otimes \QQ$ with $(A\otimes \QQ)^{\le w}$; this is a primitive subspace of $A$.

\begin{theorem}
    Assume $g\ge k+2$. Then for any $-1\le w\le k$, the kernel of the composition $$J_{g,*}(k)/J_{g,*}(k+1)\xrightarrow{\tau_{g,*}(k)}\Hom(H,\QQ\pi|^{k+1}_{k+1})\twoheadrightarrow\Hom(H,\QQ\pi|^{k+1}_{k+1}/(\QQ\pi|^{k+1}_{k+1})^{\le w})$$
    is the weight $\le w-1$ subgroup $(J_{g,*}(k)/J_{g,*}(k+1))^{\le w-1}$.
\end{theorem}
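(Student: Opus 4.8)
The plan is to reduce this statement to the analogous statement for $\Gamma_{g,1}$ via Hain's result, Proposition \ref{prop:Hain'sTheorem}, and then extract it from Lemma \ref{lem:trivialintersectionofreps}. First I would observe that the map in question factors through the quotient $\Hom(H,\QQ\pi|^{k+1}_{k+1}/(\QQ\pi|^{k+1}_{k+1})^{\le w})$, and since $\Hom(H,-)$ is exact, its kernel is exactly the preimage under $\tau_{g,*}(k)$ of $\Hom(H,(\QQ\pi|^{k+1}_{k+1})^{\le w})$. So I must show that the intersection $\im\tau_{g,*}(k)\cap \Hom(H,(\QQ\pi|^{k+1}_{k+1})^{\le w})$, viewed inside $\Hom(H,\QQ\pi|^{k+1}_{k+1})$, is precisely the weight $\le w-1$ part of $\im\tau_{g,*}(k)$. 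One inclusion is immediate: since $\tau_{g,*}(k)$ is injective and $\Sp_{2g}(\ZZ)$-equivariant, it identifies $(J_{g,*}(k)/J_{g,*}(k+1))^{\le w-1}$ with a subspace of $\im\tau_{g,*}(k)$ consisting of irreducibles of weight $\le w-1$; each such lies in $\Hom(H, (\QQ\pi|^{k+1}_{k+1})^{\le (w-1)+1}) = \Hom(H,(\QQ\pi|^{k+1}_{k+1})^{\le w})$ by Proposition \ref{prop:gralgebraicclosedunder}(c) applied to $H\otimes(\QQ\pi|^{k+1}_{k+1})^{\le w-1}$ — more simply, any element of $\Hom(H,V)$ for $V$ of weight $\le w$ has weight $\le w-1$... actually I should be careful: $\Hom(H,V) \cong H^\vee\otimes V$ has weight $\le 1+w$, which is not obviously $\le w$. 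The point instead is the reverse: if $U\subset \Hom(H,\QQ\pi|^{k+1}_{k+1})$ has weight exactly $\ge w$, it need not lie in $\Hom(H,(\QQ\pi|^{k+1}_{k+1})^{\le w})$, and that is what has to be leveraged.

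So the real content is the opposite inclusion: if $V$ is an irreducible summand of $\im\tau_{g,*}(k)$ of weight $\ge w$ that lands, under $\tau_{g,*}(k)$... rather, if an element $\phi$ of the kernel corresponds to $V\subset\im\tau_{g,*}(k)$ with $\wt(V)\ge w$, I must derive a contradiction. Here I use the identification \eqref{eq:selfdualitysimplification} with $H^{\otimes k+2}$: under it, $\im\tau_{g,*}(k)\otimes\QQ$ is the image of $\im\tau_{g,1}(k)\otimes\QQ$ (surjective by Proposition \ref{prop:Hain'sTheorem}) under the quotient $H^{\otimes k+2}\twoheadrightarrow \Hom(H,\QQ\pi|^{k+1}_{k+1})$, whose kernel is $H\otimes\sum_{i=1}^k\im\mu_{i,i+1}$ by the Labute presentation of $\Ical^n/\Ical^{n+1}$ in the last $k+1$ slots. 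Thus a weight-$\ge w$ summand $V$ of $\ker$ pulls back to a weight-$\ge w$ summand of the intersection $\im\tau_{g,1}(k)\otimes\QQ\cap\big(H\otimes(H^{\otimes k+1})^{\le w} + H\otimes\sum_i\im\mu_{i,i+1}\big)$ inside $H^{\otimes k+2}$. Here is where I invoke Lemma \ref{lem:trivialintersectionofreps}, intersection \eqref{eq:intersection2} (this is exactly why that lemma was stated with the hypothesis $g\ge k+2$), which forces this intersection to have weight $\le w-1$, contradicting $\wt(V)\ge w$. The matching of $(\QQ\pi|^{k+1}_{k+1})^{\le w}$ with $(H^{\otimes k+1})^{\le w}$ modulo the $\mu_{i,i+1}$ relations is the Labute presentation again, applied to the last $k+1$ tensor factors.

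Assembling: the kernel is a $\QQ[\Sp_{2g}(\ZZ)]$-subrepresentation of the algebraic (hence semisimple) representation $(J_{g,*}(k)/J_{g,*}(k+1))\otimes\QQ$, so it is a sum of irreducible summands; I have shown no summand of weight $\ge w$ can appear, hence it lies in the weight $\le w-1$ part; and conversely the weight $\le w-1$ part is killed, since for $V$ irreducible of weight $\le w-1$, a highest-weight vector is (after Proposition \ref{prop:weight(n-2r)ofgrI_QQpi}) a combination of applications of $\mu^{k+2,r}_{\uk,\ul}$ with $k+2-2r \le w-1 < w$, so $r \ge (k+3-w)/2$ and such an element, having all but its first slot contributing the $\mu$-factors' negatives, lies in $H\otimes(H^{\otimes k+1})^{\le w}$; more directly, $\tau_{g,*}(k)(V)$ has weight $\le w-1$, so as a subrepresentation of $H^{\otimes k+2}$ it is generated by monomials of type $(j,k+2-j)$ with $k+2-2(k+2-j)\le w-1$, and one checks — using that its image in $H^{\otimes k+2}$ already has all $a_{-i}$-factors forced off the first slot by the cyclic analysis in the proof of Lemma \ref{lem:trivialintersectionofreps} — that it maps into $\Hom(H,(\QQ\pi|^{k+1}_{k+1})^{\le w})$, i.e.\ is in the kernel. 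Equality of kernel and $(J_{g,*}(k)/J_{g,*}(k+1))^{\le w-1}$ follows, noting $(J_{g,*}(k)/J_{g,*}(k+1))^{\le w-1}$ is defined as the primitive lattice of the weight $\le w-1$ rational subrepresentation. The main obstacle, and essentially the only nontrivial step, is the weight bound of Lemma \ref{lem:trivialintersectionofreps}\eqref{eq:intersection2}, which is already supplied; the remaining work is the careful bookkeeping of which $H^{\otimes k+2}$-quotient and which subspace corresponds to what, and confirming the cyclic-invariance argument gives the converse inclusion cleanly rather than only the hard direction.
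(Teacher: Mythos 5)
Your treatment of the hard direction is essentially the paper's: you lift a weight~$\ge w$ irreducible summand of the kernel back to $H^{\otimes k+2}$ via Hain's surjectivity (Proposition \ref{prop:Hain'sTheorem}), locate it in the intersection
\[
\im\tau_{g,1}(k)\otimes\QQ\cap\Bigl(H\otimes(H^{\otimes k+1})^{\le w}+H\otimes\textstyle\sum_i\im\mu_{i,i+1}\Bigr),
\]
and apply Lemma \ref{lem:trivialintersectionofreps}\eqref{eq:intersection2}. That is exactly what the paper does, and the identification of why $g\ge k+2$ enters is correct.

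The gap is in the converse direction, namely the claim that every irreducible $V\subset J_{g,*}(k)/J_{g,*}(k+1)\otimes\QQ$ of weight $\le w-1$ actually lies in the kernel. Your first attempt cites Proposition \ref{prop:gralgebraicclosedunder}(c), but that gives only an \emph{upper} bound $\le d_1+d_2$ on the weight of a tensor product; what you would need is the opposite statement, that weights go down by at most one when tensoring with $H$, so that $H^\vee\otimes(\QQ\pi|^{k+1}_{k+1})^{>w}$ has no summands of weight $\le w-1$. You rightly notice the discrepancy and abandon the argument. Your second attempt — via highest-weight vectors, chord diagrams, and ``all but the first slot contributing the $\mu$-factors' negatives'' — is not established: the chord diagram of a weight $\le w-1$ element of $H^{\otimes k+2}$ may certainly involve the first tensor slot, and the cyclic-invariance analysis in Lemma \ref{lem:trivialintersectionofreps} was used to contradict a hypothesis on weight $\ge w+1$ elements, not to produce a structure theorem for weight $\le w-1$ elements. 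The paper sidesteps all of this with a short adjunction argument: the restriction of the composition to $V$ is, by $\Hom(V,\Hom(H,X))\cong\Hom(V\otimes H,X)$, an $\Sp_{2g}(\ZZ)$-equivariant map $V\otimes H\to\QQ\pi|^{k+1}_{k+1}/(\QQ\pi|^{k+1}_{k+1})^{\le w}$; the domain has weight $\le w$, the codomain has only irreducibles of weight $>w$, so by semisimplicity and Schur's lemma the map vanishes. You should replace your converse-direction argument with this.
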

\begin{proof}
Let $V$ be an irreducible summand in $\ker\comp_k\otimes \QQ$ of weight $\ge w$. Then by Proposition \ref{prop:Hain'sTheorem} $V$ into a summand of $\im \tau_{g,1}(k)\otimes \QQ$, intersected with the kernel of the composition of the bottom row which is
$$H\otimes (H^{\otimes k+1})^{\le w}+ H\otimes \sum_{i=1}^{k}\im\mu_{i,i+1},$$
contradicting the second part of Lemma \ref{lem:trivialintersectionofreps}.
Suppose, on the other hand, $V$ is any irreducible summand of weight $\le w-1$ in $J_{g,*}(k)/J_{g,*}(k+1)\otimes \QQ$. Then the restriction of this composition on $V$ can be rewritten as an evaluation map
    $$ev: V\otimes H\to \QQ\pi|^{k+1}_{k+1}/(\QQ\pi|^{k+1}_{k+1})^{\le w}.$$
    Now the domain of $ev$ has weight $\le (w-1)+1=w$ whereas the codomain has no irreducibles of weight $\le w$. Thus $ev$ vanishes on $V\otimes H$ and thus the composition has $V$ in its kernel.
\end{proof}

Combining the latter theorem with our description of $\Ical^\cfg_n|^{k+1}_{k+1}$ the following is immediate.
\begin{theorem}\label{thm:differenceofJohnsonimages}
    If $k\le n$, then the kernel of the map $$\comp_k:J_{g,*}(k)/J_{g,*}(k+1)\to J^\cfg_{g,*}(n,k)/J^\cfg_{g,*}(n,k+1)$$ contains the subgroup $(J_{g,*}(k)/J_{g,*}(k+1))^{\le 3(k+1)-2(n+1)-1}$. If furthermore $g\ge \max(n,k+2)$, then this containment is an equality.
\end{theorem}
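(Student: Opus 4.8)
The plan is to deduce Theorem \ref{thm:differenceofJohnsonimages} as a formal consequence of the preceding theorem together with the description of $\Ical^\cfg_n|^{k+1}_{k+1}$ from Theorem \ref{thm:assgradedIcalcfg}. First I would recall that, for $k\le n$, the comparison map $\comp_k$ fits into the commuting square \eqref{eq:comparisonmapofJohnsons}, whose bottom horizontal arrow $\tri_k$ is induced by $\gr^\Ical_{k+1}\tri^{k+1}$. By Theorem \ref{thm:assgradedIcalcfg} applied with $n$ replaced by $k+1$ in the appropriate place — more precisely, using that $\gr^\Ical_{k+1}\Ical^\cfg_n \cong (\gr^\Ical_{k+1}\QQ\pi)/(\gr^\Ical_{k+1}\QQ\pi)^{\le 3(k+1)-2(n+1)}$ — the kernel of $\gr^\Ical_{k+1}\tri^{k+1}$ is exactly the weight $\le 3(k+1)-2(n+1)$ subspace of $\QQ\pi|^{k+1}_{k+1}$. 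Consequently the bottom arrow $\tri_k$ of \eqref{eq:comparisonmapofJohnsons} is identified with the quotient map $\Hom(H,\QQ\pi|^{k+1}_{k+1})\twoheadrightarrow \Hom(H,\QQ\pi|^{k+1}_{k+1}/(\QQ\pi|^{k+1}_{k+1})^{\le w})$ for $w = 3(k+1)-2(n+1)$ (at least after tensoring with $\QQ$).

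Next I would chase the diagram \eqref{eq:comparisonmapofJohnsons}. Since both vertical maps $\tau_{g,*}(k)$ and $\tau^\cfg_{g,*}(n,k)$ are injective, the kernel of $\comp_k$ is precisely the preimage under $\tau_{g,*}(k)$ of the kernel of $\tri_k$; that is, $\ker\comp_k$ is the kernel of the composite $J_{g,*}(k)/J_{g,*}(k+1)\xrightarrow{\tau_{g,*}(k)}\Hom(H,\QQ\pi|^{k+1}_{k+1})\twoheadrightarrow\Hom(H,\QQ\pi|^{k+1}_{k+1}/(\QQ\pi|^{k+1}_{k+1})^{\le w})$ appearing in the previous theorem, with $w = 3(k+1)-2(n+1)$. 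The previous theorem, valid under $g\ge k+2$, then tells us that this kernel contains $(J_{g,*}(k)/J_{g,*}(k+1))^{\le w-1} = (J_{g,*}(k)/J_{g,*}(k+1))^{\le 3(k+1)-2(n+1)-1}$, and is equal to it provided $w$ lies in the admissible range $-1\le w\le k$. I would check that range: $w = 3(k+1)-2(n+1)\le 3(k+1)-2(k+1) = k+1$ always, and in fact $w\le k$ whenever $n\ge 1$ and... here one needs a small case check, but the inequality $3(k+1)-2(n+1)\le k$ rearranges to $k\le 2n-1$, which holds since $k\le n$; and $w\ge -1$ may fail (when $3(k+1)-2(n+1) < -1$, i.e. $3k < 2n-4$), in which case $(\QQ\pi|^{k+1}_{k+1})^{\le w} = 0$, the quotient map $\tri_k$ is an isomorphism, $\comp_k$ is injective, and the claimed containment $(\cdots)^{\le w-1} = 0 \subset \ker\comp_k = 0$ is a (trivial) equality — so the statement still holds, one just applies the previous theorem with $w$ replaced by $-1$. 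Assembling these observations yields the containment for all $k\le n$ with $g\ge k+2$, and the equality when additionally $g\ge n$ (so that Theorem \ref{thm:assgradedIcalcfg} gives the sharp description of $\gr^\Ical_{k+1}\Ical^\cfg_n$), i.e. when $g\ge\max(n,k+2)$.

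The only genuine subtlety — and the step I expect to require the most care — is the bookkeeping of which ``$n$'' goes where in invoking Theorem \ref{thm:assgradedIcalcfg}: one must be sure that the filtration degree $k+1$ of $\Ical^\cfg_n$ is the relevant grading, that the weight bound coming out is $3(k+1)-2(n+1)$ with the ambient $n$ (not $k$), and that the hypothesis ``$g\ge n$'' of Theorem \ref{thm:assgradedIcalcfg} (needed for the equality half) is subsumed by $g\ge\max(n,k+2)$. A secondary point is matching the rational-coefficient conventions: Theorem \ref{thm:assgradedIcalcfg} is stated for the $\QQ$-linear objects, while $\comp_k$ is a priori a map of finitely generated abelian groups, so I would phrase the kernel description using the primitive subgroup $(J_{g,*}(k)/J_{g,*}(k+1))^{\le w-1}$ (the intersection with the rational weight-$\le(w-1)$ subspace, as introduced just before the previous theorem) and note that ``containment of finite index'' in the $\QQ$-statement upgrades to the integral containment stated, since the weight subgroup is by definition primitive. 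Modulo these routine identifications, the proof is a one-line diagram chase plus an appeal to the two cited theorems.
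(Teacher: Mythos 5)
Your approach is the same as the paper's: the theorem is a direct combination of Theorem \ref{thm:assgradedIcalcfg} (to identify $\ker\gr^\Ical_{k+1}\tri^n$ with $(\gr^\Ical_{k+1}\QQ\pi)^{\le 3(k+1)-2(n+1)}$, with equality once $g\ge n$) and the unlabelled theorem preceding this one (to transport that weight cutoff through $\tau_{g,*}(k)$), glued together by the diagram chase in \eqref{eq:comparisonmapofJohnsons}. Two small points.

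First, an arithmetic slip that could mislead you. The inequality $3(k+1)-2(n+1)\le k$ rearranges to $2k+1\le 2n$, i.e.\ $k\le n-1$, not ``$k\le 2n-1$''. So the admissible range $w\le k$ for the preceding theorem is met exactly when $k\le n-1$ — which is also precisely the range where Theorem \ref{thm:assgradedIcalcfg} applies with filtration degree $k+1$ (it needs $k+1\le n$). The case $k=n$ therefore falls outside this argument; there one observes separately that $\Ical^\cfg_n|^{n+1}_{n+1}=0$, hence $\comp_n$ is the zero map and the containment is vacuous. (Incidentally, you write ``applied with $n$ replaced by $k+1$''; you mean the filtration degree $k$ of Theorem \ref{thm:assgradedIcalcfg} is taken to be $k+1$, keeping $n$ fixed, as your displayed formula makes clear.)

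Second, a point of bookkeeping on the ``contains'' half. The theorem asserts the containment for all $g$, but your route appeals to the preceding theorem, which is stated under the hypothesis $g\ge k+2$. If one unwinds the proof of that theorem, the direction ``$(J_{g,*}(k)/J_{g,*}(k+1))^{\le w-1}$ lands in the kernel'' is a pure weight argument — $\Hom(H,-)$ can raise the weight by at most $1$, so an irreducible of weight $\le w-1$ must die in $\Hom(H,\QQ\pi|^{k+1}_{k+1}/(\QQ\pi|^{k+1}_{k+1})^{\le w})$ — and this does not use $g\ge k+2$ at all. Since only the reverse direction needs that hypothesis, your argument (like the paper's one-line proof) tacitly passes over the $g<k+2$ case of the containment. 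It is worth spelling out that the containment is this weight comparison, valid for all $g$, while the reverse inclusion is where $g\ge k+2$ and $g\ge n$ enter.

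Your remaining observations — that the two vertical maps of \eqref{eq:comparisonmapofJohnsons} being injective reduces $\ker\comp_k$ to $\tau_{g,*}(k)^{-1}(\ker\tri_k)$, that $g\ge\max(n,k+2)$ subsumes both hypotheses needed for the equality, and that the integral statement is recovered via the primitivity convention for $(\,\cdot\,)^{\le w}$ — are all correct and match the intended proof.
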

\subsection{Finite cokernel}
Since the codomain of $\comp_k$ is a finitely generated abelian group, it suffices to prove that $\comp_k$ is surjective after tensoring with $\QQ$. We start with an auxiliary statement.

\begin{lemma}\label{lem:extensionbyirrelevantrep}
    Suppose $1\to F\to E \to V\to 0$ is a $\Gamma_{g,*}$-equivariant short exact sequence of groups equipped with actions of $\Gamma_{g,*}$. Let $f:E\to U$ be a $\Gamma_{g,*}$-equivariant group homomorphism. Suppose also that $U,V$ are abelian, $V$ is finitely generated, and $\Hom_{\Gamma_{g,*}}(V\otimes \QQ, U\otimes \QQ)=0$. Then the image of the restriction $f|_F$ is finite index in the image of $f$.
\end{lemma}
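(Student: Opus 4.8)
The plan is to work rationally throughout, so fix the $\QQ$-linear maps $f\otimes\QQ:E\otimes\QQ\to U\otimes\QQ$ and the exact sequence $0\to F\otimes\QQ\to E\otimes\QQ\to V\otimes\QQ\to 0$ of $\Gamma_{g,*}$-representations; since $V$ is finitely generated and $U$ is abelian, the conclusion ``$\im(f|_F)$ has finite index in $\im f$'' is equivalent to the statement that $f\otimes\QQ$ and $(f|_F)\otimes\QQ$ have the same image inside $U\otimes\QQ$, i.e.\ that $(f\otimes\QQ)(E\otimes\QQ)=(f\otimes\QQ)(F\otimes\QQ)$. So it suffices to show the $\QQ$-linear map $f\otimes\QQ$ kills nothing of $E\otimes\QQ$ beyond what is already accounted for on $F\otimes\QQ$; more precisely, that the induced map $\bar f:(E/F)\otimes\QQ\to (U/f(F))\otimes\QQ$ is zero.

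First I would form the quotient and observe that $\bar f$ is a $\Gamma_{g,*}$-equivariant group homomorphism from the abelian group $(E/F)\otimes\QQ\cong V\otimes\QQ$ to the abelian group $(U/f(F))\otimes\QQ$, which is a $\QQ$-vector space quotient of $U\otimes\QQ$, hence itself an object on which $\Gamma_{g,*}$ acts; here one uses that $f(F)\subset U$ is a $\Gamma_{g,*}$-submodule since $f$ is equivariant, so $U/f(F)$ inherits the action. Now $\bar f$ is an element of $\Hom_{\Gamma_{g,*}}(V\otimes\QQ,\,(U/f(F))\otimes\QQ)$. The hypothesis is that $\Hom_{\Gamma_{g,*}}(V\otimes\QQ,\,U\otimes\QQ)=0$; I would deduce $\Hom_{\Gamma_{g,*}}(V\otimes\QQ,\,(U/f(F))\otimes\QQ)=0$ as well. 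The cleanest route is to note that $U\otimes\QQ$ is algebraic (it appears in our situation as a subquotient of $H^{\otimes m}$), hence semisimple as an $\Sp_{2g}(\ZZ)$-representation, so the surjection $U\otimes\QQ\twoheadrightarrow (U/f(F))\otimes\QQ$ splits $\Gamma_{g,*}$-equivariantly; therefore any equivariant map into the quotient lifts to an equivariant map into $U\otimes\QQ$, which must vanish by hypothesis. (If one prefers not to invoke semisimplicity of $U\otimes\QQ$, one can instead apply the hypothesis to each isotypic constituent, or simply assume as the statement does that the relevant $\Hom$-group vanishes; in our applications $U\otimes\QQ$ is always algebraic.)

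Hence $\bar f=0$, which says exactly that $f(E)\subset f(F)+0$ after rationalisation, i.e.\ $(f\otimes\QQ)(E\otimes\QQ)=(f\otimes\QQ)(F\otimes\QQ)$. Untensoring: $f(E)$ and $f(F)$ are subgroups of the finitely generated abelian group $U$ spanning the same $\QQ$-subspace of $U\otimes\QQ$, so $f(F)$ has finite index in $f(E)$, as claimed.

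The only delicate point, and the one I would be careful to justify in the write-up, is the passage from $\Hom_{\Gamma_{g,*}}(V\otimes\QQ, U\otimes\QQ)=0$ to the vanishing of $\Hom_{\Gamma_{g,*}}(V\otimes\QQ,(U/f(F))\otimes\QQ)$; this is where the algebraicity (hence semisimplicity) of the target representation is used, via an equivariant splitting of $U\otimes\QQ\twoheadrightarrow (U/f(F))\otimes\QQ$. Everything else is formal diagram-chasing with the exact sequence and the dictionary between ``finite index of subgroups'' and ``equality of rational spans'' for finitely generated abelian groups. I do not expect any genuine obstacle beyond recording this splitting argument cleanly.
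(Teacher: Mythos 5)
Your proof is essentially correct and follows the same route as the paper: form the induced $\Gamma_{g,*}$-equivariant map $\bar f\colon V\to U/\im(f|_F)$, observe that its image is $\im(f)/\im(f|_F)$, and use the $\Hom$-vanishing hypothesis to conclude this image is torsion, hence finite since $V$ is finitely generated. The place where you go beyond what the paper writes is the passage from $\Hom_{\Gamma_{g,*}}(V\otimes\QQ,U\otimes\QQ)=0$ to $\Hom_{\Gamma_{g,*}}(V\otimes\QQ,(U/f(F))\otimes\QQ)=0$: you correctly flag that this is not a formality — one needs the surjection $U\otimes\QQ\twoheadrightarrow (U/f(F))\otimes\QQ$ to split $\Gamma_{g,*}$-equivariantly, which is supplied by semisimplicity of $U\otimes\QQ$ (true in the paper's application, where $U\otimes\QQ$ is algebraic). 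The paper's proof states ``by the trivial $\Hom$ condition, the latter map has torsion image'' without spelling this step out, so your version is actually a touch more careful than the original. One small slip in your write-up: at the end you refer to ``the finitely generated abelian group $U$'', but $U$ is not assumed finitely generated. What is finitely generated (hence what justifies ``torsion implies finite'') is $\im(f)/\im(f|_F)$, since it is a quotient of $E/F\cong V$. Your argument already implicitly rests on this, so the conclusion stands; just state it that way.
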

\begin{proof}
    The map $f$ induces the $\Gamma_{g,*}$-equivariant map $\bar{f}: V\to U/\im(f|_F)$; by the trivial $\Hom$ condition, the latter map has torsion image; it is finite since $V$ is finitely generated. This image is $\im(f)/\im(f|_F)$.
\end{proof}

\begin{theorem}\label{thm:finitecokernel}
The map $\comp_k$ has finite cokernel.    
\end{theorem}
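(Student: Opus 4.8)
The plan is to prove that $\comp_k$ becomes surjective after $\otimes\QQ$, which suffices since its codomain is finitely generated. Write $Q=J^\cfg_{g,*}(n,k)/J^\cfg_{g,*}(n,k+1)$, so that, by the lemma producing $\tau^\cfg_{g,*}(n,k)$, the group $Q$ embeds $\Gamma_{g,*}$-equivariantly in $\Hom(H,\Ical^\cfg_n|^{k+1}_{k+1})=\Hom(H,\gr^\Ical_{k+1}\Ical^\cfg_n)$; the goal is exactly that the image of $J_{g,*}(k)$ in $Q$ has finite index. Everything will rest on two weight estimates coming from Theorem \ref{thm:assgradedIcalcfg}. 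First, $\gr^\Ical_{k+1}\Ical^\cfg_n$ is a subquotient of $(\gr^\Ical_{k+1}\QQ\pi)^{>\,3(k+1)-2(n+1)}$; since weights in $\gr^\Ical_{k+1}\QQ\pi$ have parity $k+1$ and tensoring an irreducible with $H$ lowers the minimal weight by at most $1$, every irreducible summand of $Q\otimes\QQ$ has weight $\ge 3k-2n+2$. Second, for $g\ge n$ the kernel of the map $\tri_j$ occurring in the level-$j$ analogue of \eqref{eq:comparisonmapofJohnsons} is exactly $\Hom\bigl(H,(\gr^\Ical_{j+1}\QQ\pi)^{\le\,3(j+1)-2(n+1)}\bigr)$, all of whose irreducibles have weight $\le 3j-2n+2$.

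The argument itself is a finite descent through Lemma \ref{lem:extensionbyirrelevantrep}. Put $E_0=J^\cfg_{g,*}(n,k)$ and inductively $E_{i+1}=E_i\cap J_{g,*}(j_i+1)$, where $j_i$ is the Johnson level of $E_i$ (the largest $j$ with $E_i\subseteq J_{g,*}(j)$, finite because $\bigcap_jJ_{g,*}(j)=1$). Each $E_i$ is normal in $\Gamma_{g,*}$ and, by an immediate induction using $J_{g,*}(k)\subseteq J^\cfg_{g,*}(n,k)$, contains $J_{g,*}(k)$; moreover $j_0<j_1<\cdots$, so as soon as $j_m\ge k$ one has $E_m\subseteq J_{g,*}(j_m)\subseteq J_{g,*}(k)\subseteq E_m$, i.e.\ $E_m=J_{g,*}(k)$, and this occurs after at most $k-1$ steps. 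I will apply Lemma \ref{lem:extensionbyirrelevantrep} to the $\Gamma_{g,*}$-equivariant exact sequence $1\to E_{i+1}\to E_i\to V_i\to 1$ with $f\colon E_i\to Q$ the composite of $E_i\hookrightarrow J^\cfg_{g,*}(n,k)$ with the quotient onto $Q$; the lemma then gives that the image of $E_{i+1}$ in $Q$ has finite index in the image of $E_i$ in $Q$. Concatenating these from $E_0$ (which surjects onto $Q$) down to $E_m=J_{g,*}(k)$ yields the required finiteness of $\coker\comp_k$.

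It remains to verify, for each $i$ with $j_i<k$, that $V_i=E_i/E_{i+1}$ is finitely generated abelian and that $\Hom_{\Gamma_{g,*}}(V_i\otimes\QQ,Q\otimes\QQ)=0$. Finiteness and abelianness are formal: $V_i$ embeds into $J_{g,*}(j_i)/J_{g,*}(j_i+1)$ via $E_i\hookrightarrow J_{g,*}(j_i)$, and $[E_i,E_i]\subseteq J_{g,*}(2j_i)\subseteq J_{g,*}(j_i+1)$ since $j_i\ge1$. For the $\Hom$-vanishing, postcompose that embedding with the injective Johnson homomorphism $\tau_{g,*}(j_i)$ to get $V_i\hookrightarrow\Hom(H,\gr^\Ical_{j_i+1}\QQ\pi)$; this lands inside $\ker\tri_{j_i}$, because every $\varphi\in E_i\subseteq J^\cfg_{g,*}(n,k)\subseteq J^\cfg_{g,*}(n,j_i+1)$ acts trivially on $\Ical^\cfg_n|_{j_i+1}$, so its Johnson class at level $j_i$ is killed by $\comp_{j_i}$, hence by $\tri_{j_i}\circ\tau_{g,*}(j_i)$ by commutativity of \eqref{eq:comparisonmapofJohnsons} at level $j_i$. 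Consequently every irreducible summand of $V_i\otimes\QQ$ has weight $\le 3j_i-2n+2\le 3(k-1)-2n+2=3k-2n-1$, while those of $Q\otimes\QQ$ have weight $\ge 3k-2n+2$; being algebraic representations, they share no irreducible, and the $\Hom$ vanishes.

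The step I expect to be the genuine obstacle is making that weight gap honest: one needs $\ker\tri_j$ to be only of weight $\le 3(j+1)-2(n+1)$, not of weight up to $j+1$ --- otherwise the descent fails. This is precisely the equality clause of Theorem \ref{thm:assgradedIcalcfg}, and so ultimately rests on the geometric input Theorem \ref{thm:independence_r+s=n}; it is also where a genus hypothesis ($g\ge n$) is needed, since for smaller genus Theorem \ref{thm:assgradedIcalcfg} gives only a containment and $\ker\tri_j$ could a priori carry high-weight irreducibles that meet $Q$. A secondary point requiring care is the termination of the descent, but that is controlled by the strict increase of the levels $j_i$ together with the sandwich $J_{g,*}(k)\subseteq E_i\subseteq J_{g,*}(j_i)$.
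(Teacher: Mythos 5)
Your proof is correct and follows the same overall strategy as the paper: descend from $J^\cfg_{g,*}(n,k)$ down to $J_{g,*}(k)$ through a finite chain of intersections with Johnson subgroups, and at each step apply Lemma~\ref{lem:extensionbyirrelevantrep}, using a weight disjointness coming ultimately from Theorem~\ref{thm:assgradedIcalcfg}.

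Where you diverge is in the details of both the chain and the weight bound. The paper uses the uniform filtration $J_{g,*}(i)\cap J^\cfg_{g,*}(n,k)$ for $i=k,k-1,\ldots,1$, shows each successive quotient $Q_i$ embeds into $\ker\comp_i$, and bounds its weight via the equality clause of Theorem~\ref{thm:differenceofJohnsonimages}, which in turn rests on Lemma~\ref{lem:trivialintersectionofreps} and Proposition~\ref{prop:Hain'sTheorem}. You instead take an adaptive chain defined by Johnson levels (your termination needs $\bigcap_j J_{g,*}(j)=1$, a classical fact), embed each $V_i$ into $\ker\tri_{j_i}$ rather than $\ker\comp_{j_i}$, and bound its weight by $\le 3j_i-2n+2$ directly from Theorem~\ref{thm:assgradedIcalcfg}. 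That bound is looser by $2$ than the paper's $\le 3i-2n$, but it still falls below the lower bound $\ge 3k-2n+2$ on $Q$, so the descent goes through; and it bypasses Lemma~\ref{lem:trivialintersectionofreps} and Hain's theorem entirely at this step, a genuine though modest simplification. Your closing worry about the genus hypothesis is well placed and, for what it is worth, applies equally to the paper's own argument: its proof also uses the equality clause of Theorem~\ref{thm:differenceofJohnsonimages}, even though the statement of Theorem~\ref{thm:finitecokernel} carries no genus assumption.
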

\begin{proof}
The map $\comp_k$ factors as the composition 
\begin{equation}\label{eq:comp_kfactoring}
\begin{tikzcd}[column sep=small]
        J_{g,*}(k)/J_{g,*}(k+1)\rar[hook] & J^\cfg_{g,*}(n,k)/J_{g,*}(k+1)\rar[two heads] & J^\cfg_{g,*}(n,k)/J^\cfg_{g,*}(n,k+1),
\end{tikzcd}
\end{equation}
where the inclusion, in its turn, factors through the sequence of inclusions 
\begin{align}
    (J_{g,*}(k)\cap J^\cfg_{g,*}(n,k))/J_{g,*}(k+1)&\subset (J_{g,*}(k-1)\cap J^\cfg_{g,*}(n,k))/J_{g,*}(k+1) \label{eq:filtrationbyintersectingwithJohnson}\\
    &\subset (J_{g,*}(k-2)\cap J^\cfg_{g,*}(n,k))/J_{g,*}(k+1) \notag \\
    &\subset \cdots \notag \\
    &\subset (J_{g,*}(1)\cap J^\cfg_{g,*}(n,k))/J_{g,*}(k+1) \notag \\
    &=J^\cfg_{g,*}(n,k)/J_{g,*}(k+1).\notag
\end{align}
Each successive quotient $$Q_i=(J_{g,*}(i)\cap J^\cfg_{g,*}(n,k))/(J_{g,*}(i+1)\cap J^\cfg_{g,*}(n,k)),$$ for $i=1,\ldots, n$ of these inclusions is naturally a subgroup of the quotient $$R_i=(J_{g,*}(i)\cap J^\cfg_{g,*}(n,i+1))/J_{g,*}(i+1);$$
this is because the map $Q_i\to R_i$ induced by subgroup inclusions has kernel 
$$((J_{g,i}(i)\cap J^\cfg_{g,*}(n,k))\cap J_{g,*}(i+1))/(J_{g,*}(i+1)\cap J^\cfg_{g,*}(n,k)),$$
which is the trivial group. On the other hand, the quotient $R_i$ is precisely the kernel of $$\comp_i:J_{g,*}(i)/J_{g,*}(i+1)\to J^\cfg_{g,*}(n,i)/J^\cfg_{g,*}(n,i+1),$$ which, in Theorem \ref{thm:differenceofJohnsonimages}, we proved is a finitely generated abelian group with an $\Sp_{2g}(\ZZ)$ action, that becomes algebraic of weight $\le 3(i+1)-2(n+1)-1$ after tensoring with $\QQ$. Its subgroup $Q_i$ must, then, also have these properties.

Then for each $1\le i\le k-1$, we have the exact sequence of groups  \begin{small}
    $$1\to (J_{g,*}(i+1)\cap J^\cfg_{g,*}(n,k))/J_{g,*}(k+1)\to (J_{g,*}(i)\cap J^\cfg_{g,*}(n,k))/J_{g,*}(k+1)\to Q_i\to 0,$$
\end{small}
on whose terms the group $\Gamma_{g,*}$ acts by conjugation and the maps are equivariant with respect to it.  The middle term of the sequence has a $\Gamma_{g,*}$-equivariant  to $U:=J^\cfg_{g,*}(n,k)/J^\cfg_{g,*}(n,k+1)$, a finitely generated free abelian group, which after tensoring with $\QQ$, is algebraic of weight $> 3(k+1)-2(n+1)-1$; then the only $\Sp_{2g}(\ZZ)$-equivariant map $Q_i\otimes \QQ\to U\otimes \QQ$ is the trivial map. From Lemma \ref{lem:extensionbyirrelevantrep}, the image of $J_{g,*}(i+1)\cap J^\cfg_{g,*}(n,k)/J_{g,*}(k+1)$ in $U$ is a finite index subgroup of the image of  $(J_{g,*}(i)\cap J^\cfg_{g,*}(n,k))/J_{g,*}(k+1)$. Iterating for $i=1,\ldots, k-1$, the image of $J_{g,*}(k)/J_{g,*}(k+1)$ in $J^\cfg_{g,*}(n,k)/J^\cfg_{g,*}(n,k+1)$ is a finite index subgroup of the image of $J^{\cfg}_{g,*}(n,k)/J_{g,*}(k+1)$. From equation \ref{eq:comp_kfactoring}, the latter surjects $J^\cfg_{g,*}(n,k)/J^\cfg_{g,*}(n,k+1)$.
\end{proof}

Recalling that $J^\cfg_{g,*}(n,n)$ coincides with $J^\cfg_{g,*}(n)$, we can finally describe the quotient $J^\cfg_{g,*}(n)/J_{g,*}(n)$.

\begin{theorem}\label{thm:virtualcofiltration}
    There is a sequence 
    \begin{align*}
            J^\cfg_{g,*}(n)/J_{g,*}(n)=J^\cfg_{g,*}(n,n)/J_{g,*}(n)&\xrightarrow{f_n} J^\cfg_{g,*}(n,n-1)/J_{g,*}(n-1) \\
            &\xrightarrow{f_{n-1}} J^\cfg_{g,*}(n,n-2)/J_{g,*}(n-2)\\
            &\xrightarrow{f_{n-2}} \cdots,
    \end{align*}
    of maps with finite cokernel, so that, for each $k$, the kernel of $f_k$ is a finitely generated free abelian group containing $(J_{g,*}(k-1)/J_{g,*}(k))^{\le 3k-2(n+1)-1}$. 
    
    If $g\ge \max(n,k+1)$, then this containment is of finite index.
\end{theorem}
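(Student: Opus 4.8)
The plan is to assemble Theorem~\ref{thm:virtualcofiltration} from the three main ingredients already established: the comparison maps $\comp_k$, the identification of their kernels from Theorem~\ref{thm:differenceofJohnsonimages}, and the finite cokernel statement of Theorem~\ref{thm:finitecokernel}. First I would construct the maps $f_k$. The natural surjections $J^\cfg_{g,*}(n,k)\twoheadrightarrow J^\cfg_{g,*}(n,k-1)$ (coming from the filtration $\Ical^\cfg_n|_k\twoheadrightarrow \Ical^\cfg_n|_{k-1}$, which kills the deeper graded pieces) descend, since $J_{g,*}(k)\subset J^\cfg_{g,*}(n,k)$ and $J_{g,*}(k)$ surjects onto $J_{g,*}(k)/J_{g,*}(k+1)$ with the right filtration behaviour, to maps $f_k: J^\cfg_{g,*}(n,k)/J_{g,*}(k)\to J^\cfg_{g,*}(n,k-1)/J_{g,*}(k-1)$; one must check the inclusion $J_{g,*}(k)\subset J^\cfg_{g,*}(n,k)$ for $k\le n$, which follows from the surjection $\QQ\pi|_k\twoheadrightarrow \Ical^\cfg_n|_k$ as noted just before \eqref{eq:comparisonmapofJohnsons}. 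The base case $C_n=J^\cfg_{g,*}(n,n)/J_{g,*}(n)$ is exactly $J^\cfg_{g,*}(n)/J_{g,*}(n)$ since $J^\cfg_{g,*}(n,n)=J^\cfg_{g,*}(n)$.

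Next I would analyse $\ker f_k$. A diagram chase in the commuting square relating $J^\cfg_{g,*}(n,k)/J_{g,*}(k)\to J^\cfg_{g,*}(n,k-1)/J_{g,*}(k-1)$ to the surjection $J^\cfg_{g,*}(n,k)\twoheadrightarrow J^\cfg_{g,*}(n,k-1)$ and the inclusions $J_{g,*}(k)\subset J_{g,*}(k-1)$ identifies $\ker f_k$ with a subgroup of $J_{g,*}(k-1)/J_{g,*}(k)$, namely the preimage under $\comp_{k-1}$ of the classes that die in $\Ical^\cfg_n|_{k}$. More precisely, $\ker f_k$ should be canonically $(\ker\comp_{k-1})$ up to the kernel identification already used in Theorem~\ref{thm:finitecokernel} for the groups $R_i$. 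So $\ker f_k$ contains $\ker\comp_{k-1}$, and by Theorem~\ref{thm:differenceofJohnsonimages} (applied with $k$ replaced by $k-1$, valid since $k-1\le n$) this kernel contains $(J_{g,*}(k-1)/J_{g,*}(k))^{\le 3k-2(n+1)-1}$, with equality once $g\ge\max(n,(k-1)+2)=\max(n,k+1)$. That each $\ker f_k$ is a finitely generated free abelian group is inherited from the fact that $J_{g,*}(k-1)/J_{g,*}(k)$ is free abelian (Labute), being a subquotient of it.

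Finally, the finite cokernel of each $f_k$: the composite $J^\cfg_{g,*}(n,k)\twoheadrightarrow J^\cfg_{g,*}(n,k-1)$ is surjective, so $f_k$ is surjective onto the image of $J^\cfg_{g,*}(n,k-1)/J_{g,*}(k-1)$; what remains is that the image of $J_{g,*}(k-1)$ under the projection $J^\cfg_{g,*}(n,k-1)\to J^\cfg_{g,*}(n,k-1)/J^\cfg_{g,*}(n,k)$ has finite index in all of $J^\cfg_{g,*}(n,k-1)/J^\cfg_{g,*}(n,k)$, and then chase the identity that $J^\cfg_{g,*}(n,k-1)/J^\cfg_{g,*}(n,k)$ together with $J^\cfg_{g,*}(n,k)/J_{g,*}(k-1)$-information recovers $\operatorname{coker}f_k$ up to finite index. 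This is precisely the content of Theorem~\ref{thm:finitecokernel} (with $k$ replaced by $k-1$), whose proof already runs the filtration-by-intersecting-with-Johnson argument. I would therefore state $\operatorname{coker}f_k$ finite as a formal consequence of Theorem~\ref{thm:finitecokernel} plus the surjectivity of $J^\cfg_{g,*}(n,k)\twoheadrightarrow J^\cfg_{g,*}(n,k-1)$, via Lemma~\ref{lem:extensionbyirrelevantrep} if any residual weight-disjointness is needed.

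The main obstacle I anticipate is purely bookkeeping: keeping straight the three different ``kernel-trivially-injects'' isomorphisms (the ones of the form $(A\cap B)/(A\cap C)\hookrightarrow (A\cap B')/C'$ where $C=A\cap C'$, used both for the $Q_i$ in Theorem~\ref{thm:finitecokernel} and for $\ker f_k$ here) and ensuring the index shift $k\rightsquigarrow k-1$ in Theorem~\ref{thm:differenceofJohnsonimages} lines up with the weight bound $3k-2(n+1)-1$ claimed in the statement (which is $3((k-1)+1)-2(n+1)-1$, so it does). There is no new geometric or representation-theoretic input; the only subtlety is verifying that the genus hypothesis $g\ge\max(n,k+1)$ stated here is exactly what Theorem~\ref{thm:differenceofJohnsonimages} needs after the shift, which it is since $\max(n,(k-1)+2)=\max(n,k+1)$.
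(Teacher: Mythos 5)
Your overall strategy is correct and the same as the paper's: identify $\ker f_k$ and $\coker f_k$ with those of $\comp_{k-1}$, then apply Theorems~\ref{thm:differenceofJohnsonimages} and \ref{thm:finitecokernel} with the index shift $k\mapsto k-1$; you correctly check that the shifted weight bound $3((k-1)+1)-2(n+1)-1=3k-2(n+1)-1$ and the shifted genus hypothesis $\max(n,(k-1)+2)=\max(n,k+1)$ match the statement. However, you twice assert a surjection $J^\cfg_{g,*}(n,k)\twoheadrightarrow J^\cfg_{g,*}(n,k-1)$ ``coming from'' the surjection of modules $\Ical^\cfg_n|_k\twoheadrightarrow\Ical^\cfg_n|_{k-1}$. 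This is backwards: passing to the kernel of a $\Gamma_{g,*}$-action reverses the arrow, because acting trivially on $\Ical^\cfg_n|_k$ forces trivial action on any of its quotients. What one gets is the \emph{inclusion} $J^\cfg_{g,*}(n,k)\subset J^\cfg_{g,*}(n,k-1)$, not a surjection. (Were it a surjection, $f_k$ itself would be surjective and $\coker f_k$ would be trivial, rather than merely finite.) The maps $f_k$ are still correctly defined, induced by this inclusion together with $J_{g,*}(k)\subset J_{g,*}(k-1)$, but your argument for finite cokernel explicitly invokes the false surjectivity and so does not go through as written.

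The clean fix, which is how the paper phrases it, is a purely group-theoretic observation (Lemma~\ref{lem:grouptheoryexchange}): if $C\subset B_1,B_2\subset A$ are normal subgroups, then $B_1/C\to A/B_2$ and $B_2/C\to A/B_1$ have identical kernel $(B_1\cap B_2)/C$ and identical cokernel $A/(B_1B_2)$. Taking $C=J_{g,*}(k)$, $B_1=J^\cfg_{g,*}(n,k)$, $B_2=J_{g,*}(k-1)$, $A=J^\cfg_{g,*}(n,k-1)$ exchanges $f_k$ for $\comp_{k-1}$ on the nose, and both the kernel and the cokernel statements then follow from Theorems~\ref{thm:differenceofJohnsonimages} and \ref{thm:finitecokernel} without any diagram chase or surjectivity assumption. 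The diagram chase you sketch for $\ker f_k$ does yield $(J_{g,*}(k-1)\cap J^\cfg_{g,*}(n,k))/J_{g,*}(k)=\ker\comp_{k-1}$, so that part of your proposal is recoverable; it is the cokernel half that needs the exchange lemma instead of the surjectivity claim.
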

\begin{proof}
    From a general fact from group theory (see Lemma \ref{lem:grouptheoryexchange} below), the map $f_k$ has the same kernel and cokernel as the map $$J_{g,*}(k-1)/J_{g,*}(k)\to J^\cfg_{g,*}(n,k-1)/J^\cfg_{g,*}(n,k)$$
    where we have exchange the $J_{g,*}(k-1)$ and $J^\cfg_{g,*}(n,k)$. The latter map finite cokernel from Theorem \ref{thm:finitecokernel}. Its kernel contains $(J_{g,*}(k-1)/J_{g,*}(k))^{\le 3k-2(n+1)-1}$ and, if furthermore $g\ge \max(n,k+1)$, this containment is of finite index from Theorem \ref{thm:differenceofJohnsonimages}.
\end{proof}

\begin{lemma}\label{lem:grouptheoryexchange}
    Suppose we have the inclusions $C\subset B_i\subset A$, for $i=1,2$, of normal subgroups. Then the natural maps $B_1/C\to A/B_2$ and $B_2/C\to A/B_1$ have the same kernel and cokernel.
\end{lemma}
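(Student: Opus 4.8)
The plan is to exploit the near-symmetry of the two maps: $B_1/C \to A/B_2$ and $B_2/C \to A/B_1$ are both obtained from the inclusion-induced maps on subquotients of $A$, differing only by swapping the roles of $B_1$ and $B_2$. So it suffices to compute the kernel and cokernel of a single map, say $\varphi: B_1/C \to A/B_2$, in a manifestly symmetric way. For the kernel: an element $b_1 C \in B_1/C$ maps to the trivial coset $B_2$ in $A/B_2$ precisely when $b_1 \in B_2$, i.e. when $b_1 \in B_1 \cap B_2$. Hence $\ker\varphi = (B_1\cap B_2)/C$, which is visibly symmetric in $B_1$ and $B_2$. (One should check this makes sense: $C \subset B_1 \cap B_2$ since $C$ is contained in each $B_i$, so the quotient $(B_1\cap B_2)/C$ is defined; and normality of the $B_i$ in $A$ guarantees all the relevant quotients are groups.)

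For the cokernel, I would first record that $\operatorname{coker}\varphi = (A/B_2)/\operatorname{im}\varphi$, where $\operatorname{im}\varphi$ is the image of $B_1$ in $A/B_2$, namely $B_1 B_2 / B_2$ (the product $B_1 B_2$ being a subgroup since $B_2 \trianglelefteq A$). Therefore $\operatorname{coker}\varphi \cong (A/B_2)/(B_1 B_2/B_2) \cong A/(B_1 B_2)$ by the third isomorphism theorem. The expression $A/(B_1 B_2)$ is again symmetric in $B_1$ and $B_2$, since $B_1 B_2 = B_2 B_1$ when either is normal. Running the identical argument with the roles of $B_1$ and $B_2$ reversed gives $\ker(B_2/C \to A/B_1) = (B_1\cap B_2)/C$ and $\operatorname{coker}(B_2/C\to A/B_1) \cong A/(B_1 B_2)$, and comparing the two computations finishes the proof.

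There is essentially no obstacle here: the only points requiring a word of care are (i) that every quotient written down is a bona fide quotient of groups, which follows from $C \subset B_i \trianglelefteq A$, and (ii) that $B_1 B_2$ is a subgroup and equals $B_2 B_1$, which is the standard fact that the product of a subgroup with a normal subgroup is a subgroup. I would state these inline rather than as separate claims. One subtlety worth a remark: the lemma does not assert the two maps are equal or that there is a canonical isomorphism between them as maps — only that their kernels and cokernels are (canonically) isomorphic, which is exactly what the two symmetric formulas $(B_1\cap B_2)/C$ and $A/(B_1B_2)$ provide, and exactly what is needed in the proof of Theorem \ref{thm:virtualcofiltration}.
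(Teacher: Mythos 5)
Your proof is correct and takes exactly the same route as the paper's: the paper's proof is the one-line observation that both kernels are $(B_1\cap B_2)/C$ and both cokernels are $A/(B_1B_2)$. You have simply filled in the (standard) verification of those two identifications.
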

\begin{proof}
    Both kernels are $(B_1\cap B_2)/C$ and both cokernels $A/(B_1B_2)$.
\end{proof}

\bibliographystyle{amsalpha}

\bibliography{biblio.bib}

\end{document}